\documentclass[11pt,reqno]{article}
\usepackage{fullpage}
\usepackage[utf8]{inputenc}
\usepackage[colorlinks,
            linkcolor=red,
            anchorcolor=red,
            citecolor=blue
            ]{hyperref}

\usepackage{graphicx, amsmath, fullpage, amssymb, amsthm, amsfonts, color, algorithm,mathtools,bbm}
\mathtoolsset{showonlyrefs}
\usepackage{enumerate}
\usepackage{cases}
\newtheorem{theorem}{Theorem}[section]
\newtheorem{lemma}[theorem]{Lemma}
\usepackage{comment}
\usepackage{enumitem}
\usepackage{colonequals}
\usepackage{bm}

\newtheorem{definition}[theorem]{Definition}

\newtheorem{question}[theorem]{Question}

\newtheorem{prop}[theorem]{Proposition}

\newtheorem{cor}[theorem]{Corollary}
\newtheorem{remark}[theorem]{Remark}
\newcommand{\Var}{\mathrm{Var}}
\newcommand{\Cov}{\mathrm{Cov}}

\newcommand{\CC}{\mathbb{C}}

\newcommand{\EE}{\mathbb{E}}

\newcommand{\NN}{\mathbb{N}}
\newcommand{\PP}{\mathbb{P}}

\newcommand{\RR}{\mathbb{R}}

\newcommand{\ZZ}{\mathbb{Z}}
\DeclareSymbolFont{bbold}{U}{bbold}{m}{n}
\newcommand{\One}{\mathbbm{1}}

\newcommand{\OU}{\mathrm{OU}}
\newcommand{\PDBOU}{\mathrm{PDBOU}}
\newcommand{\PDBR}{\mathrm{PDBR}}
\newcommand{\entries}{\mathrm{entries}}

\newcommand{\Resamp}{\mathrm{Resamp}}
\newcommand{\Unif}{\mathrm{Unif}}

\newcommand{\sA}{\mathcal{A}}
\newcommand{\sB}{\mathcal{B}}
\newcommand{\sC}{\mathcal{C}}
\newcommand{\sE}{\mathcal{E}}
\newcommand{\sL}{\mathcal{L}}
\newcommand{\sN}{\mathcal{N}}
\newcommand{\sF}{\mathcal{F}}
\newcommand{\sX}{\mathcal{X}}
\newcommand{\sR}{\mathcal{R}}
\newcommand{\sT}{\mathcal{T}}

\newcommand{\sym}{\mathrm{sym}}
\renewcommand{\vec}{\mathrm{vec}}
\newcommand{\rank}{\mathrm{rank}}

\newcommand{\Dom}{\mathrm{Dom}}
\renewcommand{\Re}{\mathrm{Re}}
\renewcommand{\Im}{\mathrm{Im}}

\newcommand{\Ex}{\mathop{\mathbb{E}}}

\newcommand{\scc}{\mathrm{sc}}

\newcommand{\cpxi}{{\bm i}}

\newcommand{\GOE}{\mathrm{GOE}}
\newcommand{\DBM}{\mathrm{DBM}}

\newcommand{\ind}{\mathbbm{1}}

\DeclareMathOperator*{\argmax}{arg\,max}

\definecolor{darkgreen}{rgb}{0.0, 0.5, 0.0}

\newif\ifnotes
\notestrue

\usepackage{authblk}

\author{Xiangyi Zhu\thanks{Email: \texttt{xzhu96@jhu.edu}.}\,\,\,}
\author{Dmitriy Kunisky\thanks{Email: \texttt{kunisky@jhu.edu}.}}

\affil{Department of Applied Mathematics \& Statistics, Johns Hopkins University}

\title{Generalized noise sensitivity of eigenvectors: All eigenvectors, inhomogeneous variance profiles, and dependent resampling}
\date{February 17, 2026}

\begin{document}

\maketitle
\thispagestyle{empty}

\begin{abstract}
    Chatterjee (2016) proved, as an application of his general framework relating superconcentration and chaos, that after the entries of an $n \times n$ matrix drawn from the Gaussian unitary ensemble undergo an entrywise Ornstein-Uhlenbeck (OU) process for time greater than $n^{-1/3}$, the top eigenvector of the matrix becomes almost completely decorrelated from its initial position.
    More recently, Bordenave, Lugosi, and Zhivotovskiy (2020) showed that the same happens under a discrete resampling model, once more than $n^{5/3}$ randomly chosen entries of a Wigner random matrix are resampled.
    We generalize these results in several directions: (1) we analyze the decorrelation of \emph{any} eigenvector under continuous and discrete resampling dynamics, (2) we analyze the discrete resampling process for generalized Wigner matrices with inhomogeneous variance profiles, (3) we analyze a combination of continuous and discrete resampling where an OU process is repeatedly run for a certain time on randomly chosen entries, and (4) we analyze a dependent version of resampling where entries grouped into ``blocks'' of arbitrary shapes are resampled together.
    In each case, we show that a given eigenvector decorrelates provided that enough entries have been resampled or that the associated dynamics have been run for a long enough time.
    Our proofs take a different approach from prior work, relying more directly on the characterization of eigenvectors as derivatives of eigenvalues and reducing the problem of establishing eigenvector noise sensitivity to variants of standard and robust properties of random matrices such as bounds on eigenvalue spacings and eigenvector delocalization.
\end{abstract}

\newpage
\thispagestyle{empty}
\tableofcontents
\thispagestyle{empty}

\clearpage

\section{Introduction}
\pagenumbering{arabic}

The magnitude and distribution of fluctuations in statistics of large random objects is one of the central concerns of probability theory.
In the surprising phenomenon of \emph{superconcentration}, the variances of certain such statistics can be exceptionally small, beyond the bounds guaranteed by classical and general-purpose concentration-of-measure estimates.
Superconcentration has been observed for statistics of random matrices, first-passage percolation, Gaussian random polymers, and spin glasses.
For an overview of early developments, see the beginning of the book-length treatment \cite{chatterjeeSuperconcentration}, and \cite{bray1987chaotic,tracy1994level,benjamini1999noise,benjamini2011first, johansson2000shape} for references on specific instances of superconcentration.
In the above work, building on \cite{chatterjee2008chaos,chatterjee2009disorder}, Chatterjee developed a general theory of superconcentration, establishing that in various settings its appearance coincides with that of \emph{chaos}, a heightened sensitivity of statistics of a random object to small perturbations of the underlying randomness (also sometimes referred to as \emph{noise sensitivity}).

One simple illustrative example Chatterjee used for his theory is that of the top eigenvector of a random matrix \cite[Section 3.6]{chatterjeeSuperconcentration}.
We write $\lambda_1(X) \geq \cdots \geq \lambda_n(X)$ for the ordered eigenvalues of a Hermitian random matrix, and $v_1(X), \dots, v_n(X)$ for the associated unit eigenvectors (provided the spectrum of $X$ is simple, which it will be almost surely in all examples we study here).
Chatterjee showed that if $X(t)$ is an $n \times n$ Hermitian random matrix such that $X(0)$ is drawn from the \emph{Gaussian unitary ensemble} and $X(t)$ evolves according to \emph{stationary Dyson Brownian motion} (equivalently, an entrywise Ornstein-Uhlenbeck process), then
\[ \lim_{n \to \infty} \EE\bigg[\langle v_1(X(0)), v_1(X(t)) \rangle^2\bigg] = 0 \]
provided $t = t(n) \gg n^{-1/3}$.
In words, a matrix need move only for a very short time along the Dyson Brownian motion process in order for the top eigenvector to \emph{decorrelate} completely from its original position.
This gives a concrete sense in which the top eigen\emph{vector} is a chaotic statistic of a random matrix, and Chatterjee's theory relates this to the superconcentration of the related top eigen\emph{value} statistic, a non-trivial but well-known fact established in earlier work in random matrix theory (e.g., \cite{tracy1994level,Ledoux2003,Ledoux2007,ledoux2010small}).

It is natural to wonder how much this intriguing result depends on the various details of the setup.
The results of \cite{bordenave2020noise} adapted Chatterjee's result to a very different notion of sensitivity, replacing $X = X(0)$ not by $X(t)$ its destination along Dyson Brownian motion, but by a matrix $Y$ formed by \emph{resampling} some number $k = k(n)$ of the entries of $X$ chosen uniformly at random (also retaining the symmetry of $X$).
Their main result shows that, likewise,
\[ \lim_{n \to \infty} \EE\bigg[\langle v_1(X), v_1(Y) \rangle^2\bigg] = 0 \]
provided that $k = k(n) \gg n^{5/3}$: only a vanishingly small fraction of the $\Theta(n^2)$ total entries of $X$ need to be resampled before the top eigenvector decorrelates.
Their result also applies to Wigner matrices (having i.i.d.\ entries up to symmetry) more generally, and has since been further adapted to rectangular matrices \cite{wang2022resampling}, sparse matrices \cite{bordenave2022noise}, and the second-to-top eigenvectors of random graphs \cite{lee2020noise}.

We recall the simple intuition behind this result, as presented by \cite{bordenave2020noise}.
Write $Y(0) = X$ and $Y(j)$ for $j \geq 1$ for the discrete-time matrix-valued stochastic process where entries are chosen and resampled uniformly at random one at a time.
The eigenvalues $\lambda_{\alpha}(Y(j))$ evolve accordingly in discrete time.
Since the derivatives of the eigenvalue functions $\lambda_{\alpha}$ are given by the matrix-valued functions $v_{\alpha}v_{\alpha}^{\top}$ for matrices with simple spectrum (see our Section~\ref{sec:diff-eig}), and the eigenvectors of each Wigner matrix $Y(j)$ are delocalized in the sense that $\|v_{1}(Y(j))\|_{\infty} \leq n^{-1/2 + o(1)}$ with high probability, we see (say by taking a first-order expansion, as detailed in \cite{bordenave2020noise}) that resampling one more entry of $Y(j)$ changes each $\lambda_{\alpha}(Y(j))$ by a random quantity of order roughly $n^{-1}$.
After $k$ steps, $\lambda_{\alpha}(Y(j))$ should thus have moved a distance $k^{1/2} / n$, since these steps have random signs and should behave as though they are independent.
We expect the top eigenvectors to decorrelate once $\lambda_1(Y(j))$ ``collides with'' the adjacent eigenvalue $\lambda_2(Y(j))$.
The initial spacing between these two is known to be $\lambda_1(X) - \lambda_2(X) \asymp n^{-1/6}$ with high probability (see our Section~\ref{sec: eigenvalue_spacing_p} for discussion of such results), and thus we expect this collision to have occurred once $k^{1/2} / n \gg n^{-1/6}$, which upon rearranging gives the correct threshold $k \gg n^{5/3}$.

Our goal in this paper is to address several questions left open by the above prior works \cite{bordenave2020noise,lee2020noise,bordenave2022noise,wang2022resampling}, which are natural to ask in light of the simple and robust intuition for noise sensitivity of the top eigenvector under resampling given in the above argument.
Below, we describe at a high level the three main generalizations we prove here, as well as the ideas of the underlying proof techniques.

\paragraph{Overview of main results}
First, it is natural to expect that a similar analysis should apply to \emph{all} eigenvectors of a random matrix, not just the top one, if we modify the final calculation to include the appropriate eigenvalue spacing; the typical scale of $\lambda_{\alpha}(X) - \lambda_{\alpha - 1}(X)$ depends non-trivially on $\alpha$, but such properties have been studied thoroughly in random matrix theory.
Extending such sensitivity results to other eigenvectors has been mentioned as an open problem not amenable to existing techniques by \cite{bordenave2020noise,bordenave2022noise}.
Our first contribution is to circumvent those difficulties with a new proof technique and prove noise sensitivity for \emph{all} eigenvectors.

Second, the above heuristic does not depend on the identical distribution of entries in a Wigner matrix.
Indeed, we will see in our second contribution that this assumption can be at least somewhat relaxed to a large class of \emph{generalized Wigner matrices} having independent but not identically distributed entries subject only to mild regularity conditions.

Third, and perhaps of the greatest conceptual interest, the heuristic argument does not appear to depend on the $k$ resampled entries being chosen uniformly at random among all entries, or indeed at random at all.
It seems reasonable to expect that the same sensitivity applies to arbitrary fixed ``patterns'' of resampled entries.
We pose this as a question, which we have not been able to answer in full but on which we will present some initial progress.
\begin{question}
    Let $X = X^{(n)}$ be an $n \times n$ Wigner random matrix (say, with i.i.d.\ uniformly subgaussian entries) and $Y = Y^{(n)}$ be formed by resampling \emph{any} $k = k(n)$ deterministically chosen entries of $X$ (for each $n$, and with the choice of entries not depending on the random outcome of $X$).
    Is it true that, regardless of what entries are chosen, if $k = k(n) \gg n^{5/3}$ then
    \[ \lim_{n \to \infty} \EE\bigg[\langle v_1(X), v_1(Y) \rangle^2\bigg] = 0 \, \text{?} \]
\end{question}
\noindent
We take a first step in the study of this intriguing possibility by studying certain \emph{dependent resampling} schemes.
In particular, we suppose that instead of choosing entries to resample uniformly at random, the entries are first partitioned into deterministic blocks, and those blocks are resampled uniformly at random.
According to the size of these blocks, we allow a certain amount of bounded dependence in which entries are resampled.
In our third contribution we show that (omitting some further details) provided that the blocks are all of a certain size at most $n^{\delta}$ for an absolute constant $\delta > 0$, the top eigenvector decorrelates once a total of more than $n^{5/3}$ entries have been resampled.
Further, this may be combined with our other generalizations to apply \emph{mutatis mutandis} to all eigenvectors of generalized Wigner matrices as well.

\paragraph{Overview of proof techniques}
To show these generalizations, we develop an alternative approach to applying Chatterjee's theory to discrete resampling procedures on random matrices.
Actually, Chatterjee himself developed some general such tools for noise sensitivity of functions of independent random variables under resampling of their inputs in \cite[Chapter 7]{chatterjeeSuperconcentration}.
In our random matrix context, that approach can be viewed as replacing the Dyson Brownian motion Markov process with what Chatterjee refers to as the \emph{independent flips} Markov process, a resampling process where entries are resampled whenever an associated independent Poisson clock rings.\footnote{The term ``flips'' alludes to the natural special case of Boolean values, but the same ideas apply to resampling for general product measures, as discussed in \cite{chatterjeeSuperconcentration}.}
However, Chatterjee's tools rely on the functions under consideration being smooth, which the functions evaluating eigenvalues ($\lambda_{\alpha}$) and eigenvectors ($v_{\alpha}$) of a matrix are not.
Roughly speaking, here we implement Chatterjee's analysis of the independent flips process while allowing for functions that fail to be smooth only on a sufficiently ``thin'' set (in our case, the set of matrices with repeated or near-repeated eigenvalues) so long as this set is avoided by paths between matrices (in our random matrix application) and their resampled versions.
As a further demonstration of the flexibility of this approach, we analyze a process that is a kind of hybrid of Dyson Brownian motion and discrete resampling, where matrix entries are advanced along entrywise Ornstein-Uhlenbeck processes whenever associated independent Poisson clocks ring.

To the best of our knowledge, all previous work on the noise sensitivity of top eigenvectors relied in various ways on the variational characterization of the top eigenvector, $v_1(X) = \argmax_{\|v\| = 1} v^{\top} X v$.
On the other hand, our method merely uses that $\lambda_{\alpha}$ is a ``mostly smooth'' function of a symmetric matrix whose derivative is $v_{\alpha}v_{\alpha}^{\top}$, allowing all eigenvectors to be treated by basically identical means and reducing the task of establishing noise sensitivity to that of establishing some modest variations of standard properties of the spectra of random matrices, like eigenvalue superconcentration, eigenvalue spacing statistics, and eigenvector delocalization.
See Section~\ref{sec: proof_technique} for further details on the properties our method relies on.

We hope that these tools will allow for a more general understanding both of noise sensitivity of eigenvectors, which our results suggest is actually a quite universal phenomenon holding robustly over large classes of random matrix distributions, and of the superconcentration--chaos relationship for non-smooth functions under discrete resampling in settings outside of random matrix theory.

\subsection{Main results}

We now state our main results precisely.
We will work with the following distributions of random matrices, the first one very special case and the second a general class.
\begin{definition}[Gaussian orthogonal ensemble]
    The \emph{Gaussian orthogonal ensemble}, denoted $\GOE(n)$, is the law of $G \in \RR^{n \times n}_{\sym}$ whose entries on and above the diagonal are independent and distributed as $G_{ij} \sim \sN(0, 1 + \One\{i = j\})$.
\end{definition}
\noindent
We note that the original result of Chatterjee considered the similar but complex-valued \emph{Gaussian unitary ensemble}.
Our method should apply equally well to that case, but we restrict our attention to the real case for the sake of simplicity.

\begin{definition}[Sub-Gaussian generalized Wigner matrix]
\label{def:gen-wigner}
We call a random matrix $X \in \RR^{n \times n}_{\sym}$ a \emph{sub-Gaussian generalized Wigner matrix} with parameters $(c_1, c_2, K)$ if the following properties hold:
\begin{enumerate}
    \item The entries of $X$ on and above the diagonal, $(X_{ij})_{1 \leq i \leq j \leq n}$, are independent.
    \item $\EE X_{ij} = 0$ for all $i, j \in [n]$.
    \item The entrywise variances $\sigma_{ij}^2 \colonequals \EE X_{ij}^2$ satisfy the following bounds:
    \begin{align}
        \sigma_{ij}^2 &\in [c_1, c_2] \text{ for all } i, j \in [n], \\
        \sum_{j = 1}^n \sigma_{ij}^2 &= n \text{ for all } i \in [n]. \label{eq:var-reg}
    \end{align}
    \item $\|X_{ij}\|_{\psi_2} \leq K$ for all $i, j \in [n]$, i.e., each $X_{ij}$ is sub-Gaussian with the same variance proxy.\footnote{Here the $\psi_2$ norm is defined as $\| X_{ij}\|_{\psi_2} \coloneqq \inf \{ K>0 :\EE[\exp(X^2/K^2)] \leq 2 \}$.}
    \item $X_{ij}$ admits a density with respect to Lebesgue measure for all $i, j \in [n]$.
\end{enumerate}
    If furthermore $\sigma_{ij}^2 = 1$ for all $i, j \in [n]$, then we call $X$ a \emph{Wigner matrix}.
\end{definition}
\noindent
Note that a GOE matrix is also a sub-Gaussian generalized Wigner matrix, so in fact all matrices we work with in this paper are sub-Gaussian generalized Wigner matrices.

\begin{remark}[Justification of assumptions]
    Let us comment on the role that these various assumptions play in our analysis.
    
    Condition 3, and in particular the condition in \eqref{eq:var-reg}, puts such a random matrix in the ``semicircle universality class'': over a sequence of such $X = X^{(n)}$, we will have that the empirical spectral distribution of $\frac{1}{\sqrt{n}}X^{(n)}$ converges to the semicircle distribution, just as in the case where $\sigma_{ij}^2 = 1$ for all $i, j \in [n]$ (see \cite{erdHos2012rigidity}, for instance). However, allowing a general variance profile comes at a cost for the control of the minimal eigenvalue spacing (at least using the best results currently known), a tradeoff we discuss in Section~\ref{prel: minimal_spacing}.
    
    Condition 4 on sub-Gaussianity for us plays an important role in ensuring the delocalization of all eigenvectors, as we discuss in Section~\ref{sec: eigenvector_delocal}.
    Indeed, heavy-tailed entries can lead to eigenvectors essentially generated by the presence of one very large matrix entry, which are then highly localized on that entry's indices.

    Finally, Condition 5 is not usually included in the definition of generalized Wigner matrices, but is quite important for us since we rely heavily on the ``near-smoothness'' of the functions outputting the eigenvalues of a matrix and it will be important that our matrices have simple spectra almost surely, falling in the set where these functions are in fact smooth.
    See Section~\ref{sec: taylor_expansion} for the point in our argument where this is used.
    On the other hand, discrete matrices like ones with entries drawn from, say, $\Unif(\{\pm 1\})$, have repeated eigenvalues with small but positive probability.
\end{remark}

Next, let us make a few preliminary definitions for working with the eigenvalues of such matrices.
As mentioned, for $X \in \RR^{n \times n}_{\sym}$, we write $\lambda_1(X) \geq \cdots \geq \lambda_n(X)$ for its ordered (real) eigenvalues.
When these eigenvalues are simple, we write $v_{\alpha}(X)$ for the eigenvector of unit norm, unique up to sign, associated to $\lambda_{\alpha}(X)$.
In particular, while $v_{\alpha}(X)$ is not quite well-defined, $v_{\alpha}(X)v_{\alpha}(X)^{\top}$ is.
We always use Greek letters for the indices of eigenvalues and eigenvectors.
As in many results on random matrices, we will be interested in the distance of an eigenvalue's index from the edge of the spectrum:
\begin{equation}
    \hat{\alpha} \colonequals \min\{\alpha, n + 1 - \alpha\}.
\end{equation}
Throughout we will also work with the following related function, which can be used to describe the best known (to our knowledge) variance bounds on various eigenvalues: when working with a sub-Gaussian generalized Wigner matrix with parameters $(c_1, c_2, K)$, for constants $A_1 > 0$ and $A_2 > 1$ depending only on these parameters, we set
\begin{equation}
    \label{eq:F}
    F(n, \alpha) \colonequals \left\{ \begin{array}{ll} A_1 & \text{if } \hat{\alpha} = 1 \text{ (i.e., if } \alpha \in \{1, n\}\text{)}, \\ (\log n)^{A_2 \log \log n} & \text{if } \hat{\alpha} \geq 2 \end{array}\right\}.
\end{equation}
We will see that these $F(n, \alpha)$ appear in bounds on $\Var[\lambda_{\alpha}(X)]$, which in turn leads to their appearance in the thresholds we establish.
See Corollary~\ref{cor: eigenvalue_bound} for these variance estimates.
In Section~\ref{prel: variance_bound} we discuss in detail the choice of $F(n,\alpha)$ in the
generalized Wigner setting and its dependence on $\alpha$.
For an intuitive reading of our results, it suffices to think of $F(n, \alpha)$ as being some factor of sub-polynomial scaling, $F(n, \alpha) = n^{o(1)}$.

\subsubsection{Ornstein-Uhlenbeck process on GOE matrices}\label{sec: OU_process}

Our first result is a direct generalization (aside from switching from GUE to GOE matrices) of the result of Chatterjee in \cite{chatterjeeSuperconcentration} that initiated the study of resampling stability of eigenvectors.
While Chatterjee's result only treated the top eigenvector, we will show that an analogous result holds for all eigenvectors.
Recall that this concerns the sensitivity of eigenvectors under the following process on the underlying matrix:
\begin{definition}[Dyson Brownian motion]
    We write $\DBM(n)$ for the law of the stochastic process $W(t) \in \RR^{n \times n}_{\sym}$ where $W_{ij}(t) / \sqrt{1 + \One\{i = j\}}$ are independent standard Brownian motions for $1 \leq i \leq j \leq n$.
\end{definition}
\begin{definition}[Matrix Ornstein-Uhlenbeck process]\label{def: OU_process}
    Let $G \sim \GOE(n)$ and let $W(t) \sim \DBM(n)$ be a standard symmetric matrix Brownian motion, independent of $G$.
    Then, the \emph{matrix Ornstein-Uhlenbeck (OU)} process, sometimes also called \emph{stationary Dyson Brownian motion}, is the stochastic process
    \[ G(t) = e^{-\tau t}G + e^{-\tau t} W(e^{2\tau t} -1), \]
    defined for all $t \geq 0$, for a parameter $\tau > 0$.
    In this case we write $G(t) \sim \OU(n, \tau)$.
\end{definition}

\begin{theorem}\label{thm: OU_process}
    Let $G(t) \sim \OU(n, 1)$ and $\alpha = \alpha(n) \in [n]$.
    Suppose that $t = t(n)$ is such that
    \[ (1 - e^{-t}) \cdot \frac{\hat{\alpha}^{2/3} n^{1/3}}{F(n, \alpha)} = \omega(1). \]
    Then, we have
    \begin{align}
         \EE \left[ \langle v_\alpha(G(0)), v_\alpha(G(t))  \rangle^2 \right] = o(1).
    \end{align}
\end{theorem}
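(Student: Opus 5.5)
The plan is to follow Chatterjee's superconcentration-implies-chaos argument, using only two facts: $\lambda_\alpha$ is differentiable with gradient $v_\alpha v_\alpha^\top$ off the null set of matrices with repeated eigenvalues, and $\Var[\lambda_\alpha(G)]$ is small.

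\textbf{Step 1: covariance formula for the OU semigroup.} For $f$ a function of the independent Gaussian coordinates $(G_{ij})_{i\le j}$ with variances $\sigma_{ij}^2 = 1+\One\{i=j\}$, the OU dynamics with $\tau=1$ give
\[
\Var[f(G)] \;=\; \int_0^\infty e^{-s}\,\EE\Big[\sum_{i\le j}\sigma_{ij}^2\,\partial_{ij}f(G(0))\,\partial_{ij}f(G(s))\Big]\,ds .
\]
This follows from $\frac{d}{ds}P_s f = L P_s f$ together with the commutation $\nabla P_s f = e^{-s}P_s\nabla f$. The same relation shows that
\[
\rho(s) \colonequals \EE\Big[\sum_{i\le j}\sigma_{ij}^2\,\partial_{ij}f(G(0))\,\partial_{ij}f(G(s))\Big] \;=\; \sum_{i\le j}\sigma_{ij}^2\,\EE\big[(P_{s/2}\partial_{ij} f)^2\big]
\]
by reversibility. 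By the contraction property of $P_u$ in $L^2$, $\rho$ is nonnegative and nonincreasing in $s$. Truncating the integral at $t$ therefore gives
\[
\Var[f(G)] \;\ge\; \int_0^t e^{-s}\rho(s)\,ds \;\ge\; (1-e^{-t})\,\rho(t).
\]

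\textbf{Step 2: apply it to $f=\lambda_\alpha$.} Since $\partial_{ij}\lambda_\alpha = 2v_iv_j$ for $i<j$ and $\partial_{ii}\lambda_\alpha = v_i^2$, with $v = v_\alpha$, a short computation with $w = v_\alpha(G(t))$ gives
\[
\sum_{i\le j}\sigma_{ij}^2\,\partial_{ij}\lambda_\alpha(G(0))\,\partial_{ij}\lambda_\alpha(G(t)) \;=\; \sum_{i<j}4v_iv_jw_iw_j+\sum_i 2v_i^2w_i^2 \;=\; 2\langle v,w\rangle^2 .
\]
Hence
\[
\EE\big[\langle v_\alpha(G(0)),v_\alpha(G(t))\rangle^2\big] \;\le\; \frac{\Var[\lambda_\alpha(G)]}{2(1-e^{-t})} .
\]
Corollary~\ref{cor: eigenvalue_bound}, specialized to the GOE at the unnormalized scale where the spectrum lives in $[-2\sqrt n,2\sqrt n]$, gives $\Var[\lambda_\alpha(G)] \lesssim F(n,\alpha)\,\hat\alpha^{-2/3}n^{-1/3}$. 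The right-hand side is then $O\big(F(n,\alpha)/((1-e^{-t})\hat\alpha^{2/3}n^{1/3})\big)$, which is $o(1)$ under the hypothesis of the theorem. I would double-check the normalization in the corollary; if it is stated for $G/\sqrt n$, the variance picks up a factor $n$ and the bound reads the same.

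\textbf{Step 3: regularity (main obstacle).} The remaining issue is to justify Step 1 for the non-smooth function $\lambda_\alpha$. The plan is as follows.
\begin{itemize}
\item $\lambda_\alpha$ is $1$-Lipschitz in Frobenius norm by Weyl's inequality, hence in $W^{1,2}(\gamma)$, with weak gradient equal to $v_\alpha v_\alpha^\top$ almost everywhere, since the set of matrices with a repeated eigenvalue has Lebesgue measure zero (Section~\ref{sec:diff-eig}).
\item Approximate $\lambda_\alpha$ by smooth functions, for instance mollifications $\lambda_\alpha * \phi_\varepsilon$. These are uniformly Lipschitz and their gradients converge to $v_\alpha v_\alpha^\top$ almost everywhere.
\item Apply the covariance identity to the approximants and pass to the limit by dominated convergence: the gradients are bounded, and the joint law of $(G(0),G(s))$ is absolutely continuous for $s>0$, so almost-everywhere convergence transfers to the pair.
\item Monotonicity of $\rho$ survives the limit, because it is expressed through $L^2$ norms of $P_{s/2}\partial_{ij}\lambda_\alpha$.
\end{itemize}
This approximation argument is where care is needed. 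Everything else reduces to the cited eigenvalue variance bound.
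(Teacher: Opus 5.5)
Your proposal is correct and is essentially the paper's proof. Both use the OU variance identity, get nonnegativity and monotonicity of the integrand from $\EE[(P_{s/2} g)^2]$ applied to the gradient entries (the paper's $f_{ij}=v_iv_j$), truncate at time $t$, and finish with the variance bound of Corollary~\ref{cor: eigenvalue_bound}. The only differences are that the paper justifies the identity for the non-smooth $\lambda_\alpha$ by citing Chatterjee's version for absolutely continuous $f$ rather than mollifying, and that your $\sigma_{ij}^2$-weighted bookkeeping produces the factor $2\langle v,w\rangle^2$, which the paper's Frobenius-inner-product form drops (harmlessly, since that only weakens the bound by a constant).
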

\noindent
The interesting choices of $t = t(n)$ are $ t = o(1)$, in which case we have $1 - e^{-t} \approx t$.
Thus, the result says, neglecting the subpolynomial $F(n, \alpha)$ factor, that the eigenvector $v_{\alpha}$ decorrelates once we move for time $t \gg \hat{\alpha}^{-2/3}n^{-1/3}$ along the OU process.
In particular, for $\alpha = 1$ this recovers (a version for GOE matrices of) the result of \cite{chatterjeeSuperconcentration} mentioned earlier.

\subsubsection{Poisson-driven block Ornstein-Uhlenbeck process on GOE matrices}\label{sec: Poi_OU_intro}

Moving towards more discrete resampling dynamics, we first consider a variant matrix OU process.
The idea is as follows: we partition the entries into disjoint ``blocks,'' and consider repeatedly choosing a random block and moving its entries by some time $\tau$ along corresponding OU processes.
As we will see, when $\tau$ is very small this just behaves like the OU process, while when it is very large it behaves like completely resampling blocks chosen at random.

Let us formalize this idea.
First, we define the block patterns we will allow.
\begin{definition}[Block size]
    \label{def: block-size}
    Let $B \subseteq [n] \times [n]$.
    We define
    \[ \nu(B) \colonequals |B| + \#\{i \in [n]: (i, i) \in B\}. \]
\end{definition}
\begin{definition}[Admissible partition]
    \label{def:blocks}
    We call a family of subsets $B_1, \dots, B_m \subseteq [n] \times [n]$ an \emph{admissible partition} if the following properties hold:
    \begin{enumerate}
        \item $B_a$ is \emph{symmetric} for each $a \in [m]$, i.e., $(i, j) \in B_a$ if and only if $(j, i) \in B_a$.
        \item $B_1 \cup \cdots \cup B_m = [n] \times [n]$.
        \item The $B_a$ are pairwise disjoint.
        \item There exists $\nu \geq 1$ such that, for all $a \in [m]$, we have $\nu(B) = \nu$.
    \end{enumerate}
    We often denote such a partition by $\sB = \{B_1, \dots, B_m\}$, we call $B_a$ the \emph{blocks} of such a partition, and we call $\nu$ its \emph{size parameter}.
    We also write $\sA_k$ for the set of all unions of $k$ of the sets in $\sB$, for each $k \in [m]$.
    We will often bring up the example of the partition that puts, up to symmetry, every entry into its own block, which we denote
    \[ \sB_{\entries} \colonequals \{ \{(i,j),(j,i)\} : 1 \leq i < j \leq n \} \cup \{ \{(i, i)\}: 1 \leq i \leq n \}. \]
\end{definition}

\begin{remark}
    It seems likely that the last two conditions on disjointness and equal size (with double-counting diagonal entries) could be relaxed somewhat.
    But, as we will see, this would make our calculations considerably more complicated and likely would require stronger restrictions on $m$ and $\tau$, so here we work with the above definition.
\end{remark}

We now formalize the matrix-valued process we sketched above.
The main object is the following stochastic process.
\begin{definition}[Poisson-driven block Ornstein-Uhlenbeck process]\label{def: PDBOU}
Let $\sB = \{B_1, \dots, B_m\}$ be an admissible partition of $[n] \times [n]$.
To each $B \in \sB$, associate an independent ``Poisson clock'' with rate $\eta > 0$, i.e., a Poisson point process on with rate $\eta$ on $\RR_{\geq 0}$, yielding a random set of points $T_B = \{t_{B, 1} < t_{B, 2} < \cdots \} \subset \RR_{\geq 0}$.
Let $K_B: \RR_{\geq 0} \to \NN$ be the associated counting process, $K_B(t) = \#\{i: t_{B, i} \leq t\}$, giving the (random) number of times that the clock of block $B$ has rung by time $t$.
Let $G(t) \sim \OU(n, \tau)$.
The \emph{Poisson-driven block Ornstein-Uhlenbeck (PDBOU) process} with parameters $(\sB, \eta, \tau)$ is the stochastic process $\widetilde{G}(t) \in \RR^{n \times n}_{\sym}$ with entries
\[ \widetilde{G}(t)_{ij} = G\left(\sum_{B \in \sB: (i, j) \in B} K_B(t)\right)_{ij}. \]
In this case we write $\widetilde{G}(t) \sim \PDBOU(n, \sB, \eta, \tau)$.
\end{definition}
\noindent
In words, under the PDBOU, the total amount that entry $(i, j)$ advances along the OU process has a contribution of $\tau$ for every time the clock has rung of a block to which $(i, j)$ belongs.
(In particular, $\widetilde{G}(t)$ is a jump process: entries only change at the times when some clock rings.)
For an admissible partition $\sB$ there will only be one such block, but the definition is sensible even if blocks overlap, a possibility our results do not treat but that we will sometimes discuss.
We discuss the probabilistic properties of the PDBOU in general in Section~\ref{prel: POU} without the structural assumption of $\sB$ being an admissible partition.

\begin{theorem}\label{thm: main_POU}
Let $\delta \in (0, 1/12)$. Let $\sB^{(n)}$ be an admissible partition of $[n] \times [n]$ for each $n$ with size parameter $\nu = \nu(n)$, let $\eta = \eta(n) > 0$, $\tau = \tau(n) > 0$ be parameters, $\alpha = \alpha(n) \in [n]$, and let $\widetilde{G} = \widetilde{G}^{(n)} \sim \PDBOU(n, \sB, \eta, \tau)$.
Suppose that $t = t(n) > 0$ is such that
\begin{align}
   \nu &= O\left(\frac{n^{\delta}}{1 \wedge \tau}\right), \\
   \eta t  &\leq {e^{\tau}} \log \frac{1}{1-e^{-\tau}}, \label{equ: teta}\\
    t \cdot \eta(1 \wedge \tau)^2 \cdot \frac{ \hat{\alpha}^{2/3}   n^{1/3}}{F(n, \alpha)} &= \omega(1) \label{equ: pou_tF}.
\end{align}
Then, we have
    \begin{align}
         \EE \left[ \langle v_\alpha(\widetilde{G}(0)), v_\alpha(\widetilde{G}(t))  \rangle^2 \right] =
             o(1).
    \end{align}
\end{theorem}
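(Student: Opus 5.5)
The strategy is a Chatterjee-type covariance identity for the PDBOU semigroup, applied to the eigenvalue function $\lambda_\alpha$, combined with a Taylor expansion that is valid only away from the thin set where the spectrum is degenerate.

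\textbf{Step 1: the PDBOU as a stationary Markov process.} First we record that $\widetilde G(t)$ is a stationary, reversible Markov process with invariant law $\GOE(n)$. Its generator is
\[ Lf(X)=\eta\sum_{B\in\sB}\big(\EE[f(X^{B,\tau})]-f(X)\big), \]
where $X^{B,\tau}$ advances the entries in $B$ by time $\tau$ along the OU process. Since the blocks are disjoint, the semigroup factorizes over blocks. On each block, a single clock ring acts as the Mehler operator $P^{B}_\tau$. Hence the semigroup is $\exp(\eta t(P^B_\tau-I))$ on each block, and on Hermite chaos of degree $d$ restricted to $B$ it acts by $\exp(-\eta t(1-e^{-d\tau}))$.

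\textbf{Step 2: covariance identity and its first-order part.} For $f=\lambda_\alpha$ we write the spectral (chaos) representation
\[ \Var f = \int_0^\infty \EE\big[\langle \nabla f(X),\nabla f(X(s))\rangle_{\text{weighted}}\big]\,ds, \]
which follows from Step 1. The key mechanism is Chatterjee's monotonicity: the degree-one chaos coefficient $\EE\big[\partial_{ij}\lambda_\alpha(X)\,\partial_{ij}\lambda_\alpha(\widetilde G(t))\big]$ is nonnegative and nonincreasing in $t$. This gives
\[ \EE\big[\langle v_\alpha(0),v_\alpha(t)\rangle^2\big]=\sum_{ij}\EE\big[\nabla_{ij}\lambda_\alpha(0)\,\nabla_{ij}\lambda_\alpha(t)\big] \lesssim \frac{\Var\lambda_\alpha}{\int_0^t \text{(decay rate)}}. \]
Concretely, we bound the correlation at time $t$ by $C\,\Var[\lambda_\alpha]/(t\,\eta(1\wedge\tau))$ times a normalization. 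The expected answer has the form
\[ \frac{n^{-1/3}\hat\alpha^{-2/3}F(n,\alpha)}{t\,\eta(1\wedge\tau)^2}\cdot O(1), \]
using $\Var\lambda_\alpha\lesssim F(n,\alpha)\,n^{-1/3}\hat\alpha^{-2/3}$ from Corollary~\ref{cor: eigenvalue_bound}. This is $o(1)$ by \eqref{equ: pou_tF}.

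\textbf{Step 3: controlling the non-smoothness of $\lambda_\alpha$.} Because $\lambda_\alpha$ is not smooth, the integration by parts behind Step 2 must be done with a Taylor expansion along each jump. When block $B$ rings, we expand $\lambda_\alpha(X^{B,\tau})-\lambda_\alpha(X)$ to first order with derivative $v_\alpha v_\alpha^\top$ restricted to $B$. The second-order remainder is controlled by the resolvent term $\sum_{\beta\ne\alpha}|\langle v_\beta,\Delta v_\alpha\rangle|^2/|\lambda_\alpha-\lambda_\beta|$.

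This remainder is estimated using three inputs:
\begin{itemize}
\item delocalization, $\|v_\beta\|_\infty\le n^{-1/2+o(1)}$ (Section~\ref{sec: eigenvector_delocal});
\item the block size bound $\nu=O(n^\delta/(1\wedge\tau))$, which gives $\|\Delta\|$ of order $\nu^{1/2}(1\wedge\tau)^{1/2}$ with $\Delta$ supported on $B$, so that $|\langle v_\beta,\Delta v_\alpha\rangle|\lesssim \nu(1\wedge\tau)^{1/2} n^{-1+o(1)}$;
\item minimal eigenvalue spacing lower bounds $n^{-1/2-c}$ that hold on the whole segment between $X$ and $X^{B,\tau}$ (Section~\ref{prel: minimal_spacing}).
\end{itemize}
With $\delta<1/12$, the total remainder summed over the roughly $\eta t m$ rings is $o(1)$ relative to the main term. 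The hypothesis \eqref{equ: teta} keeps $\eta t$ in the regime where the $(1-e^{-\tau})$ factor linearizes correctly, so that no block is advanced too many times.

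\textbf{Main obstacle.} The hard step is Step 3. We need the spacing and delocalization bounds uniformly along the interpolation paths, not just at the GOE endpoints. The first approach to try is a union bound over a discretization of each path, using the fact that each interpolating matrix $\sqrt{1-s}\,X+\sqrt{s}\,X'$ restricted to $B$ is again GOE-distributed. On the exceptional event, we simply use the trivial bound $\langle v_\alpha(0),v_\alpha(t)\rangle^2\le 1$.
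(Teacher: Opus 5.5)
Your Step~1 is fine, but Step~2, where the whole proof actually lives, is not a valid argument for the PDBOU. The identity $\Var\lambda_\alpha=\int_0^\infty\EE[\langle\nabla\lambda_\alpha(X),\nabla\lambda_\alpha(X(s))\rangle_{\mathrm{weighted}}]\,ds$ does not hold here. The Dirichlet form is the discrete-difference expression $\frac{\eta}{2}\sum_{B}\sum_{K}\PP(K(t)=K)\,\EE[\Delta_B\lambda_\alpha\,\Delta_B\lambda_\alpha^K]$. Moreover, Hermite components of equal total degree are damped at different rates depending on how their degree is split among blocks (e.g.\ $e^{-\eta t(1-e^{-3\tau})}$ versus $e^{-3\eta t(1-e^{-\tau})}$ for degree $3$), so no time-weight turns it into gradient correlations. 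Concretely, $\EE\langle v_\alpha(0),v_\alpha(t)\rangle^2$ is a nonnegative combination of the squared Hermite coefficients of $\lambda_\alpha$. In it, a component of degree $d$ concentrated in one block has weight $d\,e^{-\eta t(1-e^{-\tau(d-1)})}\ge d\,e^{-\eta t}$. So Chatterjee's monotonicity alone cannot give $\Var\lambda_\alpha/(\eta t(1\wedge\tau))$.

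What is missing is a quantitative comparison of discrete differences with derivatives. The paper obtains it in three steps:
\begin{enumerate}
\item It Taylor-expands $\Delta_B\lambda_\alpha$ and $\Delta_B\lambda_\alpha^K$ along the segments.
\item It decouples with an independent copy of the block entries, so that only the pairs $\{i,j\}=\{a,b\}$ survive.
\item It confronts the fact that the resulting coefficient $(1+\One\{i\neq j\})\EE[\Delta_BG_{ij}\Delta_BG(K)_{ij}]$ equals $4(1-e^{-\tau})$ for $K_B=0$ but $-2(1-e^{-\tau})^2e^{-\tau(K_B-1)}<0$ for $K_B\ge1$. This sign change comes from mean reversion of the OU increments.
\end{enumerate}
Since $\EE[\partial_{ij}\lambda_\alpha\,\partial_{ij}\lambda_\alpha^K]$ is nonnegative and nonincreasing in $K_B$, condition \eqref{equ: teta} is exactly what makes the positive $K_B=0$ part dominate. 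It has nothing to do with linearizing $1-e^{-\tau}$. Comparing the overlap to that part then costs $e^{\eta t}/(1-e^{-\tau})$, which under \eqref{equ: teta} is the source of $(1\wedge\tau)^{-2}$.

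Your chaos picture can in fact be repaired another way. Components with all $d_C\le1$ have weight at most $\max\{1,(\eta t(1-e^{-\tau}))^{-1}\}$. For $d\ge2$ one has $d\le 2g(d)/(1-e^{-\tau})^2$ with $g(d)=d(1-e^{-\tau})-(1-e^{-\tau d})$. This bounds the within-block higher-degree contribution by $(1-e^{-\tau})^{-2}$ times the defect $4(1-e^{-\tau})-\sum_B\EE[(\Delta_B\lambda_\alpha)^2]$, and this route does not need \eqref{equ: teta}. But that defect is precisely the $K=0$ linearization error, and bounding it requires the Taylor and decoupling analysis your proposal never carries out.

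Step~3 does not work as proposed either.
\begin{itemize}
\item The gap estimates you invoke fail with probability only $n^{-c}$ for small $c$. So a union bound over a discretization fine enough to rule out eigenvalue crossings is useless.
\item The paper requires the gap only at the starting point of the segment. It then controls the motion of every eigenvalue along the segment by delocalization, $|\lambda_\beta(X(s))-\lambda_\beta(X)|\le\nu\|Y-X\|_{\ell^\infty}\sup_s M(X(s))^2$. It combines this with rigidity for distant eigenvalues to get $\sup_s S_\alpha(X(s))\lesssim n^{1/2+\delta}$; a gap bound at $\alpha$ alone does not control the second-order resolvent sum.
\item Only delocalization needs a grid, and it survives the union bound because it fails with probability $O(n^{-C})$ for every $C$.
\item Falling back on $\langle v_\alpha(0),v_\alpha(t)\rangle^2\le1$ on an exceptional event is incompatible with the method. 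Everything passes through exact expectations in the variance identity, and the positivity and monotonicity facts need not hold for restricted expectations. So the spacing failure must be absorbed inside each of the roughly $n^2/\nu$ terms $\EE[\Delta_B\lambda_\alpha\Delta_B\lambda_\alpha^K]$.
\item A Weyl-type bound there costs about $(1-e^{-\tau})\nu n^{-\delta/4}$ per block, i.e.\ a factor $n^{2-\delta/4}$ over the main term in total.
\end{itemize}
What works is the paper's three-way split. On $\sE_{\mathrm{deloc}}\cap\sE_{\mathrm{space}}$ one uses second-order Taylor. On $\sE_{\mathrm{deloc}}\cap\sE_{\mathrm{space}}^c$ one uses the first-order bound $|\Delta_B\lambda_\alpha|\lesssim\nu n^{-1+\epsilon}\|\Delta_BG\|_{\ell^\infty}$ plus H\"older with $\PP(\sE_{\mathrm{space}}^c)^{1/2}\lesssim n^{-\delta/4}$. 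On $\sE_{\mathrm{deloc}}^c$ one uses its negligible probability. This produces the error $(1-e^{-\tau})(\nu^{2+\epsilon}n^{-1/2+\delta+\epsilon}+\nu^{1+\epsilon}n^{-\delta/4+\epsilon})$ that dictates the block-size hypothesis.
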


\begin{remark}[Block size condition]
The choice of $\delta \in (0, 1/12)$ in the restriction on $\nu$ is suboptimal and written this way to give a clean statement here. 
  A more precise sufficient condition that follows from the proof is to have $\delta, \epsilon > 0$ such that
  \begin{align}
        \nu = o\left(\frac{n^{5/6 - \delta}}{\log n} \hat{\alpha}^{-1/3}\right), \quad \nu = O\left(\frac{n^{1/4-\delta/2 - \epsilon}}{\sqrt{1 \wedge \tau}}\right), \quad \nu = O\left(\frac{n^{\delta/4 - \epsilon}}{1 \wedge \tau}\right).
  \end{align}
   See Theorem~\ref{thm: sum_of_inverse_spacing} for the source of the first condition, and see Lemma~\ref{lem: delta_partial_o1} and the end of the proof of Theorem~\ref{thm: main_POU} for the source of the latter two conditions.
\end{remark}

It is illuminating to consider the two extremes of entries advanced along OU processes individually by short increments at a fast rate, or by long increments at a slow rate.
In both of the below examples, suppose for the sake of simplicity that $\sB = \sB_{\entries}$, so that each entry is in its own block along with its symmetric counterpart and $\nu = 2$.
\begin{itemize}
    \item If we take $\eta = \eta(n) \to \infty$ and $\tau = \tau(n) \to 0$ such that $\eta \tau^2 = 1$, then the result simplifies to recover the threshold established in Theorem~\ref{thm: OU_process} by some modifications. Indeed, the entrywise dynamics converge to a system of independent Ornstein-Uhlenbeck processes running simultaneously, effectively behaving as a unified process despite the block structure.
    
    We note that, despite this analogy, because the standard OU process admits simple analytic descriptions of its transition kernel and Dirichlet form, its analysis for our purposes is much simpler and avoids many of the technicalities giving rise to the extra conditions above for discrete resampling.
 
    \item If we fix, say, $\eta $ a constant and take $\tau$ large, then whenever an entry's clock rings, the entry is effectively almost resampled entirely (by running an OU process for a long time $\tau$), and further rings of that clock no longer change those entries. 
Consequently, the condition in \eqref{equ: teta} simplifies to $t \eta \sim 1$, where $t \eta$ represents the expected number of resampling events per entry by time $t$. This constraint effectively restricts the occurrence of repeated resampling—which becomes redundant in this regime—and ensures that the total number of resampled entries remains small relative to $n^2$. 

Thus, by time $t$, approximately $\Theta(\eta t n^2)$ entries will have been resampled. Comparing with the above conditions, we expect the eigenvector $v_{\alpha}$ to decorrelate once the number of resampled entries exceeds $n^{5/3} / \hat{\alpha}^{2/3}$.
    We will see below that this is indeed the correct threshold for actual discrete resampling dynamics, even for a broader class of random matrices with independent entries.
\end{itemize}

The PDBOU process gives an interpolation between these extremes where either all entries are changed by a small amount (the standard continuous OU process) or a few entries are changed by a large amount (the independent resampling process).
In the PDBOU process, the parameter $\eta$ controls the number of entries affected per unit time (along with the block size $\nu$), while $\tau$ controls the amount an entry changes each time it is affected by the resampling process.
The appearance of the quantity $\eta \tau^2$ (for small $\tau$) in the main condition involving the time parameter $t$ might be viewed as expressing the way that these parameters interact in determining the overall behavior of the process.

\subsubsection{Block resampling on generalized Wigner matrices}\label{sec: independent_resampling}

The general resampling process that we arrived at in the limit $\tau \to \infty$ above is sensible to define more generally.
For example, it makes sense for our sub-Gaussian generalized Wigner matrices (Definition~\ref{def:gen-wigner}), whose entries are independent and can be individually resampled while maintaining the same joint distribution.
We define this process as follows.
\begin{definition}[Block resampling]\label{def: block_resampling}
    \label{def:block-resample}
    Let $X \in \RR^{n \times n}_{\sym}$ be a sub-Gaussian generalized Wigner matrix, let $Y$ be an independent copy of $X$, and let $\sB$ be an admissible partition of $[n] \times [n]$ with $\sA_k$ as defined in Definition~\ref{def:blocks}.
    For each $A \in \sA_k$, we define
    \begin{equation} 
        \label{equ: resample_rule}
    X^A_{ij} \colonequals \begin{cases}
      Y_{ij} &\text{if } (i,j) \in A, \\
      X_{ij} &\text{otherwise}.
  \end{cases}
  \end{equation}
  We write $\Resamp(X; \sB, k)$ for the law of $X^A$ for $A \sim \Unif(\sA_k)$.
\end{definition}
\noindent
In words, we choose $k$ blocks from $\sB$ uniformly at random and resample all entries in $X$ that belong to those blocks.
We discuss the independent resampling process in Section~\ref{prel: independent_resample} without structural assumptions on $\mathcal{B}$, but for Theorem~\ref{thm: resample_block_1}, we specialize to admissible partitions only.
In particular, parts of our calculation can still be carried out when the blocks of $\sB$ are not disjoint, but the crucial \emph{variance identity} we rely on, stated in \eqref{equ: var_identity_dij_c}, becomes much more opaque in this case.

\begin{theorem}
\label{thm: resample_block_1}
There exists an absolute (small) constant $\delta > 0$ such that the following holds.
Let $X = X^{(n)} \in \RR^{n \times n}_{\sym}$ be sub-Gaussian generalized Wigner matrices with fixed parameters $(c_1, c_2, K)$ not depending on $n$. 
Let $\alpha = \alpha(n) \in [n]$.
Let $\sB = \sB^{(n)}$ be an admissible partition of $[n] \times [n]$ with size parameter $\nu = \nu(n)$ and let $k = k(n) \in [|\sB^{(n)}|]$ be another parameter.
Suppose that
\begin{align*} 
\nu &= O(n^{\delta}), \\
k \nu \cdot  \frac{\hat{\alpha}^{2/3}  n^{-5/3}}{F(n, \alpha)} &= \omega(1).
\end{align*}
Let $Y = Y^{(n)} \sim \Resamp(X; \sB, k)$.
Then, we have
    \begin{align}
         \EE\left[\langle v_\alpha(X), v_\alpha(Y) \rangle^2 \right] = o(1).
    \end{align}
Further, if $X$ is a Wigner matrix (i.e., with all entries of variance 1), then we may take any $\delta \in (0, 1/12)$.\footnote{The $\delta$ for which our result holds for generalized Wigner matrices must also be smaller than ${1}/{12}$ , but there are further restrictions on its value likely making it need to be even smaller, coming from absolute constants implicit in the results of \cite{benigni2022optimal} which we have not tried to make explicit.}
\end{theorem}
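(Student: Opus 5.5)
We follow Chatterjee's superconcentration--chaos route, adapted to discrete resampling. Write $f = \lambda_\alpha$. On matrices with simple spectrum, $f$ is smooth and $\partial_{ij} f(X) = (2 - \One\{i=j\}) v_\alpha(X)_i v_\alpha(X)_j$. For $A \in \sA_k$ drawn uniformly, the block structure gives the variance identity
\[ \Var[f(X)] = \sum_{j=0}^{m-1} \frac{1}{m}\,\EE\bigl[(f(X)-f(X^{B}))(f(X^{A_j})-f(X^{A_j\cup B}))\bigr]\cdot(\text{combinatorial weights}), \]
an Efron--Stein type expansion in which $A_j$ is a random union of $j$ blocks and $B$ is a fresh block. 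The key consequence is that, for each block $B$, the covariance
\[ C_B(k) \colonequals \EE\bigl[\Delta_B f(X)\,\Delta_B f(X^{A})\bigr], \qquad A \sim \Unif(\sA_k), \]
is nonnegative and nonincreasing in $k$. This monotonicity comes from the fact that the covariance is a sum of squares of conditional expectations. As a result, $\sum_{j\ge k} C(j) \le \Var f$ forces
\[ \frac{1}{m}\sum_{B} C_B(k) \lesssim \frac{\Var[f(X)]}{k} . \]
We then use Corollary~\ref{cor: eigenvalue_bound}, which gives $\Var[\lambda_\alpha(X)] \lesssim F(n,\alpha)\,\hat\alpha^{-2/3} n^{-1/3}$.

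Next we relate $C_B(k)$ to the eigenvector overlap. Fix $B$ and perform a first-order Taylor expansion of $\Delta_B f$ along the segment between $X$ and $X^B$:
\[ \Delta_B f(X) \approx \sum_{(i,j)\in B} (X_{ij}-Y_{ij})\, v_\alpha(X)_i v_\alpha(X)_j + R_B(X). \]
Since $X_{ij}-Y_{ij}$ is independent of both $X$ and $X^A$ when $B\cap A=\emptyset$, the main term of $C_B(k)$ becomes
\[ \sum_{(i,j)\in B} \sigma_{ij}^2\, \EE\bigl[v_\alpha(X)_i v_\alpha(X)_j v_\alpha(X^A)_i v_\alpha(X^A)_j\bigr] \]
(times $2$). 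Summing over all blocks of the partition, and using $\sigma_{ij}^2 \ge c_1$ together with the disjointness of the blocks, we need to produce $\EE\langle v_\alpha(X), v_\alpha(X^A)\rangle^2$. The obstacle is that the variance profile is not constant. We handle this by writing $\sigma_{ij}^2 = c_1 + (\sigma_{ij}^2 - c_1)$: the first piece gives $c_1 \EE\langle v_\alpha(X),v_\alpha(X^A)\rangle^2$, while the second piece is a sum of nonnegative terms. To see this, note that in expectation over the randomness each term is a Hadamard-type quadratic form in the PSD matrix $\EE[(v v^\top)\otimes(v' v'^\top)]$, which follows from the symmetric roles of $X$ and $X^A$ under conditioning on the unresampled entries. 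Rather than relying on that, we may instead average over $k$ as above. Either way we obtain
\[ \EE\langle v_\alpha(X),v_\alpha(Y)\rangle^2 \lesssim \frac{m\,\Var f}{k} + \text{error} \asymp \frac{n^2}{\nu k}\cdot\frac{F(n,\alpha)}{\hat\alpha^{2/3}n^{1/3}} + \text{error}. \]
This main term is $o(1)$ exactly under the stated condition on $k\nu$, since $m \asymp n^2/\nu$.

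The main obstacle is controlling the Taylor remainder $R_B$, together with the non-smoothness of $f$. Along the segment from $X$ to $X^B$, we need that the spectrum stays simple, which holds almost surely because the entries have densities, and we need quantitative control of the second derivative. The Hessian of $\lambda_\alpha$ in the direction $E$ is $2\sum_{\beta\ne\alpha} \langle v_\beta, E v_\alpha\rangle^2/(\lambda_\alpha-\lambda_\beta)$. The direction $E$ is supported on $\nu$ entries of size $O(\sqrt{\log n})$, and eigenvector delocalization gives $\|v_\beta\|_\infty \le n^{-1/2+o(1)}$ along the whole segment, which we obtain via a union/stability argument since a rank-$\nu$ perturbation moves eigenvalues by interlacing. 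With these inputs, the remainder is bounded by
\[ \frac{\nu^2 \operatorname{polylog} n}{n^2}\sum_{\beta\ne\alpha}\frac{1}{|\lambda_\alpha-\lambda_\beta|}. \]
For the sum of inverse spacings, Theorem~\ref{thm: sum_of_inverse_spacing} gives the needed bound. The minimal-spacing input is where the generalized Wigner case needs the weaker results of \cite{benigni2022optimal}, forcing small $\delta$. Plugging this bound into $\sum_B \EE|R_B\,\Delta_B f(X^A)|$ and using Cauchy--Schwarz with the main term shows the error is $o(1)$ once $\nu = O(n^\delta)$ with $\delta<1/12$ in the Wigner case. This completes the argument.
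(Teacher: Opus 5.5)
Your architecture matches the paper's: the PDBR variance identity with $T_k$ nonincreasing and $T_k\le \frac{2m}{k+1}\Var(\lambda_\alpha)$, a first-order expansion along resampling paths, uniform delocalization and spacing along those paths, and $\sigma_{ij}^2\ge c_1$ combined with termwise nonnegativity. (The bound on $T_k$ comes from monotonicity together with $\sum_{j\le k}T_j\le 2m\Var(\lambda_\alpha)$, not from a sum over $j\ge k$.) However, the passage from $\sum_B C_B(k)$ to the overlap has a real hole.

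\textbf{Resampled blocks are not controlled.} Write $\EE\langle v_\alpha(X),v_\alpha(X^A)\rangle^2$ as a sum over all blocks $B$ of $\sum_{(i,j)\in B}\EE[\partial_{ij}\lambda_\alpha(X)\,\partial_{ij}\lambda_\alpha(X^A)]$. For the $k$ resampled blocks $B\subseteq A$ you have $\Delta_B\lambda_\alpha(X^A)\equiv 0$, so your main-term identity says nothing about them; it only recovers the entries outside $A$. These missing terms are nonnegative. Delocalization alone bounds them only by about $k\nu n^{-2+o(1)}\approx (k/m)n^{o(1)}$, which is not small for the large $k$ the theorem allows. The paper closes this gap with the conditional-expectation idea you mention, used as a monotonicity rather than only as positivity: $\EE[\partial_{ij}\lambda_\alpha(X)\,\partial_{ij}\lambda_\alpha(X^A)]=\EE\bigl[\EE[\partial_{ij}\lambda_\alpha(X)\mid X_{A^c}]^2\bigr]$ is nonincreasing in $A$. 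So for $B\subseteq A$ the term is at most the same expression with $A\setminus B\in\sA_{k-1,B}$. These are then matched, via the Taylor lemma, with $T_{k-1}$, giving $\frac{1}{c_1}\frac{m+1}{k+1}\Var(\lambda_\alpha)+O(\sR)$. The same monotonicity would alternatively let you reduce to small $k$.

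\textbf{The main-term factorization is unjustified.} $X_{ij}-Y_{ij}$ is not independent of $X$ or $X^A$, since $X_{ij}$ is an entry of both when $B\cap A=\emptyset$. Hence $\EE[(X_{ij}-Y_{ij})(X_{ab}-Y_{ab})\,\partial_{ij}\lambda_\alpha(X)\,\partial_{ab}\lambda_\alpha(X^A)]$ neither factors nor vanishes for $\{i,j\}\neq\{a,b\}$. The paper decouples by introducing $Z,Z^A$, equal to $X,X^A$ off $B$ but with fresh independent entries on $B$, for which the factorization is exact. It then bounds the difference by a mean-value argument along the additional paths $X\to Z$ and $X^A\to Z^A$ (the term $\sR_3$). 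This requires delocalization and spacing along those paths as well. The error is of the same order as your Taylor remainder, but it has to be proved.

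\textbf{Two inputs are misjustified or incomplete.}
\begin{itemize}
\item Interlacing controls eigenvalues, not eigenvector entries, so it cannot give uniform delocalization along a path. The paper instead uses Lipschitz continuity of resolvent diagonal entries at $\Im z=n^{-1+\gamma}$, plus a local law on a polynomial grid.
\item Theorem~\ref{thm: sum_of_inverse_spacing} holds only with probability $1-O(n^{-\delta/2})$. On its complement the remainder must be bounded using only first-derivative (delocalization) bounds and H\"older's inequality. This produces the $\nu^{1+\epsilon}n^{-\delta/4+\epsilon}$ error, which is what limits the admissible block-size exponent.
\end{itemize}
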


When $\mathcal{B} = \sB_{\entries}$, then our model resamples $k$ entries chosen at random (on and above the diagonal, with their symmetric counterparts replaced accordingly), and we obtain the following, which may be viewed as (one side of) the result of \cite{bordenave2020noise} extended to all eigenvectors, answering a question posed in that work, and further extended to generalized Wigner matrices.

 \begin{cor}\label{cor: cor_resample_entries}
    Let $X = X^{(n)} \in \RR^{n \times n}_{\sym}$ be sub-Gaussian generalized Wigner matrices with fixed parameters $(c_1, c_2, K)$ not depending on $n$.
    Let $\alpha = \alpha(n) \in [n]$ and $k = k(n) \in \NN$.
    Suppose that
    \begin{align}
        k \cdot  \frac{\hat{\alpha}^{2/3}  n^{-5/3}}{F(n, \alpha)} \to \infty.
    \end{align}
    Let $Y = Y^{(n)} \sim \Resamp(X; \sB_{\entries}, k)$.
    Then,
    \begin{align}
         \EE \left[ \langle v_\alpha(X), v_\alpha(Y) \rangle^2 \right] = o(1).
    \end{align}
    \end{cor}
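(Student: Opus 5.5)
This follows directly from Theorem~\ref{thm: resample_block_1}, specialized to the partition $\sB_{\entries}$. The plan is to check that $\sB_{\entries}$ is an admissible partition in the sense of Definition~\ref{def:blocks}, that its size parameter satisfies $\nu = O(n^{\delta})$, and that the growth condition on $k$ in the corollary implies the one in the theorem.

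\textbf{Admissibility.} The properties to verify are the following.
\begin{itemize}
\item Each block $\{(i,j),(j,i)\}$ with $i<j$, and each diagonal block $\{(i,i)\}$, is symmetric.
\item The blocks are pairwise disjoint.
\item Their union is $[n]\times[n]$.
\end{itemize}
For the size parameter, Definition~\ref{def: block-size} gives
\[ \nu(\{(i,j),(j,i)\}) = 2 + 0 = 2 \quad (i<j), \qquad \nu(\{(i,i)\}) = 1 + 1 = 2. \]
So every block has the same size parameter $\nu = 2$, which is the reason diagonal entries are double counted in that definition. Since $\nu = 2 = O(n^{\delta})$ for any $\delta>0$, the first hypothesis of Theorem~\ref{thm: resample_block_1} holds, in particular for the absolute constant $\delta$ supplied there.

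\textbf{Growth condition and range of $k$.} With $\nu = 2$, the theorem's second hypothesis reads $2k\,\hat{\alpha}^{2/3} n^{-5/3}/F(n,\alpha) = \omega(1)$. This is implied by the corollary's assumption $k\,\hat{\alpha}^{2/3}n^{-5/3}/F(n,\alpha)\to\infty$.

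The theorem also requires $k \in [|\sB|]$, where $|\sB_{\entries}| = n(n+1)/2$. For large $n$ the condition forces $k \ge 1$. If $k$ exceeds $n(n+1)/2$, the resampling law should be read with $k$ capped at $n(n+1)/2$, meaning all entries are resampled. The growth condition still holds after capping, because
\[ \tfrac{n(n+1)}{2}\,\hat{\alpha}^{2/3}n^{-5/3}/F(n,\alpha) \;\ge\; n^{1/3}/(2F(n,\alpha)) \to \infty, \]
using $\hat{\alpha}\ge1$ and $F(n,\alpha) = n^{o(1)}$. In that capped case the conclusion is also immediate on its own, since $Y$ is then independent of $X$ and the squared overlap has expectation $O(n^{-1+o(1)})$ by delocalization.

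\textbf{Conclusion.} Applying Theorem~\ref{thm: resample_block_1} with $Y\sim\Resamp(X;\sB_{\entries},k)$ gives $\EE[\langle v_\alpha(X),v_\alpha(Y)\rangle^2] = o(1)$. There is no real obstacle here; the only point needing care is the bookkeeping for the diagonal blocks, so that all blocks have equal size parameter and admissibility holds.
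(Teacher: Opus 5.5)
Your proposal is correct and takes the same approach as the paper, which obtains this corollary by specializing Theorem~\ref{thm: resample_block_1} to $\sB_{\entries}$. There every block, including the diagonal ones thanks to the double counting in $\nu(B)$, has size parameter $\nu = 2$. One small caveat: your aside that the capped case follows ``by delocalization'' is not justified by the $\ell^\infty$ delocalization used in the paper, since two perfectly flat unit vectors such as $n^{-1/2}(1,\dots,1)$ are fully delocalized yet aligned, so one would need isotropic delocalization; the aside is unnecessary anyway, because applying the theorem with the capped $k$ already covers that case.
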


\subsection{Proof techniques}\label{sec: proof_technique}

In general, Chatterjee's superconcentration--chaos theory relies on \emph{variance identities} satisfied by various Markov processes.
For instance, as we will see in Lemma~\ref{lem: var_identity_OU}, when $G(t) \sim \OU(1)$, then we have, for $f: \RR^{n \times n}_{\sym} \to \RR$ suitably regular,
\[ \Var (f(G)) = \int_0^\infty {e^{-t}}\, \EE[  \langle  \nabla f(G(0)), \nabla f (G(t)) \rangle ]\,  dt. \]
While Chatterjee analyzes the top eigenvector by appealing to some general corollaries for $f$ given by a supremum of linear forms, we observe instead that, provided one treats non-smoothness carefully, one may simply take $f = \lambda_{\alpha}$ in the above.
We then have $\nabla f(X) = v_{\alpha}(X) v_{\alpha}(X)^{\top}$ whenever $\lambda_{\alpha}(X)$ is a simple eigenvalue, which occurs almost surely for all eigenvalues for $X \sim \GOE(n)$.
Theorem~\ref{thm: OU_process} then follows from some elementary manipulations of the resulting identity, together with properties of the Dirichlet form of the OU process appearing in the integrand on the right-hand side and known superconcentration bounds on $\Var(\lambda_{\alpha}(G))$.

Chatterjee proposed to treat resampling dynamics (his \emph{independent flips} Markov process) by replacing the OU process above with a resampling process driven by Poisson clocks, as sketched earlier.
These processes yield analogous variance identities as well, but where the right-hand side above is replaced by an expression involving finite differences of $f$.
For smooth $f$, one may approximate these by derivatives: by a Taylor expansion,
\begin{equation}
f(Y) - f(X) \approx \langle Y - X, \nabla f(X) \rangle. \label{eq:fin-diff-approx}
\end{equation}
Chatterjee's implementation of this idea may \emph{almost} be applied directly to our setting: by \cite[Theorem 7.7]{chatterjeeSuperconcentration}, given a $\sC^2$ function $f: \mathbb{R}^{n \times n}_{\mathrm{sym}} \to \mathbb{R}$, we have the bound 
\begin{align}\label{equ: Chatterjee_equ_1}
    \mathbb{E}\langle \nabla f(X), \nabla f(X^A) \rangle \leq
\frac{{n^2}}{k}\operatorname{Var}(f(X))
+
O\left(n^2 \sup_{X \in \RR^{n \times n}_{\sym}} \|\nabla f(X)\|_{\ell^{\infty}} \sup_{X \in \RR^{n \times n}_{\sym}} \|\nabla^2 f(X)\|_{\ell^{\infty}}\right),
\end{align}
derived by the argument sketched above.
Here the notations $X$ and $X^A$ are the same as in Theorem~\ref{thm: resample_block_1} for the case $\mathcal{B} = \sB_{\entries}$.

The issue is that $f = \lambda_{\alpha}$ is not $\sC^2$, failing to be differentiable at matrices with repeated eigenvalues.
Further, its second derivatives depend on the eigenvalue spacings, and diverge near these singular points (the first derivatives are bounded where they exist, but their worst-case bound coming from the trivial $\|v_{\alpha}(X)\| \leq 1$ also dramatically exceeds the typical delocalized behavior $\|v_{\alpha}(X)\| \leq n^{-1/2 + o(1)}$, which leads to a more quantitative version of the same issue).
Thus, even with various smoothing tricks, the above kind of bound cannot be used in a black-box fashion for our purposes without further understanding of how to work around the non-smoothness of these spectral functions, in particular controlling some notion of the distance between the points where we seek to evaluate $\lambda_{\alpha}$ and its non-smooth points, the matrices with repeated eigenvalues.

To do this, we ``unpack'' Chatterjee's proof of the above bound and carry out a more delicate version with attention to these new quantitative issues.
Specifically, we must understand the finite differences $f(Y) - f(X)$ mentioned above, where $X$ and $Y$ differ on individual entries (for the entrywise resampling setting of \cite{bordenave2020noise}) or blocks of a small number of entries (for our dependent resampling schemes).
Suppose that they differ on at most $\nu$ entries; this will coincide with the size parameter of the partition $\sB$ of the matrix entries into blocks when we study block resampling.

We study these differences by considering a \emph{resampling path},
\[ X(s) \colonequals (1 - s)X + sY. \]
As we will see, general algebraic-geometric considerations show that $\lambda_{\alpha}$ is almost surely smooth along any such path for $X$ and $Y$ arising in our analysis (following from the assumption in Definition~\ref{def:gen-wigner} that the entries of $X$ have a density).
Thus, we are justified in carrying out the approximation in \eqref{eq:fin-diff-approx} by a Taylor expansion,
\begin{align*}
    f(Y) - f(X)
    &= f(X(1)) - f(X(0)) \\
    &= \langle Y - X, \nabla f(X) \rangle + O\left( \nu^2 \cdot \|Y - X\|_{\ell^{\infty}}^2 \cdot \sup_{s \in [0, 1]} \|\nabla^2 f(X(s))\|_{\ell^{\infty}}\right).
\end{align*}
In our version of the argument, the worst-case bounds on derivatives of $f = \lambda_{\alpha}$ from Chatterjee's approach (which are intractably large) will become bounds on the typical values of maximum of derivatives along resampling paths.
These derivatives involve two natural spectral quantities of a matrix: the entries of the eigenvectors and the spacing of the eigenvalues.
So, the technical task our method finally boils down to is to control eigenvector delocalization and eigenvalue spacing \emph{uniformly along resampling paths}.
After establishing these tools, our results follow in similar spirit to the simpler analysis of GOE matrices (Theorem~\ref{thm: OU_process}) above from superconcentration inequalities for the eigenvalues.

Ultimately, our analysis depends only on quite standard information about the underlying random matrix $X$ and the associated resampling paths.
Namely, we only use the following properties:
\begin{enumerate}
    \item Bounds on the eigenvalue variance $\Var(\lambda_{\alpha}(X))$.
    \item Uniform delocalization of all eigenvectors along resampling paths.
    \item Uniformly large spacing of consecutive eigenvalues along resampling paths.
\end{enumerate}
Aside from the special role of resampling paths, these are well-studied properties of random matrices that are believed to enjoy strong universality over large classes of matrix ensembles.
So, we believe our method should likewise generalize straightforwardly, modulo technical challenges, to establish noise sensitivity for various other random matrix distributions.
In our case, we establish Property~1 through the rigidity estimates of \cite{erdHos2012rigidity}, Property~2 from local laws proved by \cite{ajanki2017universality}, and Property~3 from Property 2 together with spacing estimates proved by \cite{benigni2022optimal}.

Perhaps one natural next step for future work would be to establish that the same ingredient results still hold and can be combined in the same way when $X$ is a generalized Wigner matrix without the normalization condition $\sum_{j} \sigma_{ij}^2 = n$, in which case its empirical spectral distribution need not be close to the semicircle law.

\paragraph{Comparison with previous techniques}

While the work of Chatterjee in \cite{chatterjeeSuperconcentration} develops the general methodology using variance identities that we will rely on, its approach to treating the specific function $f(X) = \lambda_1(X)$ is to view it as a maximum, $f(X) = \max_{\|v\| = 1} v^{\top}Xv = \max_{\|v\| = 1} \langle X, vv^{\top}\rangle$.
Various special tools are developed there for the superconcentration--chaos equivalence for such $f$.
Unfortunately, for reasoning about eigenvalues of matrices, this approach seems restricted to the maximum (or minimum) eigenvalues.
As we have mentioned, that work also studies discrete resampling for smooth functions, and our approach essentially adapts those results to non-smooth spectral functions.

On the other hand, \cite{bordenave2020noise,bordenave2022noise,lee2020noise,wang2022resampling} focus on discrete resampling and work with variance identities specific to that setting, similar to analogous results of \cite{chatterjeeSuperconcentration} for the ``independent flips'' process though without explicitly relating resampling to a Markov process (see, e.g., Lemmas 1, 2, and 3 in \cite[Section 2]{bordenave2020noise}).
Essentially, their proof of their analog of our Corollary~\ref{cor: cor_resample_entries} amounts also to showing that the expression arising in a discrete variance identity is related to the gradient of $f(X) = \lambda_1(X)$, which is (at points where $f$ is smooth) $\nabla f(X) = v_1(X)v_1(X)^{\top}$.
However, this relationship remains only implicit in their calculations, making it difficult to generalize the same approach to other eigenvalues.

\begin{remark}
    We emphasize that the above works \cite{bordenave2020noise,bordenave2022noise,lee2020noise,wang2022resampling} all also prove complementary results (in their respective settings) stating that if the number $k$ of entries resampled is sufficiently small, then the resulting top eigenvectors are nearly perfectly aligned.
    Our methods do not seem well-suited to establishing such claims, and we leave to future work the generalization of those results to all eigenvectors, generalized Wigner matrices, and block resampling schemes.
\end{remark}

\subsection{Organization}

The rest of the paper is organized as follows. In Section~\ref{sec: OU_proof}, we prove Theorem~\ref{thm: OU_process}. Sections~\ref{sec: taylor_expansion}, \ref{sec: eigenvector_delocal}, and~\ref{sec: eigenvalue_spacing} prove various preliminaries about eigenvalues along resampling paths:  Section~\ref{sec: taylor_expansion} proves the almost sure smoothness of each eigenvalue along such a path and carries out the corresponding Taylor expansion; Section~\ref{sec: eigenvector_delocal} bounds the first derivatives of eigenvalues along resampling paths, which reduces to establishing uniform eigenvector delocalization along these paths; and Section~\ref{sec: eigenvalue_spacing} bounds the second derivatives along paths, which reduces to establishing uniform eigenvalue spacing bounds. Lastly, using these tools, in Section~\ref{sec: resampling_dynamics} we prove Theorems~\ref{thm: main_POU} and~\ref{thm: resample_block_1} about discrete resampling dynamics.

\section{Preliminaries}

\subsection{Notation} 
\label{sec: notation}

We write $[n] \colonequals \{1, \dots, n\}$ and $\RR_+ \colonequals \{x \in \RR: x \geq 0\}$.
For a vector of non-negative integers $K \in \ZZ_{\geq 0}^m$, we denote
\begin{align*}
    |K| &\colonequals \sum_{i = 1}^m K_i, \\
    K! &\colonequals \prod_{i = 1}^m K_i!.
\end{align*}

We use $\RR^{n \times n}_{\sym}$ to denote the space of real symmetric matrices of size $n \times n$.
For $X \in \RR^{n \times n}_{\sym}$, we write $\lambda_1(X) \geq \cdots \geq \lambda_n(X)$ for its ordered eigenvalues, $\lambda(X) = (\lambda_1(X), \dots, \lambda_n(X))$ for the vector of these values, and $v_{\alpha}(X)$ for the eigenvector associated to $\lambda_{\alpha}(X)$, provided this eigenvalue is simple.
Whenever we refer to eigenvectors, we always assume they are unit vectors.
We use ordinary letters ($i, j$) for the indices of matrix entries, and Greek letters ($\alpha, \beta$) for the indices of eigenvalues and eigenvectors of matrices.
For such a ``spectral index,'' we denote
\[ \hat{\alpha} \colonequals \min \{ \alpha,  n-\alpha+1 \}, \]
and similarly for $\beta$ when it is used in this way.

We define the following additional parameters, which will play important roles in our arguments:
\begin{align}\label{equ: S_alpha}
    S_\alpha(X) &\coloneqq \sum_{\beta =1,  \beta \neq \alpha}^n \frac{1}{|\lambda_\alpha(X) - \lambda_\beta(X)|}, \\
    M(X) &\coloneqq  \max_{\beta \in [n]}  \| v_\beta(X) \|_\infty, \label{equ: M} \\
    \Delta_1(X) &\coloneqq \lambda_1(X) - \lambda_2(X), \label{equ: Delta_1} \\
    \Delta_\alpha(X) &\coloneqq \min \{  \lambda_\alpha(X) - \lambda_{\alpha+1}(X),  \lambda_{\alpha-1}(X) - \lambda_{\alpha}(X)\} \text{ for } \alpha \in \{2, \dots, n - 1\}, \label{equ: Delta_alpha}\\
    \Delta_n(X) &\coloneqq \lambda_{n - 1}(X) - \lambda_n(X).\label{equ: Delta_n}
\end{align}
We will also use the operator, Frobenius, entrywise $\ell^{\infty}$ norms, and $\ell^0$ pseudo-norm of a matrix, defined respectively as
\begin{align}
\|X\| &= \|X\|_{\mathrm{op}} = \max_{\alpha  = 1}^n|\lambda_{\alpha}(X)|, \\
\|X\|_F &= \left(\sum_{\alpha = 1}^n \lambda_{\alpha}(X)^2\right)^{1/2} = \left(\sum_{i, j = 1}^n X_{ij}^2\right)^{1/2}, \\
\|X\|_{\ell^{\infty}} &= \max_{i, j = 1}^n |X_{ij}|, \\
\|X\|_0 &= \left|\{(i,j) \in [n] \times [n]: X_{ij}\neq 0\} \right|.
\end{align}

\subsection{Markov processes and semigroups}

We begin by introducing the standard basic definitions of Markov processes and semigroups, as covered in any standard reference such as \cite{Kallenberg-1997-FoundationsModernProbability} as well as the notations we use for these objects.

Let $(\sX, \sF)$ be a measurable space equipped with a probability measure $\mu$.
For $f: \sX \to \RR$ measurable, we denote
\[ \mu(f) \colonequals \int fd\mu. \]

A \emph{Markov process} $(X(t))_{t \geq 0}$ on the \emph{state space} $\sX$ is a random process that satisfies the Markov property that for every bounded measurable function $f: \sX \to \RR$, and $t,s \geq 0$, there is a bounded measurable function $P_sf$ such that \begin{align}
    \EE[  f (X(t+s) ) \mid ( X(r) )_{0 \leq r \leq t}  ] = (P_sf) (X(t) ),
\end{align}
where we call the family of operators $(P_s)_{s \geq 0}$ the associated \emph{Markov semigroup}. 
We define the \emph{generator} $\sL$ of the process as 
\begin{align}
    \sL f \colonequals \lim_{t \downarrow 0} \frac{P_tf - f}{t},
\end{align}
for every $f \in L^2(\mu)$ that has the above limit existing in $L^2(\mu)$.
We write $\Dom(\sL) \subseteq L^2(\mu)$ for this set, the \emph{domain} of the generator.
We say that $\mu$ is \emph{stationary} for the process if, for all $t \geq 0$ and $f$ bounded and measurable, $\mu(P_tf) = \mu(f)$.
We denote the inner product associated to $\mu$ as
\[ \langle f, g \rangle_{\mu} \colonequals \int fg\,d\mu. \]

The following are some additional properties of a Markov process or its semigroup that will be useful to us.

\begin{definition}[Reversibility]\label{def: reversible}
    A Markov semigroup $(P_s)_{s \geq 0}$ with stationary measure $\mu$ is called \emph{reversible} if, for all $f, g \in L^2(\mu)$,
    \begin{align}
    \langle f, P_t  g\rangle_\mu = \langle P_t  f, g\rangle_\mu,
\end{align}
    i.e., if each $P_t$ is self-adjoint as an operator on $L^2(\mu)$.
\end{definition}

\begin{definition}[Ergodicity]\label{def: ergofic}
    A Markov semigroup $(P_s)_{s \geq 0}$ is called \emph{ergodic} if, for every $f \in L^2(\mu)$, $P_t f \to \mu(f)$ as $t \to \infty$, with convergence in the norm of $L^2(\mu)$.
\end{definition}

\begin{definition}[Dirichlet form]
    The \emph{Dirichlet form} of a Markov process with generator $\sL$ and stationary measure $\mu$ is the bilinear operator
    \[ D(f, g) \colonequals -\langle f, \sL g \rangle_{\mu}, \]
    defined for all $f, g \in \Dom(\sL)$.
\end{definition}

The following simple identity is at the heart of Chatterjee's theory as developed in \cite{chatterjeeSuperconcentration} and related works.
\begin{lemma}[Covariance identity, Lemma 2.1 of \cite{chatterjeeSuperconcentration}]\label{lem: cov_identity}
Let $P_t$ be a reversible ergodic Markov semigroup with stationary measure $\mu$ and generator $\sL$. Let $f,g \in \Dom(\sL)$.
Then,
\begin{align}
    \mathrm{Cov}_{\mu}(f, g) = \langle f - \mu(f), g - \mu(g) \rangle_{\mu} =  \int_0^{\infty} D(f, P_t g)\, dt,
\end{align}
provided that the derivative can be moved inside the integral when differentiating $D(f, P_t g)$ with respect to $t$, and the ``heat equation'' $\frac{d}{dt}P_tg = \sL P_t g$ holds.
\end{lemma}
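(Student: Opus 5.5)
We prove the covariance identity by a semigroup interpolation argument, differentiating the quantity $\langle f, P_t g\rangle_\mu$ in $t$ and integrating from $0$ to $\infty$. Define
\[ \phi(t) \colonequals \langle f, P_t g \rangle_\mu \quad \text{for } t \geq 0. \]
At $t=0$ we have $\phi(0) = \langle f, g\rangle_\mu$, since $P_0$ is the identity. As $t \to \infty$, ergodicity gives $P_t g \to \mu(g)$ in $L^2(\mu)$. By Cauchy--Schwarz this implies $\phi(t) \to \langle f, \mu(g)\rangle_\mu = \mu(f)\mu(g)$. Hence
\[ \phi(0) - \lim_{t\to\infty}\phi(t) = \mu(fg) - \mu(f)\mu(g) = \mathrm{Cov}_\mu(f,g) = \langle f-\mu(f), g-\mu(g)\rangle_\mu, \]
where the last equality is the routine expansion of the centered inner product, using that $\mu$ is a probability measure.

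Next we compute $\phi'(t)$. The heat equation $\frac{d}{dt}P_t g = \sL P_t g$ holds (an assumption of the lemma), and the derivative may be moved inside the inner product (also assumed). Together these give
\[ \phi'(t) = \langle f, \sL P_t g\rangle_\mu = -D(f, P_t g). \]
This requires $P_t g \in \Dom(\sL)$. That follows from the standard fact that $P_t$ commutes with $\sL$ on $\Dom(\sL)$, so $P_t g \in \Dom(\sL)$ whenever $g$ is. Equivalently, one can read it directly off the existence of the derivative $\frac{d}{dt}P_t g = \lim_{h \downarrow 0} (P_h - I)P_t g/h$.

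By the fundamental theorem of calculus on $[0,T]$,
\[ \phi(0) - \phi(T) = \int_0^T D(f, P_t g)\,dt. \]
Letting $T \to \infty$, the left side converges to $\mathrm{Cov}_\mu(f,g)$ by the first paragraph. Therefore the improper integral $\int_0^\infty D(f,P_tg)\,dt$ exists as a limit and equals the covariance, which is the claim.

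Reversibility is not strictly needed for this argument as written. It becomes relevant if one wants to write $D(f,P_tg) = -\langle P_{t/2} f, \sL P_{t/2} g\rangle_\mu$, which gives a symmetric form with a nonnegative integrand when $f = g$. It also justifies that the Dirichlet form is symmetric, so the order of arguments is immaterial. The only delicate step is the interchange of the derivative and the inner product together with the domain issue for $P_t g$. Both are handled by the hypotheses of the lemma, so no genuine obstacle remains beyond carefully citing those assumptions.
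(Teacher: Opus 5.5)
Your proposal is correct and is essentially the standard argument: the paper does not reprove this lemma but cites Chatterjee, whose proof is exactly the interpolation $\mathrm{Cov}_\mu(f,g) = -\int_0^\infty \frac{d}{dt}\langle f, P_t g\rangle_\mu\,dt$, using ergodicity for the limit at $t=\infty$ and the heat equation to identify the derivative as $-D(f,P_tg)$. Your remark that reversibility is not needed for the identity itself is also right, since it matters for the symmetry, nonnegativity and monotonicity statements such as Lemma~\ref{lem: dirichlet_mon}. The fundamental theorem of calculus step on $[0,T]$ is justified because $|D(f,P_tg)| = |\langle f, P_t\sL g\rangle_\mu| \le \|f\|_{L^2(\mu)}\|\sL g\|_{L^2(\mu)}$, by the commutation you invoke together with contractivity of $P_t$ on $L^2(\mu)$.
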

\noindent
The two technical conditions at the end will hold in all of the examples we work with.
We will also use the following accompanying results on the behavior of the integrand:
\begin{lemma}
    \label{lem: dirichlet_mon}
    In the setting of Lemma~\ref{lem: cov_identity}, we have that $D(f, P_t f)$ is a non-negative, non-increasing function of $t \geq 0$.
\end{lemma}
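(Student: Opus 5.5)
We work spectrally with the self-adjoint operator $\sL$ on $L^2(\mu)$, using reversibility and ergodicity. The key observation is that
\[
D(f, P_t f) = -\langle f, \sL P_t f\rangle_\mu = -\langle P_{t/2} f, \sL P_{t/2} f\rangle_\mu,
\]
which holds because $P_t = P_{t/2}P_{t/2}$, each $P_{t/2}$ is self-adjoint, and $P_{t/2}$ commutes with $\sL$. Non-negativity then reduces to showing $-\langle g, \sL g\rangle_\mu \geq 0$ for $g \in \Dom(\sL)$. We get this directly from contractivity. Since $\mu$ is stationary, $P_s$ is a contraction on $L^2(\mu)$: Jensen gives $(P_s g)^2 \leq P_s(g^2)$, so $\|P_s g\|^2_\mu \leq \mu(P_s g^2) = \|g\|_\mu^2$. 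Then by Cauchy--Schwarz, $\langle g, P_s g\rangle_\mu \leq \|g\|_\mu^2$. Dividing $\langle g, P_s g - g\rangle_\mu \leq 0$ by $s$ and letting $s \downarrow 0$ yields $\langle g, \sL g\rangle_\mu \leq 0$.

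For monotonicity we use the spectral theorem. $\sL$ is the generator of a symmetric contraction semigroup, so it is self-adjoint and non-positive, with a spectral resolution $-\sL = \int_{[0,\infty)} \lambda \, dE(\lambda)$ and $P_t = \int e^{-\lambda t}\, dE(\lambda)$. Then
\[
D(f, P_t f) = \int_{[0,\infty)} \lambda e^{-\lambda t} \, d\langle E(\lambda) f, f\rangle_\mu.
\]
The measure $d\langle E f, f\rangle$ is non-negative and each integrand $\lambda e^{-\lambda t}$ is non-negative and non-increasing in $t$, so $D(f,P_tf)$ is non-negative and non-increasing in $t$; monotone convergence handles the limits.

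If we want to avoid the full spectral theorem, there is a more elementary route under the differentiation hypotheses already assumed in Lemma~\ref{lem: cov_identity}. Write $h(t) = D(f, P_t f) = -\langle P_{t/2}f, \sL P_{t/2} f\rangle_\mu$ and differentiate using the heat equation. Self-adjointness gives
\[
h'(t) = -\|\sL P_{t/2} f\|_\mu^2 \leq 0.
\]
This requires $P_{t/2} f \in \Dom(\sL^2)$, which holds for $t > 0$ in our examples and generally by analyticity of symmetric semigroups.

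The main obstacle is purely technical: justifying the domain issues, namely that $\sL$ is self-adjoint (not just symmetric) and that $P_t$ maps into $\Dom(\sL)$. We handle both by citing the standard fact that a strongly continuous symmetric Markov semigroup has a self-adjoint non-positive generator. Using the spectral representation sidesteps everything else.
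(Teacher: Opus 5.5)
Your proposal is correct, and your ``more elementary route'' is exactly the paper's proof. The paper writes $D(f,P_tf) = -\langle P_{t/2}f, \sL P_{t/2}f\rangle_\mu = D(P_{t/2}f,P_{t/2}f)$ using $P_t = P_{t/2}^2$, reversibility, and commutation of $\sL$ with $P_{t/2}$. It then simply cites ``positivity of the Dirichlet form'' for non-negativity, and differentiates to get $\frac{d}{dt}D(f,P_tf) = -\|\sL P_{t/2}f\|^2 \le 0$.

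Where you differ is in your main argument:
\begin{itemize}
\item You actually prove $-\langle g,\sL g\rangle_\mu \ge 0$ from the $L^2(\mu)$-contractivity of $P_s$ (Jensen plus stationarity), a step the paper takes for granted. This part does not even need reversibility.
\item Your spectral representation $D(f,P_tf)=\int_{[0,\infty)}\lambda e^{-\lambda t}\,d\langle E(\lambda)f,f\rangle_\mu$ gives non-negativity and monotonicity at once, from the pointwise behavior of $\lambda e^{-\lambda t}$, with no differentiation in $t$.
\end{itemize}

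The spectral route buys you something concrete. It avoids needing $P_{t/2}f\in\Dom(\sL^2)$ and avoids differentiating $\langle P_sf,\sL P_sf\rangle_\mu$. The paper's computation uses both without comment, through its term $\langle P_sf,\sL^2P_sf\rangle_\mu$.

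The cost is a heavier black box. You need $\sL$ to be genuinely self-adjoint, which requires strong continuity of $(P_t)$ on $L^2(\mu)$ plus the spectral theorem. The paper's calculation formally uses only the symmetry of $\sL$ and the heat equation already assumed in Lemma~\ref{lem: cov_identity}.

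Either route is fine for the OU, PDBOU and PDBR processes, where strong continuity holds. Ergodicity, which you mention at the outset, is not actually used by either argument.
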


\noindent
For the sake of completeness, we give a proof of this standard ``energy dissipation'' property in Appendix~\ref{sec: variance_identity}.

As in Chatterjee's method, our proof technique will revolve around using Lemma~\ref{lem: cov_identity} to give a formula for $\Var_{\mu}(f)$ (taking $f = g$ in the Lemma) and relating the right-hand side to derivatives of $f$.
We next describe how the resulting \emph{variance identities} look for the three processes featuring in our main results.

\subsection{Variance identities}
While the above discussion was completely general, now we focus on the specific state space $\sX = \RR^{n \times n}_{\sym}$ that will apply to our examples.

\subsubsection{Ornstein-Uhlenbeck process}\label{prel: OU}

From Lemma~\ref{lem: cov_identity}, we now derive the variance identity for the OU process from Definition~\ref{def: OU_process}.
We have that $\mu = \GOE(n)$ is the stationary distribution of such a process by construction.

Lemma 3.3 and Lemma 3.5 together in \cite{chatterjee2008chaos} state essentially the same variance identity, and we borrow these results which give a simpler collection of assumptions on $f$ than would result from directly applying the general Lemma~\ref{lem: cov_identity}.
This version in particular will allow us to directly apply the result to $f = \lambda_{\alpha}$ later.

\begin{lemma}[Variance identity for OU process]\label{lem: var_identity_OU}
Let $G(t) \sim \OU(n, \tau)$. For any absolutely continuous function $f: \RR^{n \times n}_{\sym} \to \RR $ with gradient $\nabla f$ defined almost everywhere and  $\EE [\|\nabla f(G(0))\|^2] < \infty$, the Dirichlet form is given by
\begin{align}
        D (f, P_tf) =  \Ex_{G \sim \mu} \langle  \nabla f(G),  \nabla P_tf (G) \rangle,
    \end{align}
    which is a non-negative, non-increasing function of $t$.
    Furthermore, 
    \begin{align}
        \Var (f(G)) = \int_0^\infty {e^{-\tau  t}}   \Ex_{G(t) \sim \OU(n, \tau)} \langle  \nabla f(G(0)), \nabla f (G(t)) \rangle \,dt.
    \end{align}
\end{lemma}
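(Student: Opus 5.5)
The plan is to derive both claims from the general covariance identity (Lemma~\ref{lem: cov_identity}) and the energy-dissipation property (Lemma~\ref{lem: dirichlet_mon}). For this we need an explicit description of the OU semigroup on $\RR^{n\times n}_{\sym}$ with stationary law $\mu=\GOE(n)$. Mehler's formula gives it directly: from Definition~\ref{def: OU_process}, $G(t)\overset{d}{=} e^{-\tau t}G + \sqrt{1-e^{-2\tau t}}\,G'$ with $G'$ an independent copy, so
\[ P_t f(x) = \EE\bigl[f\bigl(e^{-\tau t}x + \sqrt{1-e^{-2\tau t}}\,G'\bigr)\bigr]. \]
Reversibility and ergodicity of this semigroup then follow from Gaussian computations. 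The pair $(G(0),G(t))$ is jointly Gaussian and exchangeable, which gives self-adjointness. Convergence $P_tf\to\mu(f)$ in $L^2(\mu)$ follows from the Hermite expansion, or from dominated convergence for bounded continuous $f$ plus density.

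Next I would identify the generator. Writing the variance of entry $(i,j)$ as $c_{ij}=1+\One\{i=j\}$, the generator is
\[ \sL f = \tau\Bigl(\sum_{i\le j} c_{ij}\,\partial_{ij}^2 f - \sum_{i \le j} x_{ij}\partial_{ij} f\Bigr). \]
Gaussian integration by parts then gives $-\langle f,\sL g\rangle_\mu = \tau\,\EE\sum_{i\le j} c_{ij}\,\partial_{ij}f\,\partial_{ij}g$. With the matrix inner product convention (off-diagonal pairs counted twice), this equals $\langle\nabla f,\nabla g\rangle$ up to the normalization of $\tau$ and of the inner product. I would fix conventions so the constant comes out as stated. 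This gives the Dirichlet form formula for smooth $f$ with $g=P_tf$, which is smooth for $t>0$.

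To pass to absolutely continuous $f$ with $\EE\|\nabla f\|^2<\infty$, I would mollify: take smooth $f_\epsilon\to f$ with $\nabla f_\epsilon\to\nabla f$ in $L^2(\mu)$. Both sides of each identity are continuous in this topology. On the right, Mehler's formula gives $\nabla P_t f = e^{-\tau t}P_t(\nabla f)$ by differentiating under the expectation, and $P_t$ is an $L^2$ contraction. On the left, the Gaussian Poincaré inequality controls the variance by $\EE\|\nabla f\|^2$. The gradient-commutation formula also yields
\[ D(f,P_tf)=\tau e^{-\tau t}\,\EE\langle\nabla f(G(0)),\nabla f(G(t))\rangle. \]
Substituting this into Lemma~\ref{lem: cov_identity} with $g=f$ and changing variables gives the variance identity; the exact factor again depends on the normalization conventions. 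Non-negativity and monotonicity come from Lemma~\ref{lem: dirichlet_mon} for smooth $f$ and pass to the limit.

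The main obstacle is this approximation step, not the algebra. Lemma~\ref{lem: cov_identity} requires $f\in\Dom(\sL)$ and some interchange of limits, while a function like $\lambda_\alpha$ is only Lipschitz. So I would verify the identity for $f_\epsilon\in C_c^\infty$ and then rely on the $L^2$-continuity of both sides established above. The technical conditions on $\frac{d}{dt}P_tg=\sL P_tg$ hold for smooth compactly supported functions by the explicit Mehler kernel.
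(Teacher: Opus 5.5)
Your route is the paper's: Mehler's formula, the commutation $\nabla P_t f=e^{-\tau t}P_t\nabla f$, substitution into Lemma~\ref{lem: cov_identity}, and Lemma~\ref{lem: dirichlet_mon} for sign and monotonicity. Your explicit generator/integration-by-parts computation and your $C_c^\infty$ density argument (Gaussian Poincaré plus $L^2$-contractivity of $P_t$) are sound. They supply the justification that the paper only delegates to its citation of Chatterjee.

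The step to change is your promise to ``fix conventions so the constant comes out as stated.'' Your formula $D(f,g)=\tau\,\EE\sum_{i\le j}c_{ij}\,\partial_{ij}f\,\partial_{ij}g$ is correct, and it shows that no $\tau$-independent pairing makes the displays hold for all $\tau$. Indeed, $\Var f(G)$ does not depend on $\tau$, while for linear $f$ the stated right-hand side is $\|\nabla f\|^2/\tau$.

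The weighted sum is also not a rescaling of the Frobenius pairing of the matrix of coordinate partials with off-diagonal entries doubled. That pairing weights off-diagonal terms by $2$, whereas $c_{ij}$ weights diagonal terms by $2$. The correct identification is $\sum_{i\le j}c_{ij}\,\partial_{ij}f\,\partial_{ij}g=2\langle\nabla_F f,\nabla_F g\rangle_F$, where $\nabla_F$ is the Frobenius gradient. This is the convention for which $\nabla_F\lambda_\alpha=v_\alpha v_\alpha^\top$, as used in the proof of Theorem~\ref{thm: OU_process}. What your argument actually proves is therefore
\[ D(f,P_tf)=2\tau\,\EE\langle\nabla_F f(G),\nabla_F P_tf(G)\rangle_F,\qquad \Var(f(G))=2\tau\int_0^\infty e^{-\tau t}\,\EE\langle\nabla_F f(G(0)),\nabla_F f(G(t))\rangle_F\,dt. \]
You can confirm this with $f=\mathrm{tr}$, whose variance is $2n$.

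State this corrected form instead of hedging. The paper's proof has the same slip, since it imports the rate-one, unit-variance scalar Dirichlet form. The slip is harmless downstream: Theorem~\ref{thm: OU_process} uses $\tau=1$, and the extra factor $2$ only improves its bound.
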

\noindent
We give a proof in Appendix~\ref{sec: variance_identity}. 

\subsubsection{Poisson-driven block Ornstein-Uhlenbeck process}\label{prel: POU}

We now give the same derivation for the PDBOU process from Definition~\ref{def: PDBOU}.
In this case we have that $\mu = \GOE(n)$ is again the stationary distribution.

Actually, we may derive a variance formula for both of our notions of block resampling without the full structure of an admissible partition (Definition~\ref{def:blocks}).
Instead, we only make the following assumption:
\begin{definition}
    We call $\sB = \{B_1, \dots, B_m\}$ a \emph{covering} of $[n] \times [n]$ if the following properties hold:
    \begin{enumerate}
        \item Each $B_a \in \sB$ is \emph{symmetric}, in the same sense as in Definition~\ref{def:blocks}.
        \item $B_1 \cup \cdots \cup B_m = [n] \times [n]$.
    \end{enumerate}
\end{definition}
\noindent
In particular, we do not make assumptions about the sizes of the $B_i$ or assume that they are disjoint.
Let $\widetilde{G}(t) \sim \PDBOU(n, \sB, \eta, \tau)$.

Instead of analyzing the process through the continuous-time description of Definition~\ref{def: PDBOU}, we approach the variance identity from a combinatorial direction by conditioning on the set of blocks that have been resampled, which can be treated naturally in terms of the Poisson counting processes associated to the Poisson clocks involved.

Accordingly, we adapt the notation from Definition~\ref{def: PDBOU} to keep track of all of these processes together in a vector.
For any time $t$, we record the collection of ring counts of each clock as a vector $K(t) = (K_B(t))_{B \in \sB} \in \ZZ_{\geq 0}^{\sB}$. 
We use the letter $K$, without the time parameter, as a specific possible outcome of this vector, $K \in \ZZ_{\geq 0}^{\sB}$.
Finally, we denote by $e_B \in \ZZ_{\geq 0}^{\sB}$ the vectors of the standard basis of this space indexed by $\sB$, i.e., having $(e_B)_B = 1$ and $(e_B)_{B^{\prime}} = 0$ for all $B^{\prime} \neq B$.
We also define 
\begin{align}\label{equ: bar_K}
    \Bar{K}_{ij} \coloneqq \sum_{B \in \sB: \, (i,j) \in B}K_{B},
\end{align}
defining a matrix $\Bar{K} \in \ZZ^{n \times n}_{\sym}$ associated to a $K \in \ZZ^{\sB}$ describing the number of rings of clocks of all blocks containing each entry $(i, j) \in [n] \times [n]$.
Note that by the symmetry assumption of the blocks of $\sB$, this will indeed be a symmetric matrix.

To a fixed $K \in \ZZ_{\geq 0}^{\sB}$, we may then associate a partially resampled matrix $G(K) \in \RR^{n \times n}_{\sym}$ with entries
\begin{align}\label{equ: G_ij_K}
    G(K)_{ij} = e^{-\tau 
    \Bar{K}_{ij}}  G_{ij} + e^{-\tau \Bar{K}_{ij}} W(e^{2\tau \Bar{K}_{ij}} -1),
\end{align}
where $G, G^{\prime} \sim \GOE(n)$ are independent as in the definition of the OU process (Definition~\ref{def: OU_process}).
Then, conditional on $K(t) = K$, $\widetilde{G}(t)$ has the law of $G(K)$.
The variance identity is then as follows.

\begin{lemma}[Variance identity for PDBOU process]\label{thm: var_identity_poi_OU}
Let $\sB$ be a covering of $[n] \times [n]$ and $\widetilde{G}(t) \sim \PDBOU(n, \sB, \eta, \tau)$.
For any absolutely continuous function $f: \RR^{n \times n}_{\sym} \to \RR $ with gradient $\nabla f$ defined almost everywhere and  $\EE\|\nabla f({G(0)})\|^2] < \infty$, the Dirichlet form is given by
\begin{align}
    D(f,P_tf) 
    &=\frac{\eta}{2} \sum_{B \in \mathcal{B} } \sum_{K \in \ZZ_{\geq 0}^{\sB}} \left( \prod_{C \in \mathcal{B}} e^{-\eta  t}  \frac{{(\eta t)}^{K_{C}}}{K_{C}!} \right) \EE \left[ \Delta_{B} f \Delta_{B} f^K \right] \label{equ: D_use_POU_c},
\end{align}
where \begin{align}
  \Delta_{B} f &\coloneqq f(G(0)) - f(G(e_B)), \label{equ:K_B_f} \\
  \Delta_{B} f^K &\coloneqq f(G(K)) -f(G(K+e_{B})).  \label{equ:K_B_f_K}
\end{align}
Further, $D(f, P_tf)$ is a non-negative, non-increasing function of $t$, and
\begin{align}
  \Var(f(G)) 
  &=    \frac{\eta}{2} \sum_{B \in \mathcal{B}} \sum_{K \in \ZZ_{\geq 0}^{\sB}} \left( \int_0^\infty \PP[K(t) = K] \, dt \right) \EE \left[ \Delta_{B} f \Delta_{B} f^K \right] \label{eq:var_identity_POU_1} \\
  &=  \frac{1}{2}  \sum_{N=0}^{\infty}\frac{N!}{m^{N+1}}   \sum_{B \in \mathcal{B}}  \sum_{\substack{K\in\ZZ_{\geq 0}^{\sB}\\ |K| =N}} \frac{1}{K!}  \EE \left[ \Delta_{B} f \Delta_{B} f^K \right]. \label{eq:var_identity_POU_2}
    \end{align}
\end{lemma}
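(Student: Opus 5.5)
We will derive the Dirichlet form from the generator of the PDBOU process, which is a pure jump Markov process on $\RR^{n \times n}_{\sym}$. When the clock of block $B$ rings, the entries in $B$ advance by time $\tau$ along their OU processes while the other entries stay fixed. So the process jumps from $x$ to a random $x'$ with law $Q_B(x,\cdot)$, where $Q_B$ is the time-$\tau$ OU kernel acting on the coordinates in $B$. The generator is therefore
\[ \sL f(x) = \eta \sum_{B \in \sB} \big( Q_B f(x) - f(x) \big). \]
Each $Q_B$ is self-adjoint in $L^2(\GOE(n))$, because the OU kernel is reversible with respect to the Gaussian marginal on the coordinates of $B$ and acts trivially elsewhere. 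Hence $\sL$ is self-adjoint, $\GOE(n)$ is stationary and reversible, and the standard computation for reversible jump kernels gives
\[ D(f,g) = \frac{\eta}{2}\sum_{B} \EE\big[(f(G) - f(G(e_B)))(g(G) - g(G(e_B)))\big], \]
where $(G, G(e_B))$ is the stationary pair linked by one $B$-step.

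Next we compute $P_t f$ by conditioning on the clock vector. Given $K(t) = K$, we know $\widetilde{G}(t)$ has the law of $G(K)$, so
\[ P_t f(x) = \sum_K \PP[K(t) = K]\, \EE[f(G(K)) \mid G(0) = x], \qquad \PP[K(t)=K] = \prod_C e^{-\eta t}\frac{(\eta t)^{K_C}}{K_C!}. \]
We substitute $g = P_t f$ into the Dirichlet form. We then need to realize $G(0)$, $G(e_B)$, $G(K)$ and $G(K+e_B)$ on one probability space, so that $P_tf$ evaluated at $G(e_B)$ becomes the conditional expectation of $f(G(K+e_B))$. This works by commutativity of the entrywise OU semigroups: advancing $B$ once and then advancing by $K$ gives the same law as advancing by $K+e_B$, since the time advanced on entry $(i,j)$ is additive, equal to $\tau\bar{K}_{ij}$. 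Rather than pairing the $K$-step after the $B$-step, we can use reversibility to place the $B$-step at the start of the chain: view $G(e_B)$ and $G(0)$ as one $B$-step apart, with $G(K)$ and $G(K+e_B)$ both obtained from them through the common $K$-evolution. The tower property then yields \eqref{equ: D_use_POU_c}. This coupling bookkeeping is the main obstacle: one must check that the joint law of the four matrices defined via \eqref{equ: G_ij_K} with a single Brownian motion $W$ is exactly the law the conditional expectations require. The check is entrywise, since each entry is a one-dimensional OU chain evaluated at times $0$, $\tau\bar{(e_B)}_{ij}$, $\tau\bar K_{ij}$ and $\tau(\bar K + \bar{e_B})_{ij}$, and the Markov property of OU along these nested times gives the required conditional independence.

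Nonnegativity and monotonicity of $D(f,P_tf)$ follow from Lemma~\ref{lem: dirichlet_mon}. Integrability holds because $f$ is absolutely continuous with $\EE\|\nabla f(G)\|^2<\infty$, which by a Gaussian Poincaré inequality gives $f \in L^2$ and makes the finite differences square-integrable. The hypotheses of Lemma~\ref{lem: cov_identity} hold because $\sL$ is a bounded operator, with norm at most $2\eta m$, so $P_t = e^{t\sL}$ and the heat equation is automatic. Ergodicity holds because after every block has rung infinitely often all entries are fully mixed, so $P_tf \to \mu(f)$ in $L^2$. Applying Lemma~\ref{lem: cov_identity} with Tonelli or Fubini, justified by Cauchy–Schwarz, gives \eqref{eq:var_identity_POU_1}. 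For \eqref{eq:var_identity_POU_2} we compute
\[ \int_0^\infty e^{-m\eta t}\frac{(\eta t)^{N}}{K!}\,dt = \frac{N!}{K!\,\eta\, m^{N+1}}, \qquad N = |K|, \]
and then regroup the sum by the value of $N$.
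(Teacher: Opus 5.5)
\textbf{There is a genuine gap, and it is in the step you flag as the main obstacle.} Your overall route is the paper's route: generator $\eta\sum_B(Q_B-I)$, the reversible-pair formula for $D(f,g)$, conditioning on $K(t)$, then the covariance identity and the Gamma integral. Those pieces are fine, and your observation that $\sL$ is bounded is a cleaner way to verify the hypotheses of Lemma~\ref{lem: cov_identity}.

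Write $R_K$ for the kernel taking $x$ to the law of $G(K)$ given $G(0)=x$. To pass from $\EE[\Delta_Bf\,((P_tf)(G(0))-(P_tf)(G(e_B)))]$ to $\sum_K\PP[K(t)=K]\,\EE[\Delta_Bf\,\Delta_Bf^K]$, you need two conditional independences:
\begin{itemize}
\item $G(K+e_B)\perp G(0)\mid G(e_B)$, which is true;
\item $G(K)\perp G(e_B)\mid G(0)$, which is not.
\end{itemize}
In the single-chain coupling \eqref{equ: G_ij_K}, take an entry $(i,j)\in B$ with $\bar K_{ij}\ge 1$. It is read at times $0<\tau\le\tau\bar K_{ij}$, so $G(e_B)_{ij}$ sits between $G(0)_{ij}$ and $G(K)_{ij}$. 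The Markov property then gives $G(K)_{ij}\perp G(0)_{ij}\mid G(e_B)_{ij}$, which is the opposite of what you need. Hence $\EE[\Delta_Bf\,f(G(K))]\neq\EE[\Delta_Bf\,(R_Kf)(G(0))]$ whenever $\bar K_{ij}\ge1$ for some $(i,j)\in B$. So the ``nested times'' check you describe fails.

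\textbf{This cannot be repaired, because with $G(K)$ as defined the identity itself is false.} Take $\sB=\sB_{\entries}$ and $f(G)=G_{12}$. Only the block $B_0=\{(1,2),(2,1)\}$ contributes. In the single chain, $\EE[\Delta_{B_0}f\,\Delta_{B_0}f^K]$ equals $2(1-e^{-\tau})$ if $K_{B_0}=0$, and $-(1-e^{-\tau})^2e^{-\tau(K_{B_0}-1)}$ if $K_{B_0}\ge 1$; the paper computes exactly this later.
\begin{itemize}
\item The right-hand side of \eqref{equ: D_use_POU_c} is then $\frac{\eta}{2}(1-e^{-\tau})e^{-\eta t}\bigl[2-(e^{\tau}-1)(e^{\eta t e^{-\tau}}-1)\bigr]$. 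This becomes negative for large $t$.
\item The true value, from $P_tf=e^{-\eta t(1-e^{-\tau})}f$, is $D(f,P_tf)=\eta(1-e^{-\tau})e^{-\eta t(1-e^{-\tau})}$.
\item Integrating, the right-hand side of \eqref{eq:var_identity_POU_1} equals $\frac12(1-e^{-\tau})$, not $\Var(f)=1$.
\end{itemize}
The paper's own proof passes over the same step with ``by the tower and semigroup properties'' and has the same defect.

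What is true is
\[ D(f,P_tf)=\frac{\eta}{2}\sum_B\sum_K\PP[K(t)=K]\,\EE\bigl[\Delta_Bf\,\bigl((R_Kf)(G(0))-(R_Kf)(G(e_B))\bigr)\bigr]. \]
A four-point coupling realizes this only if the $K$-evolution from $G(0)$ runs away from $G(e_B)$. For example, read entry $(i,j)$ of ``$G(K)$'' at time $-\tau\bar K_{ij}$ of a two-sided stationary OU path, so the chain is $G(K),G(0),G(e_B),G(K+e_B)$. Your remark about using reversibility to move the $B$-step points this way, but you then revert to forward nested times. Under the reversed coupling, $\Delta_Bf^K$ is no longer the quantity in the statement: for $f=G_{12}$ its covariance with $\Delta_{B_0}f$ becomes $2e^{-\tau K_{B_0}}(1-e^{-\tau})>0$.
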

\noindent
We emphasize that the formula \eqref{equ: D_use_POU_c} for the Dirichlet form is what we use in the proof of Theorem~\ref{thm: main_POU} and the closed forms for the variance identity are given for the sake of completeness.
We give the proof in Appendix~\ref{sec: variance_identity}. 

\subsubsection{Poisson-driven block resampling process}\label{prel: independent_resample}

Finally, we give the variance identity for a process related to the block resampling described in Definition~\ref{def:block-resample}, under the same relaxation of the block structure $\sB$ as above in Section~\ref{prel: POU}.
Recall that Definition~\ref{def:block-resample} itself did not refer to a Markov process; rather, we will construct such a process as a tool for the analysis of block resampling.
Here we follow an idea of Chatterjee's for similar purposes, as described in \cite[Chapter 7]{chatterjeeSuperconcentration}.

This auxiliary process, which we define below, is similar to the PDBOU, except that we fully resample the entries in a block whenever its clock rings.
Below and in this whole section, we let $\mu$ be the law of some sub-Gaussian generalized Wigner matrix, which will be the stationary measure of our process.
\begin{definition}[Poisson-driven block resampling process]\label{def: PDBR}
Let $\sB = \{B_1, \dots, B_m\}$ be a covering of $[n] \times [n]$.
To each $B \in \sB$, associate an independent Poisson clock with rate $1$.
Let $K_B(t): \RR_{\geq 0} \to \ZZ_{\geq 0}$ be the associated counting process and let $\bar{K}(t) \in \ZZ_{\sym}^{n \times n}$ be constructed from this $K(t) \in \ZZ_{\geq 0}^{\sB}$ as in \eqref{equ: bar_K}.
Let $X = X^{(0)}, X^{(1)}, X^{(2)}, \dots \sim \mu$ be countably many independent copies of $X$.
The \emph{Poisson-driven block resampling (PDBR) process} with parameters $\sB, \mu$ is the stochastic process
\[ X(t)_{ij} = X^{(\bar{K}_{ij}(t))}_{ij}. \]
In this case we write $X(t) \sim \PDBR(\sB, \mu)$.
\end{definition}

Let us mention a few technical details concerning this process.
First, the stationary distribution associated to this process is clearly $\mu$, and in particular, unlike the cases of the OU and PDBOU processes, is no longer necessarily $\GOE(n)$.
Also, as the PDBR is a pure jump process, the domain of its generator $\sL$ is $\Dom(\sL) = L^2(\mu)$, so the issues of restricting to the domain that we have circumvented above do not appear here.

   \begin{lemma}[Variance identity for PDBR process]\label{thm: var_identity}
   Let $\sB$ be as above and let $\mu$ be the law of a sub-Gaussian generalized Wigner matrix. 
   For any $f \in L^2(\mu)$, the Dirichlet form of $\PDBR(\sB, \mu)$ satisfies
   \begin{align}
         D(f,P_t f)  &= \frac{1}{2} \sum_{k=0}^{m-1} (1 - e^{-t})^{k}  e^{-t(m - k)} \sum_{B \in  \mathcal{B}}  \sum_{A \in \mathcal{A}_{k, B}} n_k(A) \cdot \EE\left[ \Delta_{B} f \Delta_{B} f^A \right],
   \end{align}
   where we denote \begin{align} 
      \mathcal{A}_{k, B} &\coloneqq \{A \in \mathcal{A}_{k} : B \not\subseteq A \}, \\
       \mathbbm{1}(K)_{B} &\colonequals \mathbbm{1}\{{K}_{B} \geq 1 \}, \\
     n_k(A) &\coloneqq \# \left\{ \mathbbm{1} K \in \{0,1\}^{\sB} : |\mathbbm{1} K| = k,  \bigcup_{B: \mathbbm{1} K_B = 1} B = A \right\},\\
      \Delta_{B} f  &\coloneqq f(X) - f(X^{ B}), \label{equ:D_B_1}\\
      \Delta_{B} f^A &\coloneqq f(X^A) - f(X^{A \cup B}). \label{equ:D_B_2}
    \end{align}
    Then, the above expression is a non-negative, non-increasing function of $t$.
   Furthermore,
    \begin{align}\label{equ: var_identity_dij_c}
        \Var (f(X) )
= \frac{1}{2m} \sum_{k=0}^{m-1} \frac{1}{\binom{m-1}{k}}  \sum_{B \in  \mathcal{B}}  \sum_{A \in \mathcal{A}_{k, B}} n_k(A) \cdot \EE\left[ \Delta_{B} f \Delta_{B} f^A \right].
    \end{align}
\end{lemma}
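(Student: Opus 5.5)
The plan is to compute the generator and semigroup of $\PDBR(\sB,\mu)$ explicitly and plug them into Lemma~\ref{lem: cov_identity} and Lemma~\ref{lem: dirichlet_mon}. Throughout, the entries of a fresh independent copy of $X$ are used for the resampled entries.

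\textbf{Generator and Dirichlet form.} Since each clock rings at rate $1$ and a ring of block $B$ replaces the entries in $B$ by fresh independent values, the generator is
\[ \sL f(x) = \sum_{B \in \sB} \big( \EE[f(x^B)] - f(x) \big), \]
where $x^B$ denotes $x$ with the entries in $B$ resampled. This operator is bounded on $L^2(\mu)$ with $\|\sL\| \le 2m$. Hence $P_t = e^{t\sL}$, the heat equation holds, and differentiation under the integral is unproblematic, so the technical hypotheses of Lemma~\ref{lem: cov_identity} are satisfied.

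Reversibility holds because each single-block resampling kernel $f \mapsto \EE[f(x^B)]$ is self-adjoint in $L^2(\mu)$. Indeed, $(X, X^B)$ is an exchangeable pair, since $\mu$ is a product measure over the independent entries up to symmetry. Ergodicity follows because, once every clock has rung, $X(t)$ is independent of $X(0)$, and this event has probability $(1-e^{-t})^m \to 1$.

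Exchangeability of $(X, X^B)$ then gives the standard formula
\[ D(f,g) = \tfrac12 \sum_B \EE\big[(f(X)-f(X^B))(g(X)-g(X^B))\big]. \]

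\textbf{Semigroup.} At time $t$, the set $S$ of blocks whose clocks have rung at least once contains each block independently with probability $1-e^{-t}$. Conditional on $S$, the matrix $X(t)$ equals $x$ with the union $A(S) = \bigcup_{B \in S} B$ freshly resampled; repeated rings do not change this law. Therefore
\[ P_t f(x) = \sum_{S \subseteq \sB} (1-e^{-t})^{|S|} e^{-t(m-|S|)}\, \EE\big[f(x^{A(S)})\big]. \]

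\textbf{Coupling step.} Set $g = P_t f$ in the formula for $D$. This requires writing $P_t f(X) - P_t f(X^B)$ as an expectation over a single fresh copy $Z$ used on $A$, with the copy $Y$ defining $X^B$ kept fixed. Applying the same $Z$ on $A$ to both $X$ and $X^B$ gives two cases.
\begin{itemize}
\item If $B \subseteq A$, the two resampled matrices coincide, so the term vanishes. This is why only $A \in \sA_{k,B}$ appears and why $k \le m-1$.
\item Otherwise, $(X^B)^A$ has $Y$ on $B \setminus A$ and $Z$ on $A$. Jointly with $(X, X^B)$, this is exactly the coupling defining $X^{A\cup B}$ in \eqref{equ:D_B_2}, and $X^A$ shares the resampled values on $A$.
\end{itemize}
This joint-law bookkeeping, ensuring that $\Delta_B f$ and $\Delta_B f^A$ are built from one consistent coupling, is the step I expect to need the most care. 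The remaining steps are mechanical.

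Grouping the subsets $S$ with $|S| = k$ according to their union $A$ produces the multiplicity $n_k(A)$, which gives the stated formula for $D(f,P_tf)$. Non-negativity and monotonicity in $t$ then follow directly from Lemma~\ref{lem: dirichlet_mon}.

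\textbf{Variance identity.} Apply Lemma~\ref{lem: cov_identity} with $g = f$ and integrate term by term. The interchange of sum and integral is valid because the sum over $(k,B,A)$ is finite and each expectation is finite by Cauchy--Schwarz, since $f \in L^2(\mu)$. Substituting $u = e^{-t}$,
\[ \int_0^\infty (1-e^{-t})^k e^{-t(m-k)}\,dt = \int_0^1 (1-u)^k u^{m-k-1}\,du = \frac{k!\,(m-k-1)!}{m!} = \frac{1}{m\binom{m-1}{k}}, \]
which yields \eqref{equ: var_identity_dij_c}.
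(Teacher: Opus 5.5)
Your proposal is correct and follows essentially the same route as the paper: the explicit semigroup as a sum over the set of blocks whose clocks have rung, the generator $\sum_{B}(\EE f(X^B)-f(X))$, the Dirichlet form via exchangeability of $(X,X^B)$, the vanishing of terms with $B\subseteq A$, grouping by unions to obtain $n_k(A)$, and the Beta integral $\int_0^\infty (1-e^{-t})^k e^{-t(m-k)}\,dt = 1/(m\binom{m-1}{k})$. Your additional remarks usefully fill in two points the paper leaves implicit: the boundedness of $\sL$, which verifies the technical hypotheses of Lemma~\ref{lem: cov_identity}, and the explicit shared coupling with one fresh copy on $A$, which makes $\EE[\Delta_B f\,\Delta_B f^A]$ well defined.
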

\noindent
We give the proof in Appendix~\ref{sec: variance_identity}.
We note that \eqref{equ: var_identity_dij_c} is all we will use in our calculations; we give the expression for the Dirichlet form just to emphasize its similarity to \eqref{equ: var_identity_dij_c}.
A version of the identity~\eqref{equ: var_identity_dij_c} also appears in \cite{bordenave2020noise} and in \cite[Lemma 7.8]{chatterjeeSuperconcentration} for the case of resampling individual coordinates of random vectors. 

When the blocks of $\sB$ are disjoint, then our identity above may also be derived from such a version by just viewing these blocks as ``coarsened'' inputs into the function $f$; we give the more general version to emphasize that this part of the analyze can still be applied to overlapping blocks.
Generalizing \cite[Lemmas 7.9 and 7.10]{chatterjeeSuperconcentration} in the same way, we find the following estimate on the summands in terms of the variance of $f$:

\begin{cor}\label{cor: T_k_mono}
    Let $f \in L^2(\mu)$.
   For each $k = 0, \dots, m - 1$, define \begin{align}\label{equ: def_Tk}
       T_k \coloneqq \frac{1}{\binom{m-1}{k}} \sum_{B \in  \mathcal{B}}   \sum_{A \in \mathcal{A}_{k, B}} n_k(A) \cdot \EE\left[ \Delta_{B} f  \Delta_{B} f^A \right].
   \end{align}
    Then,
   \begin{align}
       T_0 \geq T_1 \geq \dots \geq T_{m-1} \geq 0.
   \end{align}
   In addition, for each $0 \leq k \leq m-1$, \begin{align}
       T_k \leq \frac{2m}{k+1} \Var(f(X)).
   \end{align}
\end{cor}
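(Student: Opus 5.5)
The plan is to derive both claims from the Dirichlet form formula in Lemma~\ref{thm: var_identity}, following Chatterjee's proof of \cite[Lemmas 7.9 and 7.10]{chatterjeeSuperconcentration}.

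\textbf{Rewriting the Dirichlet form.} Set $p = 1 - e^{-t} \in [0,1)$. Then
\[ (1-e^{-t})^k e^{-t(m-k)} = p^k (1-p)^{m-k}. \]
By the definition of $T_k$ in \eqref{equ: def_Tk}, Lemma~\ref{thm: var_identity} gives
\[ D(f, P_t f) = \frac{1-p}{2} \sum_{k=0}^{m-1} \binom{m-1}{k} p^k (1-p)^{m-1-k}\, T_k \eqqcolon \frac{1-p}{2}\, \phi(p). \]
So $\phi$ is the Bernstein polynomial of degree $m-1$ with coefficients $T_0, \dots, T_{m-1}$.

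\textbf{Monotonicity.} Rather than extracting monotonicity of the $T_k$ from that of $D(f,P_tf)$ in $t$, I would argue directly on each summand. The natural interpretation is that $T_k$ is $m$ times the average of $\EE[\Delta_B f\, \Delta_B f^A]$, where $B$ is a uniformly random block and $A$ is the union of a uniformly random $k$-subset $S$ of the other $m-1$ blocks. The factor $n_k(A)$ accounts exactly for the multiplicity of the same union $A$ arising from different subsets $S$.

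Conditioning on $X$ and on the copy $Y$, the quantity $\EE[\Delta_B f \,\Delta_B f^{A}]$ can be written as $\EE[g_B(X)\, g_B(X^{A})]$, where $g_B(X) = \EE_{Y_B}[f(X) - f(X^B)]$ is, after the appropriate symmetrization, the projection onto functions depending nontrivially on block $B$. The step I expect to be the main obstacle is choosing the Efron--Stein/Hoeffding decomposition over the independent block coordinates. For disjoint blocks this is the standard coordinate decomposition. For general coverings I would instead work with the independent copies indexed by clocks, viewing $f$ as a function of the independent variables $(X^{(\ell)}_{ij})$ through the clock structure.

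\textbf{Hoeffding expansion.} Write $f = \sum_{U} f_U$ over subsets $U$ of blocks, with the $f_U$ orthogonal. Then
\[ \EE[\Delta_B f\, \Delta_B f^{A}] = c\sum_{U \ni B,\; U \cap S = \emptyset} \|f_U\|^2, \]
since resampling a set of blocks that meets $U$ kills the correlation of the $U$-component. Averaging over $S$ gives
\[ T_k = c' \sum_U |U|\, \|f_U\|^2\, \frac{\binom{m-|U|}{k}}{\binom{m-1}{k}}. \]
Each ratio is nonnegative and nonincreasing in $k$, which yields $T_0 \ge T_1 \ge \cdots \ge T_{m-1} \ge 0$.

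\textbf{Upper bound.} Using the variance identity \eqref{equ: var_identity_dij_c},
\[ \Var(f) = \frac{1}{2m}\sum_{j=0}^{m-1} T_j. \]
By monotonicity, this is at least
\[ \frac{1}{2m}\sum_{j=0}^{k} T_j \ \ge\ \frac{k+1}{2m}\, T_k, \]
which rearranges to $T_k \le \frac{2m}{k+1}\Var(f(X))$.
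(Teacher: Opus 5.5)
Your argument is correct, but it takes a different route from the one the paper points to.

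The paper writes no proof of its own and defers to Chatterjee's Lemmas 7.9--7.10. The direct way to carry those over is a conditional-expectation argument, the same device the paper itself uses in \eqref{equ: resample_A_B}. Let $\mathcal{H}_A$ be the $\sigma$-algebra generated by the entries of $X$ outside $A$ together with the fresh values used on $B\setminus A$. Given $\mathcal{H}_A$, the differences $\Delta_B f$ and $\Delta_B f^A$ are conditionally independent with the same conditional mean, so $\EE[\Delta_B f\,\Delta_B f^A]=\EE[(\EE[\Delta_B f\mid\mathcal{H}_A])^2]\ge 0$. Since $\mathcal{H}_{A'}\subseteq\mathcal{H}_A$ for $A\subseteq A'$, Jensen makes this nonincreasing in $A$. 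Averaging each $(k+1)$-set of blocks over its $k+1$ subsets of size $k$, and using $\binom{m-1}{k+1}=\frac{m-1-k}{k+1}\binom{m-1}{k}$, then gives $T_{k+1}\le T_k$.

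Your Hoeffding computation instead produces the exact formula $T_k=2\sum_U|U|\,\|f_U\|^2\binom{m-|U|}{k}/\binom{m-1}{k}$, so your constants are $c=c'=2$. This makes nonnegativity and monotonicity transparent, and it even recovers the variance identity via $\sum_{k}\binom{m-u}{k}/\binom{m-1}{k}=m/u$. The final step $T_k\le\frac{2m}{k+1}\Var(f(X))$ is identical in both routes.

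What the conditional-expectation route buys is that it works verbatim for overlapping coverings, which the statement covers, since Lemma~\ref{thm: var_identity} is stated for coverings. Your suggested repair via the clock copies $X^{(\ell)}$ is off target: $T_k$ only involves $X$ and two independent resamples. The correct fix within your framework is to decompose $f$ over the individual independent entries (nonempty sets $W$ of positions on or above the diagonal) rather than over blocks. One then gets $\EE[\Delta_B f\,\Delta_B f^A]=2\sum_{W\cap B\neq\emptyset,\ W\cap A=\emptyset}\|f_W\|^2$ and $T_k=2\sum_W b(W)\,\|f_W\|^2\binom{m-b(W)}{k}/\binom{m-1}{k}$, where $b(W)$ is the number of blocks meeting $W$. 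The covering property gives $b(W)\ge 1$, and your conclusion follows unchanged.
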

\noindent
We will see later that these $T_k$ are convenient to use as they involve only resampling exactly $k$ blocks, the same setting that appears in our original description of block resampling in Definition~\ref{def:block-resample}.
In this way, we may use the PDBR process as a ``bridge'' between our simpler block resampling process and the variance identities we obtain from Markov processes.

Note that when the blocks of $\sB$ are disjoint, then each $A \in \mathcal{A}_k$ is obtained as a unique union of $k$ blocks, and hence $n_k(A) = 1$.
In this case, the quantities $T_k$ above reduce to ones fully analogous to those in \cite{chatterjeeSuperconcentration}.

\subsection{Differentiating eigenvalues and eigenvectors}
\label{sec:diff-eig}

The following well-known result gives the formulas for the first two derivatives of an eigenvalue of a matrix with respect to its entries, which will be used repeatedly throughout the paper.

\begin{prop}[{\cite[Theorems 1 and 4]{371a9e14-3dd5-3f4a-b3ef-88e02cbdde26}}]\label{prop: derivative}
    Let $X \in \RR^{n \times n}_{\sym}$ have $\lambda_{\alpha}(X)$ being a simple eigenvalue (i.e., with multiplicity 1).
    Then, $\lambda_{\alpha}: \RR^{n \times n}_{\sym} \to \RR$, viewed equivalently as a function of the vector $(X_{ij})_{1 \leq i \leq j \leq n} \in \RR^{n(n + 1) / 2}$, is a smooth function in an open neighborhood of $X$, and its first two derivatives at $X$ are
    \begin{align}
         \partial_{ij} \lambda_{\alpha}(X) &\colonequals \frac{\partial\lambda_\alpha}{\partial X_{ij}}(X) \\
         &=  (v_\alpha (X) )_i (v_\alpha (X) )_j, \label{equ: derivative_formula} \\
         \partial_{ij} \partial_{ab} \lambda_{\alpha}(X) 
         &\colonequals  \frac{\partial^2\lambda_\alpha}{\partial X_{ij} \partial X_{ab}}(X) \\ 
         &= (\lambda_\alpha I - X )^+_{ja} \cdot (v_\alpha (X) )_i (v_\alpha (X) )_b  + (\lambda_\alpha I - X )^+_{bi} \cdot (v_\alpha (X) )_j (v_\alpha (X) )_a. \label{equ: derivative_formula_2}
    \end{align}
    Here, $Y^+$ denotes the Moore-Penrose pseudoinverse of a matrix.
    The pseudoinverse in the result may further be expanded as
     \begin{align}
    (\lambda_\alpha I - X )^+ &= \sum_{\beta \neq \alpha} \frac{1}{\lambda_\alpha(X) - \lambda_\beta(X)}v_\beta(X)v_\beta(X)^T, \\
    \left((\lambda_\alpha I - X )^+\right)_{ij} &= \sum_{\beta \neq \alpha} \frac{(v_{\beta}(X))_i (v_{\beta}(X))_j}{\lambda_\alpha(X) - \lambda_\beta(X)}. \label{equ: moore-penrose}
\end{align}
\end{prop}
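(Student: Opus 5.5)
The plan is to use the spectral decomposition of a symmetric matrix near a point with simple eigenvalue $\lambda_\alpha(X)$. \textbf{Smoothness} comes from the implicit function theorem applied to the characteristic polynomial $p(\lambda, X) = \det(\lambda I - X)$, viewed as a polynomial in $\lambda$ and the independent coordinates $(X_{ij})_{i \le j}$. Since $\lambda_\alpha(X)$ is a simple root, $\partial_\lambda p(\lambda_\alpha(X), X) \neq 0$. Hence there is a smooth (indeed real-analytic) function $\tilde\lambda$ on a neighborhood of $X$ with $p(\tilde\lambda(Z), Z) = 0$ and $\tilde\lambda(X) = \lambda_\alpha(X)$. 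By continuity of the ordered eigenvalues (e.g.\ Weyl's inequality), the neighbors $\lambda_{\alpha\pm1}$ stay separated from this root in a small neighborhood, so $\tilde\lambda = \lambda_\alpha$ there.

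Next I would obtain a smooth eigenvector branch. The matrix $\lambda_\alpha(Z) I - Z$ has constant rank $n-1$ near $X$, so the projector $P(Z) = \frac{1}{2\pi \cpxi}\oint (w - Z)^{-1}\,dw$, over a small circle around $\lambda_\alpha(X)$, is smooth. Setting $v(Z) = P(Z) v_\alpha(X) / \|P(Z) v_\alpha(X)\|$ gives a smooth unit eigenvector.

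For the \textbf{first derivative}, differentiate $Zv = \lambda v$ along a direction $E$, which for the coordinate $X_{ij}$ is $E = e_ie_j^\top + e_je_i^\top$ when $i\ne j$ and $e_ie_i^\top$ when $i=j$. This gives $E v + Z \dot v = \dot\lambda v + \lambda \dot v$. Taking the inner product with $v$ and using symmetry of $Z$ yields $\dot\lambda = v^\top E v$. Here I should check the convention: with the symmetric parametrization, $v^\top E v = 2 v_iv_j$ off the diagonal. The stated formula \eqref{equ: derivative_formula} corresponds to treating $X_{ij}$ and $X_{ji}$ as separate coordinates (i.e.\ $E = e_ie_j^\top$ on the full matrix space, with the gradient taken in $\RR^{n\times n}$ and then restricted). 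I will adopt that convention, consistent with $\nabla\lambda_\alpha = v_\alpha v_\alpha^\top$.

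For the derivative of the eigenvector, project the differentiated equation onto $v^\perp$. This gives $(\lambda I - Z)\dot v = (E - \dot\lambda)v$ restricted to $v^\perp$, and since $\|v\|=1$ forces $\dot v \perp v$, we get $\dot v = (\lambda I - Z)^+ E v$.

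For the \textbf{second derivative}, differentiate $\partial_{ij}\lambda = v_iv_j$ in the coordinate $(a,b)$ with $E' = e_ae_b^\top$. This gives $\partial_{ab}(v_iv_j) = (\partial_{ab}v)_i v_j + v_i (\partial_{ab} v)_j$ with $\partial_{ab} v = (\lambda I - Z)^+ e_a e_b^\top v = (\lambda I-Z)^+_{\cdot a} v_b$. Substituting yields $(\lambda I-Z)^+_{ia} v_b v_j + v_i (\lambda I - Z)^+_{ja} v_b$. Matching this with \eqref{equ: derivative_formula_2} requires symmetrizing in $(a,b)$ and using the symmetry of the pseudoinverse: the formula as stated pairs $j$ with $a$ and $i$ with $b$, and I would confirm that the same-convention bookkeeping reproduces exactly the two displayed terms. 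Finally, the expansion \eqref{equ: moore-penrose} is immediate from the spectral decomposition $\lambda_\alpha I - Z = \sum_\beta (\lambda_\alpha - \lambda_\beta) v_\beta v_\beta^\top$, whose pseudoinverse inverts the nonzero coefficients.

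The main obstacle is not analytic but bookkeeping: reconciling the coordinate convention (symmetric parametrization versus the full $n\times n$ space, and the factors of 2 on off-diagonal entries) with the exact index placement in \eqref{equ: derivative_formula_2}. I would fix the convention that $\partial_{ij}$ is the derivative along $e_ie_j^\top$ in the full matrix space and verify both formulas under it.
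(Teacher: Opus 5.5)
Your smoothness argument and first-derivative computation are sound. You also correctly diagnosed that \eqref{equ: derivative_formula} holds only when $\partial_{ij}$ means the derivative along $e_ie_j^\top$ in the full space $\RR^{n\times n}$; the literal symmetric parametrization gives $2v_iv_j$ off the diagonal. The paper proves nothing here and simply cites the reference. Its formulas are in this full-matrix convention, and that is how the paper uses them: the chain rule in Corollary~\ref{cor: derivative_bound} sums over all $n^2$ index pairs.

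The gap is in your second-derivative step. Write $v = v_\alpha(X)$ and $R = (\lambda_\alpha I - X)^+$. You differentiate the identity $\partial_{ij}\lambda_\alpha(Z) = v_i(Z)v_j(Z)$ along $E' = e_ae_b^\top$. That identity holds only at symmetric $Z$, and $E'$ leaves the symmetric matrices.

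In the full-matrix convention, the gradient at a nearby non-symmetric $Z$ is $u_L u_R^\top/(u_L^\top u_R)$, where $u_L, u_R$ are the left and right eigenvectors. Along $E'$:
\begin{itemize}
\item the right eigenvector moves by $RE'v = Re_a v_b$, as you computed;
\item the left eigenvector moves by $RE'^{\top}v = Re_b v_a$;
\item the normalization $u_L^\top u_R$ is stationary, because $Rv = 0$.
\end{itemize}
The correct result is therefore $R_{ja}v_iv_b + R_{bi}v_av_j$. Your computation gives $R_{ja}v_iv_b + R_{ia}v_bv_j$ instead. That expression cannot be a Hessian in any convention, since it is not symmetric under $(i,j)\leftrightarrow(a,b)$. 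For $(i,j) = (1,1)$ and $(a,b) = (2,3)$ it equals $2R_{12}v_1v_3$, while the reversed order gives $R_{12}v_1v_3 + R_{13}v_1v_2$.

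Your proposed remedy of symmetrizing in $(a,b)$ does not close the gap. It produces the derivative along the symmetric direction $\tfrac12(e_ae_b^\top + e_be_a^\top)$. That quantity is the average of \eqref{equ: derivative_formula_2} over the orderings $(i,j),(j,i)$ and $(a,b),(b,a)$. It differs from the displayed expression in general when $i\neq j$ and $a\neq b$, for example at $(i,j) = (1,2)$, $(a,b) = (3,4)$.

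Your own Riesz projector gives a clean fix:
\begin{itemize}
\item For all $Z$ near $X$, symmetric or not, $\partial_E\lambda_\alpha(Z) = \operatorname{tr}(P(Z)E)$.
\item Differentiating the contour integral at $X$ and using the spectral decomposition gives $\partial_{E'}P = RE'vv^\top + vv^\top E'R$.
\item Hence $\partial_{E'}\partial_E\lambda_\alpha(X) = v^\top ERE'v + v^\top E'REv$.
\end{itemize}
For $E = e_ie_j^\top$ and $E' = e_ae_b^\top$ this is exactly \eqref{equ: derivative_formula_2}. Equivalently, you can polarize the second differential $d^2\lambda_\alpha = 2v^\top(dX)R(dX)v$, which holds for arbitrary, not necessarily symmetric, $dX$ at symmetric $X$.

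Relatedly, once you work in $\RR^{n\times n}$, $\lambda_\alpha$ at non-symmetric $Z$ must be defined as your implicit-function branch $\tilde\lambda$. Eigenvalue ordering and Weyl's inequality only identify that branch on symmetric matrices.
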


\subsection{Concentration inequalities for sub-Gaussian matrices}

In this section we record concentration bounds for several matrix norms that will be used throughout.

\begin{lemma}[Operator norm bound {\cite[Theorem~2.4]{rudelson2015delocalization}}]\label{lem: norm_inequality}
Let $X$ be a sub-Gaussian generalized Wigner matrix with parameters $(c_1,c_2,K)$. There exists a constant $C=C(K,c_1)$ such that for any $s\ge 1$,
\begin{align}\label{equ: norm_inequality}
    \PP\!\left(\|X\|> C s \sqrt{n}\right)\le 2\exp\!\left(-2 s^2 n\right).
\end{align}
\end{lemma}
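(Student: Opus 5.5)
We plan to prove this by the standard $\varepsilon$-net argument combined with Hoeffding's inequality for sums of independent sub-Gaussian variables. The only structure we use is independence of the entries on and above the diagonal, centering, and the uniform bound $\|X_{ij}\|_{\psi_2} \leq K$ (Conditions 1, 2 and 4 of Definition~\ref{def:gen-wigner}). The variance profile plays no role beyond this, so the constant will in fact depend only on $K$, which is consistent with $C = C(K, c_1)$.

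\emph{Step 1 (reduction to a net).} Since $X$ is symmetric, $\|X\| = \sup_{\|x\| = 1} |x^{\top} X x|$. Let $\sN$ be a $1/4$-net of the unit sphere $S^{n-1}$ with $|\sN| \leq 9^n$ (volumetric bound). Let $x$ attain the supremum and choose $y \in \sN$ with $\|x - y\| \leq 1/4$. Then
\[
|x^{\top}Xx - y^{\top}Xy| \leq |(x-y)^{\top}Xx| + |y^{\top}X(x-y)| \leq \tfrac{1}{2}\|X\|.
\]
It follows that $\|X\| \leq 2 \max_{y \in \sN} |y^{\top} X y|$.

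\emph{Step 2 (fixed direction).} For a fixed unit vector $y$, write
\[
y^{\top}Xy = \sum_i X_{ii} y_i^2 + \sum_{i<j} 2X_{ij}y_iy_j.
\]
This is a sum of independent centered sub-Gaussian variables. By the standard rotation-invariance bound for $\psi_2$ norms of such sums, its $\psi_2$ norm squared is at most
\[
C_0 K^2\Bigl(\sum_i y_i^4 + 4\sum_{i<j} y_i^2y_j^2\Bigr) \leq 2C_0K^2 \Bigl(\sum_i y_i^2\Bigr)^2 = 2C_0K^2.
\]
Hence $\PP(|y^{\top}Xy| > u) \leq 2\exp(-c u^2/K^2)$ for an absolute constant $c > 0$.

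\emph{Step 3 (union bound).} Take $u = \tfrac{1}{2} C s\sqrt{n}$. Combining Steps 1 and 2,
\[
\PP(\|X\| > Cs\sqrt n) \leq 9^n \cdot 2\exp\!\left(-\frac{c C^2 s^2 n}{4K^2}\right).
\]
Since $s \geq 1$, we have $n\log 9 \leq s^2 n \log 9$. So choosing $C$ large enough that $cC^2/(4K^2) \geq 2 + \log 9$ makes the right-hand side at most $2\exp(-2s^2 n)$, as claimed.

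There is no real obstacle here. The only points needing care are tracking the constants so that the $9^n$ entropy factor is absorbed (this is where $s \geq 1$ is used), and correctly handling the doubled off-diagonal coefficients in the quadratic form.
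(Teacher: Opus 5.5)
Your proof is correct. The $1/4$-net reduction $\|X\|\le 2\max_{y\in\sN}|y^{\top}Xy|$, the general Hoeffding bound with $\sum_i y_i^4+4\sum_{i<j}y_i^2y_j^2\le 2$, and the absorption of the $9^n$ entropy factor using $s\ge 1$ all check out, giving a constant that depends only on $K$. The paper gives no argument of its own and only cites Rudelson--Vershynin; to my knowledge that operator-norm bound is the standard net-plus-Hoeffding estimate, so your proposal is essentially a self-contained version of the cited result.
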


\begin{lemma}[Entrywise $\ell^{\infty}$ norm bound {\cite[Section~2.7.3]{vershynin2018high}}]\label{lem: infty_norm_inequality}
Let $X$ be a sub-Gaussian generalized Wigner matrix with parameters $(c_1, c_2, K)$. There exists an absolute constant $c>0$ such that for any $t\ge 0$,
\begin{align}\label{equ: infty_1}
    \PP\left(\|X\|_{\ell^{\infty}} > t\right)\le 2 n^2 \exp\left(-\frac{c t^2}{K^2}\right).
\end{align}
In particular, if $X_{ij} \sim N(0, \sigma_{ij}^2)$ with $\sigma_{ij}^2 \leq \sigma^2$, then \begin{align}\label{equ: infty_2}
    \PP\left(\|X\|_{\ell^{\infty}} > t\right)\le 2 n^2 \exp\left(-\frac{ t^2}{2\sigma^2}\right).
\end{align}
If $\|X\|_0 \leq N$ almost surely for some $N \in \NN$, then the factor $n^2$ may be replaced by $N$.
\end{lemma}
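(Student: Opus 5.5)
The plan is a union bound over the $n(n+1)/2$ independent entries $X_{ij}$, $1 \le i \le j \le n$, combined with the sub-Gaussian tail of each entry. The matrix is symmetric, so $\|X\|_{\ell^\infty} = \max_{i \le j} |X_{ij}|$. This gives
\[ \PP(\|X\|_{\ell^\infty} > t) \le \sum_{i \le j} \PP(|X_{ij}| > t) \le n^2 \max_{i,j} \PP(|X_{ij}| > t). \]
Independence is not actually needed for this step; the union bound alone suffices.

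It therefore remains to bound the tail of a single entry. From the definition of the $\psi_2$ norm, $\|X_{ij}\|_{\psi_2} \le K$ gives $\EE \exp(X_{ij}^2/K^2) \le 2$. Markov's inequality applied to $\exp(X_{ij}^2/K^2)$ then yields
\[ \PP(|X_{ij}| > t) \le 2 e^{-t^2/K^2}. \]
This is the standard equivalence of sub-Gaussian properties in \cite[Section~2.5]{vershynin2018high}. It gives \eqref{equ: infty_1} with $c = 1$, and in particular with any absolute $c \in (0,1]$. If one instead uses the version of the $\psi_2$ norm with a different normalization, the result differs only by an absolute constant in the exponent, which is absorbed into $c$.

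For the Gaussian case \eqref{equ: infty_2}, use the standard Gaussian tail bound $\PP(|Z| > t) \le 2 e^{-t^2/(2\sigma_{ij}^2)} \le 2 e^{-t^2/(2\sigma^2)}$ for $Z \sim \sN(0, \sigma_{ij}^2)$. The Chernoff bound gives this via $\PP(Z > t) \le e^{-t^2/(2\sigma_{ij}^2)}$, together with symmetry. The same union bound then finishes this case.

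For the final claim, suppose $\|X\|_0 \le N$ almost surely. The natural reading is that the support of $X$ is a fixed set $S$ with $|S| \le N$ containing all entries that can be nonzero, as happens for $X^A - X$ restricted to resampled blocks. The union bound then runs only over $S$, and $|S| \le N$ replaces $n^2$.

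The only delicate point is this interpretation: if the support were random, one would need a deterministic superset of size $N$. In the intended applications, namely differences supported on the fixed blocks that are resampled, such a superset is available. Everything else is routine, so there is no real obstacle.
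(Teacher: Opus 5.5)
Your proposal is correct and matches the standard union-bound-plus-sub-Gaussian-tail argument that the paper invokes through its citation to Vershynin; the paper gives no separate proof. With the paper's normalization of $\|\cdot\|_{\psi_2}$ your Markov step gives $c=1$, consistent with the paper's later choice $K^2=2\sigma^2$, $c=1$ in the Gaussian case. Your caveat about a random support is harmless: in every application ($\Delta_B G$, $\Delta_B G(K)$, $G(K)-Z(K)$) the support is the deterministic block $B$, and with independent upper-triangular entries, $\|X\|_0\le N$ a.s.\ already forces the support into a fixed set of at most $N$ positions.
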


\begin{cor}[Entrywise maximum moment bounds]\label{cor: moment_bounds}
Under the assumptions of Lemma~\ref{lem: infty_norm_inequality}, there exists an absolute constant $c>0$ such that
\begin{align}
    \EE \| X\|_{\ell^{\infty}}^2 &\leq \frac{K^2}{c} \bigg( \log(n)+1 \bigg),\\
    \EE \| X\|_{\ell^{\infty}}^4 &\leq \frac{K^4}{c^2} \bigg( \log(n)+1 \bigg)^2.
\end{align}
In the Gaussian case from the Lemma (i.e., if $X_{ij} \sim N(0, \sigma_{ij}^2)$ with $\sigma_{ij}^2 \leq \sigma^2$), then we also have
   \begin{align}
       \EE \| X\|_{\ell^{\infty}}^2 &\leq 6 \sigma^2 \bigg( \log(n) + 1 \bigg) , \\
       \EE \| X\|_{\ell^{\infty}}^4 &\leq  32 \sigma^4 \bigg( \log(n)+1 \bigg)^2.
   \end{align}
\end{cor}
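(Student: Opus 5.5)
The plan is to integrate the sub-Gaussian tail bounds of Lemma~\ref{lem: infty_norm_inequality} through the layer-cake formula, splitting each integral at the point where the union-bound tail becomes smaller than $1$. For $p \in \{2,4\}$ write
\[
\EE\|X\|_{\ell^\infty}^p = \int_0^\infty \PP\big(\|X\|_{\ell^\infty}^p > u\big)\,du = \int_0^\infty p\,t^{p-1}\,\PP\big(\|X\|_{\ell^\infty} > t\big)\,dt .
\]
I will bound the probability by $1$ for $t \le t_0$ and by $2n^2\exp(-ct^2/K^2)$ for $t > t_0$. The threshold is chosen so that $n^2 \exp(-ct_0^2/K^2) = 1$, that is, $t_0^2 = 2K^2\log(n)/c$. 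This choice makes the tail integral an $O(1)$ multiple of $K^p$ beyond the main term $t_0^p$.

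For $p=2$, the head contributes $t_0^2 = 2K^2\log n / c$. For the tail, substitute $s = t^2$:
\[
\int_{t_0^2}^\infty 2n^2 e^{-cs/K^2}\,ds = \frac{2K^2}{c}\,n^2 e^{-ct_0^2/K^2} = \frac{2K^2}{c}.
\]
The total is $\frac{2K^2}{c}(\log n + 1)$. Since $c$ is only an unspecified absolute constant, I will rename it (replace $c$ by $c/2$) to get the stated form $\frac{K^2}{c}(\log n + 1)$.

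For $p=4$, again with $s = t^2$, the head gives $t_0^4$. The tail is
\[
\int_{t_0^2}^\infty 2s \cdot 2n^2 e^{-cs/K^2}\,ds = 4n^2 e^{-ct_0^2/K^2}\left(\frac{K^2 t_0^2}{c} + \frac{K^4}{c^2}\right) = 4\left(\frac{K^2 t_0^2}{c} + \frac{K^4}{c^2}\right).
\]
Substituting $t_0^2 = 2K^2\log n / c$, everything is of order $\frac{K^4}{c^2}(\log n + 1)^2$. Absorbing constants into $c$ once more gives the claim; one renormalization of $c$ handles both bounds simultaneously.

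In the Gaussian case I will use \eqref{equ: infty_2}, which is the same computation with $c/K^2$ replaced by $1/(2\sigma^2)$. Now $t_0^2 = 4\sigma^2\log n$. For the second moment this gives
\[
4\sigma^2\log n + 4\sigma^2 \le 6\sigma^2(\log n + 1).
\]
For the fourth moment the head is $16\sigma^4\log^2 n$, and the tail is
\[
4\left(2\sigma^2 t_0^2 + 4\sigma^4\right) = 32\sigma^4\log n + 16\sigma^4 .
\]
The sum is $16\sigma^4\log^2 n + 32\sigma^4\log n + 16\sigma^4 = 16\sigma^4(\log n + 1)^2 \le 32\sigma^4(\log n + 1)^2$.

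No step here is a real obstacle. The only care needed is in the explicit Gaussian constants, where the threshold choice must keep the stated constants $6$ and $32$; the arithmetic above shows they hold with room to spare.
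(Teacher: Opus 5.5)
Your proposal is correct and follows essentially the same route as the paper. Both use the layer-cake formula, bound the tail probability by $1$ below a threshold $t_0$ and by the union bound $2n^2\exp(-ct^2/K^2)$ (resp.\ $2n^2\exp(-t^2/(2\sigma^2))$) above it, integrate, and absorb constants into $c$. The only difference is cosmetic: you place the threshold at $n^2 e^{-ct_0^2/K^2}=1$ rather than at the paper's $2n^2 e^{-ct_0^2/K^2}=1$, which makes your arithmetic cleaner (e.g.\ the exact total $16\sigma^4(\log n+1)^2$ in the Gaussian fourth-moment case) without changing anything essential.
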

\noindent
We give the proof of Corollary~\ref{cor: moment_bounds} in Appendix~\ref{sec: concentration_ine}.

\subsection{Eigenvalue spacings and variances}\label{sec: eigenvalue_spacing_p}

In this section, we present known estimates concerning the locations of the eigenvalues of generalized Wigner matrices $X$ under various distributional assumptions, and prove auxiliary estimates for the purpose of our main theorems. 
In particular, we focus on three main properties: (1) eigenvalue rigidity, (2) minimum eigenvalue spacing, (3) bounds on variances of individual eigenvalues.  

We call the \emph{variance profile} of a generalized Wigner matrix $X$ the collection of $\sigma_{ij}^2 = \Var(X_{ij})$.
We refer to generalized Wigner matrices under our assumptions as having \emph{normalized variances} if the variance profile satisfies
\[ \sum_{j = 1}^n \sigma_{ij}^2 = n \text{ for all } i \in [n]. \]
Recall that all of our results are under the assumption of normalized variances, but we will use this terminology to point out below where this assumption is important.
We refer to generalized Wigner matrices as having \emph{identical variances} (i.e., being a Wigner matrix) if we have the more restrictive
\[ \sigma_{ij}^2 = 1 \text{ for all } i, j \in [n]. \]
GOE matrices do not quite have identical variances, but all results for matrices with identical variances that we mention are also straightforwardly transferred to GOE matrices.

Also, in addition to our assumption that the entries are \emph{sub-Gaussian}, we further introduce the following broader conditions on the entry distributions that appear in prior work.
We say that a random matrix $X \in \RR^{n \times n}_{\sym}$ has:
\begin{itemize}
  \item \emph{sub-exponential entries} if there exist constants $C,c > 0$ such that
  \[
       \PP(|X_{ij}| > t) \le C e^{-c t} \text{ for all } t \ge 0 \text{ and all } i,j \in [n],
  \]
  \item and has \emph{sub-Weibull entries} if there exists a constant $c>0$ such that 
  \[ \EE  \left[  \exp(|X_{ij}|^c) \right]  \leq  \frac{1}{c} \text{ for all } i,j \in [n]. \]
\end{itemize}
As in our results, when considering a sequence of matrices $X = X^{(n)} \in \RR^{n \times n}_{\sym}$, we always think of the constant parameters in these definitions as not varying with $n$.
The sub-exponential condition is more general than the sub-Gaussian condition, and the sub-Weibull condition (for appropriate $c$) includes both.

For the sake of brevity, throughout this section, when discussing a given generalized Wigner matrix $X$, we denote $\lambda_\alpha \colonequals \lambda_\alpha(X)$ for all $\alpha \in [n]$. 

\subsubsection{Eigenvalue rigidity}

The semicircle limit theorem (in various versions) states that, for sequences generalized Wigner matrices $X = X^{(n)}$ with normalized variances and any of the entrywise concentration properties discussed above, almost surely the empirical distribution of eigenvalues of $X / \sqrt{n}$ converges weakly to the semicircle distribution, i.e., that having density 
\[ \varrho_{sc}(x) = \frac{1}{2\pi} \sqrt{(4-x^2)_+}. \]
\emph{Rigidity} results make much stronger statements that individual eigenvalues are all close to their \emph{classical positions}, the location on the interval $[-2, 2]$ with the same quantile with respect to the semicircle density.
The following is a useful result of this kind.

\begin{theorem}[{\cite[Theorem 2.2]{erdHos2012rigidity}}]\label{thm: rigidity_eigenvalue}
Let $X$ be a  generalized Wigner matrix with normalized variances and sub-exponential entries.
Let \( \gamma_\beta = \gamma_{\beta}(n) \) be the \emph{classical position} of the \( \beta \)th eigenvalue, defined implicitly by, for each $\beta \in [n]$,
\begin{align}\label{equ: rho_sc}
    n \int_{\gamma_\beta}^{2} \varrho_{\text{sc}}(x)  dx = \beta.
\end{align} 
   Then, there exist constants $A_0>1$, $c, C > 0$, and $0<\phi<1$ depending only on the constants in the sub-exponential assumption such that, for any $L$ with \begin{align}\label{equ: thm_L}
        A_0 \log \log n \leq L \leq \frac{\log(10n)}{10 \log \log n},
   \end{align} 
   we have \begin{align}\label{equ: rigidity_prob}
       \mathbb{P} \left[|\lambda_\beta-\sqrt{n}\gamma_\beta| \ge (\log n)^L\hat{\beta}^{-1/3}n^{-1/6} \text{ for some } \beta \in [n] \right]
\le C \exp\left(-c(\log n)^{\phi L}\right),
   \end{align}
   for all sufficiently large $n$.
\end{theorem}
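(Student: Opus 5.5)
This is a cited result (\cite[Theorem 2.2]{erdHos2012rigidity}), and in the paper I would simply invoke it. If I had to prove it, I would follow the standard resolvent route: a \emph{local semicircle law} for the Green function, converted into bounds on the eigenvalue counting function, and then inverted to get bounds on individual eigenvalues.

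\textbf{Step 1: local law.} Work with $H = X/\sqrt{n}$ and its resolvent $R(z) = (H - z)^{-1}$ for $z = E + \cpxi \eta$. Let $m_{\scc}(z)$ be the Stieltjes transform of $\varrho_{\scc}$; it is the root of $m^2 + zm + 1 = 0$ with $\Im m > 0$. Define $\kappa = ||E| - 2|$. The target estimate is
\[
\Big|\frac{1}{n}\operatorname{tr} R(z) - m_{\scc}(z)\Big| \leq \frac{(\log n)^{CL}}{n\eta},
\]
holding with probability at least $1 - C\exp(-c(\log n)^{\phi L})$, uniformly on a spectral domain down to $\eta \geq (\log n)^{CL}/n$.

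The proof of this estimate goes through Schur complements:
\[
R_{ii} = \Big(-z - \sum_j \sigma_{ij}^2 n^{-1} R^{(i)}_{jj} + Z_i\Big)^{-1},
\]
where $R^{(i)}$ is the resolvent of the minor with row and column $i$ removed. The fluctuation term $Z_i$ is controlled by large deviation bounds for quadratic forms in sub-exponential variables. The normalization $\sum_j \sigma_{ij}^2 = n$ is exactly what makes the vector $(R_{ii})_i$ satisfy an approximate self-consistent equation whose solution is $m_{\scc}$. Stability of this equation, which involves the operator $(1 - m_{\scc}^2 \Sigma)^{-1}$ with $\Sigma = (\sigma_{ij}^2/n)$, holds by the bounds $\sigma_{ij}^2 \in [c_1,c_2]$. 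With stability in hand, a continuity argument in $\eta$ brings the estimate down from $\eta \sim 1$ to small $\eta$.

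\textbf{Step 2: fluctuation averaging.} Step 1 gives only an entrywise error of order $(n\eta)^{-1/2}$. To improve the averaged quantity $\frac{1}{n}\sum_i$ of these errors to $(n\eta)^{-1}$, I would use a high-moment fluctuation averaging lemma. Near the edge, the sharper $\kappa$-dependent error is also needed, with an improvement of the form $(n\eta)^{-1}$ or $(n(\kappa+\eta))^{-1}$ outside the spectrum. I expect this to be the main technical obstacle. Beyond the combinatorics of the moment expansion, it requires controlling the stability constant, which degenerates like $(\kappa + \eta)^{-1/2}$ at the edge.

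\textbf{Step 3: from the local law to eigenvalue locations.} First, the strong local law outside the spectrum rules out eigenvalues of $H$ beyond $2 + (\log n)^{CL} n^{-2/3}$. Next, the Helffer--Sjöstrand formula, applied to a smoothed indicator of $[E, \infty)$, converts the Stieltjes transform bound into a bound on the counting function:
\[
\big|\mathfrak{n}(E) - \mathfrak{n}_{\scc}(E)\big| \leq \frac{(\log n)^{CL}}{n},
\]
uniformly in $E$.

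Finally, I invert the quantiles. The semicircle density behaves like $\sqrt{\kappa}$ near the edge, so near $\gamma_\beta$ one has $\kappa \asymp (\hat\beta/n)^{2/3}$. A counting error of $(\log n)^{CL}$ eigenvalues then translates into a location error of order $(\log n)^{CL}\hat\beta^{-1/3}n^{-2/3}$ for $H$. Multiplying by $\sqrt{n}$ gives the stated bound $(\log n)^L \hat\beta^{-1/3} n^{-1/6}$ for $X$, after relabelling $L$.

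A union bound over $\beta \in [n]$ and over a polynomial grid of spectral parameters $z$ keeps the failure probability of the form $C\exp(-c(\log n)^{\phi L})$. This is where the window \eqref{equ: thm_L} enters: the lower bound on $L$ makes this probability beat the $\mathrm{poly}(n)$ union-bound factors, and the upper bound keeps the moment-method truncations valid.
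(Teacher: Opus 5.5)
Your proposal is correct and takes essentially the paper's approach. The paper gives no proof of this statement and simply imports it as \cite[Theorem 2.2]{erdHos2012rigidity}, as you propose to do. Your outline (local semicircle law via Schur complements and stability of the self-consistent equation, fluctuation averaging, Helffer--Sj\"ostrand conversion to the counting function, then quantile inversion using $\varrho_{\scc}(\gamma_\beta)\asymp(\hat\beta/n)^{1/3}$) is a faithful summary of how that reference proves it.
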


Note, however, that the spacing between the classical positions $\sqrt{n}\gamma_{\beta}$ is, say for bulk indices $\beta \in [\epsilon n, (1 - \epsilon)n]$, of order $\Theta(n^{-1/2})$.
Thus, the typical fluctuations from the classical positions can be greater than the spacing between the classical positions, so this result does not directly imply anything non-trivial about the spacing of consecutive eigenvalues (the same happens at the edges with different exponents).
However, it will still be useful to use it to control the spacing of eigenvalues with indices sufficiently separated: if $|\alpha - \beta|$ is large enough, then we may safely estimate $\lambda_\alpha$ by $\gamma_\alpha$ with high probability and small error and likewise $\lambda_{\beta}$ by $\gamma_{\beta}$, and thereby obtain a lower bound on $|{\lambda}_\alpha- {\lambda}_\beta |$. 
We obtain the following:

\begin{cor}\label{cor: lambda_i-lambda_j}  In the setting of Theorem~\ref{thm: rigidity_eigenvalue}, there exist absolute constants $c^{\prime}, C^{\prime} > 0$ such that, for each $\alpha \in [n]$, 
\begin{align}\label{equ: lambdai-lambdaj}
    &\PP\left[ |{\lambda}_\alpha- {\lambda}_\beta | \geq c^{\prime}    |\beta - \alpha|  n^{-1/2} \text{ for all } \alpha, \beta \in [n] \text{ with } |\alpha - \beta| \geq C^{\prime}(\log n)^L\right] \\
    &\hspace{1cm} \geq 1 - C \exp\left( - c (\log n)^{\phi  L} \right),
\end{align}
for all sufficiently large $n$.
\end{cor}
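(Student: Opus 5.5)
The plan is to work on the high-probability event $\mathcal{E}$ of Theorem~\ref{thm: rigidity_eigenvalue}, on which $|\lambda_\beta - \sqrt{n}\gamma_\beta| \le (\log n)^L \hat\beta^{-1/3} n^{-1/6}$ for every $\beta \in [n]$ at once. This event has probability at least $1 - C\exp(-c(\log n)^{\phi L})$, which is exactly the claimed bound. So it suffices to show the conclusion holds deterministically on $\mathcal{E}$, once $C'$ is chosen suitably. Throughout we assume without loss of generality that $\alpha < \beta$ and write $d = \beta - \alpha \ge C'(\log n)^L$. The triangle inequality gives
\[ |\lambda_\alpha - \lambda_\beta| \ge \sqrt{n}(\gamma_\alpha - \gamma_\beta) - (\log n)^L n^{-1/6}\left(\hat\alpha^{-1/3} + \hat\beta^{-1/3}\right), \]
so we need a lower bound on the classical spacing and an upper bound on the error term.

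For the classical spacing, \eqref{equ: rho_sc} gives $n\int_{\gamma_\beta}^{\gamma_\alpha}\varrho_{sc} = d$. Near $\pm 2$ we have $\varrho_{sc}(x) \asymp \sqrt{2-|x|}$, and the classical locations satisfy $2 - |\gamma_\beta| \asymp (\hat\beta/n)^{2/3}$. I will use the crude bound $\varrho_{sc} \le 1/\pi$, which yields $\gamma_\alpha - \gamma_\beta \ge \pi d/n$ and hence a main term of at least $\pi d n^{-1/2}$. This is already the target order $d n^{-1/2}$. The sharper edge behavior shows that the true spacing is in fact larger, of order $d\, n^{-2/3}\max(\hat\alpha,\hat\beta)^{-1/3}$, but the crude bound is all the claim needs.

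For the error term, the trivial bound $\hat\alpha, \hat\beta \ge 1$ gives at most $2(\log n)^L n^{-1/6}$. Compared with $d n^{-1/2}$, this is small only when $d \gtrsim (\log n)^L n^{1/3}$, which is far weaker than the hypothesis $d \ge C'(\log n)^L$. This is the main obstacle, and it appears at the edge. As a concrete check, take $\alpha = 1$ and $\beta = 1 + d$ with $d = (\log n)^{2L}$. The true gap is of order $d^{2/3} n^{-1/6}$, which exceeds $d n^{-1/2}$, so the claim survives there. The real issue is only that the crude spacing bound cannot absorb the edge error.

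The fix is to use the sharper spacing. Set $m = \max(\hat\alpha,\hat\beta)$. Integrating $\sqrt{2-x}$ near the edge gives
\[ \sqrt{n}(\gamma_\alpha - \gamma_\beta) \gtrsim n^{-1/6}\, d\, m^{-1/3}, \]
and since $m \le n$ this is always at least a constant times $d n^{-1/2}$. It remains to show this dominates the error $(\log n)^L n^{-1/6}\left(\hat\alpha^{-1/3} + \hat\beta^{-1/3}\right)$. I split into two cases. If $m \lesssim d$, then $d\, m^{-1/3} \ge d^{2/3} \gg (\log n)^L$ once $C'$ is large, and this beats the error because $\hat\alpha^{-1/3} + \hat\beta^{-1/3} \le 2$. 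If $d \ll m$, then $\hat\alpha$ and $\hat\beta$ are both comparable to $m$, so the error is of order $(\log n)^L m^{-1/3} n^{-1/6}$, and the main term $d\, m^{-1/3} n^{-1/6}$ beats it because $d \ge C'(\log n)^L$. One further case is when $\alpha$ and $\beta$ lie at opposite edges; then the gap is of order $\sqrt{n}$ and the claim is immediate. In every case, after halving constants we obtain $|\lambda_\alpha - \lambda_\beta| \ge c'\, d\, n^{-1/2}$, with $c'$ and $C'$ absolute constants because the semicircle quantile estimates involve no parameters.
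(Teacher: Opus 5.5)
The gap is in your case $m \lesssim d$. You claim $d\,m^{-1/3} \ge d^{2/3} \gg (\log n)^L$ once $C'$ is large. But $d \ge C'(\log n)^L$ only gives $d^{2/3} \ge C'^{2/3}(\log n)^{2L/3}$. Since $L \ge A_0\log\log n \to \infty$, we have $(\log n)^{2L/3} = o((\log n)^L)$ for every fixed $C'$.

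This is a real obstruction, not just a lossy estimate. Take $\alpha = 1$ and $\beta = 1 + \lceil C'(\log n)^L\rceil$. The classical gap $\sqrt{n}(\gamma_1 - \gamma_\beta) \asymp d^{2/3} n^{-1/6}$ is smaller than the rigidity window $(\log n)^L n^{-1/6}$ for $\lambda_1$ alone. So comparing each of $\lambda_\alpha$ and $\lambda_\beta$ to its own classical location gives a negative lower bound, whatever the constants. The reason is that the error is governed by $\min(\hat\alpha,\hat\beta)^{-1/3}$, while the classical spacing is governed by $\max(\hat\alpha,\hat\beta)^{-1/3}$. Your sanity check with $d = (\log n)^{2L}$ lay in the regime $d^{2/3} \gg (\log n)^L$, so it did not expose this.

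The missing idea is monotonicity of the ordered eigenvalues. Suppose $\alpha < \beta \le n/2$ and $\alpha < \beta/2$, and set $\alpha' = \lceil \beta/2\rceil$. Then:
\begin{itemize}
\item $\lambda_\alpha - \lambda_\beta \ge \lambda_{\alpha'} - \lambda_\beta$;
\item $\alpha'$ and $\beta$ are within a factor $2$ of each other;
\item $\beta - \alpha' \ge (\beta-1)/2 \ge d/2$.
\end{itemize}
So your comparable-index case applies to the pair $(\alpha', \beta)$. It gives $\lambda_{\alpha'} - \lambda_\beta \gtrsim d\,\beta^{-1/3}n^{-1/6} \gtrsim d\,n^{-1/2}$ once $C'$ is large. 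In the lower half, lower $\beta$ toward $\alpha$ instead.

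For comparison, the paper's proof has the same outline: the crude bound $\varrho_{\mathrm{sc}} \le 1/\pi$ for bulk or far-apart indices, and the edge spacing $|\gamma_\alpha-\gamma_\beta| \gtrsim d\max\{\alpha,\beta\}^{-1/3}n^{-2/3}$ otherwise. It has the same hole. In its edge case it bounds the rigidity error by $4(\log n)^L\max\{\alpha,\beta\}^{-1/3}n^{-2/3}$, but on the rigidity event one only gets this with $\min\{\alpha,\beta\}^{-1/3}$. The same monotonicity step repairs that argument too.
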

\noindent
We give the proof of Corollary~\ref{cor: lambda_i-lambda_j} in Appendix~\ref{sec: eigidity_eigenvalues}.

\subsubsection{Minimum eigenvalue spacings}\label{prel: minimal_spacing}

As mentioned above, general rigidity estimates do not give control of the gaps between consecutive eigenvalues.
We now present some other results that do give such control.
We begin with the sub-Gaussian Wigner matrix case, where Nguyen, Tao, and Vu \cite{nguyen2017random} obtained the following strong result.

\begin{lemma}[{\cite[Corollary 2.2]{nguyen2017random}}]\label{lem: min_eigenvalue_spacing_2} Let $X$ be a generalized Wigner matrix with identical variances and sub-Gaussian entries. For any $\delta_0 > 0$, there exists a constant $C = C(\delta_0)$ such that, for all $\delta \in (0, \delta_0)$ and any $\alpha \in [n - 1]$,
\begin{align} 
\PP \left( (\lambda_{\alpha} - \lambda_{\alpha+1})   \leq n^{-1/2-\delta} \right) \leq C n^{-\delta}.
\end{align}
\end{lemma}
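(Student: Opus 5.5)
We would prove this by reducing a small gap to a small-ball event for a linear form in independent sub-Gaussian variables, following the strategy of Nguyen, Tao, and Vu. Write $X$ in block form with a minor $X' \in \RR^{(n-1)\times(n-1)}_{\sym}$, a column $x \in \RR^{n-1}$ and a diagonal entry $a$, where $x$ is independent of $X'$. Let $\mu_1 \geq \cdots \geq \mu_{n-1}$ and $u_1, \dots, u_{n-1}$ be the eigenvalues and unit eigenvectors of $X'$. Cauchy interlacing gives $\lambda_{\alpha+1} \leq \mu_\alpha \leq \lambda_\alpha$. So if $\lambda_\alpha - \lambda_{\alpha+1} \leq \epsilon n^{-1/2}$ with $\epsilon = n^{-\delta}$, then $\lambda = \lambda_\alpha$ lies within $\epsilon n^{-1/2}$ of $\mu_\alpha$.

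The secular equation for an eigenvalue $\lambda$ of $X$ not in the spectrum of $X'$ is
\[ \lambda - a = \sum_{\beta} \frac{\langle u_\beta, x\rangle^2}{\lambda - \mu_\beta}. \]
We will use it to show that the $\beta = \alpha$ term cannot be large. The remaining terms should be controlled by $O(n^{o(1)}\sqrt{n})$ with high probability, via the rigidity Theorem~\ref{thm: rigidity_eigenvalue} and Corollary~\ref{cor: lambda_i-lambda_j} applied to $X'$ for far indices, and a dyadic decomposition for nearby indices. Since also $|\lambda - a| = O(\sqrt{n})$, this forces $\langle u_\alpha, x\rangle^2 \lesssim n^{o(1)}\epsilon$, so $|\langle u_\alpha, x\rangle| \lesssim n^{o(1)}\epsilon^{1/2}$. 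To get the sharper rate $\epsilon$ rather than $\epsilon^{1/2}$, we would use two rows and columns, or equivalently the eigenvector $w$ of $X$ together with the minor. Concretely, a small gap makes $w$ nearly orthogonal to $x$ after projecting off the eigenvector of the companion eigenvalue, which yields a joint two-dimensional small-ball event of probability about $\epsilon^2 \cdot n^{o(1)}$ per coordinate choice. Averaging over the $n$ choices of the removed index then gives $C n^{-\delta}$.

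The key probabilistic step is a small-ball estimate. Conditional on $X'$, the vector $u_\alpha$ is fixed and $x$ has independent sub-Gaussian entries, so we need
\[ \PP\bigl(|\langle u_\alpha, x\rangle| \leq t \mid X'\bigr) \lesssim t + e^{-cn} \quad \text{for } t \geq n^{-c}. \]
By the Rudelson--Vershynin inverse Littlewood--Offord theory, this holds provided $u_\alpha$ has least common denominator $\gtrsim n^{c}$, i.e.\ has no additive structure at that scale.

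The main obstacle is exactly this no-structure property of eigenvectors of the minor $X'$. We would first try to prove it by a net argument. For each level set of possible LCD values, structured unit vectors admit a net of small cardinality. If some structured $u$ were nearly an eigenvector, then $(X' - \mu I)u \approx 0$, with $\mu$ restricted to an $n^{-C}$-net of $[-C\sqrt{n}, C\sqrt{n}]$ using Lemma~\ref{lem: norm_inequality}. For a fixed $u$, each row of $X' - \mu I$ gives an independent small-ball event at the scale dictated by the LCD of $u$, and tensorizing over rows beats the net cardinality. This is the standard but delicate part of the argument.

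Once structure is excluded with probability $1 - e^{-cn}$, the remaining steps are bookkeeping. We combine the conditional small-ball bound with the high-probability controls from rigidity, take expectations over $X'$, and absorb the $n^{o(1)}$ losses. These losses are what restrict the statement to $\delta < \delta_0$ and make $C$ depend on $\delta_0$.
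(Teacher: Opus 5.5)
The paper does not prove this lemma; it is quoted from \cite{nguyen2017random}, so the comparison is with the Nguyen--Tao--Vu argument. Your architecture is the right one: interlacing with a minor, a conditional small-ball bound for $\langle u_\alpha(X'),x\rangle$, and excluding additive structure of the minor's eigenvectors. The gap is in the step that is supposed to deliver the rate $\epsilon=n^{-\delta}$, which is not actually an argument.

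\textbf{The secular-equation step.} The single secular equation gives only $|\langle u_\alpha,x\rangle|\lesssim n^{o(1)}\epsilon^{1/2}$, i.e.\ probability $n^{-\delta/2}$. Even this requires an upper bound on $\sum_{\beta<\alpha}\langle u_\beta,x\rangle^2/(\mu_\beta-\lambda_\alpha)$. That sum is large exactly when $\mu_{\alpha-1}$ is close to $\lambda_\alpha$, i.e.\ when the minor has a small gap $\mu_{\alpha-1}-\mu_\alpha$. This is the kind of statement you are trying to prove. Rigidity and Corollary~\ref{cor: lambda_i-lambda_j} only see $|\alpha-\beta|\ge C'(\log n)^L$, and a dyadic decomposition cannot control one exploding term.

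\textbf{The proposed upgrade.} Your ``joint two-dimensional small-ball event of probability $\epsilon^2 n^{o(1)}$ per coordinate'' never identifies the second coordinate. It also never explains why a single small gap forces that coordinate to be small. The bookkeeping does not close either way. Averaging $\epsilon^2$ over coordinates gives an $\epsilon^2$ bound, a GOE-type repulsion estimate stronger than the statement. Proving it would need something like joint anti-concentration of $\langle u_\alpha,x\rangle$ and a quadratic form in $x$. A union bound instead gives $n^{1-2\delta}$.

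\textbf{The missing idea.} The eigenvector equation already gives a bound linear in the gap, with no other minor eigenvalues involved. Let $w$ be the unit eigenvector of $X$ for $\lambda_\alpha$. Let $X^{(k)},x^{(k)},w^{(k)}$ be the minor, column and vector with index $k$ removed. Pair $(X^{(k)}-\lambda_\alpha)w^{(k)}+w_kx^{(k)}=0$ with $u_\alpha(X^{(k)})$ and use $\lambda_\alpha(X^{(k)})\in[\lambda_{\alpha+1},\lambda_\alpha]$:
\[ |w_k|\,\bigl|\langle u_\alpha(X^{(k)}),x^{(k)}\rangle\bigr| = \bigl|\lambda_\alpha-\lambda_\alpha(X^{(k)})\bigr|\,\bigl|\langle u_\alpha(X^{(k)}),w^{(k)}\rangle\bigr| \le \lambda_\alpha-\lambda_{\alpha+1}. \]
Work on the gap event intersected with $\{\|w\|_\infty\le n^{-1/2+\gamma}\}$. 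There, at least $\frac34 n^{1-2\gamma}$ indices satisfy $|w_k|\ge\frac12 n^{-1/2}$, and for each of them $|\langle u_\alpha(X^{(k)}),x^{(k)}\rangle|\le 2\epsilon$. Averaging these indicators over $k$ gives
\[ \PP\bigl(\lambda_\alpha-\lambda_{\alpha+1}\le\epsilon n^{-1/2}\bigr)\le\tfrac43 n^{2\gamma}\max_k\PP\bigl(|\langle u_\alpha(X^{(k)}),x^{(k)}\rangle|\le2\epsilon\bigr)+n^{-C}. \]
A \emph{one}-dimensional small-ball bound $O(\epsilon)+e^{-n^c}$ then suffices.

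\textbf{Two further corrections.} First, the $n^{2\gamma}=n^{o(1)}$ loss cannot be absorbed into $C(\delta_0)$. As sketched you get $n^{-\delta+o(1)}$. The exact form $Cn^{-\delta}$ needs a linear-size set of coordinates with $|w_k|\gtrsim n^{-1/2}$. Second, in your net argument the rows of $X'-\mu I$ are not independent, because $X'$ is symmetric. Tensorization must be done after a decoupling: restrict to rows $I$ and condition on $X'_{I\times I}$, so that $X'_{I\times I^c}$ has independent rows. This is the genuinely delicate point of the symmetric case.
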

\noindent
In particular, for our purposes, this result may be applied to GOE matrices.

Under the normalized variance condition and sub-exponential tail assumptions (weaker conditions than the above), Benigni and Lopatto \cite{benigni2022optimal} obtain the following bound on each consecutive eigenvalue spacing.

\begin{lemma}[\cite{benigni2022optimal}, Proposition 5.7]\label{lem: min_eigenvalue_spacing}   
 Let $X$ be a generalized Wigner matrix with normalized variances and sub-exponential entries.
There exists $\delta_0 \in (0,1)$ such that for all $\delta \in (0, \delta_0)$, there exists constants $C = C(\delta) > 0$ and $c = c  (\delta) > 0$ such that, for any $\alpha \in [n-1]$, 
\begin{align}      \PP \left(  |\lambda_\alpha -         \lambda_{\alpha+1}| < n^{-\delta - 1/6}  \hat{\alpha}^{-1/3} \right) < Cn^{-\delta -c}.    \end{align} 
\end{lemma}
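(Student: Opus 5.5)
The natural route to this spacing bound is the three-step strategy for local eigenvalue statistics: rigidity and local laws, then short-time Dyson Brownian motion relaxation, then Green function comparison. The plan is to transfer a spacing estimate known for the GOE (for instance Lemma~\ref{lem: min_eigenvalue_spacing_2}, or exact Gaussian computations) first to Gaussian-divisible generalized Wigner matrices and then to general ones. The target event $\{\lambda_\alpha - \lambda_{\alpha+1} < n^{-\delta-1/6}\hat{\alpha}^{-1/3}\}$ is a gap smaller by a factor $n^{-\delta}$ than the typical local spacing at index $\alpha$. For the GOE the level repulsion (density of a rescaled gap vanishing like $s^{\beta}$ with $\beta = 1$) gives probability $O(n^{-\delta})$ for such an event. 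The extra $n^{-c}$ must come from making every transfer step lose only a polynomially smaller error.

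\textbf{Step 1 (Gaussian-divisible case).} Write $X_t = \sqrt{1-t}\,X_0 + \sqrt{t}\,G$, where $G$ is an independent GOE matrix rescaled to match the variance profile, with $t = n^{-1+\epsilon}$ in the bulk (adjusted to the local scale $\hat{\alpha}^{-1/3}n^{-2/3}$ near the edge). By Theorem~\ref{thm: rigidity_eigenvalue}, $X_0$ is rigid with overwhelming probability. I would then use the homogenization/coupling theorem for Dyson Brownian motion started from a rigid configuration: coupling the DBM from the spectrum of $X_0$ with a DBM from a GOE spectrum, the rescaled gaps near index $\alpha$ agree up to an error $n^{-c_1}$ in units of the local spacing, with probability $1 - n^{-D}$. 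If $c_1 > \delta$, a gap below $n^{-\delta}$ (in local units) for $X_t$ forces a GOE gap below $2n^{-\delta}$. Level repulsion for the GOE then gives probability $O(n^{-2\delta})$, which is better than required. This is where the restriction $\delta < \delta_0$ enters: $\delta_0$ is essentially the homogenization error exponent.

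\textbf{Step 2 (removing the Gaussian part).} The gap event is not a smooth function of the resolvent, so I would replace its indicator by a smoothed functional. Concretely, I would bound the probability of a small gap by the expectation of a smooth function $F$ of $\eta\,\mathrm{Im}\,\mathrm{Tr}\,G(E+i\eta)$ integrated over an energy window around $\sqrt{n}\gamma_\alpha$, with $\eta$ slightly below $n^{-\delta}$ times the local spacing. Rigidity (Theorem~\ref{thm: rigidity_eigenvalue}) localizes which eigenvalues can be near $\sqrt{n}\gamma_\alpha$. A four-moment (or, since only $t = n^{-1+\epsilon}$ is added, two-moment with long-time matching) Green function comparison between $X_0$ and a matrix $X_0'$ chosen so that $X_0'$ evolved to time $t$ matches $X_0$ up to third moments then shows that the two expectations differ by $n^{-c_2}$. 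This uses the entrywise local law to control resolvent entries at scale $\eta \ll$ spacing. Such control is available only with a loss $n^{\epsilon}$, and this loss is what makes the final bound $n^{-\delta-c}$ rather than $n^{-2\delta}$.

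\textbf{Main obstacle.} The hardest part is Step 2 at sub-microscopic scales $\eta \approx n^{-\delta}\times$ spacing. There the local law no longer holds, so the comparison has to use a priori bounds via $\mathrm{Im}\,G(E+i\eta) \le (\eta'/\eta)\,\mathrm{Im}\,G(E+i\eta')$ at a larger scale $\eta'$. This costs a factor $n^{\delta}$ and must be absorbed by the comparison error $n^{-c_2}$, which again forces $\delta$ to be small. The edge regime (small $\hat{\alpha}$) needs the same argument with edge-adapted scales, using the $\hat{\alpha}^{-1/3}n^{-1/6}$ rigidity window from Theorem~\ref{thm: rigidity_eigenvalue}.
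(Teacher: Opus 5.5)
The paper gives no proof of this lemma. It is quoted from Proposition~5.7 of \cite{benigni2022optimal} and used as a black box, and in fact only through the weaker consequence $\PP\le C n^{-\delta/2}$ in the proof of Theorem~\ref{thm: sum_of_inverse_spacing}, so the extra factor $n^{-c}$ is never needed downstream. There is therefore no in-paper argument to compare against. Your architecture is the standard one for level-repulsion estimates of this kind: GOE repulsion, transfer to Gaussian-divisible matrices by DBM homogenization, then removal of the Gaussian part by Green function comparison at a scale $n^{-\delta}$ below the local spacing, with the polynomial losses forcing $\delta<\delta_0$. As written, however, three ingredients are misstated in ways that would break the argument.

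\textbf{Where the gain comes from.} Your opening paragraph says the GOE bound is $O(n^{-\delta})$ and that the extra $n^{-c}$ comes from the transfer steps. That cannot work, since transfer steps only add error. The gain must come entirely from the GOE gap density vanishing like $s$, which gives a bound of order $(n^{o(1)}\epsilon)^2$ for a gap below $\epsilon$ local spacings; this is what your Step~1 actually uses. Lemma~\ref{lem: min_eigenvalue_spacing_2} from \cite{nguyen2017random} gives only the linear bound $Cn^{-\delta}$, so it cannot be this input. You need a quadratic repulsion estimate for the GOE that is uniform in $\alpha$, including the edge and intermediate regimes, and that is a substantial external theorem.

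\textbf{The Gaussian component.} It must be orthogonally invariant. If $G$ carries the profile $(\sigma_{ij}^2)$, the eigenvalues of $\sqrt{1-t}\,X_0+\sqrt{t}\,G$ do not follow Dyson Brownian motion, and no homogenization theorem applies. Take $G$ a GOE and put the inhomogeneity into $X_0'$, whose off-diagonal variances become $(\sigma_{ij}^2-t)/(1-t)$. This is admissible because $\sigma_{ij}^2\ge c_1$.

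\textbf{The DBM time away from the bulk.} It is not set by the local spacing. In the scaling $X/\sqrt{n}$, eigenvalue $\alpha$ moves by one local spacing $\hat{\alpha}^{-1/3}n^{-2/3}$ only after a time of order $\hat{\alpha}^{-2/3}n^{-1/3}$, the same threshold as in Theorem~\ref{thm: OU_process}. Local equilibration therefore needs $t\ge n^{\epsilon}\hat{\alpha}^{-2/3}n^{-1/3}$. This matches $n^{\epsilon}$ times the spacing only in the bulk, and equals $n^{-1/3+\epsilon}$ at the edge. If by adjusting to the local scale you meant $t\approx n^{\epsilon}\hat{\alpha}^{-1/3}n^{-2/3}$, the edge gaps of $X_t$ have not relaxed and Step~1 fails there. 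With the correct $t$, the fourth-moment mismatch in Step~2 is of order $n^{-1/3+\epsilon}$ rather than $n^{-1+\epsilon}$. That is still polynomially small, but it must be fed into the choice of $\delta_0$.

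With these fixes the plan is a sound roadmap. Each of its pillars, however, is a major theorem in its own right: uniform GOE repulsion, DBM homogenization up to the edge, and sub-microscopic Green function comparison. That is presumably why the paper simply cites \cite{benigni2022optimal}.
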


We emphasize the difference in the admissible choices of $\delta$ in the two results. Under identical variances, Lemma~\ref{lem: min_eigenvalue_spacing_2} allows arbitrary $\delta > 0$. In contrast, under normalized variances, Lemma~\ref{lem: min_eigenvalue_spacing} only proves the existence of a small $\delta_0 > 0$ such that $\delta \in (0, \delta_0)$ are permitted; further, no explicit value is given for this $\delta_0$ by \cite{benigni2022optimal}. 
This lack of an explicit $\delta_0$ is the reason why, in Theorem~\ref{thm: resample_block_1}, we only assert the existence of a small constant $\delta>0$ in the generalized Wigner case, whereas in the Wigner case we may take any $\delta < 1/12$.
(The $\delta$ constant in our results has the same name as but different meaning from the $\delta$ in these results.)

\subsubsection{Variance bounds for eigenvalues}\label{prel: variance_bound}

Finally, to actually use the variance identities to establish noise sensitivity of eigenvectors, we will need to control the variances of individual eigenvalues.
Dallaporta \cite{dallaporta2012eigenvalue} obtains variance bounds for individual eigenvalues for
  Wigner matrices under an exponential tail condition and the assumption of matching four moments with the Gaussian ensembles in different regimes: at the edge, in the bulk, and in the intermediate regime.
We only bring up these results for the sake of comparison, so let us state the bound for the top eigenvalue in the GOE case.

\begin{lemma}[\cite{dallaporta2012eigenvalue}, Theorem 2]
    Let $X$ be a random matrix with i.i.d.\ sub-exponential entries whose first four moments match those of $X \sim \GOE(n)$.
    Then, there exists a constant $c>0$ depending only on the entrywise sub-exponential tail bound such that \begin{align}
         \Var(\lambda_1) \leq c n^{-1/3}. 
    \end{align}
\end{lemma}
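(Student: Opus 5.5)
The plan is to reduce to the Gaussian case via an edge comparison theorem, and to handle the tails separately using rigidity. Since $\Var(\lambda_1) \le \EE[(\lambda_1 - 2\sqrt{n})^2]$, it suffices to show that the normalized fluctuation $Z \colonequals n^{1/6}(\lambda_1 - 2\sqrt{n})$ has bounded second moment uniformly in $n$. I would write
\[ \EE[Z^2] = \int_0^\infty 2s\,\PP(|Z| > s)\,ds \]
and split the range of $s$ into three regimes: $s \le 1$ (trivial), $1 \le s \le (\log n)^L$ (moderate deviations), and $s \ge (\log n)^L$ (large deviations). Here $L$ is chosen as in Theorem~\ref{thm: rigidity_eigenvalue}.

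\emph{Large deviations.} For $s \ge (\log n)^L$, I would invoke Theorem~\ref{thm: rigidity_eigenvalue} with $\beta = 1$, using $\sqrt{n}\gamma_1 = 2\sqrt{n} + O(n^{-1/6})$. This gives
\[ \PP\bigl(|Z| \ge (\log n)^L\bigr) \le C\exp\bigl(-c(\log n)^{\phi L}\bigr), \]
which is superpolynomially small. To control the contribution of this event to $\EE[Z^2]$, I would apply Cauchy--Schwarz:
\[ \EE\bigl[Z^2 \One\{|Z| \ge (\log n)^L\}\bigr] \le \EE[Z^4]^{1/2}\,\PP\bigl(|Z| \ge (\log n)^L\bigr)^{1/2}. \]
The crude bound $\EE[Z^4] \le n^{2/3}\,\EE\|X\|^4 = \mathrm{poly}(n)$ holds because sub-exponential entries give polynomial moments of the operator norm. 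Hence this contribution is $o(1)$.

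\emph{Moderate deviations.} For $1 \le s \le (\log n)^L$, the GOE case follows from the Ledoux--Rider small deviation inequalities:
\[ \PP\bigl(\lambda_1 \ge 2\sqrt{n}(1+\epsilon)\bigr) \le Ce^{-cn\epsilon^{3/2}}, \qquad \PP\bigl(\lambda_1 \le 2\sqrt{n}(1-\epsilon)\bigr) \le Ce^{-cn^2\epsilon^3}. \]
Taking $\epsilon = s n^{-2/3}/2$ yields $\PP(|Z_{\GOE}| > s) \le Ce^{-cs^{3/2}}$. To transfer this to $X$, I would use the Tao--Vu four moment theorem at the edge, which applies because the first four moments match those of the GOE. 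In the form I need, it states that for some $c_0 > 0$,
\[ \PP\bigl(Z_X \le s\bigr) \le \PP\bigl(Z_{\GOE} \le s + n^{-c_0}\bigr) + n^{-c_0}, \]
and symmetrically with the roles of $X$ and the GOE exchanged, uniformly over $|s| \le (\log n)^L$. Consequently $\PP(|Z_X| > s) \le Ce^{-cs^{3/2}} + 2n^{-c_0}$ in this range. The contribution to $\EE[Z^2]$ is therefore at most
\[ \int_1^{(\log n)^L} 2s\bigl(Ce^{-cs^{3/2}} + 2n^{-c_0}\bigr)\,ds = O(1) + O\bigl((\log n)^{2L}n^{-c_0}\bigr) = O(1). \]

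Combining the three regimes gives $\EE[Z^2] = O(1)$, which is $\Var(\lambda_1) \le cn^{-1/3}$.

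The main obstacle is the comparison step. I need a version of the edge four moment theorem whose additive error $n^{-c_0}$ and shift are uniform over the whole window $|s| \le (\log n)^L$, rather than holding only for fixed $s$. My first attempt would be to apply the smoothed form of the comparison theorem, $|\EE G(Z_X) - \EE G(Z_{\GOE})| \le n^{-c_0}$ for smooth $G$ with derivatives bounded by $n^{c}$ for small $c$, to bump functions sandwiching the indicator of $\{Z \le s\}$. Its error is polynomially small while the window is only polylogarithmic, so uniformity in $s$ should come for free from the derivative bounds on these bump functions.
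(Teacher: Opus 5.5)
The paper gives no proof of this lemma. It is quoted from Dallaporta only to contrast with the $F(n,\alpha)$-weakened bound of Corollary~\ref{cor: eigenvalue_bound}. Your argument is correct, and it follows essentially the route of that source: the Gaussian edge bound of Ledoux--Rider, transfer to fourth-moment-matching Wigner matrices via the Tao--Vu four moment theorem, and Erd\H{o}s--Yau--Yin localization for the far tail.

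The step you flag as the main obstacle is not one. The four moment theorem's error $n^{-c_0}$ depends on the test function $G$ only through its derivative bounds. The theorem is stated in the variable $\lambda_1(\sqrt{n}X)$, and $Z = n^{-1/3}\lambda_1(\sqrt{n}X) - 2n^{2/3}$. So a bump of width $w \geq n^{-1/3}$ in $Z$ has derivatives $O(1)$ in that variable. The sandwiching comparison therefore holds uniformly over all $s$, not just on a window.

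Two small points need tightening.

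\begin{itemize}
\item \emph{Choice of $L$.} You must take $L$ at the bottom of its admissible range, $L = A_0\log\log n$. Then $(\log n)^{2L} = \exp(2A_0(\log\log n)^2) = n^{o(1)}$, so $(\log n)^{2L} n^{-c_0} \to 0$. This window is quasi-polylogarithmic, not polylogarithmic as you say, but subpolynomial is all you need. Near the top of the range, $(\log n)^L$ is of order $n^{1/10}$ and would swamp $n^{-c_0}$. With this choice $\exp(-c(\log n)^{\phi L})$ is still superpolynomially small, so your Cauchy--Schwarz step goes through.
\item \emph{Variance normalization.} Theorem~\ref{thm: rigidity_eigenvalue} assumes $\sum_j \sigma_{ij}^2 = n$. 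GOE-matching diagonal entries have variance $2$, so the row sums are $n+1$. Apply rigidity to $\sqrt{n/(n+1)}\,X$ instead. On the rigidity event its top eigenvalue differs from $\lambda_1(X)$ by $O(n^{-1/2}) = o(n^{-1/6})$. Likewise $\sqrt{n}\gamma_1 = 2\sqrt{n} - O(n^{-1/6})$, so take the tail cutoff at $2(\log n)^L$ rather than $(\log n)^L$.
\end{itemize}
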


For the specific case of $X \sim \GOE(n)$, Ledoux and Rider \cite{ledoux2010small} proved the same bound earlier as well.
Using eigenvalue rigidity as discussed in Theorem~\ref{thm: rigidity_eigenvalue}, Bordenave, Lugosi, and Zhivotovsky \cite{bordenave2020noise} extended the bound to generalized Wigner matrices with identical variances and sub-Weibull entries.

\begin{lemma}[\cite{bordenave2020noise}, Lemma 4]\label{lem: top_lambda_wigner}
Let $X$ be a generalized Wigner matrix with identical variances and sub-Weibull entries. There exists a constant $c>0$ depending only on the constants in the sub-Weibull assumption such that, for all sufficiently large $n$,
    \begin{align}
        \Var(\lambda_1) \leq c n^{-1/3}.
    \end{align}
\end{lemma}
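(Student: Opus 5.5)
The plan is to bound the variance by a second moment around a deterministic centering and then integrate a tail bound on the natural edge scale $n^{-1/6}$:
\[ \Var(\lambda_1) \leq \EE\big[(\lambda_1 - 2\sqrt{n})^2\big] = n^{-1/3}\int_0^\infty 2s\, \PP\big(|\lambda_1 - 2\sqrt n| \geq s n^{-1/6}\big)\, ds . \]
It then suffices to show that the integrand is summable uniformly in $n$. Concretely, I would aim for a tail estimate of the form
\[ \PP\big(|\lambda_1 - 2\sqrt n| \geq s n^{-1/6}\big) \leq C\exp(-c s^{\kappa}), \qquad 1 \leq s \leq n^{\epsilon}, \]
for some $\kappa > 0$. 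I would split the integral into three ranges: $s \leq s_0$ with $s_0$ a constant (contributes $O(1)$ trivially), $s_0 \leq s \leq n^{\epsilon}$, and $s \geq n^{\epsilon}$.

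\textbf{Large deviations range $s \geq n^{\epsilon}$.} I would use a crude argument. The sub-Weibull assumption gives $\EE\|X\|^4 \leq \mathrm{poly}(n)$, for instance via $\|X\| \leq \|X\|_F$ and moments of the entries. It also gives $\PP(\|X\| \geq 2\sqrt n + n^{\epsilon - 1/6}) \leq C\exp(-n^{c})$; for sub-Gaussian entries one would take this from Lemma~\ref{lem: norm_inequality} combined with rigidity, and in the sub-Weibull case from the stretched-exponential bound in Theorem~\ref{thm: rigidity_eigenvalue} with $L$ taken at the top of its range. By Cauchy--Schwarz, the contribution of this range is then at most $\mathrm{poly}(n)\exp(-n^{c/2}) = o(n^{-1/3})$.

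\textbf{Moderate range $s_0 \leq s \leq n^{\epsilon}$.} This is the main obstacle. Theorem~\ref{thm: rigidity_eigenvalue} as stated only controls deviations at scale $(\log n)^{L} n^{-1/6}$ with $L \geq A_0\log\log n$, and integrating it would lose exactly the factor $F(n,1)$-type polylog we want to avoid. One also has $\sqrt n\gamma_1 = 2\sqrt n - O(n^{-1/6})$, so the centering is harmless. For $s$ in this range I would instead invoke edge-specific estimates valid down to $O(1)$ multiples of $n^{-1/6}$, treating the two tails separately.

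For the \emph{upper tail}, I would use the high-moment method (Soshnikov-type bounds on $\EE\,\mathrm{tr}\, X^{2p}$ with $p \asymp n^{2/3}s^{1/2}$), or equivalently the optimal edge rigidity of Erdős--Yau--Yin. Either gives $\PP(\lambda_1 \geq 2\sqrt n + s n^{-1/6}) \leq C e^{-cs^{3/2}}$.

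For the \emph{lower tail}, the event $\lambda_1 \leq 2\sqrt n - s n^{-1/6}$ forces the interval $[2\sqrt n - sn^{-1/6},\, 2\sqrt n]$ to contain no eigenvalues. The semicircle law predicts about $s^{3/2}$ eigenvalues there, so the local law at the edge (with its high-probability error bounds on the eigenvalue counting function) gives a bound of the form $C e^{-cs^{\kappa}}$.

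Combining the three ranges gives $\Var(\lambda_1) \leq n^{-1/3}\big(s_0^2 + \int 2sCe^{-cs^{\kappa}}\,ds\big) + o(n^{-1/3}) \leq c\, n^{-1/3}$. The identical-variance assumption enters only through the availability of these sharp edge estimates, and it also makes it possible to transfer from the Gaussian case by moment matching near the edge if a cleaner route is needed.
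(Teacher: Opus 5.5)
\textbf{There is a genuine gap in the moderate range.} Your overall plan is sound: bound $\Var(\lambda_1)$ by $\EE(\lambda_1-2\sqrt n)^2$, integrate tails on the $n^{-1/6}$ scale, and dispose of huge deviations by a polynomial moment plus Cauchy--Schwarz. But the moderate range is exactly what separates $c\,n^{-1/3}$ from the $F(n,1)$-type loss in the paper's own rigidity-based Corollary~\ref{cor: eigenvalue_bound}, and there your argument does not go through.

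For the lower tail you appeal to the local law for the eigenvalue counting function. Local laws, and the rigidity estimate of Theorem~\ref{thm: rigidity_eigenvalue}, control eigenvalue counts only up to an additive error of order $(\log n)^{C\log\log n}$ (or $n^{\epsilon}$ in later formulations). So they cannot certify that $[2\sqrt n - sn^{-1/6}, 2\sqrt n]$ is nonempty when its expected occupation $\asymp s^{3/2}$ is below that error. For $s_0 \leq s \leq (\log n)^{C'\log\log n}$ you therefore get no $n$-uniform decay. Integrating the trivial bound against $2s\,ds$ then reproduces exactly the polylogarithmic factor you set out to avoid.

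The same objection applies to your upper-tail option of ``optimal edge rigidity of Erd\H{o}s--Yau--Yin''; it is not equivalent to the moment method. Theorem~\ref{thm: rigidity_eigenvalue} requires $L \geq A_0\log\log n$ and says nothing for $s < (\log n)^{A_0\log\log n}$. The moment method does give an $O(1)$-scale upper tail: already $p = c\,n^{2/3}$, without letting $p$ grow with $s$, yields $Ce^{-cs}$, which suffices. However, the available versions assume sub-Gaussian, often symmetric, entries, so under a sub-Weibull hypothesis you would also need a truncation argument.

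\textbf{What is missing} is an input that genuinely works at the $O(1)$ edge scale, at least for the lower tail. This is where the identical-variance hypothesis is actually needed, not an optional ``cleaner route'': edge universality via Green function comparison with the Gaussian ensemble. It gives $|\PP(n^{1/6}(\lambda_1-2\sqrt n)\leq s) - \PP_{\GOE}(n^{1/6}(\lambda_1-2\sqrt n)\leq s)| \leq n^{-\delta}$, up to an $n^{-\epsilon}$ shift in $s$, for $|s|\leq(\log n)^{\log\log n}$. This transfers the Ledoux--Rider GOE bounds $Ce^{-cs^{3/2}}$ (upper tail) and $Ce^{-cs^{3}}$ (lower tail) with an additive $n^{-\delta}$. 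That error is harmless once integrated against $2s\,ds$ over a polylogarithmic range.

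Beyond that range you are also underusing Theorem~\ref{thm: rigidity_eigenvalue}. Taking $L = \log s/\log\log n$ gives directly $\PP(|\lambda_1-\sqrt n\gamma_1|\geq sn^{-1/6})\leq C\exp(-cs^{\phi})$ for all $s\in[(\log n)^{A_0\log\log n},(10n)^{1/10}]$. Monotonicity of the tail in $s$ bridges the interval between $(\log n)^{\log\log n}$ and $(\log n)^{A_0\log\log n}$. Your crude Cauchy--Schwarz step then covers $s \geq (10n)^{1/10}$, and this must be done for both tails, not only through $\|X\|$.

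With the comparison step replacing the local-law argument, your decomposition closes. As written, the lower tail for $s_0\leq s\leq\mathrm{polylog}(n)$ is unproved.
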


To obtain a general variance bound for each individual eigenvalue for our sub-Gaussian generalized Wigner matrices, we derive the following general estimate, which follows from Theorem~\ref{thm: rigidity_eigenvalue}.

\begin{cor}
\label{cor: eigenvalue_bound}
Let $X$ be a sub-Gaussian generalized Wigner matrix.
There exists a constant $C > 0$ depending only on the parameters of $X$ such that, for every $\alpha \in [n]$, we have
    \begin{align}\label{equ: var_bound_alpha}
        \Var({\lambda}_\alpha) \leq C  (\log n)^{C \log \log n}\hat{\alpha}^{-2/3}n^{-1/3}.
    \end{align}
    When $X$ has identical variances and $\hat{\alpha} = 1$, then we further have
    \[ \Var({\lambda}_\alpha) \leq Cn^{-1/3}. \]
    Phrased differently, for all $\alpha \in [n]$ we have
    \begin{align}
        \Var(\lambda_{\alpha}) \leq F(n, \alpha) \hat{\alpha}^{-2/3} n^{-1/3},
    \end{align}
    where $F(n, \alpha)$ is as defined in \eqref{eq:F}.
\end{cor}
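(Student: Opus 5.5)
We bound $\Var(\lambda_\alpha) \le \EE[(\lambda_\alpha - \sqrt{n}\gamma_\alpha)^2]$, since the variance is at most the mean squared deviation from any constant. We then split this expectation on the rigidity event of Theorem~\ref{thm: rigidity_eigenvalue}. Sub-Gaussian entries are in particular sub-exponential, so that theorem applies.

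Fix $L = A_0 \log\log n$, the smallest admissible choice in \eqref{equ: thm_L}. Let $E$ be the event that $|\lambda_\beta - \sqrt{n}\gamma_\beta| < (\log n)^L \hat\beta^{-1/3} n^{-1/6}$ for all $\beta$. On $E$ the squared deviation is at most $(\log n)^{2L}\hat\alpha^{-2/3}n^{-1/3} = (\log n)^{2A_0\log\log n}\hat\alpha^{-2/3}n^{-1/3}$, which has the claimed form with $C \ge 2A_0$.

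On $E^c$ we use Cauchy--Schwarz:
\[
\EE[(\lambda_\alpha-\sqrt n\gamma_\alpha)^2\One_{E^c}] \le \big(\EE(\|X\|+2\sqrt n)^4\big)^{1/2}\PP(E^c)^{1/2}.
\]
Here $|\lambda_\alpha| \le \|X\|$ and $|\gamma_\alpha| \le 2$. Integrating the tail in Lemma~\ref{lem: norm_inequality} shows the fourth-moment factor is $O(n)$, and Theorem~\ref{thm: rigidity_eigenvalue} gives $\PP(E^c) \le C\exp(-c(\log n)^{\phi L})$.

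The main obstacle is making this error term negligible against $n^{-1/3}\hat\alpha^{-2/3} \ge n^{-1}$. With $L = A_0\log\log n$, we have $(\log n)^{\phi L} = \exp(\phi A_0 (\log\log n)^2)$. This is superpolylogarithmic but subpolynomial in $\log$-scale: it beats $\log n$ only mildly, so $\exp(-c(\log n)^{\phi L})$ decays faster than any power of $n$ only when $\phi A_0 (\log\log n)^2 \gg \log\log n$. That does hold for large $n$, since $(\log n)^{\phi A_0\log\log n}/\log n \to \infty$, so the error term is $o(n^{-2})$. If this margin felt too thin, we could instead pick $L$ larger, say $L = A\log\log n$ with $A$ big, still within \eqref{equ: thm_L}, at the cost of enlarging $C$. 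The overall bound is thus $C(\log n)^{C\log\log n}\hat\alpha^{-2/3}n^{-1/3}$.

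For the identical-variance edge case $\hat\alpha = 1$, Lemma~\ref{lem: top_lambda_wigner} gives $\Var(\lambda_1) \le cn^{-1/3}$, because sub-Gaussian implies sub-Weibull. Applying it to $-X$, which satisfies the same hypotheses, gives the same bound for $\lambda_n$. Combining the two cases with the definition \eqref{eq:F} of $F(n,\alpha)$, taking $A_1 = c$ and $A_2$ large enough to absorb $C$ (using $(\log n)^{A_2\log\log n} \ge C(\log n)^{C\log\log n}$ for $A_2 > C$ and large $n$), yields the final reformulation.
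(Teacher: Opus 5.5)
Your proposal is correct and follows essentially the paper's argument: you bound $\Var(\lambda_\alpha)$ by $\EE[(\lambda_\alpha-\sqrt{n}\gamma_\alpha)^2]$, take the main term $(\log n)^{2L}\hat\alpha^{-2/3}n^{-1/3}$ from Theorem~\ref{thm: rigidity_eigenvalue}, and control the remainder with Lemma~\ref{lem: norm_inequality} together with the superpolynomially small rigidity failure probability. The only differences are that you apply Cauchy--Schwarz on $E^c$ where the paper splits the tail integral at $T_0$ and $M$, and that you correctly obtain $(\log n)^{2L}$ where the paper's last line writes $(\log n)^{L}$.

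One caveat applies equally to the paper. In the final reformulation, the edge value $F(n,\alpha)=A_1$ is justified via Lemma~\ref{lem: top_lambda_wigner} only under identical variances. For a general variance profile, your argument gives the $(\log n)^{C\log\log n}$ factor at $\hat\alpha=1$ as well.
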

\noindent
The second claim is an immediate consequence of Lemma~\ref{lem: top_lambda_wigner}, and we give the proof for the first part of Corollary~\ref{cor: eigenvalue_bound} in Appendix~\ref{sec: eigidity_eigenvalues}.
It seems likely that the extra sub-polynomial factors could be removed by similar arguments to those used in \cite{bordenave2020noise}, but, since such factors are of sub-leading order in the thresholds in our results, we do not explore such generalizations here.

\section{Sensitivity under Ornstein-Uhlenbeck dynamics: Proof of Theorem~\ref{thm: OU_process}}\label{sec: OU_proof}

Motivated by Lemma~3.5 in \cite{chatterjee2008chaos}, we first use the OU semigroup to prove the monotonicity of 
 $ \mathbb{E} \left[ \langle v_\alpha(G(0)), v_\alpha ( G(t)) \rangle^2 \right]  $ for $ t \in \RR_+$.
We note here that, for any given $t$, $G(t)$ has the law $\GOE(n)$, and since almost surely all eigenvalues of such a random matrix are simple, $v_{\alpha}(G(t))v_{\alpha}(G(t))^{\top}$ is indeed almost surely well-defined.

\begin{lemma}\label{lem: OU_mono_inner}
   For each $\alpha \in [n]$, $ \mathbb{E} \left[ \langle v_\alpha(G(0)), v_\alpha ( G(t)) \rangle^2 \right] \geq 0 $ is non-increasing in $t \in \RR_+$.
\end{lemma}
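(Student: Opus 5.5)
We will express the quantity as an inner product of gradients and then apply Lemma~\ref{lem: var_identity_OU} to $f = \lambda_\alpha$. By Proposition~\ref{prop: derivative}, almost surely (since $G(t) \sim \GOE(n)$ has simple spectrum for each fixed $t$) we have $\nabla \lambda_\alpha(X) = v_\alpha(X) v_\alpha(X)^\top$. Therefore
\[ \langle \nabla \lambda_\alpha(G(0)), \nabla \lambda_\alpha(G(t)) \rangle = \mathrm{tr}\bigl(v_\alpha(G(0))v_\alpha(G(0))^\top v_\alpha(G(t))v_\alpha(G(t))^\top\bigr) = \langle v_\alpha(G(0)), v_\alpha(G(t))\rangle^2 . \]
The right-hand side is non-negative, so non-negativity is immediate. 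One convention point needs care: the entries of $\nabla f$ are the partials in the $(X_{ij})_{i \le j}$ coordinates, with the GOE diagonal variance $2$. The inner product appearing in Lemma~\ref{lem: var_identity_OU} is the one that makes the Dirichlet form equal to the Frobenius pairing of the matrix gradients; this is the normalization of the OU generator for this $\mu$. I will check this carefully, since an unweighted pairing would produce a diagonal-weighted version of the trace rather than exactly the squared overlap.

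Next, I verify the hypotheses of Lemma~\ref{lem: var_identity_OU} for $f = \lambda_\alpha$. By Weyl's inequality $\lambda_\alpha$ is $1$-Lipschitz in operator norm, hence Lipschitz in the entries and absolutely continuous. Its gradient exists on the full-measure set of matrices with simple spectrum, where $\|\nabla\lambda_\alpha\|_F = 1$ is bounded, so $\EE\|\nabla f(G(0))\|^2<\infty$.

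The key step is to identify $\EE\langle \nabla f(G(0)), \nabla f(G(t))\rangle$ with $D(f, P_t f)$. Since $G(t) = e^{-t}G(0) + \sqrt{1-e^{-2t}}\,G'$ in law (Mehler's formula), we have $\nabla P_t f = e^{-t} P_t \nabla f$ for Lipschitz $f$. This is obtained by differentiating under the Gaussian expectation, justified by dominated convergence because $\nabla f$ is bounded and defined almost everywhere and $G'$ has a density. Combining this with the formula $D(f, P_tf) = \EE\langle \nabla f(G), \nabla P_t f(G)\rangle$ from Lemma~\ref{lem: var_identity_OU} gives
\[ D(f,P_tf) = e^{-t}\, \EE\langle \nabla f(G(0)), \nabla f(G(t))\rangle = e^{-t}\,\EE\bigl[\langle v_\alpha(G(0)), v_\alpha(G(t))\rangle^2\bigr]. \]
(With $\tau = 1$, this matches the weight $e^{-\tau t}$ in the variance identity.)

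The factor $e^{-t}$ is the main obstacle. Lemma~\ref{lem: dirichlet_mon} says that $D(f,P_tf)$ is non-increasing, but dividing by the decreasing function $e^{-t}$ does not preserve monotonicity. I will therefore prove monotonicity of $h(t) = \EE\langle \nabla f(G(0)), \nabla f(G(t))\rangle$ directly, using the spectral/semigroup structure as in Lemma~3.5 of \cite{chatterjee2008chaos}. Write $g = \nabla f$, viewed componentwise as a vector of functions in $L^2(\mu)$. By reversibility and the semigroup property,
\[ h(t) = \sum_{\text{components}} \langle g, P_t g\rangle_\mu = \sum \|P_{t/2} g\|_\mu^2 . \]
For a reversible Markov semigroup, $\|P_{s}g\|^2$ is non-increasing in $s$, because $P_s$ is a contraction and $\frac{d}{ds}\|P_s g\|^2 = -2D(P_sg,P_sg) \le 0$. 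Hence $h$ is non-negative and non-increasing, which proves the lemma.
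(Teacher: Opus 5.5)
Your proof is correct and, once the detour is stripped away, it is the paper's argument. You write the squared overlap as $\sum_{i,j}\EE[g_{ij}(G(0))\,g_{ij}(G(t))]$ with $g_{ij}=(v_\alpha)_i(v_\alpha)_j$, pass to $\sum_{i,j}\|P_{t/2}g_{ij}\|_\mu^2$ by reversibility and the semigroup property, and conclude by the contraction property of $P_s$; the paper phrases that last step as a conditional-variance (Jensen) identity. The identification with $\nabla\lambda_\alpha$, the appeal to Lemma~\ref{lem: var_identity_OU}, the $e^{-t}$ issue, and the diagonal-weight convention check you flag are all unnecessary. The monotonicity argument applies directly to the bounded functions $g_{ij}$ indexed by all $(i,j)\in[n]^2$.
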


\begin{proof}[Proof of Lemma~\ref{lem: OU_mono_inner}]
Fix $\alpha \in [n]$, and for simplicity, write $v \colonequals v_\alpha$ as the proof is identical for all $\alpha \in [n]$. 
Let us also abbreviate $G \colonequals G(0)$ and $\mu \colonequals \GOE(n)$.
We first rewrite $ \mathbb{E} \left[ \langle v(G), v( G(t)) \rangle^2 \right] $ by the linearity of expectation, obtaining \begin{align}
     \mathbb{E} \left[ \langle v(G), v( G(t)) \rangle^2 \right]  &= \sum_{i,j \in [n]} \EE \left[ v_i(G)v_j(G) v_i( G(t)) v_j( G(t))  \right] \\
     &\equalscolon \sum_{i,j \in [n]} \EE \left[ f_{ij}(G)   f_{ij}( G(t))\right]. \label{equ: inner_fij}
\end{align}
     Then, for each $i,j \in [n]$, since $P_t = P_{t/2}^2$ by the semigroup property, we have
     \begin{align}
        \EE \left[ f_{ij}(G)   f_{ij}( G(t))\right] 
        &= \langle f_{ij}, P_t f_{ij} \rangle_{\mu} \\
        &= \langle P_{t/2}f_{ij}, P_{t/2} f_{ij} \rangle_{\mu} \\
        &= \Ex_{G \sim \mu} \left[ \left( P_{t/2}  f_{ij}(G) \right)^2 \right] \geq 0. \label{equ: f_ij_GGt}
    \end{align}
Use the stationarity of the process and the semigroup property $P_t P_s = P_{t+s}$ again, we also have
\begin{align}
    \EE \left[(P_t f(G))^2\right]-\EE \left[(P_{t+s} f(G))^2\right]
&=
\EE \left[(P_t f(G(s)))^2\right]
-\EE \left[(\EE \left[P_t f(G(s))\middle|G\right])^2\right] \\
&=
\EE \left[  \Var \left(P_t f(G(s))\middle|G\right)\right] \geq 0, 
\end{align}
which, together with \eqref{equ: f_ij_GGt}, shows that $ \EE \left[ f_{ij}(G)   f_{ij}( G(t))\right]$ is non-increasing in $t\in \RR_+$. 
Therefore, in \eqref{equ: inner_fij}, since each term is non-increasing, we have that $ \mathbb{E} \left[ \langle v_\alpha(G), v_\alpha ( G(t)) \rangle^2 \right] $ is non-increasing over $t \in \RR_+$.
\end{proof}

\begin{proof}[Proof of Theorem~\ref{thm: OU_process}]
Fix $\alpha \in [n]$, and for simplicity we again write $\lambda \colonequals \lambda_{\alpha}$ and $v \colonequals v_{\alpha}$. To apply Lemma~\ref{lem: cov_identity}, we first need to check that the function $\lambda(X)$ satisfies the assumptions.
By Weyl's inequality, for any $X,Y \in \RR^{n \times n}_{\sym}$, \begin{align}
     |  \lambda(X) - \lambda(Y) | =   |  \lambda(X) - \lambda\left(X - (X-Y)\right) | \leq \| X-Y\| \leq \|X - Y\|_F,
\end{align}
which implies $\lambda$ is 1-Lipschitz, and thus, absolutely continuous. In addition, $X \sim \GOE(n)$ has a simple spectrum almost surely, so we can apply the result given in \eqref{equ: derivative_formula} that $ \partial_{ij}  \lambda(G) =  (v_\alpha (G) )_i (v_\alpha (G) )_j$ and obtain \begin{align}
   \EE  [\|\nabla \lambda(G)\|^2] =\mathbb{E} \left[ \|v(G)\|^2\right] = 1 < \infty.
 \end{align}
Thus, we can apply the variance identity in Lemma~\ref{lem: cov_identity}, 
\begin{align}\label{equ: var_OU_c}
\Var(\lambda(G)) 
&=  \int_0^\infty  e^{-s}  \mathbb{E} \left[ \langle v(G), v( G(s)) \rangle^2 \right] ds \\ 
&\geq  \int_0^{t}  e^{-s}  \mathbb{E} \left[ \langle v(G), v(G(s)) \rangle^2 \right] ds 
\intertext{$ \mathbb{E} \left[ \langle v(G), v( G(t)) \rangle^2 \right] \geq 0 $ is monotone non-increasing as stated in Lemma~\ref{lem: OU_mono_inner}, whereby}  
&\geq \mathbb{E} \left[ \langle v(G), v(G(t))  \rangle^2 \right]  \int_0^{t}  e^{-s} ds \\    
&= \mathbb{E} \left[ \langle v(G), v(G(t))  \rangle^2 \right]  \left( 1 -  e^{-t}\right). 
\end{align}
Applying the upper bound on $ \operatorname{Var}(\lambda(G)) $ from Corollary~\ref{cor: eigenvalue_bound}, we have \begin{align}
    \mathbb{E} \left[ \langle v(G), v(G(t))  \rangle^2 \right]  \leq \frac{F(n, \alpha)\hat{\alpha}^{-2/3}n^{-1/3}}{ 1 -  e^{-t}},
\end{align}
and the result follows.
\end{proof}

\section{Smoothness and Taylor expansion of eigenvalues along resampling paths}\label{sec: taylor_expansion}

We now begin to gather our tools for working with block resampling processes.
As we have discussed in Section~\ref{sec: proof_technique}, our proofs for the block resampling models work by, for some $X$ and $Y$ formed by modifying the coordinates in a block $B$ of entries of $X$ (either moving along an entrywise OU process for the PDBOU process, or fully resampling for the PDBR process), performing an expansion of the difference $\lambda_{\alpha}(X) - \lambda_{\alpha}(Y)$.
Since $\lambda_{\alpha}$ is not smooth everywhere and has diverging second derivative around the points where it is not smooth, we must be more careful than we would be for uniformly smooth functions.
To do this, we consider a \emph{resampling path},
\[ X(s) = (1 - s)X + sY, \]
and consider $\lambda_{\alpha}(X(s))$.
It suffices to show that $\lambda_{\alpha}$ is smooth along this path, and to control the derivatives along the path.

Our main ``soft'' tool for establishing smoothness is as follows.
This is an intuitive consequence of the fact that the set of matrices with a repeated eigenvalue is an algebraic variety of codimension~2, but we give a careful and concrete proof in Appendix~\ref{sec: simplicity_eigenvalues}.

\begin{prop}\label{prop: F_B}
    Define \begin{align}
        E_{\sym} \colonequals \{X \in \RR^{n \times n}_{\sym}: X \text{ has a repeated eigenvalue}\}.
    \end{align}
   Fix a symmetric set of matrix entry positions $B \subseteq [n]^2$, and write $\RR^B_{\sym}$ for the set of symmetric matrices with non-zero entries only in positions in $B$. Then \begin{align}
    F_B 
&\colonequals \{(X, \Delta) \in \RR^{n \times n}_{\sym} \times \RR^{B}_{\sym}: \text{there exists } s \in \RR \text{ such that } X + s\Delta \in E_{\sym}\}
\end{align}
is contained in a proper real algebraic variety in $\RR^{n \times n}_{\sym} \times \RR^B_{\sym}$ and therefore has Lebesgue measure zero.
\end{prop}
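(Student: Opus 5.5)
The plan is to realize $F_B$ as the image of a semialgebraic set of too-small dimension under a polynomial map, and then use that the Zariski closure of a semialgebraic set has the same dimension. Write $N \colonequals n(n+1)/2 = \dim \RR^{n\times n}_{\sym}$ and $d \colonequals \dim \RR^B_{\sym}$. Consider the polynomial map
\[ \Phi : \RR^{n \times n}_{\sym} \times \RR^B_{\sym} \times \RR \to \RR^{n\times n}_{\sym} \times \RR^B_{\sym}, \qquad \Phi(Y, \Delta, s) \colonequals (Y - s\Delta, \Delta). \]
By definition, $F_B = \Phi(E_{\sym} \times \RR^B_{\sym} \times \RR)$: if $X + s\Delta = Y \in E_{\sym}$, then $(X,\Delta) = \Phi(Y,\Delta,s)$, and conversely.

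\textbf{Step 1: $E_{\sym}$ is a real algebraic set of dimension at most $N-2$.} It is algebraic because it is the zero set of the discriminant of the characteristic polynomial $\det(\lambda I - X)$, which is a polynomial in the entries of $X$. For the dimension, note that $E_{\sym}$ is the union over pairs $\beta$ of the images of the maps
\[ (Q, \mu) \mapsto Q\, \mathrm{diag}(\mu) Q^{\top}, \qquad Q \in O(n), \ \mu \in \RR^n \text{ with } \mu_\beta = \mu_{\beta+1}. \]
The parameter $\mu$ ranges over $n-1$ dimensions. The map is invariant under right multiplication of $Q$ by the stabilizer, which contains $O(2)$ acting on the repeated pair; this stabilizer has dimension at least $1$. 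So the effective frame dimension is at most $n(n-1)/2 - 1$, and the image has dimension at most $(n-1) + n(n-1)/2 - 1 = N - 2$. To make this rigorous, I would restrict to the semialgebraic stratum where exactly $\mu_\beta = \mu_{\beta+1}$ and all other $\mu$'s are distinct. There the map factors through $O(n)/(O(2)\times O(1)^{n-2})$, which has dimension $n(n-1)/2 - 1$. The remaining strata, with more coincidences, have even smaller dimension. I would then use that semialgebraic images do not increase dimension.

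\textbf{Step 2: dimension count for $F_B$.} The domain $E_{\sym}\times\RR^B_{\sym}\times\RR$ is semialgebraic of dimension at most $(N-2)+d+1 = N+d-1$. The image of a semialgebraic set under a polynomial map is semialgebraic (Tarski--Seidenberg) of no larger dimension, so $\dim F_B \le N+d-1$. The Zariski closure of a semialgebraic set has the same dimension as the set. Hence $F_B$ lies in a real algebraic variety of dimension at most $N+d-1 < N+d$, which is therefore proper. A proper real algebraic variety is contained in the zero set of a nonzero polynomial, and so it has Lebesgue measure zero.

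As an alternative or sanity check for the measure-zero statement alone, I would use Fubini. For fixed $\Delta$, the $X$-section of $F_B$ is the projection of $E_{\sym}$ along the direction $\Delta$, which has dimension at most $N-1$. It is therefore null in $\RR^{n\times n}_{\sym}$, and integrating over $\Delta$ gives the claim.

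The main obstacle is Step 1, specifically establishing codimension $2$ (not merely $1$) of $E_{\sym}$ cleanly. A naive discriminant-based argument fails here. Writing $p(s) = \mathrm{disc}(X+s\Delta)$, this polynomial is a sum of squares, so any real root is a double root, and one would like to take $\mathrm{Disc}_s(p)$ as the defining polynomial. But when $\Delta \in \RR^B_{\sym}$ itself has repeated eigenvalues (e.g., when $B$ is a single off-diagonal pair), the leading coefficient vanishes identically. This makes $\mathrm{Disc}_s(p)\equiv 0$ on all of $\RR^{n\times n}_{\sym}\times\RR^B_{\sym}$, so the approach gives nothing. That is why I prefer the dimension-theoretic route via the orthogonal parametrization and Tarski--Seidenberg.
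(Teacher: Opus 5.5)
Your proof is correct, and it takes a genuinely different route from the paper.

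The paper does no dimension counting. Its argument runs as follows:
\begin{enumerate}
\item It passes to $\CC$ and replaces $E_{\sym}$ by the variety $E$ of derogatory matrices. This is the common zero set of the maximal minors $\det A_\sigma(X)$ of the $n^2\times n$ matrix with columns $\vec(I),\vec(X),\dots,\vec(X^{n-1})$, and it satisfies $E\cap\RR^{n\times n}_{\sym}=E_{\sym}$.
\item It enlarges $F_B$ to the set $\widetilde F_B$ of pairs $(X,\Delta)$ for which the polynomials $s\mapsto\det A_\sigma(X+s\Delta)$ share a complex root, and invokes multivariate resultants to call this set algebraic.
\item It proves properness with an explicit witness. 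After reducing to $B$ a single entry, it takes a tridiagonal $X$ and $\Delta=e_1e_1^\top$ or $e_1e_2^\top+e_2e_1^\top$. It then exhibits an $(n-1)\times(n-1)$ minor of $X+s\Delta-\lambda I$ that is nonzero for all $s,\lambda\in\CC$.
\end{enumerate}
That route is elementary and yields a concrete certificate, but it needs a case analysis.

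Your route needs no witness and is uniform in $B$; in fact it works for any linear space of directions $\Delta$. The cost is importing the codimension-two fact for $E_{\sym}$ and semialgebraic dimension theory. For Step 1 you can avoid quotients with the polynomial parametrization $X=\mu_1\bigl(I-\sum_{k\ge3}u_ku_k^\top\bigr)+\sum_{k\ge3}\mu_ku_ku_k^\top$. Here $(u_3,\dots,u_n)$ ranges over the Stiefel variety of orthonormal frames, of dimension $(n-2)(n+1)/2$, and $(\mu_1,\mu_3,\dots,\mu_n)$ ranges over $\RR^{n-1}$. This gives dimension $N-2$ at once.

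Your dimension count also sidesteps a delicate point in the paper's resultant step, and it is the same degeneracy you flagged for the discriminant. The coefficient of $s^{n(n-1)/2}$ in $\det A_\sigma(X+s\Delta)$ is $\det A_\sigma(\Delta)$. This vanishes for every $\sigma$ whenever $\Delta$ is derogatory, which the single-entry witnesses are once $n\ge4$. So formal-degree resultants detect a spurious common root at $s=\infty$, and the set of pairs with a common root in $\CC$ is a priori only constructible. Going from one point outside $\widetilde F_B$ to containment in a proper algebraic set then requires an extra argument, such as stability of the witness under perturbation.
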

\noindent
As a direct consequence, we obtain the following.

\begin{lemma}\label{lem: smooth_lambda}
 Suppose $X, Y$ as above are random matrices such that $Y$ differs from $X$ only on entries in some symmetric set of matrix entry positions $B \subseteq [n]^2$.
 Suppose that the vector formed from $(X_{ij})_{1 \leq i \leq j \leq n}$ and $(Y_{ij})_{1 \leq i \leq j \leq n, (i, j) \in B}$ has a joint density with respect to Lebesgue measure.
 Then, almost surely, $X(s)$ has a simple spectrum for all $s \in [0, 1]$. 
 Consequently, also almost surely, $\lambda_\alpha (X(s) )$ is smooth at all $s \in [0, 1]$ and for every $\alpha \in [n]$.
\end{lemma}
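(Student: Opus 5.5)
We deduce the lemma directly from Proposition~\ref{prop: F_B}. Set $\Delta \colonequals Y - X$, which lies in $\RR^B_{\sym}$ because $Y$ and $X$ differ only on positions in $B$. Then $X(s) = X + s\Delta$ for every $s$. Hence the event that $X(s) \in E_{\sym}$ for some $s \in [0,1]$ is contained in the event that $(X, \Delta) \in F_B$.

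It remains to show that $\PP[(X, \Delta) \in F_B] = 0$. The map
\[
\bigl((X_{ij})_{i \leq j}, (Y_{ij})_{i \leq j, (i,j) \in B}\bigr) \mapsto \bigl(X, (Y_{ij} - X_{ij})_{i\le j,(i,j)\in B}\bigr)
\]
is an invertible linear map with unit determinant (a shear) onto $\RR^{n\times n}_{\sym} \times \RR^B_{\sym}$, in the natural coordinates given by the upper-triangular entries. The pushforward of a law with a density under such a map again has a density, namely the original density composed with the inverse map. By Proposition~\ref{prop: F_B}, $F_B$ has Lebesgue measure zero. Since $(X, \Delta)$ has a density, this probability is therefore $0$. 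So almost surely $X(s)$ has simple spectrum for all $s\in[0,1]$; indeed this holds for all $s \in \RR$.

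For the smoothness claim, fix an outcome in this full-measure event. For each $s_0\in[0,1]$, $X(s_0)$ has a simple spectrum, so by Proposition~\ref{prop: derivative} each $\lambda_\alpha$ is smooth on an open neighborhood of $X(s_0)$ in $\RR^{n\times n}_{\sym}$. The path $s \mapsto X(s)$ is affine, hence smooth, so the composition $s\mapsto\lambda_\alpha(X(s))$ is smooth near $s_0$, and this holds for every $\alpha\in[n]$. Because smoothness is local, it holds at all $s\in[0,1]$.

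There is no real obstacle, since the algebraic-geometric content is already in Proposition~\ref{prop: F_B}. The only point needing care is the change of variables from $(X,Y_B)$ to $(X,\Delta)$. We must check that the coordinates of $\Delta$ outside $B$ vanish identically, so that the problem genuinely lives in $\RR^{n\times n}_{\sym}\times\RR^B_{\sym}$, and that absolute continuity is preserved under this linear bijection.
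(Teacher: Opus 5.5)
Your proof is correct and is exactly the argument the paper intends when it calls this lemma a direct consequence of Proposition~\ref{prop: F_B}. You set $\Delta = Y - X \in \RR^B_{\sym}$, transfer the density to $(X,\Delta)$ via the unimodular shear, and conclude $\PP[(X,\Delta)\in F_B]=0$; smoothness of $s\mapsto\lambda_\alpha(X(s))$ then follows from Proposition~\ref{prop: derivative} composed with the affine path. These are details the paper leaves implicit and you have supplied them correctly.
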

Therefore, we can obtain uniform derivative bounds together with a Taylor remainder estimate for the eigenvalues along $X(s)$.
Below we use the quantities $M(X)$ and $S_{\alpha}(X)$ defined in Section~\ref{sec: notation}.

\begin{cor}\label{cor: derivative_bound}
  Almost surely, for each $\alpha \in [n]$, $s \in [0,1]$, and $(i,j), (a,b) \in [n] \times [n]$, we have
   \begin{align}
         &|\partial_{ij}  \lambda_\alpha (X(s) ) | \leq M(X(s))^2, \label{equ: derivative_bound_1} \\
         & | \partial_{ij}  \partial_{ab}  \lambda_\alpha(X(s) ) | \leq 2 \cdot S_\alpha(X(s)) \cdot M(X(s))^4,\label{equ: derivative_bound_2}
    \end{align}
    where $ S_\alpha$ and $M$ are defined in \eqref{equ: S_alpha} and \eqref{equ: M}, respectively.
    Further, letting $F_\alpha(s) \coloneqq \lambda_\alpha (X(s))$, $F_{\alpha}$ is almost surely differentiable at $s = 0$, and we have
    \begin{align}\label{equ: lambda_M_2} 
       & \left|\lambda_\alpha(Y) - \lambda_\alpha(X) \right|  \leq  \nu(B) \| Y-X \|_{\ell^{\infty}}   \sup_{s \in [0, 1]} M( X(s) )^2,
   \\[4pt]  
   & \left|\lambda_\alpha(Y) - \lambda_\alpha(X) - F_\alpha'(0) \right|  \leq  \nu(B)^2    \| Y-X \|_{\ell^{\infty}}^2 \sup_{s \in [0, 1]}   ( S_\alpha(X(s)) \cdot M(X(s))^4 ). \label{equ: R_ij_S_a_M^d}
    \end{align}
\end{cor}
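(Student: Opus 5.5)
The plan is to work on the almost-sure event of Lemma~\ref{lem: smooth_lambda}, on which $X(s)$ has simple spectrum for every $s \in [0,1]$. On that event each $\lambda_\alpha$ is smooth in an open neighborhood of every $X(s)$ (Proposition~\ref{prop: derivative}). Hence $F_\alpha(s) = \lambda_\alpha(X(s))$ is $C^\infty$ on a neighborhood of $[0,1]$. In particular it is differentiable at $s = 0$, with
\[
F_\alpha'(0) = \sum_{1 \le i \le j \le n} (Y-X)_{ij}\, \partial_{ij}\lambda_\alpha(X).
\]

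\emph{Derivative bounds.} For \eqref{equ: derivative_bound_1}, formula \eqref{equ: derivative_formula} gives $|v_i v_j| \le M^2$ directly. For \eqref{equ: derivative_bound_2}, I insert the expansion \eqref{equ: moore-penrose} into \eqref{equ: derivative_formula_2}. Each of the two terms is at most
\[
\sum_{\beta \neq \alpha} \frac{|(v_\beta)_j (v_\beta)_a|\, |(v_\alpha)_i (v_\alpha)_b|}{|\lambda_\alpha - \lambda_\beta|} \le M^4 S_\alpha .
\]
Summing the two terms gives the factor $2$.

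\emph{Increment bounds.} Write $\Delta = Y - X$, which is supported on $B$, and use coordinates $(X_{ij})_{i \le j}$. The chain rule gives
\[
F_\alpha'(s) = \sum_{i \le j,\, (i,j) \in B} \Delta_{ij}\, \partial_{ij}\lambda_\alpha(X(s)).
\]
The number of index pairs $i \le j$ with $(i,j) \in B$ is $(|B| + \#\{\text{diagonal entries in } B\})/2 = \nu(B)/2 \le \nu(B)$. Therefore $|F_\alpha'(s)| \le \nu(B)\, \|\Delta\|_{\ell^\infty}\, M(X(s))^2$, and the mean value theorem gives \eqref{equ: lambda_M_2}.

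For the second-order estimate, Taylor's theorem with Lagrange remainder gives
\[
F_\alpha(1) - F_\alpha(0) - F_\alpha'(0) = \tfrac12 F_\alpha''(\xi) \quad \text{for some } \xi \in (0,1).
\]
Here $F_\alpha''(\xi) = \sum \Delta_{ij}\Delta_{ab}\, \partial_{ij}\partial_{ab}\lambda_\alpha(X(\xi))$, a double sum over at most $(\nu(B)/2)^2$ pairs. Applying \eqref{equ: derivative_bound_2} bounds $\tfrac12|F_\alpha''(\xi)|$ by $\tfrac12 \cdot \nu(B)^2 \cdot 2\, \|\Delta\|_{\ell^\infty}^2 \sup_s S_\alpha M^4$ or better, which yields \eqref{equ: R_ij_S_a_M^d}.

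The only subtle point is convention bookkeeping. If derivatives are taken with respect to symmetric pairs, the off-diagonal derivative might carry a factor $2$ relative to \eqref{equ: derivative_formula}. This factor is absorbed by counting $\nu(B)$, which counts ordered positions and double-counts diagonal entries, rather than counting $\nu(B)/2$ unordered pairs. So the stated constants hold under either convention.
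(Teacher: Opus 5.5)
Your proposal is correct and follows essentially the same route as the paper: work on the almost-sure simple-spectrum event of Lemma~\ref{lem: smooth_lambda}, bound the entrywise first and second derivatives via \eqref{equ: derivative_formula}, \eqref{equ: derivative_formula_2} and \eqref{equ: moore-penrose}, then apply the chain rule along $X(s)$ using that $Y-X$ has at most $\nu(B)$ nonzero entries, and conclude with the mean value theorem and Taylor's theorem with Lagrange remainder. Your extra bookkeeping about full-matrix versus symmetric ($i \le j$) coordinates is a point the paper glosses over, and you correctly observe that the factors of $2$ (resp.\ $4$) are offset by counting $\nu(B)/2$ rather than $\nu(B)$ terms, so the stated constants hold under either convention.
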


\begin{proof}
 By Lemma~\ref{lem: smooth_lambda}, for all $\alpha \in [n]$, each $s \mapsto \lambda_\alpha (X(s))$ is almost surely smooth, so $\partial_{ij}  \lambda_\alpha (X(s) ) $ and $\partial_{ij}^2  \lambda_\alpha (X(s) ) $ as in \eqref{equ: derivative_formula} exist. We first prove \eqref{equ: derivative_bound_1} and \eqref{equ: derivative_bound_2}. We write the proof for 
 $X(s)$ when $s = 0$, and the argument is identical for all $s \in [0,1]$. The pointwise derivative bounds then follow directly from the formulas \eqref{equ: derivative_formula} and \eqref{equ: derivative_formula_2} for the derivatives of an eigenvalue with respect to matrix entries:
\begin{align}\label{equ: main_inequality_1}
   \left| \partial_{ij}  \lambda_\alpha(X) \right|
    = \left| (v_\alpha (X))_i (v_\alpha (X) )_j \right|  
    \leq \max_{\beta \in [n]} \| v_\beta (X) \|_\infty^2 = M(X)^2,
\end{align} 
and
\begin{align}
   | \partial_{ij}  \partial_{ab}  \lambda_\alpha(X) |  \leq  2  \sum_{\beta \neq \alpha} \left|  \frac{1}{\lambda_\alpha(X) - \lambda_\beta(X)}  \right| \cdot  \max_{\beta \in [n]} \|v_\beta  (X) \|_\infty^4 = 2  S_\alpha(X) \cdot M(X)^4. \label{equ: second_deriv_1}
\end{align}

We now prove \eqref{equ: lambda_M_2} and \eqref{equ: R_ij_S_a_M^d}. Fix any $\alpha \in [n]$ and let $F_\alpha(s) \coloneqq \lambda_\alpha (X(s))$.
Again, by Lemma~\ref{lem: smooth_lambda}, almost surely $F_\alpha(s)$ is a smooth function of $s$.
By Taylor's theorem with the Lagrange bound on the remainder,
\begin{align}\label{equ: lagrange_taylor_f_1}
     | F_\alpha(1) - F_\alpha(0)| &\leq \sup_{s \in [0,1]} |F_\alpha'(s)|,\\
     |F_\alpha(1) - F_\alpha(0) - F_\alpha^\prime (0)| &\leq \frac{1}{2} \sup_{s \in [0,1]} |F_\alpha^{\prime \prime}(s)|. \label{equ: lagrange_taylor_f_2}
\end{align} 
Next, we calculate $F_\alpha^{\prime}(s) $ and $F_\alpha^{\prime \prime}(s)$.  
Note that, for all $(i, j) \in B$,
\begin{align}\label{equ: non_zero_dXt}
    \left( \frac{d}{ds} X(s) \right)_{ij} = \left( \frac{d}{ds} X(s) \right)_{ji} = Y_{ij} - X_{ij},
\end{align}
with all the other entries of this derivative equal to 0.
Then, by the multivariate chain rule, we have 
\begin{align}
 |F_\alpha^{\prime}(s)| 
 &= \left|\langle \nabla \lambda_\alpha (X(s)), Y-X \rangle\right| \\
 &= \left|\sum_{(i,j) \in B} \partial_{ij} \lambda_\alpha (X(s)) (Y_{ij} - X_{ij})\right|
 \intertext{There are at most $\nu(B)$ nonzero entries in $Y-X$, so}
 &\leq \nu(B) \left( \max_{(a,b) \in [n] \times [n]} |\partial_{ab}  \lambda_\alpha(X(s))| \right) \|Y-X \|_{\ell^{\infty}} 
 \intertext{Applying the bound on the first derivatives of $\lambda_{\alpha}$ given in \eqref{equ: main_inequality_1}, }
 &\leq \nu(B) M( X(s) )^2  \|Y-X \|_{\ell^{\infty}}.
    \label{equ: F_first_(0)}
    \end{align}
    Similarly, for the second derivative,
    \begin{align}  
  |F_\alpha^{\prime \prime}(s)| &= \left|\sum_{a,b = 1}^n (Y_{ab} - X_{ab}) \sum_{c,d = 1}^n \partial_{ab}  \partial_{cd} \lambda_\alpha (X(s))  (Y_{cd} - X_{cd})\right| \\
  &\leq \nu(B)^2 \cdot \left( \max_{(a,b), (c,d)  \in  [n] \times [n]} |\partial_{ab}  \partial_{cd}  \lambda_\alpha (X(s))| \right) \cdot \| Y-X \|_{\ell^{\infty}}^2  \intertext{Applying the bound on the second derivatives of $\lambda_{\alpha}$ given in \eqref{equ: second_deriv_1},} 
  &\leq 2\nu(B)^2  \cdot  S_\alpha(X(s)) \cdot M(X(s))^4 \cdot \|Y-X \|_{\ell^{\infty}}^2. \label{equ: F_second_(0)}
\end{align}
Finally, we note that by definition $ | \lambda_\alpha (Y) -  \lambda_\alpha (X)| = | F_\alpha(1) - F_\alpha(0)| $, and plug the estimates \eqref{equ: F_first_(0)} and \eqref{equ: F_second_(0)} into the formulas \eqref{equ: lagrange_taylor_f_1} and \eqref{equ: lagrange_taylor_f_2}, obtaining
\begin{align}
       & \left|\lambda_\alpha(Y) - \lambda_\alpha(X) \right|  \leq  \nu(B)  \| Y-X \|_{\ell^{\infty}}  \sup_{s \in [0, 1]} M( X(s) )^2,
   \\[4pt]   &  \left|\lambda_\alpha(Y) - \lambda_\alpha(X) - F_\alpha'(0)  \right|  \leq   \nu(B)^2    \| Y-X \|_{\ell^{\infty}}^2 \sup_{s \in [0, 1]}   ( S_\alpha(X(s)) \cdot M(X(s))^4 ),
    \end{align}
completing the proof.
\end{proof}

\section{Uniform eigenvector delocalization along resampling paths}\label{sec: eigenvector_delocal}

We have seen in the previous section that we may control the eigenvalue differences we are interested in by the quantities $M(X(s))$ and $S_{\alpha}(X(s))$ over $X(s)$ a resampling path.
In the next two sections we develop tools for controlling these two quantities, which have to do with eigenvector delocalization and eigenvalue spacing, respectively, uniformly over paths of matrices.
In this section we focus on the first quantity $M(X(s))$, which amounts to showing that eigenvectors delocalize uniformly over paths.

We adapt the approach in Section~5.3 of \cite{ajanki2017universality}, which has been used in several prior works as well to deduce delocalization from local laws, and extend their analysis of a single sub-Gaussian generalized Wigner matrix to the entire path $X(s) =  (1-s)X + sY$ over $s \in [0,1]$, for any two sub-Gaussian generalized Wigner matrices $X,Y$ (thus we have that each $X_{ij}$ and $Y_{ij}$ may be dependent, but pairs $(X_{ij}, Y_{ij})$ over different $1 \leq i \leq j \leq n$ are independent).
In particular, we do not make the demand that $Y$ and $X$ only differ in a single block.
We state our main result to this effect below in this general language, in case it may be of independent interest:

\begin{theorem}\label{thm: delocalization_all_t}
Let $X$, $Y$ be two sub-Gaussian generalized Wigner matrices (not necessarily independent) with parameters $(c_1, c_2, K)$, and let $X(s) \colonequals (1 - s)X + sY$.
Suppose that we have the condition, for some $c > 0$,
\begin{equation}
c^{-1} \leq \Var(X(s)_{ij}) \leq c \text{ for all } s \in [0, 1] \text{ and all } i, j \in [n].
\label{eq:var-asm}
\end{equation}
Then, for all $\epsilon > 0$ and $C > 0$, for all sufficiently large $n$,
\begin{align}\label{equ: delocalization_norm}
      \PP\left\{\sup_{s \in [0, 1]} \max_{\alpha \in [n]} \| v_{\alpha}( X(s) ) \|_\infty \leq n^{-1/2 + \epsilon} \right\} \geq 1 - n^{-C}.
  \end{align}
\end{theorem}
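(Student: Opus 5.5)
The plan is to combine an entrywise local law for the resolvent of each fixed $X(s)$, which holds with overwhelming probability, with a union bound over a fine grid of $s$ values, and then to pass to all $s \in [0,1]$ by a Lipschitz/continuity argument.

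\textbf{Step 1: a single fixed $s$.} For fixed $s$, the matrix $X(s)$ has independent entries up to symmetry, since the pairs $(X_{ij},Y_{ij})$ are independent across $i \le j$. Its entries are centered and sub-Gaussian with $\psi_2$-norm at most $2K$. By \eqref{eq:var-asm}, its variance profile is bounded above and below. The normalization $\sum_j \sigma_{ij}^2 = n$ may fail along the path, so I would not use \cite{erdHos2012rigidity}. Instead I would invoke the local law of \cite{ajanki2017universality} for Wigner-type matrices with flat variance profile. That result requires only the bounds $c^{-1} \le \Var \le c$ and uniform moment bounds.

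It gives, for $z = E + \cpxi \eta$ with $\eta \ge n^{-1+\epsilon}$ (in the normalization $X(s)/\sqrt{n}$), the estimate $\max_{i,j}|G_{ij}(z) - m_i(z)\delta_{ij}| \prec (n\eta)^{-1/2}$ with probability at least $1 - n^{-D}$ for every $D$. It also gives boundedness $|m_i(z)| \le C$ of the solution of the vector Dyson equation, uniformly in the spectral domain including outside the support. Following Section~5.3 of \cite{ajanki2017universality}, delocalization then follows. Write $\lambda = \lambda_\alpha/\sqrt n$ and $\eta = n^{-1+\epsilon}$. Then
\[ |v_\alpha(i)|^2 \le \eta\, \Im G_{ii}(\lambda + \cpxi\eta) \le C\eta = Cn^{-1+\epsilon}. \]
Eigenvalues far from the spectrum are excluded by the operator norm bound of Lemma~\ref{lem: norm_inequality}, which restricts $E$ to a bounded interval. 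A union bound over a grid of $E$ values, combined with the Lipschitz continuity of $G$ in $z$ (with constant $\eta^{-2}$), makes the bound uniform in $E$ and hence in $\alpha$.

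\textbf{Step 2: a grid in $s$.} Let $s_\ell = \ell n^{-10}$ for $\ell = 0, \dots, n^{10}$. The local law constants depend only on $(c, K)$, so Step 1 applies uniformly in $s$. A union bound then yields, with probability at least $1 - n^{-C}$, the resolvent bound $\max_i \Im G^{(s_\ell)}_{ii}(E + \cpxi\eta) \le C'$ simultaneously for all grid points $s_\ell$ and all grid energies $E$.

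\textbf{Step 3: extension to all $s$.} I would extend through the resolvent rather than through the eigenvectors, since eigenvectors are not continuous across near-degeneracies. For $|s - s_\ell| \le n^{-10}$, the resolvent identity gives
\[ \|G^{(s)}(z) - G^{(s_\ell)}(z)\| \le \eta^{-2}\,\|X(s) - X(s_\ell)\|/\sqrt n \le \eta^{-2}\, n^{-10}\,\|Y - X\|/\sqrt n. \]
On the event of Lemma~\ref{lem: norm_inequality}, $\|Y - X\| \le 2C\sqrt n$, so this difference is at most $n^{2}\cdot n^{-10} = o(1)$. Hence $\Im G^{(s)}_{ii}(E + \cpxi\eta) \le C' + 1$ for every $s \in [0,1]$ and every $E$ in a bounded interval, after also interpolating in $E$. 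The eigenvector bound from Step 1 then holds for every $\alpha$ and every $s$ on this event, giving $\|v_\alpha(X(s))\|_\infty \le n^{-1/2+\epsilon}$ after renaming $\epsilon$.

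\textbf{Main obstacle.} The main difficulty is checking that the hypotheses of \cite{ajanki2017universality} hold uniformly along the path. Specifically, one needs the flatness and moment conditions with constants independent of $s$, and the boundedness of the Dyson-equation solution $m(z)$ uniformly up to the real axis and across the whole spectrum, including the edges. The local law comes with an error probability of $n^{-D}$ for arbitrary $D$, which easily absorbs the polynomially many grid points.
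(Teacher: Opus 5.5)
Your proposal is correct and follows essentially the same route as the paper. Both arguments combine four ingredients: a uniform bound on $\max_i |R_{X(s)/\sqrt n}(z)_{ii}|$ at each fixed $s$ from the local law of \cite{ajanki2017universality}; a union bound over a polynomial grid in $s$; extension to all $s$ via the resolvent identity on the operator-norm event of Lemma~\ref{lem: norm_inequality}; and the inequality $\|v_\alpha\|_\infty^2 \le \eta \max_i \Im R_{ii}(\lambda_\alpha + \cpxi\eta)$, which is the paper's Proposition~\ref{prop: tilde_M}. The differences are cosmetic: you use a finer $n^{-10}$ grid in $s$ and an explicit grid in energy, whereas the paper takes $q \asymp n^{2-2\gamma}$ grid points and quotes a version of the resolvent bound that is already uniform in $z$.
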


We now outline the proof strategy. We aim to extend the eigenvector delocalization from a single matrix to the entire path $X(s)$ by controlling $M(X(s_i))$, defined in \eqref{equ: M}, over a grid $0 = s_1 < \cdots < s_q = 1$. However, extending this control to the full interval via standard perturbative, e.g., the Davis-Kahan theorem, is problematic. Comparing eigenvectors $v_\alpha(X(s_i))$ and $v_\alpha(X(t))$ typically involve the spectral gap, $\lambda_\alpha(X(s_i)) -\lambda_\alpha (X(t))$. As discussed in Section~\ref{prel: minimal_spacing}, while we have explicit high-probability bounds on eigenvalue spacings for any fixed $s$, these bounds do not remain strong after taking a union over the grid $\{s_i\}_{i=1}^q$. As a result, the union bound becomes too weak for the proofs of main Theorems~\ref{thm: main_POU} and~\ref{thm: resample_block_1}. We therefore work with the resolvent $R_X(z)$, defined in Definition~\ref{def: resolvent}, as a proxy for eigenvector delocalization. 
\begin{definition}\label{def: resolvent}
    The \emph{resolvent} of $X \in \RR^{n \times n}_{\sym}$ is the function
    \[ R_X(z) = (X - zI)^{-1}, \]
    defined on $z \in \CC \setminus \RR$.
\end{definition}
Using this formalism, we claim that we can break up the task of bounding $M(X)$ into bounding $\|X\|$ and $\widetilde{M}(X; C, \eta)$ given in \eqref{equ: tilde_M}, and further allow us to extend the estimate to the whole path without loss of probabilistic control.

\begin{prop}\label{prop: tilde_M}
    Consider the parameter \begin{align}\label{equ: tilde_M}
        \widetilde{M}(X; C, \eta) \colonequals \sup_{w \in [-C, C]} \max_{i = 1}^n |R_{X / \sqrt{n}}  (w + \bm i \eta)_{ii}|.
    \end{align}
    If $\|X\| \leq C \sqrt{n}$, then for all $\alpha \in [n]$, we have \begin{align}
        M(X)^2 \leq \eta \cdot \widetilde{M}(X; C, \eta).
    \end{align}
\end{prop}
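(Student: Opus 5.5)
The plan is to use the spectral decomposition of the resolvent and the standard pointwise lower bound of the imaginary part of a diagonal resolvent entry by a single eigenvector's contribution. Write $\mu_\beta \colonequals \lambda_\beta(X)/\sqrt{n}$ for the eigenvalues of $X/\sqrt{n}$, which has the same eigenvectors $v_\beta(X)$ as $X$. Then for $z = w + \cpxi\eta$ with $\eta > 0$,
\begin{align}
R_{X/\sqrt{n}}(z)_{ii} = \sum_{\beta=1}^n \frac{(v_\beta(X))_i^2}{\mu_\beta - w - \cpxi\eta},
\qquad
\Im R_{X/\sqrt{n}}(z)_{ii} = \sum_{\beta=1}^n \frac{\eta\,(v_\beta(X))_i^2}{(\mu_\beta - w)^2 + \eta^2}.
\end{align}
Every summand in the imaginary part is non-negative.

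Next, fix $\alpha \in [n]$ and $i \in [n]$, and choose $w = \mu_\alpha$. The hypothesis $\|X\| \leq C\sqrt{n}$ gives $|\mu_\alpha| \leq C$, so $w \in [-C, C]$ is an admissible point in the supremum defining $\widetilde{M}(X; C, \eta)$. Dropping all summands except $\beta = \alpha$ gives
\begin{align}
\frac{(v_\alpha(X))_i^2}{\eta} \leq \Im R_{X/\sqrt{n}}(\mu_\alpha + \cpxi\eta)_{ii} \leq |R_{X/\sqrt{n}}(\mu_\alpha + \cpxi\eta)_{ii}| \leq \widetilde{M}(X; C, \eta).
\end{align}
Multiplying by $\eta$ and taking the maximum over $i \in [n]$ and then over $\alpha \in [n]$ yields $M(X)^2 \leq \eta\,\widetilde{M}(X;C,\eta)$. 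This bounds the maximum over all eigenvectors at once, which is stronger than the per-$\alpha$ statement as written.

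There is no real obstacle here. The only points needing care are keeping the $\sqrt{n}$ normalization consistent, so that the eigenvalue location lies in $[-C,C]$, and noting that the argument is deterministic and does not require a simple spectrum. If an eigenvalue is repeated, any orthonormal choice of eigenvectors still appears in the spectral sum, so the inequality holds for every choice.
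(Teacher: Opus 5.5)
Your proof is correct and follows essentially the same argument as the paper. Both write $\Im R_{X/\sqrt{n}}(w+\cpxi\eta)_{ii}$ as a sum of non-negative terms via the spectral decomposition, evaluate at $w=\lambda_\alpha(X)/\sqrt{n}\in[-C,C]$, keep only the $\beta=\alpha$ term, and then maximize over $i$ and $\alpha$. Your expansion $\sum_\beta (v_\beta)_i^2/(\mu_\beta - z)$ also matches the convention $R_X(z)=(X-zI)^{-1}$ more carefully than the paper's intermediate display, which writes $1/(z-\lambda_\alpha)$.
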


\begin{proof}[Proof of Proposition~\ref{prop: tilde_M}]
   Fix $\alpha \in [n]$. For simplicity, we write $v_{\alpha} \colonequals v_{\alpha}(X)$ and $\lambda_{\alpha} \colonequals \lambda_{\alpha}(X) $. For a general $z \in \CC$ with $\Im(z) > 0$,
    \begin{align*}
    \Im(R_X(z)_{ii}) 
    &= \Im\left(e_i^{*}\left(\sum_{\alpha = 1}^n \frac{1}{z - \lambda_{\alpha}}v_{\alpha} v_{\alpha}^{\top}\right)e_i\right) \\
    &= \sum_{\alpha = 1}^n \Im\left(\frac{1}{z - \lambda_{\alpha}}\right) (v_{\alpha})_i^2 \\
    &= \sum_{\alpha = 1}^n \frac{\Im(z)}{|z - \lambda_{\alpha}|^2} (v_{\alpha})_i^2 \\
    &\geq \frac{\Im(z)}{|z - \lambda_{\alpha}|^2} (v_{\alpha})_i^2
    \end{align*}
  where  the last part following since all terms in the sum are non-negative as $\Im(z) > 0$ by assumption. Taking $z = \lambda_{\alpha} + \cpxi\eta$ for $\eta>0$, we have \begin{align}\label{equ: v_alpha_bound_R}
        \|v_{\alpha}\|_{\infty}^2 \leq \eta \cdot \max_{i = 1}^n \Im(R_X(\lambda_{\alpha} + \cpxi\eta)_{ii}).
    \end{align}
   By taking the maximum over all $\alpha \in [n]$ in \eqref{equ: v_alpha_bound_R},  \begin{align}
        M(X)^2 \colonequals \max_{\alpha \in [n]} \|v_\alpha\|_\infty^2 
        &\leq \eta \max_{\alpha \in [n]} \max_{i = 1}^n |R_{X / \sqrt{n}}(\lambda_{\alpha}(X/\sqrt{n}) + \cpxi\eta)_{ii}| 
        \intertext{Whenever $\| X\| \leq C\sqrt{n}$, $\lambda_{\alpha}(X/\sqrt{n}) \in [-C, C]$, we have}
        &\leq \eta \sup_{w \in [-C, C]} \max_{i = 1}^n |R_{X / \sqrt{n}}  (w + \bm i \eta)_{ii}| \\
        &\colonequals \eta \cdot \widetilde{M}(X; C, \eta).
     \end{align}
\end{proof}

Bounding $\|X(s)\|$ uniformly over $s$ will be easy by concentration inequalities follows from Lemma~\ref{lem: norm_inequality}.
To bound $\widetilde{M}(X; C, \eta)$, we use the following tool.
This is a consequence of Theorem~1.13 of \cite{ajanki2017universality}, as used in their proof of their Corollary~1.14 in their Section~5.3.
\begin{theorem}\label{thm: tilde_M_p}
    Let $X \in \RR^{n \times n}_{\sym}$ have independent centered uniformly sub-Gaussian entries on and above the diagonal and satisfy the assumption \eqref{eq:var-asm} on the entrywise variances.
    Let $\gamma, C_1, C_2 > 0$ be arbitrary.
    Then, there exists $C_3 > 0$ depending only on $\gamma, C_1, C_2$, the sub-Gaussian variance proxy, and the constant in \eqref{eq:var-asm} such that
    \[ \PP\left( \sup_{\substack{z \in \CC \\ |\Re(z)| \leq C_1 \\ n^{-1 + \gamma} \leq \Im(z) \leq C_1}} \max_{i = 1}^n |R_{X / \sqrt{n}}(z)_{ii}| > C_3\right) \leq n^{-C_2}. \]
    In particular, in our notation, we also have \begin{align}\label{equ: tilde_M_p}
        \PP  (\widetilde{M}(X; C_1, n^{-1 + \gamma}) > C_3) \leq n^{-C_2}.
    \end{align}
\end{theorem}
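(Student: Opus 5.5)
Theorem~\ref{thm: tilde_M_p} is essentially an import of the entrywise local law of \cite{ajanki2017universality} (their Theorem~1.13), so the plan is to match hypotheses and turn their pointwise-in-$z$ control into a supremum over the domain
\[ \mathcal{D} \colonequals \{ z \in \CC : |\Re(z)| \leq C_1, \ n^{-1+\gamma} \leq \Im(z) \leq C_1 \}. \]
This mirrors their Section~5.3.

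\textbf{Step 1: check the hypotheses.} The normalized matrix $H = X/\sqrt{n}$ has independent centered entries on and above the diagonal. By \eqref{eq:var-asm}, the variance matrix $S_{ij} = \sigma_{ij}^2/n$ has entries in $[c^{-1}/n, c/n]$. This gives the flatness condition and, trivially, the uniform primitivity needed for their vector Dyson equation. Uniform sub-Gaussianity supplies the moment bounds $\EE|\sqrt{n} H_{ij}|^p \leq \mu_p$ for all $p$.

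Under these conditions their results say the solution $m(z)$ of the vector Dyson equation $-1/m = z + Sm$ is bounded uniformly, $\|m(z)\|_\infty \leq C$ on $\mathcal{D}$. We do not need normalization of the row sums here, since flatness alone gives this bound on compact spectral domains.

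Their Theorem~1.13 then gives, for each fixed $z \in \mathcal{D}$ and every $\epsilon, D > 0$,
\[ \max_{i,j} |R_H(z)_{ij} - m_i(z)\delta_{ij}| \leq \frac{n^\epsilon}{\sqrt{n\Im z}} \]
with probability at least $1 - C n^{-D}$. Since $n\Im z \geq n^{\gamma}$, choosing $\epsilon < \gamma/2$ yields $|R_H(z)_{ii}| \leq C + 1$ at each fixed $z$.

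\textbf{Step 2: extend to the whole domain.} Take a grid $\mathcal{G} \subset \mathcal{D}$ with spacing $n^{-3}$, so $|\mathcal{G}| \leq C n^6$. A union bound with $D = C_2 + 7$ controls all grid points simultaneously with probability at least $1 - n^{-C_2}$ for large $n$.

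Off the grid we use the deterministic Lipschitz bound
\[ |\partial_z R_H(z)_{ii}| \leq \|R_H(z)\|^2 \leq (\Im z)^{-2} \leq n^{2}. \]
Every $z \in \mathcal{D}$ lies within $n^{-3}$ of a grid point, so the resolvent entries move by at most $n^{-1}$ between $z$ and that point. Hence the supremum is at most $C_3 \colonequals C + 2$.

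The second display, \eqref{equ: tilde_M_p}, follows immediately. Indeed $w + \cpxi n^{-1+\gamma} \in \mathcal{D}$ for all $w \in [-C_1, C_1]$ (assuming $n^{-1+\gamma} \leq C_1$, which holds for large $n$), so $\widetilde{M}$ is bounded by the supremum just controlled.

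\textbf{Main obstacle.} The substance lies in verifying that the constants in \cite{ajanki2017universality} depend only on the model parameters: the variance bounds, the moment sequence, $\gamma$, and $C_1$. In particular, the bound on $m$ must hold uniformly up to $\Im z = n^{-1+\gamma}$, including near spectral edges, without any row-sum normalization. I would cite their Corollary~1.14 argument to confirm that flatness alone suffices for the diagonal entries; the net argument above is routine.
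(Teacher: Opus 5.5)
Your proposal is correct and matches the paper's approach. The paper gives no separate proof: it takes the statement directly from Theorem~1.13 of \cite{ajanki2017universality}, as used in their Corollary~1.14 (Section~5.3), which is exactly the hypothesis check (flat variance profile, sub-Gaussian moment bounds, bounded solution $m$) that you carry out. Your grid-plus-Lipschitz step in $z$ is the standard upgrade from pointwise to uniform control, and may be redundant if the cited local law is already stated uniformly over the domain. The regimes of small $n$ or $\gamma \ge 1$, which your "large $n$" caveats leave aside, are trivially handled by enlarging $C_3$ using $|R_{ii}| \le 1/\Im z$.
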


\begin{proof}[Proof of Theorem~\ref{thm: delocalization_all_t}]
We consider $M(X(s_i))$ over a grid $0 = s_1 < \cdots < s_q = 1$ for $q$ to be chosen later.
    We first note that for $z = w + \bm i \eta$ for $\eta > 0$, \begin{align}
        \|R_X(z)\| = \|(X - zI)^{-1}\| \leq (\min_{\alpha = 1}^n |\lambda_{\alpha}(X) - z|)^{-1} \leq 1 / \Im(z).
    \end{align}
    Using the resolvent identity,
    \begin{align}\label{equ: R_X_Y_ineq}
        \|R_X(z) - R_Y(z)\|
        &= \|R_X(z)(Y - X)R_Y(z)\| \\
        &\leq \|R_X(z)\| \cdot \|R_Y(z)\| \cdot \|Y - X\| \\
        &\leq \frac{1}{\Im(z)^2} \|Y - X\|.
    \end{align}
    Then, fix $s_i$, for any $s$ that that $|s-s_i| \leq 1/(2q)$, we have \begin{align}
        &|\widetilde{M}(X(s); C, n^{-1 + \gamma}) - \widetilde{M}(X(s_i); C, n^{-1 + \gamma})| \\
        &= \left|
        \sup_{w \in [-C, C]} \max_{i = 1}^n |R_{X(s) / \sqrt{n}}  (w + \bm i \eta)_{ii}| - \sup_{w \in [-C, C]} \max_{i = 1}^n |R_{X(s_i) / \sqrt{n}}  (w + \bm i \eta)_{ii}| \right|\\
        &\leq \sup_{w \in [-C, C]} \max_{i = 1}^n |R_{X(s) / \sqrt{n}}  (w + \bm i \eta)_{ii} -  R_{X(s_i) / \sqrt{n}}  (w + \bm i \eta)_{ii}| \\
        &\leq \sup_{w \in [-C, C]} \| R_{X(s) / \sqrt{n}}  (w + \bm i \eta) - R_{X(s_i) / \sqrt{n}}  (w + \bm i \eta) \| \intertext{Applying the bound given in \eqref{equ: R_X_Y_ineq},}
        &\leq \sup_{w \in [-C, C]} \frac{1}{\eta^2} \frac{1}{\sqrt{n}}  \|X(s) - X(s_i)\| \intertext{With $\eta = n^{-1 + \gamma}$, and apply the triangle inequality that $\|X(s) - X(s_i)\| \leq |s_i-s|  \| X-Y \| \leq  (2q)^{-1} ( \|X\|+\|Y\|)$, we obtain}
        &\leq n^{3/2 - 2 \gamma}  ({2q})^{-1}   (\| X\| +\| Y\|). \label{equ: tilde_M_difference}
    \end{align}
Fix any $C_2 > 0$, by Lemma~\ref{lem: norm_inequality}, we can choose a $C_1$ sufficiently large such that, defining the event
\[ \sE_{\mathrm{op}} = \{\|X\| + \|Y\| \geq C_1 \sqrt{n} \}, \]
we have
\[ \PP(\sE_{\mathrm{op}}) \leq n^{-C_2}. \]
Then, we have
\begin{align}
    \PP(\sup_{s \in [0, 1]} \|X(s)\| \geq C_1 \sqrt{n}) &\leq \PP(\|X\| + \|Y\| \geq C_1 \sqrt{n})  = \PP(\mathcal{E}_{\mathrm{op}}^c ) \leq n^{-C_2}.
\end{align}
Then, on the event $\sE_{\mathrm{op}}$, we choose $q = C_1 n^{2 - 2 \gamma}$ such that from \eqref{equ: tilde_M_difference}, we have \begin{align}
    \sup_{s \in [0,1]} \min_{1 \leq i \leq q} |\widetilde{M}(X(s); C_1, n^{-1 + \gamma}) - \widetilde{M}(X(s_i); C_1, n^{-1 + \gamma}) | \leq \frac{1}{2},
\end{align}
and thus, \begin{align}\label{equ: sup_s_m}
    \sup_{s \in [0,1]} \widetilde{M}(X(s); C_1, n^{-1 + \gamma}) \leq \max_{1 \leq i \leq q} \widetilde{M}(X(s_i); C_1, n^{-1 + \gamma}) + \frac{1}{2}.
\end{align}Fix such choice of $C_1, C_2$, since \eqref{eq:var-asm} holds, we can apply \eqref{equ: tilde_M_p} in Theorem~\ref{thm: tilde_M_p} that for any $\gamma > 0$, there exists $C_3^i>0$ such that \begin{align}\label{equ: tilde_M_si}
        \PP \left( \widetilde{M}(X(s_i); C_1, n^{-1 + \gamma}) \geq C_3^i \right) \leq n^{-(C_2 - 2\gamma +2)}.
    \end{align}
  We choose $C_3 = \max_{i \in [q]} C_3^i$, then from \eqref{equ: sup_s_m}, we have \begin{align}
      \PP \left( \sup_{s\in[0,1]}\widetilde M(s; C_1, n^{-1 + \gamma})\ge C_3+\frac{1}{2}\right) &\leq \PP \left( (\mathcal{E}_{op}(C_2) \cap \{ \max_{1 \leq i \leq q} \widetilde M(s_i; C_1, n^{-1 + \gamma}) \leq C_3 \})^c \right) \\
      &\leq \PP \left( \mathcal{E}_{op}^c(C_2) \right) + \PP \left(  \max_{1 \leq i \leq q} \widetilde M(s_i; C_1, n^{-1 + \gamma}) \geq C_3  \right) \intertext{By taking the union bound over all $i \in \{ 1, \dots, q\}$ in \eqref{equ: tilde_M_si},}
      &\leq n^{-C_2} + C_1 n^{2 - 2\gamma} n^{-(C_2-2 \gamma+2)} \\
      &\leq (1+C_1) n^{-C_2} \\
      &\leq n^{-C}, \label{equ: P_tilde_M}
  \end{align}
    for some constant $C$ for sufficiently large $n$. With Proposition~\ref{prop: tilde_M}, \eqref{equ: P_tilde_M} implies for any $\gamma>0$, we have \begin{align}
         \PP \left( \sup_{s\in[0,1]}M(s)\ge (C_3+\frac{1}{2}) n^{-1/2+\gamma/2}\right) \leq n^{-C}.
    \end{align}
    Therefore, by adjusting the constant for sufficiently large $n$, we have the result follows.
\end{proof}

\section{Uniform eigenvalue spacing along resampling paths}\label{sec: eigenvalue_spacing}

We now continue to the second part of our estimates for working with the Taylor expansion of $\lambda_{\alpha}(X(s))$, the control of $S_{\alpha}(X(s))$ uniformly over the sampling path $X(s)$ for $s \in [0, 1]$.
Recall that the quantity $S_{\alpha}(X(s))$ depends on the spacing of the eigenvalues of $X(s)$ for eigenvalues of indices near $\alpha$; thus, this amounts to uniform control of eigenvalue spacing over the resampling path.

\begin{theorem}\label{thm: sum_of_inverse_spacing}
    Let $X, Y$ be two sub-Gaussian generalized Wigner matrices (not necessarily independent) with parameters $(c_1, c_2, K)$ such that $Y$ almost surely differs from $X$ only on entries $(i, j) \in B$ for a block $B$.
    Suppose that the vector formed from $(X_{ij})_{1 \leq i \leq j \leq n}$ and $(Y_{ij})_{1 \leq i \leq j \leq n, (i, j) \in B}$ has a joint density with respect to Lebesgue measure.
    Let $X(s) \colonequals (1 - s)X + sY$ and suppose these matrices satisfy the condition \eqref{eq:var-asm} on their entrywise variances.
     There exists $\delta_0 \in (0,1)$ such that, for all $\delta \in (0, \delta_0)$, there exist constants $C, c > 0$ such that, for every $\alpha \in [n]$, if 
     \begin{align}
         \nu(B) \leq \frac{n^{5/6 - \delta}}{\log n} \hat{\alpha}^{-1/3},
     \end{align}
     then
    \begin{align}\label{equ: thm_S_bound}
        \PP\left[\sup_{s \in [0, 1]} S_\alpha(X(s) ) \leq C  n^{1/2 + \delta}\right] \geq 1 - c n^{-\delta / 2}.
    \end{align}
    Furthermore, if $X$ is a Wigner matrix, i.e., $\sigma_{ij}^2 = 1$ for all $i, j \in [n]$, then the same holds with any $\delta_0>0$. 
\end{theorem}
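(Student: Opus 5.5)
We split $S_\alpha(X(s))$ into near and far indices and control each uniformly in $s$. The far indices are handled by rigidity. The near indices are handled by bounding spacings at $s=0$ and showing, via Weyl's inequality, that eigenvalues barely move along a path that changes only the $\nu(B)$ entries in $B$.

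\textbf{Step 1: far indices.} Fix $L = A_0\log\log n$ and let $C'$ be as in Corollary~\ref{cor: lambda_i-lambda_j}. Each $X(s)$ is a generalized Wigner matrix with variances bounded by \eqref{eq:var-asm}. On a high-probability event, for every $\beta$ with $|\beta-\alpha|\ge C'(\log n)^L$ we have $|\lambda_\alpha(X(s))-\lambda_\beta(X(s))| \ge c'|\beta-\alpha|n^{-1/2}$. Summing over such $\beta$ bounds their contribution by $O(n^{1/2}\log n)$, which is $\le n^{1/2+\delta}$.

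To get this uniformly in $s$, we do not apply rigidity to each $X(s)$ separately. Instead we apply Theorem~\ref{thm: rigidity_eigenvalue} only to $X=X(0)$ and use Weyl's inequality:
\[ |\lambda_\beta(X(s))-\lambda_\beta(X)| \le \|X(s)-X\| \le \|Y-X\|_F \le \nu(B)^{1/2}\|Y-X\|_{\ell^\infty} . \]
By Lemma~\ref{lem: infty_norm_inequality} with $\|Y-X\|_0\le\nu(B)$, the right-hand side is $O(\nu(B)^{1/2}\sqrt{\log n})$ with high probability. This is much smaller than the far separations $|\beta-\alpha|n^{-1/2}$ once the hypothesis on $\nu(B)$ is used, possibly after enlarging the cutoff $C'(\log n)^L$ by a polylog factor.

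\textbf{Step 2: near indices at $s=0$.} There are at most $O((\log n)^{L})$ near indices. For each consecutive pair among them, apply Lemma~\ref{lem: min_eigenvalue_spacing} to $X$ (or Lemma~\ref{lem: min_eigenvalue_spacing_2} in the Wigner case). Each gap then exceeds $n^{-\delta-1/6}\hat\alpha^{-1/3}$, up to adjusting for $\hat\beta\asymp\hat\alpha$ near $\alpha$, except with probability $Cn^{-\delta-c}$. A union bound over the polylog many near indices costs at most $n^{-\delta/2}$.

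This lower bound is still too weak by itself, since $(\log n)^L n^{1/6+\delta}\hat\alpha^{1/3}$ can exceed $n^{1/2+\delta}$ only when $\hat\alpha$ is large. So we use the better bound $\gtrsim n^{-1/2-\delta}$ whenever it is available. Concretely, we choose the spacing lower bound $g=n^{-1/2-\delta}$ in the Wigner case and the Benigni–Lopatto scale otherwise, so that the near sum is at most polylog$\cdot g^{-1}\lesssim n^{1/2+\delta}$. For bulk indices in the generalized case, $\hat\alpha^{-1/3}n^{-1/6}\approx n^{-1/2}$, which is consistent.

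\textbf{Step 3 (main obstacle): keeping near gaps open along the path.} Weyl's inequality gives a perturbation of order $\nu^{1/2}\sqrt{\log n}$, which is far too large compared with the gaps. Instead we use the first-derivative bound from Corollary~\ref{cor: derivative_bound}. Along the path,
\[ \left|\tfrac{d}{ds}\lambda_\beta(X(s))\right| \le \nu(B)\,\|Y-X\|_{\ell^\infty}\,\sup_s M(X(s))^2 , \]
and Theorem~\ref{thm: delocalization_all_t} bounds $\sup_s M(X(s))^2$ by $n^{-1+2\epsilon}$ with high probability. Lemma~\ref{lem: smooth_lambda} guarantees the eigenvalue curves are smooth, so this bound can be integrated. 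Hence every eigenvalue moves by at most $\nu(B)\sqrt{\log n}\,n^{-1+2\epsilon}$ over $s\in[0,1]$. The hypothesis $\nu(B)\le n^{5/6-\delta}\hat\alpha^{-1/3}/\log n$ makes this at most half of the $s=0$ gap scale $n^{-1/6-\delta}\hat\alpha^{-1/3}$ (after a mild adjustment of $\delta$ against $\epsilon$). Consequently each near gap at least halves and no more, uniformly in $s$; the same movement bound also yields the uniformity in Step 1.

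The delicate point is matching exponents between the movement bound and the gap scale, including the logarithmic factor; this is where the exact form of the $\nu(B)$ condition enters. Summing Steps 1–3 and collecting failure probabilities, which are dominated by $cn^{-\delta/2}$, gives \eqref{equ: thm_S_bound}.
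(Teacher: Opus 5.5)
Your overall architecture is the paper's: rigidity at $s=0$ for far indices, Benigni--Lopatto at $s=0$ for the gap next to $\alpha$, and the delocalization-based movement bound $\sup_s|\lambda_\beta(X(s))-\lambda_\beta(X)|\le\nu(B)\|Y-X\|_{\ell^\infty}\sup_s M(X(s))^2\lesssim n^{-1/6-\delta+2\epsilon}\hat\alpha^{-1/3}(\log n)^{-1/2}$ to carry everything along the path. However, two steps as written do not go through.

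\textbf{Step~1.} The Weyl argument is false, and no polylogarithmic enlargement of the cutoff rescues it. Weyl only controls the perturbation by $\|Y-X\|\ge\|Y-X\|_{\ell^\infty}$, which is of order one for an independent resample; your bound $\nu(B)^{1/2}\sqrt{\log n}$ is larger still. Far separations, by contrast, start at $(\log n)^L n^{-1/2}$. You would need $|\beta-\alpha|\gtrsim n^{1/2}\nu(B)^{1/2}\sqrt{\log n}$, which ruins the union bound of Step~2.

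Your later claim that the Step~3 movement bound ``also yields the uniformity in Step~1'' is true, but not by comparison with the rigidity bound. For $\hat\alpha=O(1)$ and $|\beta-\alpha|\approx(\log n)^L$, the bound $c'|\beta-\alpha|n^{-1/2}$ is smaller than the movement bound $\approx n^{-1/6-\delta}$ by a polynomial factor. The missing observation, which is the heart of the paper's proof, is that $|\lambda_\alpha(X)-\lambda_\beta(X)|\ge\Delta_\alpha(X)$ for \emph{every} $\beta\ne\alpha$. So once the movement is at most $\Delta_\alpha(X)/4$, every distance $|\lambda_\alpha(X(s))-\lambda_\beta(X(s))|$ stays at least half its value at $s=0$, and $\sup_s S_\alpha(X(s))\le 2S_\alpha(X)$. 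This also shows that only the single adjacent gap is needed, with no union bound over near gaps. The paper takes the near window $h=C'n^{\delta/2}$, bounds the near sum crudely by $2h/\Delta_\alpha(X)$, and requires $\Delta_\alpha(X)\ge n^{-1/6-\delta/2}\hat\alpha^{-1/3}$. That is how the extra factor you were worried about is absorbed.

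\textbf{Step~2.} The switch to $g=n^{-1/2-\delta}$ goes the wrong way. Since $\hat\alpha\le n$, the quantity $n^{-1/2-\delta}\le n^{-1/6-\delta}\hat\alpha^{-1/3}$ is the \emph{weaker} lower bound. It also does not remove the polylogarithmic factor; that is done by lowering the exponent in the gap threshold, for example to $\delta/2$.

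In the Wigner case this choice also breaks Step~3. The hypothesis on $\nu(B)$ only makes the movement small compared with $n^{-1/6-\delta}\hat\alpha^{-1/3}$, which exceeds $n^{-1/2-\delta}$ by the factor $(n/\hat\alpha)^{1/3}$. So away from the bulk (roughly, unless $\hat\alpha\ge n^{1-O(\delta)}$) the gaps are not shown to stay open. The Wigner extension for arbitrary $\delta$ therefore needs a gap estimate at the edge-adapted scale $n^{-1/6-\delta}\hat\alpha^{-1/3}$. Be aware that the paper's one-line Wigner remark, which substitutes Lemma~\ref{lem: min_eigenvalue_spacing_2} into the same event, has the same limitation, since that lemma as quoted only controls gaps at scale $n^{-1/2-\delta}$.
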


\begin{remark}
By a more careful use of eigenvalue rigidity, one can obtain a sharper bound on  $S_\alpha(X(s) )$ for each fixed $\alpha \in [n]$; see \eqref{equ: sum_a}. In particular, $S_\alpha(X(s) )$ is sharper in the edge regime then in the bulk. 
For the purpose of a uniform statement, we state here only the bound \eqref{equ: thm_S_bound}.
\end{remark}
\noindent
We prove  Theorem~\ref{thm: sum_of_inverse_spacing} based on Corollary~\ref{cor: lambda_i-lambda_j} and Lemma~\ref{lem: min_eigenvalue_spacing}.

\begin{proof}[Proof of Theorem~\ref{thm: sum_of_inverse_spacing}]
Fix $1 \leq \alpha \leq \lfloor n/2 \rfloor$, so that we are working with an eigenvalue in the ``right half'' of the spectrum.
In this case, $\hat{\alpha} = \alpha$.
A symmetric argument applies to the left half.

Define constants $\delta, C, c^{\prime}, C^{\prime}, C_{X, Y} > 0$ to be chosen later.
In terms of these constants, we define the following events:
\begin{align*}
    \sE_{\mathrm{deloc}} &= \left\{\sup_{s \in [0, 1]} M(X(s)) \leq Cn^{-1/2 + \delta/4}\right\}, \\
    \sE_{\mathrm{space}, \alpha, 1} &= \left\{\Delta_{\alpha}(X) \geq n^{-1/6 - \delta/2} \alpha^{-1/3}\right\}, \\
    \sE_{\mathrm{space}, \alpha, 2} &= \left\{\text{for all } \beta \in [n] \text{ with } |\alpha - \beta| \geq C^{\prime}n^{\delta / 2},  |\lambda_{\alpha}(X) - \lambda_{\beta}(X)| \geq c^{\prime}|\beta - \alpha| n^{-1/2}\right\}, \\
    \sE_{\mathrm{norm}} &= \left\{\|Y  - X\|_{\ell^{\infty}} \leq C_{X, Y} \sqrt{\log n}\right\}, \\
    \sE_{\alpha} &\colonequals \sE_{\mathrm{deloc}} \cap \sE_{\mathrm{space}, \alpha, 1} \cap \sE_{\mathrm{space}, \alpha, 2} \cap \sE_{\mathrm{norm}}.
\end{align*}
Here $\Delta_\alpha$ is as defined in \eqref{equ: Delta_1}, \eqref{equ: Delta_alpha}, and \eqref{equ: Delta_n}.
The interpretations of the events are that $\sE_{\mathrm{deloc}}$ asks for uniform delocalization in the sense of Theorem~\ref{thm: delocalization_all_t}, $\sE_{\mathrm{space}, \alpha, 1}$ asks for the eigenvalue spacing of eigenvalues adjacent to $\lambda_{\alpha}$ to be large, $\sE_{\mathrm{space}, \alpha, 2}$ asks for the eigenvalue spacing of eigenvalues sufficiently far from $\lambda_{\alpha}$ to be large, and $\sE_{\mathrm{norm}}$ bounds the entries of $X - Y$.

We will first show that $\sup_{s \in [0, 1]} S_{\alpha}(X(s))$ is small on the event $\sE_{\alpha}$, and then will show that $\sE_{\alpha}$ occurs with high probability.
Note that, by Lemma~\ref{lem: smooth_lambda}, almost surely $\lambda_{\alpha}$ is a smooth function at $X(s)$ for all $s \in [0, 1]$.
Therefore, we may bound by the fundamental theorem of calculus
\begin{align*}
    |\lambda_{\alpha}(X(s)) - \lambda_{\alpha}(X)|
    &= \left| \int_0^s \frac{d}{ds} \lambda_{\alpha}(X(s))ds \right| \\
    &= \left| \int_0^s \langle v_{\alpha}(X(s))v_{\alpha}(X(s))^{\top}, Y - X\rangle \right| \\
    &\leq \nu(B) \|Y - X\|_{\ell^{\infty}} \sup_{s \in [0, 1]} M(X(s))^2,
\end{align*}
arguing at the end as in Corollary~\ref{cor: derivative_bound}.
Thus, on the event $\sE_{\alpha}$, for any $\alpha \neq \beta$, by triangle inequality we have
\begin{align*}
|\lambda_{\alpha}(X(s)) - \lambda_{\beta}(X(s))| 
&\geq |\lambda_{\alpha}(X) - \lambda_{\beta}(X)| - 2C^2 \nu(B) \|Y - X\|_{\ell^{\infty}} n^{-1 + \delta / 2} \\
&\geq |\lambda_{\alpha}(X) - \lambda_{\beta}(X)| - 2C^2 \nu(B) \|Y - X\|_{\ell^{\infty}} n^{-1 + \delta/2} \\
&\geq |\lambda_{\alpha}(X) - \lambda_{\beta}(X)| - 2C_{X,Y}C^2 \nu(B) (\log n)^{1/2} n^{-1 + \delta/2}
\intertext{and plugging in the bound we assume on $\nu(B)$ as well,}
&\geq |\lambda_{\alpha}(X) - \lambda_{\beta}(X)| - 2C_{X,Y}C^2 n^{-1/6-\delta / 2}\alpha^{-1/3} (\log n)^{-1/2}
\intertext{Here, since we have $|\lambda_{\alpha}(X) - \lambda_{\beta}(X)| \geq \Delta_{\alpha}(X) \geq n^{-1/6-\delta/2}\alpha^{-1/3}$ on the event $\sE_{\alpha}$, we have}
&\geq \left(1 - \frac{2C_{X,Y}C^2}{\sqrt{\log n}}\right) |\lambda_{\alpha}(X) - \lambda_{\beta}(X)| \\
&\geq \frac{1}{2}|\lambda_{\alpha}(X) - \lambda_{\beta}(X)|
\end{align*}
for $n$ sufficiently large depending only on the constants we have defined.
So, for $n$ this large, on the event $\sE_{\alpha}$, we have
\[ \sup_{s \in [0, 1]} S_{\alpha}(X(s)) \leq 2S_{\alpha}(X). \]

Thus, to bound $\sup_{s \in [0, 1]} S_{\alpha}(X(s))$ on the event $\sE_{\alpha}$, it suffices to bound $S_{\alpha}(X)$ on this event.
We have on this event that, letting $h \colonequals C^{\prime}n^{\delta / 2}$,
\begin{align}\label{equ: S_alpha_two_parts}
  S_\alpha(X)
  &=  \sum_{\beta: |\beta-\alpha| \leq h} \frac{1}{|\lambda_\alpha(X) -\lambda_\beta(X)|} + \sum_{\beta: |\beta-\alpha| > h} \frac{1}{|\lambda_\alpha(X) -\lambda_\beta(X)|} \\
  &\leq  \frac{2h}{\Delta_\alpha(X)} + \mathbbm{1}\{\alpha \geq h \} \cdot \frac{1}{c^{\prime}}n^{1/2} \sum_{\beta=1}^{\alpha-h} \frac{1}{|\beta - \alpha|} + \frac{1}{c^{\prime}}n^{1/2} \sum_{\beta=\alpha+h}^n  \frac{1}{|\beta - \alpha|} \\
  &\leq  h \cdot n^{1/6 + \delta/2} \cdot \alpha^{1/3} + \frac{1}{c^{\prime}}  n^{1/2} \left( \log \left(\frac{\alpha}{h-1} \right) + \log\left(\frac{n - \alpha}{h-1}\right) + 2 \right)
  \intertext{The bound is maximized when $\alpha = n/2$, and so we see that, for another constant $C^{\prime\prime} > 0$, we may bound}
  &\leq C^{\prime\prime} n^{1/2 + \delta}.
  \label{equ: sum_a} 
\end{align}
Thus, on the event $\sE_{\alpha}$, we also have
\[ \sup_{s \in [0, 1]} S_{\alpha}(X(s)) \leq 2C^{\prime\prime} n^{1/2 + \delta}, \]
the condition that we are trying to show holds with high probability.

It remains to show that the probability of $\sE_{\alpha}$ is large.
From the definition, it suffices to show that each of $\sE_{\mathrm{deloc}}$, $\sE_{\mathrm{space}, \alpha, 1}$, $\sE_{\mathrm{space}, \alpha, 2}$, and $\sE_{\mathrm{norm}}$ have large probability.

For $\sE_{\mathrm{deloc}}$, Theorem~\ref{thm: delocalization_all_t} gives that $\PP[\sE_{\mathrm{deloc}}^c] \leq n^{-K}$ for any $K > 0$, provided we take $n$ sufficiently large, or equivalently our constant $C$ sufficiently large.

For $\sE_{\mathrm{space}, \alpha, 1}$, Lemma~\ref{lem: min_eigenvalue_spacing} gives that $\PP[\sE_{\mathrm{space}, \alpha, 1}^c] \leq C(\delta) n^{-\delta/2}$ for any $\delta \in (0, \delta_0)$ for $\delta_0$ a constant implicit in the proof techniques of \cite{benigni2022optimal}.

For $\sE_{\mathrm{space}, \alpha, 2}$, Corollary~\ref{cor: lambda_i-lambda_j} gives that $\PP[\sE_{\mathrm{space}, \alpha, 2}^c] \leq n^{-K}$ for any $K > 0$ provided we take $n$ sufficiently large, or equivalently our constant $c^{\prime}$ sufficiently large.

Finally, for $\sE_{\mathrm{norm}}$, by sub-Gaussianity of the entries in $X$ and $Y$ we find that $\PP[\sE_{\mathrm{norm}}^c] \leq n^{-K}$ for any $K > 0$ provided we take $C_{X, Y}$ sufficiently large.

We see that the ``bottleneck'' in these bounds is in the estimate of $\PP[\sE_{\mathrm{space}, \alpha, 1}^c]$, whose bound only gives a rate of $n^{-\delta/2}$ rather than an arbitrarily fast rate of polynomial decay.
Thus, choosing the constant $K$ in the other bounds sufficiently large, we may ensure that, say, $\PP[\sE_{\alpha}^c] \leq 2C(\delta)n^{-\delta / 2}$, completing the proof.

For $X$ a Wigner matrix, the improved result follows by carrying out exactly the same argument but using Lemma~\ref{lem: min_eigenvalue_spacing_2} instead of Lemma~\ref{lem: min_eigenvalue_spacing} to control $\PP[\sE_{\mathrm{space}, \alpha, 1}^{c}]$.
\end{proof}

\section{Sensitivity under discrete dynamics}\label{sec: resampling_dynamics}

We now give the proofs of our remaining two main results, Theorems~\ref{thm: main_POU} and~\ref{thm: resample_block_1}, which both concern sensitivity of eigenvectors under entrywise dynamics involving changing blocks of entries at discrete times.

Before proceeding to the proofs, let us recall the general proof technique, as we have discussed earlier in Section~\ref{sec: proof_technique}.
The basic issue we encounter with these kinds of dynamics is that the variance identity relates $\Var(\lambda_{\alpha}(X))$ to sums of expressions of the form
\begin{equation}
\label{eq:resampling-pair-exp}
\EE[(\lambda_{\alpha}(W) - \lambda_{\alpha}(X))(\lambda_{\alpha}(Y) - \lambda_{\alpha}(Z))],
\end{equation}
where $W, X, Y, Z \in \RR^{n \times n}_{\sym}$ are matrices such that the pair $W, X$ and the pair $Y, Z$ each only differ in a block of entries.
To extract information about the eigenvectors from such expressions, we seek to approximate these discrete differences by derivatives of the function $\lambda_{\alpha}$, which per the differential identities in Section~\ref{sec:diff-eig} indeed relate to the eigenvectors.
To do this, we consider the discrete differences along a \emph{resampling path}, for instance $W(s) = (1 - s)W + sX$, take a Taylor expansion of $\lambda_{\alpha}(W(s))$, and then study the result using the general analysis from the previous two sections.

Below we give the details of how this idea is applied to each of the two specific discrete dynamics settings we consider.

\subsection{Poisson-driven block Ornstein-Uhlenbeck dynamics}

\subsubsection{Approximation of discrete differences by derivatives}

Let $\sB$ be an admissible partition of $[n] \times [n]$ and $\widetilde{G} = \widetilde{G}^{(n)} \sim \PDBOU(\sB, \eta, \tau)$. 
We recall that the variance identity associated to this process, which is given in Lemma~\ref{thm: var_identity_poi_OU}, relates $\Var(\lambda_{\alpha}(X))$ to expectations of the form
\begin{equation}
\label{eq:resampling-pair-exp-PDBOU}
\EE[\Delta_{B}\lambda_{\alpha} \Delta_{B} \lambda_{\alpha}^K],
\end{equation}
where we use the shorthand
\begin{align*}
    \Delta_{B}\lambda_{\alpha} &= \lambda_{\alpha}(G(0)) -\lambda_{\alpha}(G(e_B)), \\
    \Delta_{B}\lambda_{\alpha}^K &= \lambda_{\alpha}(G(K)) - \lambda_{\alpha}(G(K + e_B)),
\end{align*}
where $K \in \ZZ^{\sB}_{\geq 0}$, $e_B$ is the indicator vector of a given block $B \in \sB$, and $G(K)$ is as in \eqref{equ: G_ij_K}, defined upon expanding the definition for each such $K$ as
\begin{align}
    G(K)_{ij} = e^{-\tau 
    \Bar{K}_{ij}}  G_{ij} + e^{-\tau \Bar{K}_{ij}} W_{ij}(e^{2\tau \Bar{K}_{ij}} -1),
\end{align}
for $G \sim \GOE(n)$ and $W_{ij}$ a Brownian motion.
In particular, we see that \eqref{eq:resampling-pair-exp-PDBOU} is indeed of the general form \eqref{eq:resampling-pair-exp} described above, so we are justified in applying our resampling path approach.
We will obtain the following approximation:

\begin{lemma}\label{lem: delta_partial_o1}
For any $\delta, \epsilon > 0$, there exists $C = C(\delta, \epsilon) > 0$ such that the following holds for all $n$ sufficiently large.
Let $\sB$ be an admissible partition of $[n] \times [n]$ with size parameter $\nu$. Let $\widetilde{G} = \widetilde{G}^{(n)} \sim \PDBOU(\sB, \eta, \tau)$.
Then, for all $K \in \ZZ^{\sB}_{\geq 0}$ and all $\alpha \in [n]$, we have that, if
\begin{align}\label{equ:nuB_0_1}
     \nu \leq \frac{n^{5/6 - \delta}}{\log n} \hat{\alpha}^{-1/3},
\end{align}
then
    \begin{align}
      \sR(K) &\colonequals \sum_{B \in \mathcal{B}}  \left|\EE \left[ \Delta_{B} \lambda_\alpha  \Delta_{B} \lambda_\alpha^K \right] -  \sum_{(i,j) \in B } (1+ \mathbbm{1}\{i\neq j\})  \EE[\Delta_B G_{ij} \Delta_B G(K)_{ij}  ] \EE [   \partial_{ij} \lambda_\alpha  \partial_{ij}  \lambda_\alpha^K ] \right|  \label{equ: sum_expectation} \\
      &\leq C (1 - e^{-\tau}) \left(   \nu^{2+\epsilon}   n^{-1/2 + \delta+\epsilon} +  \nu^{1+\epsilon}  n^{-\delta/4+\epsilon}   \right), \label{equ: delta_partial_tau}
    \end{align}
    where we use the notations
    \begin{align}
        \Delta_B G &= G(0) - G(e_B), \\
        \Delta_B G(K) &= G(K) - G(K+e_B), \\
        \partial_{ij}\lambda_{\alpha} &= \partial_{ij}\lambda_{\alpha}(G(0)), \\
        \partial_{ij}\lambda_{\alpha}^K &= \partial_{ij}\lambda_{\alpha}(G(K)).
    \end{align}
\end{lemma}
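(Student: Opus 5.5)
For each block $B$ and each $K$, I will Taylor expand both discrete differences along their resampling paths and show that the leading term factorizes into the product displayed in \eqref{equ: sum_expectation}.

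\textbf{Step 1 (expansion).} Set $G^B(s) = (1-s)G(0) + sG(e_B)$ and $G^{B,K}(s) = (1-s)G(K) + sG(K+e_B)$. Each pair differs only on $B$, and the relevant entries have a joint Gaussian density, so Lemma~\ref{lem: smooth_lambda} and Corollary~\ref{cor: derivative_bound} apply. They give
\[ \Delta_B\lambda_\alpha = \sum_{(i,j)\in B,\, i\le j} \partial_{ij}\lambda_\alpha\, \Delta_B G_{ij} + R_1, \qquad |R_1| \le \nu^2 \|\Delta_B G\|_{\ell^\infty}^2 \sup_s S_\alpha M^4, \]
and similarly $\Delta_B\lambda_\alpha^K = L_2 + R_2$. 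Multiplying out, $\EE[\Delta_B\lambda_\alpha \Delta_B\lambda_\alpha^K] = \EE[L_1L_2] + \EE[L_1R_2 + R_1L_2 + R_1R_2]$.

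\textbf{Step 2 (main term).} In $\EE[L_1 L_2]$, the increments $\Delta_B G_{ij}$ and $\Delta_B G(K)_{ij}$ are functions of the Brownian increments on $B$ and of $G_{ij}$. The derivatives $\partial\lambda_\alpha$ and $\partial\lambda^K_\alpha$, however, depend on the entries on $B$ as well. So I first replace $\partial_{ij}\lambda_\alpha(G(0))$ by its value at the matrix with the $B$-entries zeroed (or set to an independent copy). This replacement is again controlled by Corollary~\ref{cor: derivative_bound}, with error $\lesssim \nu \|\cdot\|_{\ell^\infty} S_\alpha M^4$.

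After the replacement, the derivative factors are independent of the $B$-increments. Distinct entries $(i,j)\ne(a,b)$ have independent mean-zero increments, so the cross terms vanish. The diagonal terms factor into $\EE[\Delta_B G_{ij}\Delta_B G(K)_{ij}]$ times the derivative correlation. The symmetric duplicates give the factor $(1+\One\{i\ne j\})$. Undoing the replacement costs the same order of error.

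\textbf{Step 3 (bounding errors).} Each of $\Delta_B G_{ij}$ and $\Delta_B G(K)_{ij}$ has variance of order $1-e^{-\tau}$. So, via Gaussian tails and Corollary~\ref{cor: moment_bounds}, $\|\Delta_B G\|_{\ell^\infty} \lesssim \sqrt{(1-e^{-\tau})\log n}$ with high probability, and the products of two increments carry the prefactor $(1-e^{-\tau})$. On the good event $\sE$ from Theorem~\ref{thm: delocalization_all_t} and Theorem~\ref{thm: sum_of_inverse_spacing} (which is where the hypothesis \eqref{equ:nuB_0_1} enters), we have $M^2 \le n^{-1+\epsilon}$ and $S_\alpha \le C n^{1/2+\delta}$. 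Then
\[ |L_1 R_2| \lesssim \nu M^2 \cdot \nu^2 S_\alpha M^4 (1-e^{-\tau}) \le (1-e^{-\tau})\,\nu^3 n^{-5/2+\delta+\epsilon}. \]
Summing over the $\asymp n^2/\nu$ blocks gives $\nu^{2} n^{-1/2+\delta+\epsilon}$. Since the cubic term $R_1R_2$ is smaller, this matches the first term in \eqref{equ: delta_partial_tau}.

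On $\sE^c$ I use only crude bounds. Weyl gives $|\Delta_B\lambda_\alpha| \le \nu\|\Delta_BG\|_{\ell^\infty}$, and together with $|\partial\lambda|\le 1$ and Cauchy--Schwarz this yields a per-block contribution $\lesssim (1-e^{-\tau})\,\nu^2 (\log n)\, \PP(\sE^c)^{1/2}$. Here $\PP(\sE^c) \lesssim n^{-\delta/2}$ comes from the spacing bottleneck. The resulting sum is $n^2/\nu \cdot \nu^2 n^{-\delta/4}$, which is too large as written. So I instead apply the bad-event estimate to the whole block sum at once: via Cauchy--Schwarz on $\sum_B$, using $\sum_B \|\Delta_B\lambda\|$ and the delocalization event $\sE_{\mathrm{deloc}}$ (which has superpolynomially small failure probability), which restores the $n^{-2}$ per block. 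This leaves the $\nu^{1+\epsilon}n^{-\delta/4+\epsilon}$ term.

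\textbf{Main obstacle.} The main obstacle is exactly this bad-event accounting. Only the spacing event has a weak failure probability $n^{-\delta/2}$, and it enters only through $S_\alpha$. I will therefore separate it from delocalization: on $\sE_{\mathrm{deloc}}\cap\sE_{\mathrm{space}}^c$, I bound the Taylor remainder not by $S_\alpha$ but by the first-order bound \eqref{equ: lambda_M_2}, which gives $\nu^{1+\epsilon} n^{-1+\epsilon}$ per difference. Then Hölder with $\PP(\sE_{\mathrm{space}}^c)^{1/2}$ produces the stated $n^{-\delta/4}$ term.
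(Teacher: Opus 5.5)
Your proposal is correct and follows essentially the paper's proof: the same first-order expansion along both resampling paths, the same decoupling via an independent copy on $B$ (the paper's $Z$, $Z(K)$), and the same three-way split into $\sE_{\mathrm{deloc}}\cap\sE_{\mathrm{space}}$ (second-order remainder), $\sE_{\mathrm{deloc}}\cap\sE_{\mathrm{space}}^c$ (first-order bounds plus H\"older with $\PP(\sE_{\mathrm{space}}^c)^{1/2}$), and $\sE_{\mathrm{deloc}}^c$ (trivial bounds), applied to both the Taylor errors and the decoupling error and then summed over the $|\sB|\le 2n^2/\nu$ blocks.

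Two small points. First, use the independent-copy option rather than zeroing the $B$-entries: zeroed entries violate the variance hypothesis \eqref{eq:var-asm} required by the path theorems, whereas with the independent copy no ``undoing'' error arises, since $(Z,Z(K))$ has the same law as $(G,G(K))$. Second, your intermediate ``Cauchy--Schwarz on $\sum_B$'' idea is unnecessary: the split in your final paragraph already yields the $\nu^{1+\epsilon}n^{-\delta/4+\epsilon}$ term.
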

\begin{remark}
    Note that this bound as stated here depends on the index $\alpha$: the condition on $\nu$ in \eqref{equ:nuB_0_1} involves $\alpha$, and $\nu$ also appears in the bound in \eqref{equ: delta_partial_tau}.
    When we later apply this bound, we will choose a parameter $\gamma > 0$ such that, if $\nu = O(n^{\gamma})$, then the associated error term $\sR$ is small for \emph{any} choice of $\alpha \in [n]$, to simplify the final presentation of our result.
    But, we emphasize here that one may select $\gamma$ depending on $\alpha$ to obtain slightly more precise estimates and looser conditions on $\nu$ in some cases.
\end{remark}

To prove Lemma~\ref{lem: delta_partial_o1}, we begin by establishing a decomposition of the difference we are trying to control in Proposition~\ref{prop: POU_inequality} below. This first step addresses two technical issues.

First, because there are two factors inside the expectation in the expression $\EE[\Delta_{B}\lambda_\alpha\,\Delta_{B}\lambda_\alpha^K]$, we cannot just directly apply the estimates of Corollary~\ref{cor: derivative_bound}. 
Instead, we introduce the interpolations $F_\alpha(s)\coloneqq \lambda_\alpha(G(s))$ and $F_{\alpha, K}(s)\coloneqq \lambda_\alpha(G(K,s))$, defined below in \eqref{equ: F_s} and \eqref{equ: F_K_s}, and approximate
$\EE[\Delta_{B}\lambda_\alpha\,\Delta_{B}\lambda_\alpha^K] \approx \EE[F_\alpha'(0)\,F_{\alpha, K}^{\prime}(0)]$, with the error controlled by corresponding bounds on remainders in Taylor expansions.

Second, each of these derivatives expands into a sum over matrix entries.
In particular, if the first expands into a sum over indices $(i, j)$ and the second into one over indices $(a, b)$, then the product is a summation over all four indices, while the result of Lemma~\ref{lem: delta_partial_o1} only involves one summation, associated to the ``diagonal'' terms $\{i, j\} = \{a, b\}$.
In particular, the terms of our expansion contains factors of the form $(G_{ij}-G(e_B)_{ij})(G(K)_{ab}-G(K+e_B)_{ab})$.
The expectation of such an expression is zero unless $\{i, j\} = \{a, b\}$, but these factors appear in expectations together with other factors, stopping us from reducing in this way to only the diagonal terms of the summation.
Therefore, we further compare to a \emph{decoupled} version of this summation, where we can in fact use the above reasoning, at the cost of introducing another error term.

\begin{prop}\label{prop: POU_inequality}
 Let $B \in \mathcal{B}$.
 Let $(Z, (\widetilde{W}(t))_{t \in \RR_+})$ be identically distributed to $(G, ({W}(t))_{t \in \RR_+})$, coupled such that the entries of these matrices indexed by positions in $B$ are independent while the other entries are equal.
 For each $K \in \ZZ^{\sB}_{\geq 0}$, we define
 \begin{align}
    Z(K)_{ij} = e^{-\tau 
    \Bar{K}_{ij}}  Z_{ij} + e^{-\tau \Bar{K}_{ij}} \widetilde{W}(e^{2\tau \Bar{K}_{ij}} -1),
\end{align} 
where the notations follow \eqref{equ: bar_K} and \eqref{equ: G_ij_K}.
Note that, for any $K$, $Z(K)$ is identically distributed to $G(K)$, and the two are dependent according to the above coupling.
Then, for any $\alpha \in [n]$ (see below for the dependence of these notations on $\alpha$), we have
         \begin{align}\label{equ: delta_partial_bound_g}
    &\Bigl|  \EE \left[ \Delta_{B} \lambda \Delta_{B} \lambda^K \right] -   \sum_{(i,j)\in B } (1+ \mathbbm{1}\{i\neq j\}) \EE[\Delta_B G_{ij} \Delta_B G(K)_{ij} ] \EE \left[  \partial_{ij} \lambda  \partial_{ij} \lambda^K  \right] \Bigr|  \\ 
    & \leq |  \EE [  ( {\Delta_{B} \lambda - F'(0)} )  \Delta_{B}  \lambda^K ]  | 
         +  |  \EE [ F'(0)    (\Delta_{B} \lambda^K - F_K'(0)  )  ]  | \label{equ: delta_partial_bound_0} \\
   &\hspace{0.5cm} + \left|   \sum_{(i,j), (a,b)\in B} \EE\left[ \Delta_B G_{ij} \Delta_B G(K)_{ab}  ( \partial_{ij} \lambda  \partial_{ab} \lambda^K -  \partial_{ij}  \widetilde \lambda \partial_{ab} \widetilde{\lambda}^K  )  \right] \right| \label{equ: pdbou_prop_3}
    \end{align}
    where we use the following notations, here and in the proof omitting the dependence on $\alpha$.
    \begin{align}
      G(K, s) &\colonequals (1 - s)G(K) + sG(K + e_B), \label{equ: G_K_s} \\ 
      G(s) &\colonequals G(0, s), \\
      \lambda = \lambda_{\alpha}
      &\colonequals  \lambda_{\alpha}(G), \\
      F(s) = F_{\alpha}(s) &\colonequals \lambda_{\alpha}(G(s)), \label{equ: F_s} \\
      \lambda^K = \lambda_{\alpha}^K
      &\colonequals  \lambda_{\alpha}(G(K)), \\
      F_K(s) = F_{\alpha, K}(s) &\colonequals \lambda_{\alpha}(G(K, s)), \label{equ: F_K_s} \\
      \widetilde{\lambda} = \widetilde{\lambda}_{\alpha} 
      &\colonequals \lambda_{\alpha}(Z), \\  \widetilde{\lambda}^K = \widetilde{\lambda}_{\alpha}^K
      &\colonequals \lambda_{\alpha}(Z(K)).
    \end{align}
\end{prop}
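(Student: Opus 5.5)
The plan is a telescoping decomposition followed by a decoupling identity for the leftover term. First write
\[
\Delta_B\lambda\,\Delta_B\lambda^K = (\Delta_B\lambda - F'(0))\Delta_B\lambda^K + F'(0)(\Delta_B\lambda^K - F_K'(0)) + F'(0)F_K'(0).
\]
Up to the sign convention $\Delta_B\lambda = F(0)-F(1)$, we have $F'(0)=\sum_{(i,j)\in B}\partial_{ij}\lambda\,(G(e_B)-G)_{ij}$ by the chain rule computation from Corollary~\ref{cor: derivative_bound}; we fix signs so that $F'(0)$ is the linearization of $\Delta_B\lambda$, and similarly for $F_K'(0)$. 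Here the sum over ordered pairs in $B$ accounts for both symmetric entries. Lemma~\ref{lem: smooth_lambda} justifies the differentiability almost surely. Taking expectations and the triangle inequality, the first two terms give exactly the two terms in \eqref{equ: delta_partial_bound_0}. The remainder is
\[
\EE[F'(0)F_K'(0)] = \sum_{(i,j),(a,b)\in B}\EE\left[\Delta_BG_{ij}\,\Delta_BG(K)_{ab}\,\partial_{ij}\lambda\,\partial_{ab}\lambda^K\right].
\]

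Next, add and subtract the same sum with $\partial_{ij}\widetilde\lambda\,\partial_{ab}\widetilde\lambda^K$ in place of $\partial_{ij}\lambda\,\partial_{ab}\lambda^K$. The difference is exactly the term \eqref{equ: pdbou_prop_3}. It then remains to show that
\[
\sum_{(i,j),(a,b)\in B}\EE\left[\Delta_BG_{ij}\Delta_BG(K)_{ab}\,\partial_{ij}\widetilde\lambda\,\partial_{ab}\widetilde\lambda^K\right]
\]
equals the diagonal expression in the statement.

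The key observation is an independence structure. The variables $\Delta_BG_{ij}$ and $\Delta_BG(K)_{ab}$ for $(i,j),(a,b)\in B$ are functions only of $G_{ij}$, $W_{ij}(\cdot)$ and the extra OU increments defining $G(e_B)$ and $G(K+e_B)$ on $B$. Meanwhile $Z$ and $Z(K)$ are built from $Z_{ij}$ and $\widetilde W_{ij}$, which coincide with $G$ and $W$ off $B$ and are independent copies on $B$. We also need the OU increments defining $G(e_B)$ and $G(K+e_B)$ to be independent of $Z$. This holds if the construction runs $\widetilde W$ only up to time $e^{2\tau\bar K}-1$ and the extra increment of $W$ beyond that is independent; I would make this coupling explicit.

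Hence the factor $\partial\widetilde\lambda\,\partial\widetilde\lambda^K$ is independent of the pair $(\Delta_BG,\Delta_BG(K))$ restricted to $B$, and the expectation factorizes. Finally, $\EE[\Delta_BG_{ij}\Delta_BG(K)_{ab}]=0$ unless $\{a,b\}=\{i,j\}$, by independence and centering of distinct upper-triangular entries. The surviving terms $(a,b)=(i,j)$ and $(a,b)=(j,i)$ coincide by symmetry and produce the factor $1+\mathbbm{1}\{i\neq j\}$. Since $(Z,Z(K))$ has the same law as $(G,G(K))$, we have $\EE[\partial_{ij}\widetilde\lambda\,\partial_{ij}\widetilde\lambda^K]=\EE[\partial_{ij}\lambda\,\partial_{ij}\lambda^K]$, which gives the claimed identity.

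The main obstacle I expect is the bookkeeping of the ordered versus unordered index sums, i.e.\ making the factor $1+\mathbbm{1}\{i\ne j\}$ and the chain-rule derivative convention consistent. Close behind is verifying the independence claim for $\Delta_BG(K)$ with respect to $Z(K)$ under the precise Brownian coupling. All integrability needed to split the expectations follows from the boundedness of the $\partial_{ij}\lambda$ (which are at most $1$ in absolute value) and the Gaussian moments.
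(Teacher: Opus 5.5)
Your proposal is correct and follows the paper's proof essentially step for step. Both arguments use the same three-term split (the paper phrases it as a triangle inequality), the chain-rule expansion of $\EE[F'(0)F_K'(0)]$, adding and subtracting the decoupled $\partial_{ij}\widetilde\lambda\,\partial_{ab}\widetilde\lambda^K$ terms, factorization by independence, vanishing of the off-diagonal pairs, and replacing $(Z,Z(K))$ by $(G,G(K))$ via equality in law. Two small remarks:
\begin{enumerate}
\item The independence step needs no modified construction. Under the stated coupling, $(Z_B,\widetilde W_B)$, $(G_B,W_B)$ and $(G_{B^c},W_{B^c})$ are mutually independent. Moreover, $\Delta_B G$ and $\Delta_B G(K)$ depend only on $(G_B,W_B)$.
\item Your sign observation is accurate but harmless here. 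The split is an algebraic identity, and the two sign flips cancel in $F'(0)F_K'(0)$. The sign only matters later, when showing the remainders are small.
\end{enumerate}
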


\begin{proof}
As mentioned above, we fix $\alpha \in [n]$, and omit the dependence of all quantities involved on $\alpha$ as its value does not affect the proof.

 By \eqref{equ: R_ij_S_a_M^d} and the triangle inequality, we have
\begin{align}\label{equ: delta_partial_trian_1}
    &\left| \EE[ \Delta_{B} \lambda \Delta_{B}  \lambda^K] - \EE[ F'(0)  F_K^{\prime}(0)  ] \right| \\
       &\quad \leq |  \EE [  ( {\Delta_{B} \lambda - F'(0)} )  \Delta_{B}  \lambda^K ]  | 
         +  |  \EE [ F'(0)    (\Delta_{B} \lambda^K - F_K^{\prime}(0)  )  ]  | .
\end{align}
We further establish a relationship between $\EE  [  F'(0)  F_K^{\prime}(0)  ]$ and $ \sum_{(i,j)\in B }  \EE \left[   \partial_{ij} \lambda  \partial_{ij} \lambda^K  \right] $.  We first note that, by the chain rule,
\begin{equation}\label{equ: F_FK}
   \EE  [  F'(0) F_K^{\prime}(0)  ] =  \sum_{(i,j),(a,b)\in B} \mathbb{E} \left[ \Delta_B G_{ij} \Delta_B G(K)_{ab} \partial_{ij}\lambda \partial_{ab}\lambda^K \right].
\end{equation}
Recall that $\lambda$ and $\lambda^K$ depend on $G$ and $G^{\prime}$.
We now show that, if we instead replace these by the corresponding quantities $\widetilde{\lambda}$ and $\widetilde{\lambda}^K$ depending on $Z$ and $Z^{\prime}$, then this expression reduces to precisely the one appearing in the claim.
Since $\Delta_B G_{ij} \Delta_B G(K)_{ab}$ and $\partial_{ij} 
 \widetilde{\lambda}  \partial_{ab} \widetilde{\lambda}^K $ are independent for any $(i,j), (a,b) \in B$, we have
 \begin{align}
    &\sum_{(i,j), (a,b)\in B}  \EE \left[ \Delta_B G_{ij} \Delta_B G(K)_{ab} \partial_{ij}\widetilde{ \lambda} \partial_{ab}\widetilde{\lambda}^K   \right] \\
    &= \sum_{(i,j), (a,b) \in B }  \EE [ \Delta_B G_{ij} \Delta_B G(K)_{ab} ]   \EE [ \partial_{ij} 
 \widetilde{\lambda}  \partial_{ab} \widetilde{\lambda}^K  ] 
 \intertext{By the independence of the entries, $\EE [   \Delta_B G_{ij} \Delta_B G(K)_{ab} ] = 0$ for all $\{i, j\} \neq \{a, b\}$, thus}
 &= \sum_{(i,j) \in B} (1+ \mathbbm{1}\{i\neq j\}) \EE [ \Delta_B G_{ij} \Delta_B G(K)_{ij} ]   \EE [ \partial_{ij} 
 \widetilde{\lambda}  \partial_{ij} \widetilde{ \lambda}^K  ]
 \intertext{and by the identical marginal distributions of the $Z$ and $G$ variables, we thus also have the following, removing the $Z$ variables after using them in the above manipulation:}
 &= \sum_{(i,j) \in B} (1+ \mathbbm{1}\{i\neq j\}) \EE [ \Delta_B G_{ij} \Delta_B G(K)_{ij} ]   \EE [ \partial_{ij} 
 \lambda \partial_{ij} \lambda^K  ]
\end{align}
Rearranging this calculation, we have:
\begin{align}
    &\left|   \EE  [  F'(0)  {F'}^K(0)  ]  -    \sum_{(i,j)\in B } (1+ \mathbbm{1}\{i\neq j\})  \EE [ \Delta_B G_{ij} \Delta_B G(K)_{ij} ]   \EE \left[  \partial_{ij} \lambda  \partial_{ij} \lambda^K  \right]  \right| \\
    &=  \left|   \sum_{(i,j), (a,b)\in B} \EE\left[ \Delta_B G_{ij} \Delta_B G(K)_{ab}  ( \partial_{ij} \lambda  \partial_{ab} \lambda^K -  \partial_{ij} \widetilde{ \lambda} \partial_{ab}\widetilde{ \lambda}^K  )  \right]  \right|. \label{equ: delta_partial_trian_2}
\end{align} 
Thus, applying this observation and the triangle inequality to our original expression, we find
     \begin{align}
    &\left|  \EE \left[ \Delta_{B} \lambda  \Delta_{B} \lambda^K \right] -   \sum_{(i,j)\in B }  (1+ \mathbbm{1}\{i\neq j\}) \EE [ \Delta_B G_{ij} \Delta_B G(K)_{ij} ]   \EE \left[  \partial_{ij} \lambda  \partial_{ij} \lambda^K  \right] \right| \\ 
    &\leq \left| \EE[ \Delta_{B} \lambda \Delta_{B}  \lambda^K] - \EE[ F'(0)  {F'}^K(0)  ] \right| \\&\quad + \left|   \EE  [  F'(0)  {F'}^K(0)  ]  -   \sum_{(i,j)\in B }  \EE [ \Delta_B G_{ij} \Delta_B G(K)_{ij} ]   \EE \left[  \partial_{ij} \lambda  \partial_{ij} \lambda^K  \right] \right| \\
    &\leq |  \EE [  ( {\Delta_{B} \lambda - F'(0)} )  \Delta_{B}  \lambda^K ]  | 
         +  |  \EE [ F'(0)    (\Delta_{B} \lambda^K -{F'}^K(0)  )  ]  |   \\
    &\quad + \left|   \sum_{(i,j), (a,b)\in B} \EE\left[ \Delta_B G_{ij} \Delta_B G(K)_{ab}  ( \partial_{ij} \lambda  \partial_{ab} \lambda^K -  \partial_{ij} \widetilde{ \lambda} \partial_{ab}\widetilde{\lambda}^K  )  \right]
     \right|.
\end{align}
as desired.
\end{proof}

For any $K \in \ZZ^{\sB}_{\geq 0}$ and $B \in \mathcal{B}$, we denote  \begin{align}
    \eqref{equ: pdbou_prop_3} \equalscolon \sR_1(K, B) + \sR_2(K, B) + \sR_3(K, B)  \equalscolon \sR(K, B),
\end{align} where, to recall, \begin{align}
     \sR_1(K, B) &= \left|  \EE [  ( {\Delta_{B} \lambda - F'(0)} )  \Delta_{B}  \lambda^K ]  \right| , \label{equ: R_1B} \\
     \sR_2(K, B) &= \left|  \EE [ F'(0)    (\Delta_{B} \lambda^K - F_K'(0)  )  ]  \right|, \label{equ: R_2B} \\
     \sR_3(K, B) &= \left|   \sum_{(i,j), (a,b)\in B} \EE\left[ \Delta_B G_{ij} \Delta_B G(K)_{ab}  ( \partial_{ij} \lambda  \partial_{ab} \lambda^K -  \partial_{ij}  \widetilde \lambda \partial_{ab} \widetilde{\lambda}^K  )  \right] \right| \label{equ: R_3B} .
\end{align}

Our strategy then relies on the bounds provided by Corollary~\ref{cor: derivative_bound} on derivatives and Taylor approximations of the derivatives of eigenvalues. 
All of these bounds are in terms of further derivatives of the eigenvalues.
To control these derivatives, we invoke Theorems~\ref{thm: delocalization_all_t} and~\ref{thm: sum_of_inverse_spacing}, which control with high probability the various spectral quantities appearing in these derivatives, uniformly along resampling paths.
The similar terms $\sR_1(K, B)$ and $\sR_2(K, B)$ can be estimated by directly following this plan.
For $\sR_3(K, B)$, which involves the coupling between the $G$ and $Z$ variables, we will need to work with a slightly different event that concerns both of these.

Before establishing these bounds, we first verify that our resampling path, $G(K, s)$ as defined in \eqref{equ: G_K_s}, satisfies the variance assumptions \eqref{eq:var-asm} required to apply the uniform delocalization results of Theorem~\ref{thm: delocalization_all_t}.

\begin{prop}\label{prop: delocalization_cond}
  For any $K \in \ZZ^{\sB}_{\geq 0}$, we have for all $\epsilon>0$ and $c>0$, for all sufficiently large $n$, \begin{align}
    \PP  \left\{\sup_{s \in [0, 1]} M(G(K,s)) \leq Cn^{-1/2 + \epsilon}\right\} \geq 1 - n^{-c}.
  \end{align}
\end{prop}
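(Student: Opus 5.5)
The plan is to apply Theorem~\ref{thm: delocalization_all_t} with $X = G(K)$ and $Y = G(K+e_B)$, so that $X(s) = G(K,s)$. Everything then reduces to checking that theorem's hypotheses: each endpoint must be a sub-Gaussian generalized Wigner matrix with fixed parameters, the pairs $(X_{ij},Y_{ij})$ must be independent across $i\le j$, and the variance condition \eqref{eq:var-asm} must hold uniformly along the path. Once these are in place, the theorem gives $\sup_s M(G(K,s)) \le n^{-1/2+\epsilon}$ with probability at least $1-n^{-c}$ for any $c$, which is the claim (with $C=1$, or any constant).

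\textbf{Endpoints.} By the construction \eqref{equ: G_ij_K}, each entry satisfies $G(K)_{ij} = e^{-\tau\bar K_{ij}}G_{ij} + e^{-\tau\bar K_{ij}}W_{ij}(e^{2\tau\bar K_{ij}}-1)$. This is Gaussian with variance
\[
e^{-2\tau\bar K_{ij}}\bigl(1+\One\{i=j\}\bigr)\bigl(1+e^{2\tau\bar K_{ij}}-1\bigr) = 1+\One\{i=j\},
\]
so $G(K)\sim\GOE(n)$ for every fixed $K$. The same holds for $G(K+e_B)$. Since $\GOE(n)$ is a sub-Gaussian generalized Wigner matrix with fixed parameters, the endpoint hypothesis holds. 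Independence across index pairs also holds, because the pair $(G(K)_{ij}, G(K+e_B)_{ij})$ is a function only of $(G_{ij}, W_{ij}(\cdot))$, and these are independent across $i\le j$.

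\textbf{Variance along the path.} The only genuine computation is \eqref{eq:var-asm}. For $(i,j)\notin B$ the two endpoints agree, so $\Var(G(K,s)_{ij}) = 1+\One\{i=j\}$. For $(i,j)\in B$, write $X_{ij}=G(K)_{ij}$ and $Y_{ij}=G(K+e_B)_{ij}$. Then $Y_{ij}$ is the OU process advanced by an additional time $\tau$ from $X_{ij}$, so
\[
\Cov(X_{ij},Y_{ij}) = e^{-\tau}\bigl(1+\One\{i=j\}\bigr) =: \rho\,\sigma^2,
\]
where $\sigma^2 = 1+\One\{i=j\}$ and $\rho=e^{-\tau}$. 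Hence
\[
\Var\bigl((1-s)X_{ij}+sY_{ij}\bigr) = \sigma^2\bigl((1-s)^2+s^2+2s(1-s)\rho\bigr).
\]
Because $\rho\ge 0$, this is at least $\sigma^2\bigl((1-s)^2+s^2\bigr)\ge \sigma^2/2\ge 1/2$, and it is at most $\sigma^2\le 2$. So \eqref{eq:var-asm} holds with $c=2$, uniformly in $s$, $K$, and $\tau$.

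The one place I expect care is needed is that Theorem~\ref{thm: delocalization_all_t} internally invokes Theorem~\ref{thm: tilde_M_p} at each grid matrix $X(s_i)$. That requires $X(s_i)$ to have independent, centered, uniformly sub-Gaussian entries. This holds here: each entry of $X(s_i)$ is a centered Gaussian with variance at most $2$, and the entries are independent across $i\le j$ by the independence of the pairs noted above. The constants in the delocalization theorem therefore do not depend on $K$, $B$, or $\tau$, and the bound holds uniformly in these parameters for all sufficiently large $n$.
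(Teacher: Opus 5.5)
Your proposal is correct and follows the paper's proof. Like the paper, you apply Theorem~\ref{thm: delocalization_all_t} to the path from $G(K)$ to $G(K+e_B)$, after computing $\Cov(G(K)_{ij}, G(K+e_B)_{ij}) = e^{-\tau}\sigma_{ij}^2$ to verify \eqref{eq:var-asm}. Your lower bound $\sigma_{ij}^2/2$ (from $e^{-\tau}\geq 0$) is slightly cruder than the paper's $\frac{1+e^{-\tau}}{2}\sigma_{ij}^2$ but equally sufficient, and your explicit checks that the endpoints are $\GOE(n)$ and that entry pairs are independent spell out what the paper leaves implicit.
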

\begin{proof}
 We denote \begin{align}\label{equ: sigma_ij}
    \sigma^2_{ij} = 1 + \ind\{i = j\},
\end{align}
so that $G_{ij} \sim N(0, \sigma^2_{ij})$.
Recall that, by the definition of an admissible partition (Definition~\ref{def:blocks}), for any block $B \in \sB$ and any $(i, j) \in B$, we have $K_{ij} = \Bar{K}_{ij} = K_B$.
Using this, for any $K \in \ZZ^{\sB}_{\geq 0}$, we calculate for $(i,j) \in B$ that
\begin{align}
    &\Cov (G(K)_{ij},G(K+e_B)_{ij}) \\
    &=\EE [ (e^{-\tau K_B} G_{ij} + e^{-\tau K_B} W(e^{2\tau K_B} -1)_{ij}) ( e^{-\tau (K_B+1)} G_{ij} + e^{-\tau (K_B+1)}  W(e^{2\tau( K_B+1)} -1)_{ij})] \\
    &= e^{-\tau(2K_{B}+1)} \sigma^2_{ij} + e^{-\tau(2K_{B}+1)} (e^{2\tau K_B} -1) \sigma^2_{ij} \\
    &= e^{-\tau} \sigma^2_{ij}.
\end{align}
Then, expanding the variance, we have
\begin{align}
    &\Var (G(K, s)_{ij} ) \\
    &= \Var ((1-s)G(K)+sG(K+e_B) ) \\
    &= (1-s)^2  \Var(G(K)_{ij}) + s^2  \Var(G(K+e_B)_{ij}) + 2s (1-s)\Cov (G(K)_{ij},G(K+e_B)_{ij})\\
    &= (1-2s(1-s)( 1-e^{-\tau} )) \sigma_{ij}^2  \\
    &\in \left[  \frac{1+e^{-\tau}}{2}\sigma^2_{ij},  \sigma^2_{ij}\right].\label{equ: var_G_s}
\end{align}
On the other hand, for $(i,j) \notin B$, we simply have
\begin{align}
    \Var (G(K,s)_{ij} ) = \Var ( G(K)_{ij})  = \sigma^2_{ij}.\label{equ: var_G_s_2}
\end{align}
In all cases, the condition \eqref{eq:var-asm} holds for $G(K,s)$ uniformly over $s\in[0,1]$, and the conclusion then follows immediately from Theorem~\ref{thm: delocalization_all_t}.
\end{proof}

We further record the distribution of $\Delta_B G(K)_{ij}$ for $(i,j) \in B$, which will be used repeatedly for the calculation in the proof.
\begin{lemma}\label{lem: delta_variance}
    For any $K \in \ZZ^{\sB}_{\geq 0}$ and $(i,j) \in B$, we have that $\Delta_B G(K)_{ij}$ is a Gaussian random variable with mean zero and with
    \begin{align}
       \Var(\Delta_B G(K)_{ij}) = 2 (1-e^{-\tau}) \sigma_{ij}^2.
    \end{align}
\end{lemma}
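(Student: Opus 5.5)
The plan is to write $\Delta_B G(K)_{ij} = G(K)_{ij} - G(K+e_B)_{ij}$ explicitly as a linear combination of the independent Gaussian inputs $G_{ij}$ and the Brownian motion $W_{ij}$. Gaussianity and mean zero then follow immediately, and the variance is computed from covariances we already know.

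\emph{Step 1: Gaussianity and mean zero.} Fix $(i,j) \in B$. Since $\sB$ is an admissible partition, $\Bar{K}_{ij} = K_B$ and $\Bar{(K+e_B)}_{ij} = K_B + 1$. By \eqref{equ: G_ij_K}, both $G(K)_{ij}$ and $G(K+e_B)_{ij}$ are linear combinations of $G_{ij}$ and the values of the Brownian motion $W_{ij}$ at the two deterministic times $e^{2\tau K_B}-1$ and $e^{2\tau(K_B+1)}-1$. The family $(G_{ij}, W_{ij}(\cdot))$ is jointly Gaussian and centered, so the difference $\Delta_B G(K)_{ij}$ is a centered Gaussian.

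\emph{Step 2: the variance.} Expand
\[ \Var(\Delta_B G(K)_{ij}) = \Var(G(K)_{ij}) + \Var(G(K+e_B)_{ij}) - 2\Cov(G(K)_{ij}, G(K+e_B)_{ij}). \]
By stationarity of the OU process, each $G(K)$ has law $\GOE(n)$. One can also check this directly: $e^{-2\tau k}\sigma_{ij}^2 + e^{-2\tau k}(e^{2\tau k}-1)\sigma_{ij}^2 = \sigma_{ij}^2$. So both variances equal $\sigma_{ij}^2$. The covariance was computed in the proof of Proposition~\ref{prop: delocalization_cond}: using $\Cov(W(s),W(t)) = (s\wedge t)\sigma_{ij}^2$, it equals $e^{-\tau}\sigma_{ij}^2$. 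Hence
\[ \Var(\Delta_B G(K)_{ij}) = 2\sigma_{ij}^2 - 2e^{-\tau}\sigma_{ij}^2 = 2(1-e^{-\tau})\sigma_{ij}^2. \]

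There is no real obstacle here. The only point to be careful about is the Brownian covariance: the increment of $W_{ij}$ between the two times is independent of $W_{ij}(e^{2\tau K_B}-1)$, so only the earlier time contributes to the cross term. The diagonal case $i=j$ differs only through the factor $\sigma_{ij}^2 = 2$, which is already built into $\sigma_{ij}^2$.
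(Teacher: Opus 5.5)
Your proof is correct and is essentially the paper's argument: both write $\Delta_B G(K)_{ij}$ as a linear combination of $G_{ij}$ and of $W_{ij}$ at the two times, and both rely on the Brownian covariance kernel $\Cov(W_{ij}(s),W_{ij}(t)) = (s \wedge t)\sigma_{ij}^2$. The paper expands $\EE[(\Delta_B G(K)_{ij})^2]$ directly after factoring out $e^{-2\tau K_B}$. You instead use $\Var(X-Y) = \Var X + \Var Y - 2\Cov(X,Y)$, together with stationarity and the covariance $e^{-\tau}\sigma_{ij}^2$ already computed in the proof of Proposition~\ref{prop: delocalization_cond}; this is a minor and slightly more economical reorganization of the same computation.
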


\begin{proof}
That $\Delta_B G(K)_{ij}$ is a mean-zero Gaussian directly follows from the fact that it is a linear combination of the mean-zero Gaussian variables $G_{ij}$ and $W(t)$ for a suitable $t \geq 0$. Following the notation $\sigma_{ij}^2 = 1 + \One\{i = j\}$ as above, we have
    \begin{align}\label{equ: G_Geb_K_var}
    &\Var( \Delta_B G(K)_{ij}) \\
    &= \EE [(\Delta_B G(K)_{ij})^2] \\
   &= \EE [(e^{-\tau K_B} G_{ij} + e^{-\tau K_B} W(e^{2\tau K_B} -1)_{ij} - e^{-\tau (K_B+1)} G_{ij} - e^{-\tau (K_B+1)} W(e^{2\tau (K_B+1)} -1)_{ij} )^2] \\
  &= e^{-2\tau K_B} \left(  (1-e^{-\tau})^2 \Var(G_{ij}) + \Var(W(e^{2\tau K_B} -1)_{ij} - e^{-\tau} W(e^{2\tau (K_B+1)} -1) )_{ij} \right) \label{equ: var_delta_G_e} \intertext{Using the standard Brownian motion covariance kernel $\Cov(W_{ij}(s), W_{ij}(t) ) = \min(s, t) \sigma_{ij}^2 $, we then compute}
  &=e^{-2\tau K_B} \sigma_{ij}^2 \left( (1-e^{-\tau})^2  + (e^{2\tau K_B} -1) + e^{-2\tau} (e^{2\tau (K_B+1)} -1) - 2e^{-\tau}(e^{2\tau K_B} -1) 
  \right) \\
  &= 2 (1-e^{-\tau}) \sigma_{ij}^2,
  \end{align}
  as claimed.
\end{proof}

We now start moving towards the proof of Lemma~\ref{lem: delta_partial_o1} by bounding the $\sR_i$ for $i \in \{1, 2, 3\}$.
 \begin{prop}[Bounds for $\sR_1$ and $\sR_2$]\label{prop: R_bound_12}
Under the condition on $\nu(B)$ in \eqref{equ:nuB_0_1}, for any $\delta, \epsilon > 0$, there exists $C_1 = C(\epsilon, \delta) > 0$ such that
\begin{align}
      \sR_i(K, B) &\leq  C_1 (1-e^{-\tau})   \left(  (1 - e^{-\tau})^{1/2}  \nu^{3+\epsilon}  n^{-5/2 +\delta +  \epsilon}     +    \nu^{2+\epsilon}  n^{-2-\delta/4 + \epsilon}   \right) \text{ for } i \in \{1, 2\}. \label{equ: R12_bound} 
\end{align}
\end{prop}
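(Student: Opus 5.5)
We bound $\sR_1(K,B)$; the argument for $\sR_2(K,B)$ is symmetric, with the roles of $(G(0),G(e_B))$ and $(G(K),G(K+e_B))$ exchanged. The plan is to split the expectation on a good event $\sE$ and its complement, apply Corollary~\ref{cor: derivative_bound} on $\sE$, and apply crude moment bounds together with Cauchy--Schwarz on $\sE^c$.

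\textbf{The good event.} Let $\sE$ be the intersection of the following events:
\begin{enumerate}
\item $\sup_{s} M(G(s)) \leq n^{-1/2+\epsilon'}$ and $\sup_s M(G(K,s))\le n^{-1/2+\epsilon'}$. Proposition~\ref{prop: delocalization_cond} gives this with failure probability $n^{-C}$ for arbitrary $C$.
\item $\sup_s S_\alpha(G(s))\le Cn^{1/2+\delta}$. Theorem~\ref{thm: sum_of_inverse_spacing} gives this, and its hypothesis is exactly \eqref{equ:nuB_0_1}; it holds for the GOE path because the variances \eqref{equ: var_G_s} satisfy \eqref{eq:var-asm}, and the joint-density requirement holds by Gaussianity. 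The failure probability is $cn^{-\delta/2}$.
\end{enumerate}

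\textbf{Main term on $\sE$.} By \eqref{equ: R_ij_S_a_M^d},
\[ |\Delta_B\lambda - F'(0)| \le \nu^2\|\Delta_B G\|_{\ell^\infty}^2\, n^{1/2+\delta}\, n^{-2+4\epsilon'}. \]
By \eqref{equ: lambda_M_2},
\[ |\Delta_B\lambda^K| \le \nu\|\Delta_BG(K)\|_{\ell^\infty}\, n^{-1+2\epsilon'}. \]
The entries of $\Delta_BG$ and $\Delta_BG(K)$ on $B$ are Gaussian with variance $2(1-e^{-\tau})\sigma_{ij}^2$ (Lemma~\ref{lem: delta_variance}), and only the $\nu$ entries in $B$ are nonzero. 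The $\|X\|_0\le N$ version of Corollary~\ref{cor: moment_bounds} then gives
\[ \EE\|\Delta_BG\|_{\ell^\infty}^4\lesssim (1-e^{-\tau})^2\log^2\nu \quad\text{and}\quad \EE\|\Delta_BG(K)\|^2_{\ell^\infty}\lesssim (1-e^{-\tau})\log\nu. \]
Applying Cauchy--Schwarz to the product of these two quantities yields a contribution of order
\[ (1-e^{-\tau})^{3/2}\nu^3 n^{-5/2+\delta+\epsilon}, \]
after absorbing the logarithms and the powers $n^{O(\epsilon')}$ into $\nu^\epsilon n^\epsilon$. This is the first term of \eqref{equ: R12_bound}.

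\textbf{Complement $\sE^c$.} We avoid the second-order remainder entirely and use only first-order bounds, valid everywhere, together with $M\le 1$:
\[ |\Delta_B\lambda|,\ |F'(0)| \le \nu\|\Delta_BG\|_{\ell^\infty}. \]
This is crude, so we bound at most one factor crudely. Write the integrand as
\[ (\Delta_B\lambda - F'(0))\,\Delta_B\lambda^K\,\One_{\sE^c}. \]
We keep the factor $\Delta_B\lambda^K$ at the delocalized scale $\nu\|\Delta_BG(K)\|_{\ell^\infty}n^{-1+2\epsilon'}$ on the part of $\sE^c$ where only the spacing event fails. The delocalization event fails only with probability $n^{-C}$, so that part of $\sE^c$ is negligible. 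Similarly, we bound $|\Delta_B\lambda-F'(0)|\le 2\nu\sup_s M(G(s))^2\|\Delta_BG\|_{\ell^\infty}\lesssim \nu n^{-1+2\epsilon'}\|\Delta_BG\|_{\ell^\infty}$, using delocalization alone. Hölder's inequality with $\PP(\sE^c)^{1/2}\lesssim n^{-\delta/4}$ then gives
\[ (1-e^{-\tau})\,\nu^2 n^{-2-\delta/4+\epsilon}, \]
which is the second term of \eqref{equ: R12_bound}.

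\textbf{Main obstacle.} The delicate point is this last step: the spacing event only holds with probability $1-O(n^{-\delta/2})$. We therefore must not use the second-order Taylor bound off $\sE$, and must retain delocalization there so that no factor of size $O(1)$ appears. Splitting $\sE^c$ into the spacing-failure part and the negligible delocalization-failure part handles this. A secondary issue is tracking that every factor carries the correct power of $(1-e^{-\tau})^{1/2}$, which Lemma~\ref{lem: delta_variance} supplies.
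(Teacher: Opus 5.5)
Your proposal is correct and matches the paper's proof step for step. It uses the same three-way split: delocalization and spacing both hold, delocalization holds but spacing fails, and delocalization fails. On the first piece it uses the second-order bound \eqref{equ: R_ij_S_a_M^d} with Cauchy--Schwarz; on the second, first-order delocalized bounds with H\"older and $\PP(\sE_{\mathrm{space}}^c)^{1/2}\lesssim n^{-\delta/4}$; on the third, the trivial bound $M\le 1$ offset by the $n^{-C}$ failure probability.

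One caveat is shared with the paper. Since $\Delta_B\lambda=\lambda(G(0))-\lambda(G(e_B))=F(0)-F(1)$, the remainder bound \eqref{equ: R_ij_S_a_M^d} actually controls $\Delta_B\lambda+F'(0)$, and likewise $\Delta_B\lambda^K+F_K'(0)$. So $\sR_1,\sR_2$ should be defined with $+F'(0)$ and $+F_K'(0)$. The decomposition in Proposition~\ref{prop: POU_inequality} remains valid with that sign, and your argument is otherwise unaffected.
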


\begin{proof}\label{proof: prop_r12}
To keep the notation concise, we write $\sR_i \colonequals \sR_i(K,B)$ for a fixed $K \in \ZZ^{\sB}_{\geq 0}$ and $B \in \sB$ in the proof. 
Note that while there is a tradeoff in the role of $\delta$ in the two terms, for the role of $\epsilon$ it suffices to show that the bound holds for $\epsilon > 0$ arbitrarily small. Fix any $\delta, \epsilon>0$.
Let $C, C' > 0$ be constants to be fixed later.
In terms of these constants, we define the following events:
\begin{align}\label{equ: R_1_events}
     \sE_{\mathrm{deloc}} &= \left\{\sup_{s \in [0, 1]} M(G(s)) \leq Cn^{-1/2 + \epsilon}\right\}\cap \left\{\sup_{s \in [0, 1]} M(G(K,s)) \leq Cn^{-1/2 + \epsilon}\right\}, \\ 
    \sE_{\mathrm{space}} &= \left\{\sup_{s \in [0,1]}  S_\alpha ( G(s) )\leq C'  n^{1/2 + \delta} \right\} \cap  \left\{\sup_{s \in [0,1]}  S_\alpha ( G(K,s) )\leq C'  n^{1/2 + \delta} \right\}.
 \end{align}
Here we note that $\sE_{\mathrm{space}}$ depends on $\alpha$, but we omit this dependence for the sake of simplicity.

We first show that both $\sE_{\mathrm{deloc}}$ and $\sE_{\mathrm{space}}$ happen with somewhat high probability.
For $\sE_{\mathrm{deloc}}$, Theorem~\ref{thm: delocalization_all_t} applies, with its condition verified above in Proposition~\ref{prop: delocalization_cond}: for any $c > 0$, we may choose $C > 0$ sufficiently large that
\[ \PP[\sE_{\mathrm{deloc}}^c] \leq 2n^{-c}. \]

For $\sE_{\mathrm{space}}$, we use Theorem~\ref{thm: sum_of_inverse_spacing} on uniform eigenvalue spacing over paths.
Since our assumption \eqref{equ:nuB_0_1} on the size $\nu$ of $B$ holds, there exists $C' = C'(\delta)>0$ such that  \begin{align}\label{equ: prob_c_1}
    \PP[\sE_{\mathrm{space}}^c ] \leq  C^{\prime\prime} n^{-\delta / 2}
\end{align}
provided $C^{\prime\prime} = C(\delta) > 0$, as detailed in Theorem~\ref{thm: sum_of_inverse_spacing}.

We begin by bounding $\sR_1$ using these estimates.
We first partition according to whether $\sE_{\mathrm{deloc}}$ and $\sE_{\mathrm{space}}$ both happen, only $\sE_{\mathrm{deloc}}$ happens, or $\sE_{\mathrm{deloc}}$ does not happen.\footnote{It may seem at first that a more natural decomposition is just into the events $\{\sE_{\mathrm{deloc}} \cap \sE_{\mathrm{space}} \} $ and $\{\sE_{\mathrm{deloc}} \cap \sE_{\mathrm{space}} \}^c$, i.e., whether both events happen or at least one does not. However, one may check that this does not give sufficiently precise control over the error terms involved, essentially because our bound on the probability with which $\sE_{\mathrm{space}}$ happens is so much looser than that for $\sE_{\mathrm{deloc}}$. For similar reasons we also cannot use the Cauchy-Schwarz inequality to control the subsequent expectations involving indicators of rare events.}
This gives, by triangle inequality,
    \begin{align}\label{equ: R_B_delta_A}
  \sR_1  
  &= |  \EE [  ( {\Delta_{B} \lambda - F'(0)} )  \Delta_{B}  \lambda^K ]  | \\ 
   &\leq \left| \EE [  ( {\Delta_{B} \lambda - F'(0)} )  \Delta_{B}  \lambda^K   \mathbbm{1}\{\sE_{\mathrm{deloc}} \cap \sE_{\mathrm{space}} \} ]  \right| \\
   &\quad + \left| \EE [  ( {\Delta_{B} \lambda - F'(0)} )  \Delta_{B}  \lambda^K   \mathbbm{1}\{\sE_{\mathrm{deloc}} \cap \sE_{\mathrm{space}}^c \} ]  \right| \\
   &\quad + \left| \EE [  ( {\Delta_{B} \lambda - F'(0)} )  \Delta_{B}  \lambda^K   \mathbbm{1}\{\sE_{\mathrm{deloc}}^c \} ]  \right| \\
   &\leq  \EE \left| {\Delta_{B} \lambda - F'(0)}\right| \cdot |\Delta_{B}  \lambda^K| \cdot  \mathbbm{1}\{\sE_{\mathrm{deloc}} \cap \sE_{\mathrm{space}} \} \\
   &\quad +  \EE \left| \Delta_{B} \lambda - F'(0)\right| \cdot |\Delta_{B}  \lambda^K| \cdot  \mathbbm{1}\{\sE_{\mathrm{deloc}} \cap \sE_{\mathrm{space}}^c \}   \\
   &\quad +  \EE \left| \Delta_{B} \lambda - F'(0) \right| \cdot |\Delta_{B}  \lambda^K| \cdot \mathbbm{1}\{\sE_{\mathrm{deloc}}^c \}. \label{equ: eplison_1_3_terms} 
   \end{align}

We then bound the three terms in \eqref{equ: eplison_1_3_terms} separately.
Below, $C_1$ is a parameter depending only on $\delta$ and $\epsilon$ (as appears in the statement of the Proposition), which we allow to vary from line to line for the sake of concisely absorbing various constants appearing in these inequalities.
For the first term, we use that $|{\Delta_{B} \lambda - F'(0)}|$ and $|\Delta_{B}  \lambda^K|$ are bounded by the results \eqref{equ: R_ij_S_a_M^d} and \eqref{equ: lambda_M_2} of Corollary~\ref{cor: derivative_bound}, respectively, which gives that
\begin{align}
      & \EE \left| {\Delta_{B} \lambda - F'(0)}\right| \cdot |\Delta_{B}  \lambda^K| \cdot  \mathbbm{1}\{\sE_{\mathrm{deloc}} \cap \sE_{\mathrm{space}} \}    \\
       &\leq \nu^{3}   \EE  \|\Delta_B G \|_{\ell^{\infty}}^2 \|\Delta_B G(K) \|_{\ell^{\infty}}  \left(\sup_{s \in [0,1]}   S_\alpha(G(s))  M(G(s))^4 \right)  \\
       &\hspace{1.5cm} \left(\sup_{s \in [0,1]} M(G(K,s))^2\right)  \mathbbm{1}\{\sE_{\mathrm{deloc}} \cap \sE_{\mathrm{space}}\}  \label{equ:E_1} \intertext{and here on the event $\sE_{\mathrm{deloc}} \cap \sE_{\mathrm{space}}$, the two supremum factors may be bounded as} 
       &\leq C_1 \nu^{3}  \cdot n^{1/2+\delta}  n^{-3 + 6 \epsilon} \cdot  \EE [ \|\Delta_B G  \|_{\ell^{\infty}}^2  \|\Delta_B G(K) \|_{\ell^{\infty}} ].   \label{equ: exp_2to3} 
       \end{align}
We then bound  $\EE [ \|\Delta_B G  \|_{\ell^{\infty}}^2  \|\Delta_B G(K) \|_{\ell^{\infty}} ]$ by Cauchy-Schwarz, \begin{align}
     \EE  \|\Delta_B G  \|_{\ell^{\infty}}^2  \|\Delta_B G(K) \|_{\ell^{\infty}}
     &\leq (\EE  \|\Delta_B G  \|_{\ell^{\infty}}^4 )^{1/2} \cdot (\EE \|\Delta_B G(K) \|_{\ell^{\infty}}^2 )^{1/2} \label{equ: G_G_K_4} \intertext{By construction, $ \Delta_B G$ and $\Delta_B G(K)$ are supported on at most $\nu$ entries. Using the calculation in Lemma~\ref{lem: delta_variance}, we can bound the expectations by Corollary~\ref{cor: moment_bounds} and obtain}
     &\leq C_1 \left( (1 - e^{-\tau})^2 ( \log(\nu)+1 )^2 \right)^{1/2} \left( (1 - e^{-\tau}) ( \log(\nu) + 1 ) \right)^{1/2} \intertext{Since $\nu \geq 2$,}
     &\leq C_1 (1 - e^{-\tau})^{3/2}  \log^{3/2}(\nu).
     \end{align}
Substituting this into \eqref{equ: exp_2to3}, \begin{align}
      \EE  \left|\Delta_{B} \lambda - F'(0)\right| \cdot |\Delta_{B}  \lambda^K| \cdot  \mathbbm{1}\{\sE_{\mathrm{deloc}} \cap \sE_{\mathrm{space}} \}  &\leq  C_1  \nu^{3}    n^{1/2+\delta}  n^{-3+6 \epsilon} \cdot (1 - e^{-\tau})^{3/2}  \log^{3/2}(\nu) \\
      &= C_1  \nu^{3}   n^{-5/2 +\delta + 6 \epsilon} \cdot (1 - e^{-\tau})^{3/2}  \log^{3/2}(\nu).\label{equ: R_delta_A_1} 
   \end{align}

 For the second term $\EE|\Delta_{B} \lambda - F'(0)| \cdot |\Delta_{B}  \lambda^K| \cdot \mathbbm{1}\{\sE_{\mathrm{deloc}} \cap \sE_{\mathrm{space}}^c \}$ in \eqref{equ: eplison_1_3_terms}, working over the event $\sE_{\mathrm{space}}^c$ now prevents us from bounding the terms involving $S_{\alpha}$ in the way we did above. Instead, we apply triangle inequality and by \eqref{equ: lambda_M_2}, \begin{align}
   |\Delta_{B} \lambda - F'(0) | \leq  |\Delta_{B} \lambda| + |F'(0)| \leq 2 \nu  \| \Delta_B G \|_{\ell^{\infty}}   \sup_{s \in [0, 1]} M( X(s) )^2 .
   \end{align}
  As before, we also have $ | \Delta_{B}  \lambda^K| \leq 2 \nu  \| \Delta_B G(K) \|_{\ell^{\infty}}  \sup_{s \in [0, 1]} M(G(K,s))^2 $, and all together we have
  \begin{align}
    &\EE \left|\Delta_{B} \lambda - F'(0)\right| \cdot |\Delta_{B}  \lambda^K| \cdot  \mathbbm{1}\{\sE_{\mathrm{deloc}} \cap \sE_{\mathrm{space}}^c \}  \\ 
    &\leq   \nu^2 \cdot  \EE   \| \Delta_B G  \|_{\ell^{\infty}}  \| \Delta_B G(K) \|_{\ell^{\infty}}  \left(\sup_{s\in [0,1]} M (G(s) )^2 \right)  \\
    &\quad \left( \sup_{s\in [0,1]} M (G(K,s) )^2 \right)  \mathbbm{1} \{\sE_{\mathrm{deloc}} \cap \sE_{\mathrm{space}}^c \} \label{equ: Rij_1_1} \intertext{As before, we do have a deterministic bound on these suprema on the event $\sE_{\mathrm{deloc}}$,} 
    &\leq C_1  \nu^2  \cdot n^{-2+4\epsilon} \cdot \EE  \| \Delta_B G  \|_{\ell^{\infty}}  \| \Delta_B G(K) \|_{\ell^{\infty}} \mathbbm{1} \{\sE_{\mathrm{deloc}} \cap \sE_{\mathrm{space}}^c \}.
\end{align}
For the remaining expectation, we apply H\"older's inequality, \begin{align}     &\EE  \| \Delta_B G  \|_{\ell^{\infty}}  \| \Delta_B G(K) \|_{\ell^{\infty}} \mathbbm{1} \{\sE_{\mathrm{deloc}} \cap \sE_{\mathrm{space}}^c \}  \\     &\leq   (\EE \| \Delta_B G  \|_{\ell^{\infty}}^4 )^{1/4} (\EE \| \Delta_B G(K) \|_{\ell^{\infty}}^4  )^{1/4} \PP( \sE_{\mathrm{deloc}} \cap \sE_{\mathrm{space}}^c )^{1/2} \intertext{Now, as before, Lemma~\ref{lem: delta_variance} and Corollary~\ref{cor: moment_bounds} give estimates on the expectations $\EE \| \Delta_B G  \|_{\ell^{\infty}}^4$ and $\EE \| \Delta_B G(K) \|_{\ell^{\infty}}^4$. The probability is bounded by \eqref{equ: prob_c_1}, and combining these we find:}
&\leq C_1 (1-e^{-\tau})  \log(\nu) n^{-\delta/4}. \label{equ: second_G_4}\end{align}
Substituting this into \eqref{equ: Rij_1_1}, we find that this term is bounded by
\begin{align}
   \EE \left|\Delta_{B} \lambda - F'(0)\right| \cdot |\Delta_{B}  \lambda^K| \cdot  \mathbbm{1}\{\sE_{\mathrm{deloc}} \cap \sE_{\mathrm{space}}^c \} \leq  C_1  \nu^2   {n^{-2-\delta/4 + 4\epsilon}} \cdot (1 - e^{-\tau})  \log(\nu). \label{equ: R_delta_A_2}
\end{align}

For the third term in \eqref{equ: eplison_1_3_terms}, we are able to bound neither the suprema over $S_{\alpha}$ nor over $M$, leaving us with only trivial bounds $\sup_{s\in [0,1]} M (G(s) )^2 \leq 1 $ and $ \sup_{s\in [0,1]} M(G(K,s) )^2 \leq 1$ if we follow the approach for the second term above.
Fortunately, the event $\sE_{\mathrm{deloc}}^c$ has sufficiently small probability to offset these poor bounds.
Following the same steps to reach \eqref{equ: Rij_1_1} and modifying appropriately, we get
\begin{align}
\EE  \left|\Delta_{B} \lambda - F'(0)\right| \cdot |\Delta_{B}  \lambda^K| \cdot  \mathbbm{1}\{\sE_{\mathrm{deloc}}^c \}  \leq C_1  \nu^2  n^{-c/2} \cdot (1 - e^{-\tau})  \log(\nu).  \label{equ: R_delta_A_3}
\end{align}

Finally, we substitute the estimates \eqref{equ: R_delta_A_1},  \eqref{equ: R_delta_A_2}, and \eqref{equ: R_delta_A_3} into \eqref{equ: eplison_1_3_terms}, which gives
\begin{align}
      \sR_1  &\leq C_1  (1-e^{-\tau})   \log(\nu) \biggl(  (1-e^{-\tau})^{1/2} \log^{1/2}(\nu)    \nu^{3}  n^{-5/2 +\delta + 6 \epsilon}     +    \nu^2  n^{-2-\delta/4 + 4\epsilon}  +     \nu^2  n^{-c/2}  \biggr)
      \intertext{To simplify, we use that $\log(\nu) = O_{\epsilon}(\nu^{\epsilon})$ for any $\epsilon > 0$. Choosing $c$ sufficiently large, we may also absorb the last summand above into the others, finding}
    &\leq C_1 (1-e^{-\tau})   \left(  (1 - e^{-\tau})^{1/2}  \nu^{3+\epsilon}  n^{-5/2 +\delta + 6 \epsilon}     +    \nu^{2+\epsilon}  n^{-2-\delta/4 + 4\epsilon}   \right). \label{equ: first_triangle_bound_1}
\end{align}
Lastly, an identical argument applies to $\sR_2$, giving the same bound and completing the proof.
\end{proof}

\begin{prop}[Bound for $\sR_3$]\label{prop: R_bound_3}
Under the condition on $\nu(B)$ in \eqref{equ:nuB_0_1}, for any $\delta, \epsilon > 0$ there exists $C_1 = C(\epsilon, \delta) > 0$ such that for arbitrarily small $\epsilon$,
\begin{align}
      \sR_3(K, B) &\leq C_1 (1-e^{-\tau})   \left(      \nu^{3+\epsilon}   n^{-5/2 +\delta +  \epsilon}  +     \nu^{2+\epsilon}  n^{-2-\delta/4 + \epsilon}   \right). \label{equ: R3_bound} 
\end{align}
\end{prop}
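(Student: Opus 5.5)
We need to bound $\sR_3$, the error from replacing $\partial_{ij}\lambda\,\partial_{ab}\lambda^K$ by the decoupled quantity $\partial_{ij}\widetilde\lambda\,\partial_{ab}\widetilde\lambda^K$. The plan is to reduce it to the same Taylor-remainder estimates used for $\sR_1,\sR_2$.

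The key observation concerns how $G$ and $Z$ differ. They differ only on entries in $B$, and those entries are independent Gaussians. So $Z$ lies on a resampling path from $G$ changed only in the block $B$, and likewise $Z(K)$ relative to $G(K)$. Define the paths
\[ H(s) \colonequals (1-s)G + sZ, \qquad H_K(s) \colonequals (1-s)G(K) + sZ(K). \]
Then $H(s)$ and $H_K(s)$ satisfy the hypotheses of Theorems~\ref{thm: delocalization_all_t} and~\ref{thm: sum_of_inverse_spacing}. The variances are preserved up to the factor $1-2s(1-s)$ for $(i,j)\in B$, which is at least $1/2$, so \eqref{eq:var-asm} holds, as in Proposition~\ref{prop: delocalization_cond}. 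Joint densities exist, so Lemma~\ref{lem: smooth_lambda} applies.

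Next I would write the decoupling error as
\[ \partial_{ij}\lambda\,\partial_{ab}\lambda^K - \partial_{ij}\widetilde\lambda\,\partial_{ab}\widetilde\lambda^K = (\partial_{ij}\lambda - \partial_{ij}\widetilde\lambda)\,\partial_{ab}\lambda^K + \partial_{ij}\widetilde\lambda\,(\partial_{ab}\lambda^K - \partial_{ab}\widetilde\lambda^K). \]
By the mean value theorem along $H(s)$, together with the second-derivative bound \eqref{equ: derivative_bound_2}, we get
\[ |\partial_{ij}\lambda - \partial_{ij}\widetilde\lambda| \le 2\nu\,\|G-Z\|_{\ell^\infty} \sup_s S_\alpha(H(s))\,M(H(s))^4. \]
The first-derivative factors are bounded by $M^2$ via \eqref{equ: derivative_bound_1}. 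Summing over the at most $\nu^2$ pairs $(i,j),(a,b)\in B$, each term carries
\begin{itemize}
\item a factor $|\Delta_B G_{ij}\,\Delta_B G(K)_{ab}|$,
\item a factor $\nu\,\|G-Z\|_{\ell^\infty}$,
\item a factor $S_\alpha M^6$.
\end{itemize}
On the good event this is $\nu^3 \cdot n^{1/2+\delta}\cdot n^{-3+6\epsilon}$ times $\EE\|\Delta_BG\|_{\ell^\infty}\|\Delta_BG(K)\|_{\ell^\infty}\|G-Z\|_{\ell^\infty}$. By Lemma~\ref{lem: delta_variance} and Corollary~\ref{cor: moment_bounds}, this expectation is $O((1-e^{-\tau})\log^{3/2}\nu)$. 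Note that $\|G-Z\|$ gives no $(1-e^{-\tau})$ factor, which is why the stated bound has only $(1-e^{-\tau})$ rather than $(1-e^{-\tau})^{3/2}$.

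As in Proposition~\ref{prop: R_bound_12}, I would split into three events. The good event is $\sE_{\mathrm{deloc}}\cap\sE_{\mathrm{space}}$, now also including the paths $H,H_K$. The middle event is $\sE_{\mathrm{deloc}}\cap\sE_{\mathrm{space}}^c$. The bad event is $\sE_{\mathrm{deloc}}^c$.

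On $\sE_{\mathrm{deloc}}\cap\sE_{\mathrm{space}}^c$, I would avoid second derivatives and bound the difference crudely by $2M^4\le Cn^{-2+4\epsilon}$. Summing $\nu^2$ terms and applying H\"older with $\PP(\sE_{\mathrm{space}}^c)^{1/2}\lesssim n^{-\delta/4}$ from Theorem~\ref{thm: sum_of_inverse_spacing} gives $\nu^{2}\,n^{-2-\delta/4+4\epsilon}(1-e^{-\tau})\log\nu$.

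On $\sE_{\mathrm{deloc}}^c$, I would use the trivial bound $M\le1$ together with the $n^{-c}$ probability, which is then absorbed. Finally, $\log\nu \lesssim \nu^\epsilon$, and $\epsilon$ is rescaled.

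The main obstacle I expect is verifying that the coupled pair $(G,Z)$, and the pair $(G(K),Z(K))$, fits Theorem~\ref{thm: sum_of_inverse_spacing}. That theorem is stated for $X,Y$ differing on one block with a joint density, and requires the $\nu$ condition \eqref{equ:nuB_0_1}. The delicate part is that $G(K)$ and $Z(K)$ differ on $B$ through both $G$ and $\widetilde W$, so the density must be checked, and the union of the four path events must still have failure probability $O(n^{-\delta/2})$. Both hold since there are finitely many events and all entries are Gaussian.
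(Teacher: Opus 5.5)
Your proposal is correct and follows essentially the same route as the paper. It uses the same two-term splitting of $\partial_{ij}\lambda\,\partial_{ab}\lambda^K-\partial_{ij}\widetilde\lambda\,\partial_{ab}\widetilde\lambda^K$, the mean value theorem along the interpolations $G\to Z$ and $G(K)\to Z(K)$ with Corollary~\ref{cor: derivative_bound}, the same three-event split with H\"older's inequality, and the same moment bookkeeping, including the observation that $\|G-Z\|_{\ell^\infty}$ carries no factor of $1-e^{-\tau}$. Your direct check that these coupled paths satisfy \eqref{eq:var-asm} (via $1-2s(1-s)\ge 1/2$) and the joint-density hypothesis is actually more accurate than the paper's justification. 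The paper asserts that $\widetilde{G}(s)$ and $\widetilde{G}(K,s)$ are distributed as $G(s)$ and $G(K,s)$, which is not literally true for finite $\tau$ because the variances on $B$ differ for $s\in(0,1)$, though the conclusion is unaffected.
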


\begin{proof}
    We start by bounding a term of the form
    \begin{align}
     \EE\left[ \Delta_B G_{ij} \Delta_B G(K)_{ab}  ( \partial_{ij} \lambda  \partial_{ab} \lambda^K - \partial_{ij} \widetilde{ \lambda} \partial_{ab}\widetilde{ \lambda}^K  )  \right] \colonequals \sR_3^{(i,j,a,b)}
\end{align} 
    for some fixed $(i, j), (a, b) \in B$.
    We will obtain a uniform bound not depending on these indices, and at the end will use that $\sR_3$ itself is a sum of at most $\nu^2$ such terms.

    We use a similar decomposition to \eqref{equ: eplison_1_3_terms} from the previous proof:
    \begin{align}\label{equ: r_3 two_detivative}
      \left|   \partial_{ij} \lambda  \partial_{ab} \lambda^K -  \partial_{ij} \widetilde{ \lambda} \partial_{ab} \widetilde{\lambda}^K\right|  \leq | \partial_{ij} \lambda | \cdot |\partial_{ab} \lambda^K -  \partial_{ab} \widetilde{\lambda}^K| + |\partial_{ij} \lambda - \partial_{ij} \widetilde{ \lambda} | \cdot | \partial_{ab} \widetilde{\lambda}^K|.
\end{align} 
To control $\partial_{ab} \lambda^K -  \partial_{ab} \widetilde{\lambda}^K$ and $ \partial_{ij} \lambda - \partial_{ij} \widetilde{ \lambda} $ by Taylor expansion, we introduce resampling paths $ \widetilde{G}(K, s)$ and $\widetilde{G}(s)$ given below, and then define corresponding events $\widetilde \sE_{\mathrm{deloc}}$ and $\widetilde \sE_{\mathrm{space}} $ along these new paths that ensure the same type of uniform delocalization and the spacing bounds as used previously.
Given $B \in \sB$, we define  
\begin{align}\label{equ: new_path}
   \widetilde{G}(K, s) &\colonequals (1 - s)G(K) + sZ(K), \\
   \widetilde{G}(s) &\colonequals \widetilde{G}(0, s).
 \end{align}
 Fix any $\delta, \epsilon > 0$ and let constants $ C, C' > 0$ to be chosen later, we define the following events:
 \begin{align}\label{equ: R_3_event_def}
     \widetilde{\sE}_{\mathrm{deloc}} &= \left\{\sup_{s \in [0, 1]} M(\widetilde{G}(s)) \leq Cn^{-1/2 + \epsilon}\right\} \cap \left\{\sup_{s \in [0, 1]} M(\widetilde{G}(K, s)) \leq Cn^{-1/2 + \epsilon}\right\}, \\
    \widetilde{\sE}_{\mathrm{space}} &= \left\{\sup_{s \in [0,1]}  S_\alpha ( \widetilde{G}(s) )\leq C'  n^{1/2 + \delta} \right\} \cap  \left\{\sup_{s \in [0,1]}  S_\alpha ( \widetilde{G}(K, s) )\leq C'  n^{1/2 + \delta} \right\},
 \end{align}
 where we omit the dependence on $\alpha$ for $\widetilde{\sE}_{\mathrm{space}}$  for the sake of simplicity.
 Since $\widetilde{G}(s), \widetilde{G}(K, s)$ have the same distribution as $G(s), G(K,s)$, we choose $
 C, C' > 0$ consistent with those in ${\sE}_{\mathrm{deloc}}, {\sE}_{\mathrm{space}}$ and thereby apply the corresponding probability estimates directly.

 We now apply these events to $\sR_3^{(i,j,a,b)}$ to control the expectations: we partition according to whether $ \widetilde \sE_{\mathrm{deloc}}$ and $\widetilde \sE_{\mathrm{space}}$ both happen, only $\widetilde \sE_{\mathrm{deloc}}$ happens, or $\widetilde \sE_{\mathrm{deloc}}$ does not happen. By triangle inequality,
\begin{align}
 \sR_3^{(i,j,a,b)}  
  &\leq  \EE | \Delta_B G_{ij} \Delta_B G(K)_{ab} | \cdot | \partial_{ij} \lambda  \partial_{ab} \lambda^K - \partial_{ij} \widetilde{ \lambda} \partial_{ab}\widetilde{ \lambda}^K  |   \label{equ: eplison_3_absolute} \\
   &\leq  \EE  | \Delta_B G_{ij} \Delta_B G(K)_{ab} | \cdot  | \partial_{ij} \lambda  \partial_{ab} \lambda^K - \partial_{ij} \widetilde{ \lambda} \partial_{ab}\widetilde{ \lambda}^K  | \cdot \mathbbm{1}\{\widetilde{\mathcal{E}}_{\mathrm{deloc}} \cap \widetilde{\mathcal{E}}_{\mathrm{space}}\} \\ 
   &\quad  +\EE  | \Delta_B G_{ij} \Delta_B G(K)_{ab} | \cdot  | \partial_{ij} \lambda  \partial_{ab} \lambda^K - \partial_{ij} \widetilde{ \lambda} \partial_{ab}\widetilde{ \lambda}^K  | \cdot  \mathbbm{1}\{\widetilde{\mathcal{E}}_{\mathrm{deloc}} \cap \widetilde{\mathcal{E}}_{\mathrm{space}}^c \} \\&\quad + \EE  | \Delta_B G_{ij} \Delta_B G(K)_{ab} | \cdot  | \partial_{ij} \lambda  \partial_{ab} \lambda^K - \partial_{ij} \widetilde{ \lambda} \partial_{ab}\widetilde{ \lambda}^K  | \cdot  \mathbbm{1}\{\widetilde{\mathcal{E}}_{\mathrm{deloc}}^c  \}.  \label{equ: R_3_events}
\end{align}
For the first term that is on the event $\widetilde{\mathcal{E}}_{\mathrm{deloc}} \cap \widetilde{\mathcal{E}}_{\mathrm{space}}$, we first apply the bound in \eqref{equ: r_3 two_detivative}.
Then, we use the mean value theorem on $ |\partial_{ab} \lambda^K -  \partial_{ab} \widetilde{\lambda}^K|$ and $|\partial_{ij} \lambda - \partial_{ij} \widetilde{ \lambda} | $. 
In addition, by the coupling in Proposition~\ref{prop: POU_inequality}, the pairs $(G,Z)$ and $(G(K), Z(K))$ differ in at most $\nu$ entries.
Then, on the event $\widetilde{\mathcal{E}}_{\mathrm{deloc}} \cap \widetilde{\mathcal{E}}_{\mathrm{space}}$,  
\begin{align}
    & \left|   \partial_{ij} \lambda  \partial_{ab} \lambda^K -  \partial_{ij} \widetilde{ \lambda} \partial_{ab} \widetilde{\lambda}^K\right| \\
    &\leq  | \partial_{ij} \lambda |  \cdot \nu  \|G(K)- Z(K)\|_{\ell^{\infty}} \cdot \sup_{s \in [0,1]}  \max_{(c,d) \in [n] \times [n]} | \partial_{ab} \partial_{cd} \lambda ( \widetilde{G}(K, s) )| \\
 &\quad +  | \partial_{ab} \widetilde{\lambda}^K| \cdot \nu  \|G-Z\|_{\ell^{\infty}} \cdot \sup_{s \in [0,1]}  \max_{(c,d) \in [n] \times [n]} | \partial_{ij} \partial_{cd}   \lambda ( \widetilde{G}(s) ) | \intertext{Now, we again apply the bound on these derivatives from \eqref{equ: derivative_bound_2} in Corollary~\ref{cor: derivative_bound}, which gives}
 &\leq M(G)^2 \cdot  2\nu  \|G(K)- Z(K)\|_{\ell^{\infty}} \cdot \sup_{s \in [0,1]} S_\alpha ( \widetilde{G}(K, s) )  M( \widetilde{G}(K, s) )^4  \\
 &\quad + M(Z(K))^2 \cdot  2\nu  \|G- Z\|_{\ell^{\infty}} \cdot \sup_{s \in [0,1]} S_\alpha( \widetilde{G}(s) )  M( \widetilde{G}(s) )^4 \label{equ: mean_value_derivative}.
\end{align}
Substituting in the bounds that hold on the event $\widetilde{\mathcal{E}}_{\mathrm{deloc}} \cap \widetilde{\mathcal{E}}_{\mathrm{space}}$, \begin{align}\label{equ: first_R3}
    &  \EE  | \Delta_B G_{ij} \Delta_B G(K)_{ab} | \cdot | \partial_{ij} \lambda  \partial_{ab} \lambda^K - \partial_{ij} \widetilde{ \lambda} \partial_{ab}\widetilde{ \lambda}^K  | \cdot \mathbbm{1}\{\widetilde{\mathcal{E}}_{\mathrm{deloc}} \cap \widetilde{\mathcal{E}}_{\mathrm{space}}\}     \\
    &\leq C_1  \nu  n^{1/2+\delta} n^{-3+6\epsilon} \cdot \EE  | \Delta_B G_{ij} \Delta_B G(K)_{ab} | (\|G(K)- Z(K)\|_{\ell^{\infty}} + \|G- Z\|_{\ell^{\infty}} ).  \label{equ: G_G_entrywise}
\end{align}
We bound $ \EE  | \Delta_B G_{ij} \Delta_B G(K)_{ab} | \cdot \|G(K)- Z(K)\|_{\ell^{\infty}} $, and bounding $ \EE  | \Delta_B G_{ij} \Delta_B G(K)_{ab} | \cdot \|G- Z\|_{\ell^{\infty}}  $ can be done by an identical argument.
By H\"older's inequality, \begin{align}
  &\EE  | \Delta_B G_{ij} \Delta_B G(K)_{ab} | \cdot \|G(K)- Z(K)\|_{\ell^{\infty}} \\&\leq (\EE [(\Delta_B G_{ij})^4 ])^{1/4} ( \EE   [(\Delta_B G(K))_{ab}]^4 )^{1/4}  (\EE  \|G(K)- Z(K)\|_{\ell^{\infty}}^2)^{1/2} \intertext{By Lemma~\ref{lem: delta_variance} and Corollary~\ref{cor: moment_bounds}, we may bound these as}
  &\leq C_1 (1-e^{-\tau}) \log(\nu)^{3/2}.
\end{align}
Applying this to \eqref{equ: first_R3},
\begin{align}
&\EE  | \Delta_B G_{ij} \Delta_B G(K)_{ab} | \cdot | \partial_{ij} \lambda  \partial_{ab} \lambda^K - \partial_{ij} \widetilde{ \lambda} \partial_{ab}\widetilde{ \lambda}^K  | \cdot \mathbbm{1}\{\widetilde{\mathcal{E}}_{\mathrm{deloc}} \cap \widetilde{\mathcal{E}}_{\mathrm{space}}\} \\
&\leq    C_1  (1-e^{-\tau})  \nu^{1 + \epsilon}  n^{-5/2 +\delta + 6 \epsilon}. \label{equ: Z_first_term}  
\end{align}

For the second term in \eqref{equ: R_3_events}, instead of using the mean value theorem on differences of derivatives, we apply the triangle inequality and the bound for the first derivative given in \eqref{equ: derivative_bound_1} directly. This gives
\begin{align}
    |\partial_{ab} \lambda^K -  \partial_{ab} \widetilde{\lambda}^K| \leq  |\partial_{ab} \lambda^K| +|  \partial_{ab} \widetilde{\lambda}^K| \leq M(G(K))^2 + M(Z(K))^2 \leq   2   \sup_{s \in [0,1]} M( \widetilde G(K,s))^2 ,
\end{align}
and similarly,
\begin{align}
    |\partial_{ij} \lambda - \partial_{ij} \widetilde{ \lambda} |  \leq   2 \sup_{s \in [0,1]} M(\widetilde G(s))^2 .
\end{align}
We have \begin{align}
   &\left|   \partial_{ij} \lambda  \partial_{ab} \lambda^K -  \partial_{ij} \widetilde{ \lambda} \partial_{ab} \widetilde{\lambda}^K\right| \\
   &\leq | \partial_{ij} \lambda | \cdot |\partial_{ab} \lambda^K -  \partial_{ab} \widetilde{\lambda}^K| + |\partial_{ij} \lambda - \partial_{ij} \widetilde{ \lambda} | \cdot | \partial_{ab} \widetilde{\lambda}^K| \\
    &\leq 2 \left( \max \left\{ \sup_{s \in [0,1]} M(\widetilde G(s)), \sup_{s \in [0,1]} M(\widetilde G(K,s))\right\} \right)^4 \label{equ: tilde_difference_1}.
\end{align}
The above term is at most $C_1  n^{-2+4\epsilon}$ on the event $ \widetilde{\mathcal{E}}_{\mathrm{deloc}} \cap \widetilde{\mathcal{E}}_{\mathrm{space}}^c$, and so we have
\begin{align}\label{equ: Z_second_term}
    &  \EE  | \Delta_B G_{ij} \Delta_B G(K)_{ab} | \cdot | \partial_{ij} \lambda  \partial_{ab} \lambda^K - \partial_{ij} \widetilde{ \lambda} \partial_{ab}\widetilde{ \lambda}^K  | \cdot \mathbbm{1}\{\widetilde{\mathcal{E}}_{\mathrm{deloc}} \cap \widetilde{\mathcal{E}}_{\mathrm{space}}^c \}   \\
    &\leq C_1    n^{-2+4\epsilon} (\EE [(\Delta_B G_{ij})^4 ])^{1/4} ( \EE   [(\Delta_B G(K)_{ab})^4] )^{1/4}  \PP(\widetilde{\mathcal{E}}_{\mathrm{deloc}} \cap \widetilde{\mathcal{E}}_{\mathrm{space}}^c  )^{1/2} \\
     &\leq C_1    n^{-2+4\epsilon} n^{-\delta/4} \cdot (1-e^{-\tau}) \log(\nu) \\
     &\leq C_1     n^{-2-\delta/4 + 4\epsilon} \cdot (1-e^{-\tau})  \nu^\epsilon
\end{align}

For the last term in \eqref{equ: R_3_events}, we follow the same argument as we used to obtain \eqref{equ: R_delta_A_3}, which here gives
\begin{align}\label{equ: Z_third_term}
    \EE  | \Delta_B G_{ij} \Delta_B G(K)_{ab} | \cdot  | \partial_{ij} \lambda  \partial_{ab} \lambda^K - \partial_{ij} \widetilde{ \lambda} \partial_{ab}\widetilde{ \lambda}^K  | \cdot  \mathbbm{1}\{\widetilde{\mathcal{E}}_{\mathrm{deloc}}^c  \} \leq C_1   n^{-c/2} \cdot (1-e^{-\tau}) \nu^\epsilon.
\end{align}

As mentioned before, we may bound the expression we were originally interested in as
\begin{align*}
    \sR_3 
    &\leq \nu^2 \cdot \max_{(i, j), (a, b) \in B} R_3^{(i,j,a,b)}
    \intertext{and now by putting together \eqref{equ: Z_first_term}, \eqref{equ: Z_second_term} and \eqref{equ: Z_third_term} we have a uniform bound on every term in the maximum, which gives}
    &\leq C_1 (1-e^{-\tau})  \left(      \nu^{3+\epsilon}   n^{-5/2 +\delta + 6 \epsilon}  +     \nu^{2+\epsilon}  n^{-2-\delta/4 + 4\epsilon}  +     \nu^{2+\epsilon} n^{-c/2} \right)
    \intertext{and, by choosing $c$ sufficiently large,}
    &\leq C_1 (1-e^{-\tau})   \left(      \nu^{3+\epsilon}   n^{-5/2 +\delta + 6 \epsilon}  +     \nu^{2+\epsilon}  n^{-2-\delta/4 + 4\epsilon}   \right),
\end{align*}
completing the proof.
\end{proof}

Lemma~\ref{lem: delta_partial_o1} then directly follows by combining Propositions~\ref{prop: POU_inequality}, \ref{prop: R_bound_12}, and~\ref{prop: R_bound_3}.

\begin{proof}[Proof of Lemma~\ref{lem: delta_partial_o1}]
    By Proposition~\ref{prop: POU_inequality}, we have
    \begin{align*}
        \sR(K)
        &\leq \sum_{B \in \sB} \sR(K, B) \\
        &\leq \sum_{B \in \sB} (\sR_1(K, B) + \sR_2(K, B) + \sR_3(K, B))
        \intertext{In Propositions~\ref{prop: R_bound_12} and~\ref{prop: R_bound_3}, we give bounds on the summands that are uniform over $B \in \sB$, so substituting these gives}
        &\leq |\sB| \cdot C (1-e^{-\tau})   \left(      \nu^{3+\epsilon}   n^{-5/2 +\delta +  \epsilon}  +     \nu^{2+\epsilon}  n^{-2-\delta/4 + \epsilon}   \right)
        \intertext{for a suitable $C = C(\delta, \epsilon)$. For an admissible partition $\sB$, we have $|\sB| \nu = n^2 + n$, so $|\sB| \leq 2n^2 / \nu$, and applying this gives}
        &\leq 2C (1 - e^{-\tau})  \left(   \nu^{2+\epsilon}   n^{-1/2 + \delta+\epsilon} +  \nu^{1+\epsilon}  n^{-\delta/4+\epsilon}   \right),
    \end{align*}
    completing the proof.
\end{proof}

\subsubsection{Proof of Theorem~\ref{thm: main_POU}}

In this section, we again omit the dependence on $\alpha$ in the proof for simplicity and note that the argument applies identically to all $\alpha \in [n]$.

Let us recall our situation now that we are equipped with Lemma~\ref{lem: delta_partial_o1}.
Together with the variance identity, the Lemma relates the variance of $\lambda_{\alpha}$ to the expression
\begin{align}\label{equ: }
    \sum_{B \in \sB} \sum_{(i,j)\in B}   (1+ \mathbbm{1}\{i\neq j\}) \EE [ \Delta_B G_{ij} \Delta_B G(K)_{ij}]     \EE [  \partial_{ij} \lambda  \partial_{ij} \lambda^K  ].
\end{align}
On the other hand, we are interested in relating the variance to $ \EE [ \langle v(\widetilde{G}(0)), v(\widetilde{G}(t)) \rangle^2 ]$, which, upon expanding and conditioning on $K$ is a sum like the above, but only involving the terms $\EE [  \partial_{ij} \lambda  \partial_{ij} \lambda^K  ]$.
In particular, we have:
\begin{prop}
    \label{prop:inner_prod_K_sum}
    For any $t \geq 0$, we have
    \begin{align}
        \EE [ \langle v(\widetilde{G}(0)), v(\widetilde{G}(t))  \rangle^2 ] 
        &= \sum_{K \in \ZZ^{\sB}_{\ge 0}} \PP(K(t) = K) \sum_{(i, j) \in [n]^2} \EE [  \partial_{ij} \lambda  \partial_{ij} \lambda^K  ]
        \intertext{and so, for any admissible partition $\sB$,}
        &= \sum_{K \in \ZZ^{\sB}_{\ge 0}} \PP(K(t) = K)  \sum_{B \in \sB} \sum_{(i,j)\in B}     \EE [  \partial_{ij} \lambda  \partial_{ij} \lambda^K  ].
    \end{align}
\end{prop}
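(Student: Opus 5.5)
The plan is a direct expansion followed by conditioning on the clock counts. First I would expand the squared inner product entrywise, exactly as in the proof of Lemma~\ref{lem: OU_mono_inner}:
\[ \langle v(\widetilde{G}(0)), v(\widetilde{G}(t)) \rangle^2 = \sum_{(i,j) \in [n]^2} v_i(\widetilde{G}(0)) v_j(\widetilde{G}(0)) v_i(\widetilde{G}(t)) v_j(\widetilde{G}(t)). \]
Each summand $v_iv_j$ is a well-defined function of the matrix, independent of the sign choice, whenever $\lambda_\alpha$ is simple. By \eqref{equ: derivative_formula}, it equals $\partial_{ij}\lambda_\alpha$ evaluated at that matrix. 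Both $\widetilde{G}(0)$ and $\widetilde{G}(t)$ are marginally $\GOE(n)$ (the OU process is stationary and $\widetilde{G}(t)$ is a mixture of stationary OU marginals), so the spectra are almost surely simple and these identities hold almost surely. Every summand is bounded by $1$ in absolute value, so expectations can be exchanged freely with sums.

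Next, I would condition on the vector of clock counts $K(t)$. This vector is independent of $(G, W)$, and on the event $K(t) = K$ we have $\widetilde{G}(0) = G(0)$ and $\widetilde{G}(t) = G(K)$ as in \eqref{equ: G_ij_K}. I would verify this by matching Definition~\ref{def: PDBOU} against \eqref{equ: G_ij_K}: in both, entry $(i,j)$ has been run along its OU path for total time $\tau \bar K_{ij}$.

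By the law of total expectation,
\[ \EE[\cdots] = \sum_K \PP(K(t)=K)\sum_{(i,j)} \EE[\partial_{ij}\lambda\,\partial_{ij}\lambda^K]. \]
Since $K$ ranges over a countable set and the summands are bounded, Fubini/Tonelli justifies all interchanges.

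For the second equality, I would use that for an admissible partition the blocks are disjoint and cover $[n]\times[n]$. So $[n]^2 = \bigsqcup_{B\in\sB} B$, and the sum over $(i,j)\in[n]^2$ regroups as $\sum_{B}\sum_{(i,j)\in B}$.

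I do not expect a real obstacle here. The only points needing care are the almost-sure simplicity of the spectra of $G(0)$ and $G(K)$, which makes $\partial_{ij}\lambda$ well defined, and the identification of the conditional law of $\widetilde{G}(t)$ given $K(t) = K$ with $G(K)$.
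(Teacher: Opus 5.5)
Your proposal is correct and matches the paper's argument. The paper gives no formal proof, only the remark that the identity follows ``upon expanding and conditioning on $K$.'' Your steps carry this out exactly: expand the squared inner product entrywise, identify $v_iv_j$ with $\partial_{ij}\lambda_\alpha$ via \eqref{equ: derivative_formula}, condition on the clock vector $K(t)$ (independent of $(G,W)$) so that $(\widetilde{G}(0),\widetilde{G}(t))$ becomes $(G(0),G(K))$, and regroup the sum over $[n]^2$ by the disjoint blocks of $\sB$. Your added justifications, namely almost-sure simplicity from the GOE marginals and boundedness of the summands to justify the interchanges, are the right ones.
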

Thus, we would like to get rid of the terms $\EE [ \Delta_B G_{ij} \Delta_B G(K)_{ij}]$.
While these can be computed in closed form, they depend non-trivially on $K$, and further we will see below that they can have different signs.
In particular, if $K = 0$ then they are clearly positive, while otherwise they will turn out to be negative (one may see this as a consequence of the mean-reverting behavior of the OU process: if $G$ moves up at an earlier time, it tends to move down at a later time and vice-versa).
So, we will have to rather carefully control these two contributions.

We begin by establishing the non-negativity of $ \EE [  \partial_{ij} \lambda  \partial_{ij} \lambda^K  ]$.

\begin{lemma}\label{lem: pos_E_derivative}
For all $\alpha \in [n]$,  $K \in \ZZ^{\sB}_{\geq 0}$ and $(i,j) \in [n] \times [n]$, we have $  \EE [  \partial_{ij} \lambda  \partial_{ij} \lambda^K  ] \geq 0$. Moreover, for any fixed $B \in \sB$, given $K_C $ fixed for all ${C \neq B}$, we have $ \EE [  \partial_{ij} \lambda  \partial_{ij} \lambda^K  ]  $ is a non-increasing function of $K_B$.
\end{lemma}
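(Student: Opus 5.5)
The plan is to mimic the proof of Lemma~\ref{lem: OU_mono_inner}, writing $\EE[\partial_{ij}\lambda\,\partial_{ij}\lambda^K]$ as the square of a semigroup applied to a function. Fix $(i,j)$ and set $f \colonequals f_{ij} = \partial_{ij}\lambda_\alpha$. This function is well-defined and bounded by $1$ almost everywhere, since $\GOE(n)$ has simple spectrum almost surely.

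First use the structure of $G(K)$. By \eqref{equ: G_ij_K}, each entry $(a,b)$ of $G(K)$ is $G_{ab}$ run along its own OU process for time $\tau \bar K_{ab}$. For an admissible partition, $\bar K_{ab} = K_C$ where $C$ is the unique block containing $(a,b)$. The entries are independent, so the pair $(G(0), G(K))$ is obtained by applying independent one-dimensional OU kernels blockwise. Write $Q^{(C)}_s$ for the Markov operator that runs only the entries in block $C$ along the OU process for time $s$ and leaves the others fixed. Then
\[
\EE[f(G(0)) f(G(K))] = \big\langle f, \textstyle\prod_{C\in\sB} Q^{(C)}_{\tau K_C} f\big\rangle_\mu .
\]
The operators $Q^{(C)}_s$ act on disjoint sets of independent coordinates, so they commute. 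Each $Q^{(C)}_s$ is self-adjoint in $L^2(\mu)$, because the OU semigroup is reversible with respect to the Gaussian marginal. Each also satisfies the semigroup law $Q^{(C)}_s Q^{(C)}_r = Q^{(C)}_{s+r}$.

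Next take the square root. Let $P^K \colonequals \prod_C Q^{(C)}_{\tau K_C/2}$. By commutativity and the semigroup law, $(P^K)^2 = \prod_C Q^{(C)}_{\tau K_C}$, and $P^K$ is self-adjoint. Hence $\EE[\partial_{ij}\lambda\,\partial_{ij}\lambda^K] = \|P^K f\|_{L^2(\mu)}^2 \ge 0$. This proves nonnegativity.

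For monotonicity in $K_B$, fix the other $K_C$ and let $h \colonequals \prod_{C\ne B} Q^{(C)}_{\tau K_C/2} f$. The quantity then becomes $\|Q^{(B)}_{\tau K_B/2} h\|^2 = \langle h, Q^{(B)}_{\tau K_B} h\rangle_\mu$. It remains to show that $s\mapsto \|Q^{(B)}_{s}h\|^2$ is non-increasing. Since $Q^{(B)}_{s+r}h = Q^{(B)}_r Q^{(B)}_s h$, it suffices to know that $Q^{(B)}_r$ is an $L^2(\mu)$ contraction. This follows from Jensen's inequality, because $Q^{(B)}_r$ is a conditional expectation operator with stationary law $\mu$. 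One can also repeat the conditional-variance computation from the proof of Lemma~\ref{lem: OU_mono_inner}.

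The main point needing care is justifying the factorization of the joint law of $(G(0),G(K))$ into commuting blockwise OU kernels. Concretely, for each entry one must check that $G(K)_{ab} \mid G$ has the law of the time-$\tau\bar K_{ab}$ OU transition from $G_{ab}$, namely $N\big(e^{-\tau\bar K_{ab}}G_{ab},\ (1-e^{-2\tau\bar K_{ab}})\sigma_{ab}^2\big)$. This follows from the Brownian variance $e^{-2\tau\bar K}(e^{2\tau\bar K}-1)\sigma^2$, and the independence across entries makes the product structure and commutativity immediate. The remaining issue is that $f$ is defined only almost everywhere. This is harmless, since $f \in L^\infty(\mu)$ and all operators act on $L^2(\mu)$.
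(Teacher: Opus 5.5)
Your proposal is correct and follows the paper's proof essentially step for step. Both write the expectation through commuting, self-adjoint blockwise OU semigroups, use the square-root trick $\langle f, P^2 f\rangle_\mu = \|Pf\|^2$ for nonnegativity, and reduce monotonicity to the map $t_B \mapsto \langle g, P^{(B)}_{t_B} g\rangle_\mu$ with $g$ built from the other blocks.

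The only difference is the last step. You get monotonicity from the $L^2(\mu)$-contractivity of $Q^{(B)}_r$ (Jensen plus stationarity). The paper instead differentiates in $t_B$ and invokes nonnegativity of the Dirichlet form from Lemma~\ref{lem: dirichlet_mon}. Your version avoids questions about differentiability and the generator's domain for the non-smooth $f = \partial_{ij}\lambda$.
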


\begin{proof}

We begin by proving the non-negativity. Given the partition $\sB$, where $|\sB| = m$, we decompose the matrix $G$ into a sequence of blocks $G = (G_1, G_2, \dots G_m)$. 
Similarly, given $K$, we write \begin{align}
    G(K) =( G_1(t_1), G_2(t_2), \dots, G_m(t_m)),
\end{align} where each $t_a \geq 0$ corresponds to the specific ring count $K_a$. Then, for each block index $a$, the entries restricted to $B_a$ evolve according to the Ornstein-Uhlenbeck process for time $t_a$, independently of the other blocks\footnote{We note that while the specific values of $t_a$ may not be unique, any feasible choice suffices for the proof. }.
We define the function $f(G) \colonequals \partial_{ij} \lambda$ and $ f(G(K)) \colonequals \partial_{ij} \lambda^K$, then
    \begin{align}
     \EE [  \partial_{ij} \lambda  \partial_{ij} \lambda^K  ] &= \EE[ f(G_1,  \dots, G_m) f(G_1(t_1),  \dots, G_m(t_m))] \\
     &= \EE_{G \sim \mu} [  \EE[ f(G_1,  \dots, G_m) f(G_1(t_1),  \dots, G_m(t_m) ) \mid G ] ]\intertext{Let $P_t^{(a)}$ denote the OU process semigroup for each $B_a$, then by the independence of the blocks,}
     &= \EE [ f(G) (P_{t_1}^{(1)}  \dots P_{t_m}^{(m)} f)(G) ] \intertext{Then by the semigroup property $P_t  = P_{t/2}^2$ and reversibility,}
     &= \EE[ ( (P_{t_1/2}^{(1)} \dots P_{t_m/2}^{(m)} f)(G) )^2] \geq 0. \label{equ: semi_s}
    \end{align}
Furthermore, if we fix $K_C $ fixed for all ${C \neq B}$, then similar to \eqref{equ: semi_s}, by denoting $P_{t}^{(\sim B)} \colonequals \prod_{C \neq B} P_{t_C/2}^{(C)} $ and $g \colonequals P_{t_{-B}/2}^{(\sim B)} f$, we can write
\begin{align}
\EE [  \partial_{ij} \lambda  \partial_{ij} \lambda^K  ] &= \EE[ (P_{t_{-B}/2}^{(\sim B)} f)(G) , (P_{t_B}^{(B)} P_{t_{-B}/2}^{(\sim B)} f)(G) ] \\
&= \langle g, P_{t_B}^{(B)} g \rangle_\mu.
\end{align}
By differentiating with respect to $t_B$, we obtain
\begin{align}
\frac{d}{dt_B} \langle g, P_{t_B}^{(B)} g \rangle_\mu = \langle g, \sL P_{t_B}^{(B)} g \rangle_\mu = -D(g, P_{t_B}^{(B)} g ) \leq 0,
\end{align}
where the non-positivity follows from the non-negativity of the {Dirichlet form} given in Lemma~\ref{lem: dirichlet_mon}. Thus, $\EE [  \partial_{ij} \lambda  \partial_{ij} \lambda^K   ]$ is non-increasing in $t_B$. Since $t_B$ is a strictly increasing function of $K_B$, we have that $\EE [ \partial_{ij} \lambda \partial_{ij} \lambda^K  ]$ is non-increasing with respect to $K_B$.
\end{proof}

Also, as mentioned above, the other factor in the sum we are interested in has sign that depends on the vector $K$:
\begin{prop}
    For any $B \in \sB$, $(i, j) \in B$, and $K \in \ZZ_{\geq 0}^B$, we have
    \begin{align}
    (1+ \mathbbm{1}\{i\neq j\}) \EE [ \Delta_B G_{ij} \Delta_B G(K)_{ij}] = \begin{cases}
        4(1 - e^{-\tau}), &\text{for } K_B = 0; \\
        - 2(1-e^{-\tau})^2 e^{-\tau(K_B-1)}, &\text{for } K_B \geq 1.
    \end{cases}
\end{align}
\end{prop}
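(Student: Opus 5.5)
The plan is to reduce everything to the covariance function of a single stationary scalar Ornstein-Uhlenbeck process. Fix $B \in \sB$ and $(i,j) \in B$, and write $k \colonequals K_B$. Since $\sB$ is an admissible partition, $\Bar{K}_{ij} = K_B = k$ and $\overline{(K + e_B)}_{ij} = k+1$. The same holds for $K = 0$ versus $e_B$ (values $0$ and $1$). Hence all four quantities $G(0)_{ij}$, $G(e_B)_{ij}$, $G(K)_{ij}$, $G(K+e_B)_{ij}$ are values of the one scalar process
\[ u \mapsto H(u) \colonequals e^{-\tau u} G_{ij} + e^{-\tau u} W_{ij}(e^{2\tau u} - 1), \qquad u \in \{0, 1, k, k+1\}. \]
So $\Delta_B G_{ij} = H(0) - H(1)$ and $\Delta_B G(K)_{ij} = H(k) - H(k+1)$.

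The first step is to compute the covariance kernel of $H$, exactly as in the proofs of Proposition~\ref{prop: delocalization_cond} and Lemma~\ref{lem: delta_variance}. We use that $G_{ij} \sim \sN(0, \sigma_{ij}^2)$ with $\sigma_{ij}^2 = 1 + \One\{i=j\}$, that $G_{ij}$ is independent of $W_{ij}$, and that $\Cov(W_{ij}(s), W_{ij}(t)) = \min(s,t)\,\sigma_{ij}^2$. For $u \le u'$ this gives
\[ \Cov(H(u), H(u')) = e^{-\tau(u+u')}\sigma_{ij}^2\bigl(1 + e^{2\tau u} - 1\bigr) = e^{-\tau(u'-u)}\sigma_{ij}^2 . \]
Write $c(d) \colonequals e^{-\tau|d|}\sigma_{ij}^2$, so that $\Cov(H(u), H(u')) = c(u' - u)$; in particular $H$ is stationary and everything is centered. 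Expanding bilinearly, we get
\[ \EE[\Delta_B G_{ij}\,\Delta_B G(K)_{ij}] = c(k) - c(k+1) - c(k-1) + c(k) . \]

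The second step splits into the two cases of the statement.
\begin{itemize}
\item If $k = 0$, the expression equals $2c(0) - 2c(1) = 2\sigma_{ij}^2(1 - e^{-\tau})$. This agrees with Lemma~\ref{lem: delta_variance}, since in this case the two increments coincide.
\item If $k \ge 1$, all lags are nonnegative, and the expression equals
\[ \sigma_{ij}^2\bigl(2e^{-\tau k} - e^{-\tau(k+1)} - e^{-\tau(k-1)}\bigr) = -\sigma_{ij}^2 e^{-\tau(k-1)}(1 - e^{-\tau})^2 . \]
\end{itemize}

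Finally, we multiply by $(1 + \One\{i \ne j\})$. The product $(1+\One\{i\neq j\})\,\sigma_{ij}^2$ equals $2$ in every case, since $\sigma_{ij}^2 = 2$ exactly when $i = j$. This yields $4(1 - e^{-\tau})$ and $-2(1 - e^{-\tau})^2 e^{-\tau(K_B - 1)}$ respectively.

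There is no real obstacle here. The only points needing care are the identification $\Bar{K}_{ij} = K_B$, which uses the disjointness of an admissible partition, and the sign of the lag $k - 1$ in the case $k = 0$, where $|{-1}| = 1$ makes the two middle terms coincide.
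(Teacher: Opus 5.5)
Your proof is correct and is essentially the paper's computation. Both arguments write the two increments as values of the same scalar OU path, which uses $\bar{K}_{ij} = K_B$ from disjointness, and evaluate them using independence of $G_{ij}$ and $W_{ij}$ together with the Brownian kernel $\min(s,t)\,\sigma_{ij}^2$. The only difference is organizational: you first reduce to the stationary covariance $e^{-\tau|u-u'|}\sigma_{ij}^2$ and then take a discrete second difference. This makes the sign change between $K_B = 0$ (the kink of $e^{-\tau|d|}$ at $d = 0$) and $K_B \geq 1$ (convexity away from $0$) more transparent than the paper's direct expansion.
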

\begin{proof}
    We calculate
\begin{align}
    &   \EE [ \Delta_B G_{ij} \Delta_B G(K)_{ij}] \\
    &=  e^{-\tau K_B} \bigg( (1-e^{-\tau})^2\EE[G_{ij}^2] - e^{-\tau} \EE[ W(e^{2\tau}-1)_{ij} W(e^{2\tau K_B}-1){ij}] \\
    &\quad + e^{-2\tau} \EE[ W(e^{2\tau}-1)_{ij} W(e^{2\tau(K_B+1)}-1)_{ij}]\bigg) \\
    &= e^{-\tau K_B}\sigma_{ij}^2 \bigg( (1-e^{-\tau})^2 - e^{-\tau}\min\{e^{2\tau}-1, e^{2\tau K_B}-1\}+ e^{-2\tau}\min\{e^{2\tau}-1, e^{2\tau(K_B+1)}-1\}\bigg) \intertext{If $K_B=0$, it reduces to the variance given in Lemma~\ref{lem: delta_variance}, which equals to $2(1 - e^{-\tau}) \sigma_{ij}^2$. Let $K_B \geq 1$, then}
    &= e^{-\tau K_B}\sigma_{ij}^2 ((1-e^{-\tau})^2-e^{-\tau}(e^{2\tau}-1)+e^{-2\tau}(e^{2\tau}-1) ) \\
    &=- (1-e^{-\tau})^2 e^{-\tau(K_B-1)}\sigma_{ij}^2, \label{equ: delta_KB}
\end{align}
where, following our previous notation, $\sigma_{ij}^2 = 1 + \One\{i = j\} $.
Since for all $(i,j) \in [n]^2$ we have $(1+ \mathbbm{1}\{i\neq j\})  \sigma_{ij}^2 = 2$, the result follows.
\end{proof}

Now we look at how these extra factors will interact with a sum over $K$, which is what we will finally be interested in per Proposition~\ref{prop:inner_prod_K_sum}.
We will decompose a sum of the following form into its positive ($K_B=0$) and negative ($K_B \geq 1$) components (as we know from the last two above results): using \eqref{equ: delta_KB}, we have
\begin{align}\label{equ: delta_deri_pn}
    \sT(t) &\colonequals \sum_{K \in \ZZ^{\sB}_{\geq 0}} \PP(K(t) = K) \sum_{B \in \sB} \sum_{(i,j)\in B} (1+ \mathbbm{1}\{i\neq j\}) \EE [ \Delta_B G_{ij} \Delta_B G(K)_{ij}]     \EE [  \partial_{ij} \lambda  \partial_{ij} \lambda^K  ] \\
    &=  4(1 - e^{-\tau})\sum_{B \in \sB} \sum_{(i,j)\in B} \sum_{\substack{K \in \ZZ^{\sB}_{\ge 0},\\ K_B = 0}} \PP(K(t) = K)  \EE [  \partial_{ij} \lambda  \partial_{ij} \lambda^K  ] \\
    &\quad - 2(1-e^{-\tau})^2\sum_{B \in \sB} \sum_{(i,j)\in B} \sum_{\substack{K \in \ZZ^{\sB}_{\ge 0},\\ K_B \geq 1}} \PP(K(t) = K)  e^{-\tau(K_B-1)}  \EE [  \partial_{ij} \lambda  \partial_{ij} \lambda^K  ] \\
    &\equalscolon \sT_{+}(t) - \sT_{-}(t).  \label{equ: def_T_+}
\end{align}
We then state the following lemma, which establishes conditions under which the positive term dominates the negative contribution. For this purpose, for each $B \in \sB$ and $K \in \ZZ_{\geq 0}^{\sB}$, we define the notation $K^{\sim B} \in \ZZ_{\geq 0}^{\sB}$ by \begin{align}
    K^{\sim B}_C \colonequals \begin{cases}
        0, &\text{if } C = B; \\
        K_C, &\text{otherwise.}
    \end{cases}
\end{align}
In words, this is the vector $K \in \ZZ^{\sB}_{\geq 0}$ with the coordinate indexed by $B$ set to zero.
In particular, we always have
\[ K = K^{\sim B} + K_Be_B. \]

\begin{prop}\label{prop: dominate_t}
If \begin{align}\label{equ: range_t}
     0 \leq t \leq \frac{e^{\tau}}{\eta} \log( \frac{1}{1-e^{-\tau}}),
\end{align} 
then
    \begin{align}
          \frac{1}{2} \sT_{+}(t)  \geq \sT_{-}(t),
    \end{align}
    and therefore
    \[ \sT(t) = \sT_{+}(t) - \sT_{-}(t) \geq \frac{1}{2}\sT_{+}(t). \]
\end{prop}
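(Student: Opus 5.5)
The plan is to compare $\sT_-(t)$ and $\sT_+(t)$ term by term: fix a block $B \in \sB$ and a position $(i,j) \in B$, and group the vectors $K$ in both sums according to $K' \colonequals K^{\sim B}$. In $\sT_+(t)$ the inner sum runs over exactly the $K$ with $K_B = 0$, i.e.\ $K = K'$. In $\sT_-(t)$ every $K$ with $K_B \geq 1$ is written uniquely as $K = K' + k e_B$ with $k \geq 1$. The clocks are independent Poisson processes of rate $\eta$, so
\[ \PP(K(t) = K' + k e_B) = \PP(K(t) = K') \cdot \frac{(\eta t)^k}{k!}, \]
because the $B$-coordinate contributes $e^{-\eta t}(\eta t)^k/k!$ in place of $e^{-\eta t}$.

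Next I control the correlation factor. Set $a_{ij}(K') \colonequals \EE[\partial_{ij}\lambda\, \partial_{ij}\lambda^{K'}]$, which is non-negative by Lemma~\ref{lem: pos_E_derivative}. The monotonicity part of that lemma says $\EE[\partial_{ij}\lambda\,\partial_{ij}\lambda^{K}]$ is non-increasing in $K_B$ when the other coordinates are held fixed. Hence $\EE[\partial_{ij}\lambda\,\partial_{ij}\lambda^{K'+ke_B}] \leq a_{ij}(K')$ for every $k \geq 1$. Substituting this and the factorization of probabilities gives
\begin{align*}
\sT_-(t) &\leq 2(1-e^{-\tau})^2 \sum_{B}\sum_{(i,j)\in B}\sum_{K' : K'_B = 0} \PP(K(t)=K')\, a_{ij}(K') \sum_{k\geq 1} \frac{(\eta t)^k}{k!} e^{-\tau(k-1)} \\
&= 2(1-e^{-\tau})^2 e^{\tau}\left(e^{\eta t e^{-\tau}} - 1\right) \sum_{B}\sum_{(i,j)\in B}\sum_{K'} \PP(K(t)=K')\, a_{ij}(K').
\end{align*}
On the other hand, $\sT_+(t) = 4(1-e^{-\tau}) \sum_B \sum_{(i,j)\in B} \sum_{K'} \PP(K(t)=K')\, a_{ij}(K')$, which is the same non-negative sum.

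It therefore suffices to check the scalar inequality $(1-e^{-\tau})\, e^{\tau}\,(e^{x}-1) \leq 1$ with $x = \eta t e^{-\tau}$. The hypothesis \eqref{equ: range_t} is exactly $x \leq \log\frac{1}{1-e^{-\tau}}$. This gives $e^x - 1 \leq \frac{e^{-\tau}}{1-e^{-\tau}}$, and multiplying by $(1-e^{-\tau})e^{\tau}$ yields at most $1$. Consequently $\sT_-(t) \leq 2(1-e^{-\tau}) \cdot (\text{common sum}) = \tfrac12 \sT_+(t)$, and $\sT(t) \geq \tfrac12\sT_+(t)$ follows immediately.

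The only delicate step is the bookkeeping of the Poisson weights. The factor $e^{-\eta t}$ attached to block $B$ must be kept inside $\PP(K(t)=K')$ and not counted a second time; the $k$-sum then produces $e^{\tau}(e^{x}-1)$ rather than an expression with an extra exponential. Everything else is a direct application of Lemma~\ref{lem: pos_E_derivative}, including non-negativity, which is what makes it legitimate to bound term by term.
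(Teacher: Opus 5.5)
Your proof is correct and follows the paper's argument: you group by $K^{\sim B}$, use the non-negativity and monotonicity in Lemma~\ref{lem: pos_E_derivative} to bound $\EE[\partial_{ij}\lambda\,\partial_{ij}\lambda^{K}]$ by its value at $K^{\sim B}$, and reduce to the scalar condition $(e^{\tau}-1)(e^{\eta t e^{-\tau}}-1)\le 1$, which is exactly \eqref{equ: range_t}. The only difference is bookkeeping: the paper factors the Poisson weight as $\PP(K^{\sim B}(t)=K^{\sim B})\,\PP(K_B(t)=k)$ and checks that the resulting bracket is non-negative, while you normalize by $\PP(K(t)=K^{\sim B})$, which is the same computation.
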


\begin{remark}
    We now see in more detail the reason that an upper bound on $t$ is required in Theorem~\ref{thm: main_POU}, as we discussed briefly after the Theorem statement.
    The technical reason for this is that we need $t$ to be not too large for the above kind of result to hold, since when $t$ is large, then $K(t)$ is typically large, and therefore the (negative) contribution of $\sT_{-}(t)$ increasingly dominates the value of $\sT(t)$, and in particular $\sT(t) < 0$.
    Below, our strategy will be to use Lemma~\ref{lem: delta_partial_o1} to compare $\sT(t)$ to an evaluation of the Dirichlet form of the PDBOU, which is always non-negative.
    Thus, when $t$ is too large, the result of the Lemma becomes vacuous, just bounding both the Dirichlet form and $|\sT(t)|$.
    The reason for this is that, in the Lemma, we are analyzing terms of the form $\EE [ \Delta_{B} \lambda_\alpha  \Delta_{B} \lambda_\alpha^K]$ (and comparing to continuous versions).
    Our proof technique essentially estimates each of the two factors in this expectation separately.
    However, there is another important behavior involved, which is that as the entries of $K$ grow, the above expectation becomes smaller, since $G(0)$ and $G(K)$ become increasingly decorrelated.
    Our proof, giving an error bound uniform in $K$, does not take this into account, and thus our result becomes less precise (relative to the scale of this expectation) as $K$ gets larger, or, when averaged over the random $K(t)$, as $t$ gets larger.
\end{remark}

\begin{proof}
Starting from \eqref{equ: def_T_+}, 
\begin{align}
   & \frac{1}{2} \sT_+(t) - \sT_{-}(t)\\
    &= \sum_{B \in \sB} \sum_{(i,j)\in B}  \sum_{K^{\sim B} \in \ZZ^{\sB}_{\ge 0}} \PP(K^{\sim B} (t) = K^{\sim B}  ) \bigg(  2(1-e^{-\tau}) \PP(K_B(t) = 0)  \EE [  \partial_{ij} \lambda  \partial_{ij} \lambda^{K^{\sim B}}  ] \\
    &\quad - \sum_{K_B \geq 1} 2(1-e^{-\tau})^2 e^{-\tau(K_B-1)} \PP(K_B(t) = K_B)   \EE [  \partial_{ij} \lambda  \partial_{ij} \lambda^{K}  ] \bigg) 
    \intertext{By Lemma~\ref{lem: pos_E_derivative}, we have $ \EE [  \partial_{ij} \lambda  \partial_{ij} \lambda^{K^{\sim B}}  ] \geq \EE [  \partial_{ij} \lambda  \partial_{ij} \lambda^{K}  ] \geq 0$ for all $K \in \ZZ^{\sB}_{\ge 0}$ and $B \in \sB$, then }
    &\geq \sum_{B \in \sB} \sum_{(i,j)\in B} \sum_{K^{\sim B} \in \ZZ^{\sB}_{\ge 0}} \PP(K^{\sim B} (t) = K^{\sim B}  ) \EE [  \partial_{ij} \lambda  \partial_{ij} \lambda^{K^{\sim B}}  ]  (2-2e^{-\tau}) \\
    &\quad \bigg(  \PP(K_B(t) = 0) -  \sum_{K_B \geq 1} (1-e^{-\tau}) e^{-\tau(K_B-1)} \PP(K_B(t) = K_B) \bigg)
\end{align} 
So, it suffices to show that the last expression in parentheses is non-negative in every summand.
From \eqref{equ: P_Kt_K} (in the proof of the variance identity of the PDBOU) that
\begin{align}\label{equ: P_K_t}
    \PP(K(t) = K)  = \prod_{C \in \mathcal{B}} e^{-\eta  t}  \frac{{(\eta t)}^{K_{C}}}{K_{C}!}.
\end{align}
Then we can further calculate the probability term above, that \begin{align}
    & \PP(K_B(t) = 0) -  \sum_{K_B \geq 1} (1-e^{-\tau}) e^{-\tau(K_B-1)} \PP(K_B(t) = K_B) \\
    &=  e^{-\eta t } -  \sum_{K_B \geq 1} (1-e^{-\tau}) e^{-\tau(K_B-1)} e^{-\eta t} \frac{(\eta t)^{K_B}}{K_B!}  \intertext{Since $\sum_{K_B \geq 1} (\eta t e^{-\tau})^{K_B} / K_B!  = \exp(\eta t e^{-\tau} )-1 $, we have}
    &=  e^{-\eta t }  - (1-e^{-\tau}) e^{\tau} e^{- \eta t} (\exp(\eta t e^{-\tau} )-1) \\
    &= e^{-\eta t} (1 - (e^{\tau}-1)(\exp(\eta t e^{-\tau} )-1) ).
\end{align}
Thus, by solving \begin{align}
    (e^{\tau}-1)(\exp(\eta t e^{-\tau} )-1) \leq 1,
\end{align}
which gives \begin{align}
    0 \leq t \leq \frac{e^{\tau}}{\eta} \log\left( \frac{1}{1-e^{-\tau}}\right),
\end{align}
completing the proof.
\end{proof}

We are now ready for the proof of the main result.
\begin{proof}[Proof of Theorem~\ref{thm: main_POU}] 
As before, we fix $\alpha \in [n]$, and for simplicity, we write $(\lambda, v)$ for $(\lambda_\alpha, v_\alpha)$. 
Fix $t \in \RR_+$, satisfying the assumption \eqref{equ: teta} of the Theorem.
We restate this below:
\begin{align}\label{equ: range_t_pf}
     0 \leq t \leq \frac{e^{\tau}}{\eta} \log( \frac{1}{1-e^{-\tau}}).
\end{align} 

We start with the expansion from Proposition~\ref{prop:inner_prod_K_sum}, and control this expression by a summation over only $K_B \geq 1$, preparing to relate it to $\sT_+(t)$, and then to $\sT(t)$ using the bounds proved above.
\begin{align}
     &\EE [ \langle v(\widetilde{G}(0)), v(\widetilde{G}(t))  \rangle^2 ] \\
    &= \sum_{K \in \ZZ^{\sB}_{\ge 0}} \PP(K(t) = K)  \sum_{B \in \sB} \sum_{(i,j)\in B}     \EE [  \partial_{ij} \lambda  \partial_{ij} \lambda^K  ] \\
    &= \sum_{B \in \sB} \sum_{(i,j)\in B} \sum_{\substack{K \in \ZZ^{\sB}_{\ge 0},\\ K_B = 0}} \PP(K(t) = K)  \EE [  \partial_{ij} \lambda  \partial_{ij} \lambda^K  ] \\
    &\quad + \sum_{B \in \sB} \sum_{(i,j)\in B} \sum_{\substack{K \in \ZZ^{\sB}_{\ge 0},\\ K_B \geq 1}} \PP(K(t) = K)    \EE [  \partial_{ij} \lambda  \partial_{ij} \lambda^K  ]. \label{equ: v_vgt_1}
    \end{align}
    By using the monotonicity given in Lemma~\ref{lem: pos_E_derivative}, for each fixed $K^{\sim B}$, we have \begin{align}
        \sum_{K_B \geq 1} \PP(K_B(t) = K_B) \EE [  \partial_{ij} \lambda  \partial_{ij} \lambda^K  ] &\leq \EE [  \partial_{ij} \lambda  \partial_{ij} \lambda^{K^{\sim B}} ]  \sum_{K_B \geq 1} \PP(K_B(t) = K_B) \\
        &\leq \EE [  \partial_{ij} \lambda  \partial_{ij} \lambda^{K^{\sim B}} ] \PP(K_B(t) \geq 1). 
    \end{align}
    So, we have
    \begin{align*}
        &\sum_{B \in \sB} \sum_{(i,j)\in B} \sum_{\substack{K \in \ZZ^{\sB}_{\ge 0},\\ K_B \geq 1}} \PP(K(t) = K)    \EE [  \partial_{ij} \lambda  \partial_{ij} \lambda^K  ] \\
        &= \sum_{B \in \sB} \sum_{(i,j)\in B} \sum_{\substack{K^{\sim B} \in \ZZ^{\sB}_{\ge 0}, \\ K^{\sim B}_B = 0}} \sum_{K_B \geq 1} \PP(K(t) = K^{\sim B} + K_B e_B) \EE [  \partial_{ij} \lambda  \partial_{ij} \lambda^K  ] \\
        &= \sum_{B \in \sB} \sum_{(i,j)\in B} \sum_{\substack{K^{\sim B} \in \ZZ^{\sB}_{\ge 0}, \\ K^{\sim B}_B = 0}} \PP(K^{\sim B}(t) = K^{\sim B}) \sum_{K_B \geq 1} \PP(K_B(t) = K_B) \EE [  \partial_{ij} \lambda  \partial_{ij} \lambda^K  ] \\
        &\leq \sum_{B \in \sB} \sum_{(i,j)\in B} \sum_{\substack{K^{\sim B} \in \ZZ^{\sB}_{\ge 0}, \\ K^{\sim B}_B = 0}} \PP(K^{\sim B}(t) = K^{\sim B}) \PP(K_B(t) \geq 1) \EE [  \partial_{ij} \lambda  \partial_{ij} \lambda^{K^{\sim B}} ] \\
        &= \sum_{B \in \sB} \sum_{(i,j)\in B} \sum_{\substack{K^{\sim B} \in \ZZ^{\sB}_{\ge 0}, \\ K^{\sim B}_B = 0}} \PP(K(t) = K^{\sim B}) \frac{\PP(K_B(t) \geq 1)}{\PP(K_B(t) = 0)} \EE [  \partial_{ij} \lambda  \partial_{ij} \lambda^{K^{\sim B}} ].
    \end{align*}
    Substituting this into \eqref{equ: v_vgt_1}, we have
    \begin{align}
    &\EE [ \langle v(\widetilde{G}(0)), v(\widetilde{G}(t))  \rangle^2 ] \\
    &\leq \sum_{B \in \sB} \sum_{(i,j)\in B} \bigg(1+ \frac{ \PP(K_B(t) \geq 1) }{ \PP(K_B(t) = 0)} \bigg) \sum_{\substack{K^{\sim B} \in \ZZ^{\sB}_{\ge 0},\\ K_B^{\sim B} = 0}} \PP(K(t) = K^{\sim B}) \EE [  \partial_{ij} \lambda  \partial_{ij} \lambda^K  ] 
    \intertext{By \eqref{equ: P_K_t}, we have $1+ { \PP(K_B(t) \geq 1) }/{ \PP(K_B(t) = 0)} = e^{\eta t} $. Then, we can write the above equation in term of $\sT_+(t)$ in \eqref{equ: def_T_+},}
    &\leq \frac{ e^{\eta t} }{4(1-e^{-\tau})} \sT_+(t) 
    \intertext{Using our assumption that $t$ is in the range given in \eqref{equ: range_t_pf}, Proposition~\ref{prop: dominate_t} implies $\sT(t)  \geq \frac{1}{2} \sT_+(t) $, so we obtain }
    &\leq \frac{ e^{\eta t} }{2(1-e^{-\tau})} \sT(t)
    \intertext{Then with Lemma~\ref{lem: delta_partial_o1}, we have}
    &\leq \frac{ e^{\eta t} }{2(1-e^{-\tau})} \left( \sum_{K \in \ZZ^{\sB}_{\ge 0}} \PP(K(t) = K)  \sum_{B \in \sB} \EE [ \Delta_{B} \lambda_\alpha  \Delta_{B} \lambda_\alpha^K ] + \sR\right)
    \intertext{By Lemma~\ref{thm: var_identity_poi_OU}, this is equivalent to the {Dirichlet form},}
    &\leq \frac{ e^{\eta t} }{2(1-e^{-\tau})}  \bigg( \frac{2}{\eta} D(\lambda, P_t\lambda) + \sup_{K \in \ZZ^{\sB}_{\geq 0}} \sR(K) \bigg), \label{equ: v_tilde_v_b}
\end{align}
where the Lemma gives a uniform bound on $\sR(K)$.
So, let us write
\begin{equation}
    \sR \colonequals \sup_{K \in \ZZ^{\sB}_{\geq 0}} \sR(K) \leq C(1 - e^{-\tau})(\nu^{2 + \epsilon} n^{-1/2 + \delta + \epsilon} + \nu^{1 + \epsilon} n^{-\delta/4 + \epsilon}). \label{eq:R-estimate}
\end{equation} 

On the other hand, by the variance identity given in Lemma~\ref{lem: cov_identity} and using the nonnegativity and
monotonicity of $D(\lambda, P_t\lambda)$ stated in Lemma~\ref{lem: dirichlet_mon}, we have
\begin{align}\label{equ: var_pou_c}
D(\lambda, P_t\lambda)
&\leq \frac{1}{t}\int_{0}^{t} D(\lambda, P_s\lambda) ds \\
&\leq \frac{1}{t}\int_{0}^{\infty} D(\lambda, P_s\lambda) ds \\
&\leq \frac{1}{t}\Var(\lambda(G)) \intertext{and using the bound of Corollary~\ref{cor: eigenvalue_bound} on $\Var(\lambda(G))$, we have}
&\leq \frac{F(n,\alpha)\,\hat a^{-2/3} n^{-1/3}}{t},
\end{align}
where $F(n,\alpha)$ is as defined in \eqref{eq:F}.
Therefore, applying the above inequality to \eqref{equ: v_tilde_v_b}, we conclude \begin{align}
    \EE [ \langle v(\widetilde{G}(0)), v(\widetilde{G}(t))  \rangle^2 ] &\leq \frac{ e^{\eta t} }{1-e^{-\tau}} \bigg( \frac{F(n,\alpha)\,\hat a^{-2/3} n^{-1/3}}{ \eta t } +  \sR \bigg)\intertext{By our assumption \eqref{equ: range_t_pf} on $t$, we can further bound}
    &\leq (1-e^{-\tau})^{-(1+e^{\tau})}\bigg(\frac{F(n,\alpha) \hat a^{-2/3} n^{-1/3}}{ \eta t }+\sR \bigg)
    \intertext{Further, one may verify that for all $\tau > 0$ we have the bound $(1 - e^{-\tau})^{-(1 + e^{\tau})} \leq 10 (1 \vee \tau^{-2}) = 10 / (1 \wedge \tau)^2$, a form one may guess by noting that the singularity of this function near $\tau = 0$ is of the kind $\tau^{-2}$, while the function converges to a constant as $\tau \to \infty$. This gives, hiding the irrelevant constant:}
    &\lesssim \frac{F(n,\alpha) \hat a^{-2/3} n^{-1/3}}{(1 \wedge \tau)^2 \eta t }+ \frac{\sR}{(1 \wedge \tau)^2}
    \intertext{Here, we use \eqref{eq:R-estimate} together with the fact that $1 - e^{-\tau} \leq 1 \wedge \tau$, which gives}
    &\lesssim \frac{F(n,\alpha) \hat a^{-2/3} n^{-1/3}}{(1 \wedge \tau)^2 \eta t }+ \frac{\nu^{2 + \epsilon}n^{-1/2 + \delta + \epsilon} + \nu^{1 + \epsilon} n^{-\delta / 4 + \epsilon}}{1 \wedge \tau}.
\end{align}

Consider the conditions under which the second term above will go to zero.
Given that $\epsilon > 0$ here may be taken arbitrarily small, it suffices for there to exist some $\epsilon^{\prime}$ such that
\[ \nu = O\left(\frac{n^{1/4-\delta/2 - \epsilon^{\prime}}}{\sqrt{1 \wedge \tau}}\right) \text{ and } \nu = O\left(\frac{n^{\delta/4 - \epsilon^{\prime}}}{1 \wedge \tau}\right). \]
Further, to apply Lemma~\ref{lem: delta_partial_o1}, we need to have
\[ \nu = o\left(\frac{n^{5/6 - \delta}}{\log n} \hat{\alpha}^{-1/3}\right). \]
Thus, under these restrictions, if we also have
\begin{align}
   \frac{(1 \wedge \tau)^2 \eta t}{ F(n,\alpha) \hat a^{-2/3} n^{-1/3} } \to \infty,
\end{align}
then $\EE [ \langle v(\widetilde{G}(0)), v(\widetilde{G}(t))  \rangle^2 ]  = o(1) $, as claimed.
\end{proof}

\subsection{Resampling dynamics: Proof of Theorem~\ref{thm: resample_block_1}}

The proof of Theorem~\ref{thm: resample_block_1} first follows the same strategy as Theorem~\ref{thm: main_POU}, starting with the Taylor approximation in Lemma~\ref{lem: delta_partial_o1}. We state Lemma~\ref{lem: delta_partial_o1_ind}, the analog for this setting, below. Moreover, since decorrelation is controlled by the number of resampled blocks $k$ in the independent resampling dynamics, accordingly, we start from the variance identity in Theorem~\ref{thm: var_identity} and invoke the monotonicity of $T_k$ from Corollary~\ref{cor: T_k_mono}, instead of the monotonicity in $t$ of the Dirichlet form.

\begin{lemma}\label{lem: delta_partial_o1_ind}
  Let $\sB$ be an admissible partition of $[n] \times [n]$. For any $k \in [|\sB^{(n)}|]$ and $A \in \sA_k$, there exists $\delta > 0$ (any $\delta \in (0, \delta_0)$ for the $\delta_0$ in Theorem~\ref{thm: sum_of_inverse_spacing} can be used here) such that for each $\alpha \in [n]$, if \begin{align}\label{equ:nuB_1}
     \nu \leq \frac{n^{5/6 - \delta}}{\log n} \hat{\alpha}^{-1/3},
\end{align}
there exists a constant $C>0$ such that for arbitrarily small $\epsilon>0$,
    \begin{align}\label{equ: delta_partial_tau_ind}
      &\sum_{\substack{B \in \mathcal{B},\\  B \not\subseteq A} } \Bigl|  \EE \left[ \Delta_{B} \lambda_\alpha \Delta_{B} \lambda_\alpha^A \right] -  \sum_{(i,j) \in B } \tilde{\sigma}_{ij}^2  \EE \left[   \partial_{ij} \lambda_\alpha(X)  \partial_{ij}  \lambda_\alpha(X^A) \right] \Bigr| \\
      &\leq C  \left(   \nu^{2+\epsilon}   n^{-1/2 + \delta+\epsilon} + \nu^{1+\epsilon}  n^{-\delta/4+\epsilon}   \right) \colonequals \sR, \label{equ: ind_R}
    \end{align}
    where \begin{align}
    \Delta_{B} \lambda_\alpha &= \lambda_\alpha(X) - \lambda_\alpha(X^B) \\
    \Delta_{B} \lambda_\alpha^A &= \lambda_\alpha(X^A) - \lambda_\alpha(X^{A \cup B}) \\ 
        \tilde{\sigma}_{ij}^2 &\coloneqq \EE[(X_{ij}-X^B_{ij})^2]
+ \mathbbm{1}\{i\neq j\}\EE[(X_{ji}-X^B_{ji})^2] \\
&= \begin{cases}
            4 \sigma_{ij}^2 , &\text{if } (i,j) \in B \text{ and } i \neq j, \\
            2\sigma_{ij}^2, &\text{if } (i,j) \in B \text{ and } i = j.
        \end{cases}  
    \end{align} 
\end{lemma}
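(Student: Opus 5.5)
The proof will transpose the proof of Lemma~\ref{lem: delta_partial_o1} to the resampling setting, replacing $G, G(e_B), G(K), G(K+e_B)$ by $X, X^B, X^A, X^{A\cup B}$. Fix $B \not\subseteq A$. Because $\sB$ is an admissible partition, the blocks are disjoint, so $B \cap A = \emptyset$. Hence $X - X^B$ and $X^A - X^{A\cup B}$ are supported on $B$, and on $B$ they are identical: both equal $X_{ij} - Y_{ij}$. We will introduce the resampling paths $X(s) = (1-s)X + sX^B$ and $X^A(s) = (1-s)X^A + sX^{A\cup B}$. Each is a linear interpolation between two sub-Gaussian generalized Wigner matrices that differ only on $B$.

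The variance condition \eqref{eq:var-asm} holds along both paths: on $B$, $\Var(X(s)_{ij}) = ((1-s)^2 + s^2)\sigma_{ij}^2 \in [\sigma_{ij}^2/2, \sigma_{ij}^2]$, and off $B$ the variance is unchanged. The joint density required by Lemma~\ref{lem: smooth_lambda} comes from Definition~\ref{def:gen-wigner}(5). Therefore Theorem~\ref{thm: delocalization_all_t}, Theorem~\ref{thm: sum_of_inverse_spacing} (under the hypothesis \eqref{equ:nuB_1}) and Corollary~\ref{cor: derivative_bound} all apply.

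Next, as in Proposition~\ref{prop: POU_inequality}, we will split the error into $\sR_1+\sR_2+\sR_3$.
\begin{itemize}
\item $\sR_1$ and $\sR_2$ are the Taylor remainders $\EE[(\Delta_B\lambda - F'(0))\Delta_B\lambda^A]$ and its symmetric counterpart.
\item $\sR_3$ is the decoupling error. It comes from comparing $\EE[F'(0)F_A'(0)] = \sum_{(i,j),(a,b)\in B}\EE[(X-X^B)_{ij}(X^A-X^{A\cup B})_{ab}\,\partial_{ij}\lambda\,\partial_{ab}\lambda^A]$ with the same sum where $\lambda, \lambda^A$ are evaluated at $Z, Z^A$, an independent resampling of $X$ on $B$.
\end{itemize}
In the decoupled sum the increments are independent of the derivatives. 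Since $\EE[(X_{ij}-Y_{ij})(X_{ab}-Y_{ab})]$ vanishes unless $\{i,j\}=\{a,b\}$, only diagonal terms survive, and each has coefficient $2\sigma_{ij}^2$. The symmetric pair $(i,j),(j,i)$ produces the weight $\tilde\sigma_{ij}^2$ in the statement. One point differs from the PDBOU case: there the two increments had correlation depending on $K$, whereas here they coincide exactly on $B$. This simplifies the calculation. Also, $Z^A$ has the same law as $X^A$, because $A \cap B = \emptyset$.

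To bound each of $\sR_1,\sR_2,\sR_3$, we will split over the events $\sE_{\mathrm{deloc}}\cap\sE_{\mathrm{space}}$, $\sE_{\mathrm{deloc}}\cap\sE_{\mathrm{space}}^c$ and $\sE_{\mathrm{deloc}}^c$, defined along the four relevant paths (including the paths $X \to Z$ and $X^A \to Z^A$ for $\sR_3$). On the good event, we bound using $M \le Cn^{-1/2+\epsilon}$ and $S_\alpha \le C'n^{1/2+\delta}$. On the middle event, we use only first-derivative bounds together with $\PP(\sE_{\mathrm{space}}^c)^{1/2} \lesssim n^{-\delta/4}$. On the last event, we use trivial bounds against a probability $n^{-c}$.

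The moment factors $\EE\|X-X^B\|_{\ell^\infty}^p$ are now bounded via the sub-Gaussian case of Corollary~\ref{cor: moment_bounds} with $N=\nu$, giving $O((\log\nu)^{p/2})$. There is no $(1-e^{-\tau})$ factor in this setting. The resulting per-block bound is $C(\nu^{3+\epsilon}n^{-5/2+\delta+\epsilon} + \nu^{2+\epsilon}n^{-2-\delta/4+\epsilon})$. Summing over at most $|\sB| \le 2n^2/\nu$ blocks gives \eqref{equ: ind_R}.

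I expect the main obstacle to be the middle event in $\sR_1$ and $\sR_2$. There the weak spacing probability $n^{-\delta/2}$ must be paired through H\"older with only first-derivative bounds, and one must check that the non-Gaussian sub-Gaussian increments still have fourth moments of order $\log\nu$. A second point needing care is that Theorem~\ref{thm: sum_of_inverse_spacing} is applied to $X^A$ and the paths from it, and not only to $X$. This is fine because $X^A$ has the same law as $X$, so each path has the required marginal structure.
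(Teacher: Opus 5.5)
Your proposal is correct and matches the paper's argument. The paper likewise proves Lemma~\ref{lem: delta_partial_o1_ind} by rerunning the proof of Lemma~\ref{lem: delta_partial_o1} with $(G,G(e_B),G(K),G(K+e_B))$ replaced by $(X,X^B,X^A,X^{A\cup B})$, using the same $\sR_1+\sR_2+\sR_3$ split, decoupled copies on $B$, three-event decomposition, and count $|\sB|\le 2n^2/\nu$.

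One point deserves explicit statement. The decoupling identity needs the joint law $(Z,Z^A)\stackrel{d}{=}(X,X^A)$, so $Z^A$ must agree with $Z$ on $B$, as your construction implicitly has; the marginal equality $Z^A\stackrel{d}{=}X^A$ that you cite is not enough. The paper's sketch, which declares $Z_{ij}$ and $Z^A_{ij}$ independent on $B$, is loose on exactly this point.
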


We omit the full proof of Lemma~\ref{lem: delta_partial_o1_ind} as it is entirely analogous to that of Lemma~\ref{lem: delta_partial_o1}.
Below, we just describe the modifications required to repeat that proof in the independent resampling setting.

First, the construction of the auxiliary matrices $Z$ and $Z(K)$ (denoted here as $Z^{A}$) in Proposition~\ref{prop: POU_inequality} simplifies significantly. We define $Z$ as an independent copy of $X$ that differs from $X$ only on the entries indexed by $B$, and similarly define $Z^{A}$ as an independent copy of $X^{A}$ restricted to those same entries. With this construction, the variables $X_{ij}$, $X^B_{ij}$, $Z_{ij}$, and $Z^A_{ij}$ are mutually independent for all $(i,j)\in B$.

Second, the summation in \eqref{equ: delta_partial_tau_ind} is restricted to $B \not\subseteq A$ because $\Delta_{B} \lambda_\alpha^A$ vanishes by Definition~\ref{def: block_resampling}; this restriction also appears in the proof of Theorem~\ref{thm: resample_block_1} due to same restriction in the variance identity for the PDBR process. Furthermore,  $ (1+ \mathbbm{1}\{i\neq j\}) \EE[\Delta_B G_{ij} \Delta_B G(K)_{ij}  ]$ in \eqref{equ: sum_expectation} simplifies to $\tilde{\sigma}_{ij}^2$ defined above. Unlike the PDBOU process, here $\tilde{\sigma}_{ij}^2$ does not depend on $k$ or $A$ and is just a statistic of the law of the generalized Wigner matrix $X$, allowing it to factor out of the expectation. Consequently, the bounds of Proposition~\ref{prop: dominate_t} are unnecessary, as the variance is simply absorbed into the constant $C$ in \eqref{equ: ind_R}.

\begin{proof}[Proof of Theorem~\ref{thm: resample_block_1}]  
As before, we fix $\alpha \in [n]$ and omit the dependence of all quantities involved on $\alpha$ as its value does not affect the proof.
 Let $k \in \NN$.
With Lemma~\ref{lem: smooth_lambda}, we can write the variance identity given in Theorem~\ref{thm: var_identity} with $f = \lambda$ that \begin{align}\label{equ: proof_var_identity}
    \Var (\lambda(X) )
= \frac{1}{2m} \sum_{k=0}^{m-1} \frac{1}{\binom{m-1}{k}} \sum_{B \in  \mathcal{B}}   \sum_{A \in \mathcal{A}_{k, B}}   \EE\left[ \Delta_{B} \lambda  \Delta_{B} \lambda^A \right].
\end{align}
We then compare $ \Var (\lambda(X) )$ to $\EE \left[ \langle v(X)  ,  v(X^A) \rangle^2 \right]$:
\begin{align}\label{equ: v_va_identity_2}
     &\EE \left[ \langle v(X)  ,  v(X^A) \rangle^2 \right] \\
     &\quad =  \frac{1}{\binom{m}{k}} \sum_{A \in \mathcal{A}_{k}}  \EE \left[  2 \sum_{1 \leq i< j \leq n} \partial_{ij} \lambda  \partial_{ij} \lambda^A + \sum_{i = 1}^n \partial_{ii} \lambda  \partial_{ii} \lambda^A \right] \\
     &\quad = \frac{1}{\binom{m}{k}}   \sum_{A \in \mathcal{A}_{k}}  \sum_{B \in \mathcal{B}}   \sum_{(i,j) \in B} \EE[ \partial_{ij} \lambda  \partial_{ij} \lambda^A ] \intertext{We can rewrite the sum to mirror \eqref{equ: proof_var_identity}, as}
     &\quad = \frac{1}{\binom{m}{k}}   \sum_{B \in  \mathcal{B}} \sum_{(i,j) \in B} \left( \sum_{A \in \mathcal{A}_{k,B}} \EE[ \partial_{ij} \lambda  \partial_{ij} \lambda^A ] + \sum_{A \in \mathcal{A}_{k-1,B}} \EE[ \partial_{ij} \lambda  \partial_{ij} \lambda^{A \cup B} ] \right), \label{equ: exp_v_k-1}
\end{align}
where we emphasize the expectation in \eqref{equ: exp_v_k-1} is now taken with only respect to $X, X^A$. 
\noindent If  $ (i,j) \in A$, we denote $ A' = A \setminus \{ (i,j) \}$. By Jensen's inequality, we have
\begin{align}\label{equ: resample_A_B}
    \EE[\partial_{ij}\lambda \partial_{ij}\lambda^{A}]
&= \EE[\EE[\partial_{ij}\lambda \mid \{X_{ab}\}_{(a,b)  \notin A}]^{2}] \\
&\leq 
\EE[\EE[\partial_{ij}\lambda \mid \{X_{ab}\}_{(a,b)  \notin A'}]^{2}]
= \EE[\partial_{ij}\lambda \partial_{ij}\lambda^{A'}].
\end{align}
Applying the above inequality \eqref{equ: resample_A_B} to \eqref{equ: v_va_identity_2}, we get
\begin{align}\label{equ: v_va_upper_bound}
   &\EE \left[ \langle v(X)  ,  v(X^A) \rangle^2 \right]  \\
   &\quad \leq \frac{1}{\binom{m}{k}}   \sum_{B \in  \mathcal{B}} \sum_{(i,j) \in B } \left( \sum_{A \in \mathcal{A}_{k,B}} \EE[ \partial_{ij} \lambda  \partial_{ij} \lambda^A ] + \sum_{A \in \mathcal{A}_{k-1,B}} \EE[ \partial_{ij} \lambda  \partial_{ij} \lambda^{A} ] \right) \\
   &\quad = \frac{1}{\binom{m}{k}}   \sum_{B \in  \mathcal{B}}  \left( \sum_{A \in \mathcal{A}_{k,B}} (\sum_{(i,j) \in B } \EE[ \partial_{ij} \lambda  \partial_{ij} \lambda^A ] ) + \sum_{A \in \mathcal{A}_{k-1,B}} (\sum_{(i,j) \in B } \EE[ \partial_{ij} \lambda  \partial_{ij} \lambda^{A} ] )\right). \label{equ: inner_v_ij_B}
\end{align}
Suppose $\nu$ satisfies the bound given in \eqref{equ:nuB_1}. Then with the nonnegativity of  $\EE \left[    \partial_{ij} \lambda  \partial_{ij}  \lambda^A \right]$  above in \eqref{equ: resample_A_B}, Lemma~\ref{lem: delta_partial_o1_ind} gives that
 \begin{align}
  \sum_{B \in \sB}  \sum_{(i,j) \in B } \EE \left[    \partial_{ij} \lambda  \partial_{ij}  \lambda^A \right]  &\leq \frac{1}{2c_1} \sum_{B \in \sB}  \sum_{(i,j) \in B } \tilde{\sigma}_{ij}^2 \EE \left[    \partial_{ij} \lambda  \partial_{ij}  \lambda^A \right] \\
    &\leq \frac{1}{2c_1} \left( \sum_{B \in \sB} \EE \left[ \Delta_{B} \lambda  \Delta_{B} \lambda^A \right] + \sR \right), \label{equ: delta_partial_bound_00}
\end{align}
where we recall that $\sR$ denotes the error given in \eqref{equ: ind_R}, which we will expand later.
 Then, combining \eqref{equ: inner_v_ij_B} and \eqref{equ: delta_partial_bound_00}, we have
\begin{align}
    &\EE \left[ \langle v(X)  ,  v(X^A) \rangle^2 \right]  \\
     &\leq \frac{1}{2c_1} \frac{1}{\binom{m}{k}}   \sum_{B \in  \mathcal{B}}  \left( \sum_{A \in \mathcal{A}_{k,B}}  \EE \left[ \Delta_{B} \lambda  \Delta_{B} \lambda^A \right] + \sum_{A \in \mathcal{A}_{k-1,B}}  \EE \left[ \Delta_{B} \lambda  \Delta_{B} \lambda^A \right]\right) +  \frac{1}{2c_1}  \sR \intertext{We write in terms of $T_k$ defined in \eqref{equ: def_Tk} in Theorem~\ref{thm: var_identity}, then}
     &= \frac{1}{2c_1} \frac{1}{\binom{m}{k}}   \left( \tbinom{m-1}{k} T_k +  \tbinom{m-1}{k-1} T_{k-1}  \right) +   \frac{1}{2c_1}   \sR  \intertext{We further apply Corollary~\ref{cor: T_k_mono} to write it in terms of $\Var(\lambda(X)) $,}
     &\leq \frac{1}{2c_1} \left( \frac{m-k}{m}   \frac{2m}{k+1} + \frac{k}{m}  \frac{2m}{k} \right)  \Var(\lambda(X)) + \frac{1}{2c_1}   \sR \\
     &= \frac{1}{c_1}  \frac{m+1}{k+1}  \Var(\lambda(X)) + \frac{1}{2c_1}   \sR. \label{equ: inner_product_main_me}
\end{align}
Sufficient conditions to have $\sR = o(1)$ are the same as we derived in proving Theorem~\ref{thm: main_POU}, but without the dependence on the $\tau$ parameter there; in particular, for sufficiently small $\delta > 0$ (according to whether we are working with a Wigner or generalized Wigner matrix) it suffices to have $\nu = O(n^{\delta})$.
Therefore, under this restriction on $\nu$, we have
\begin{align}
   \EE \left[ \langle v(X)  ,  v(X^A) \rangle^2 \right]  &\leq   \frac{1}{c_1}  \frac{m+1}{k+1}  \Var(\lambda(X)) + o(1) \intertext{Applying the upper bound on $\Var ( \lambda(X) )$ from Corollary~\ref{cor: eigenvalue_bound}, we have}
   &\leq   \frac{1}{c_1}  \frac{m+1}{k+1}  F(n, \alpha) \hat{\alpha}^{-2/3} n^{-1/3} + o(1),
\end{align}
where $F(n, \alpha)$ is as defined in \eqref{eq:F}. Therefore,
 if \begin{align}   \frac{k}{m  F(n, \alpha) \hat{\alpha}^{-2/3}n^{-1/3}}  \to \infty,\end{align} then $\EE \left[ \langle v(X)  ,  v(X^A) \rangle^2 \right]   = o(1) $.
\end{proof}

\section*{Acknowledgments}
\addcontentsline{toc}{section}{Acknowledgments}
Thanks to Benjamin McKenna for helpful discussions during this project.

\addcontentsline{toc}{section}{References}
\bibliographystyle{alpha}
\bibliography{ref}

\newpage
\appendix  

\section{Simplicity of eigenvalues along lines: Proof of Proposition~\ref{prop: F_B}}\label{sec: simplicity_eigenvalues}

Recall that we want to study the set
\[ E_{\sym} \colonequals \{X \in \RR^{n \times n}_{\sym}: X \text{ has a repeated eigenvalue}\}. \]
We do this by considering a suitable complex generalization.

Recall that, to any matrix $X \in \CC^{n \times n}$, we may associate the \emph{minimal polynomial} $\mu_X(z)$, the unique monic polynomial of minimum degree such that $\mu_X(X) = 0$.
By the Cayley-Hamilton Theorem, we always have $\deg(\mu_X) \leq n$.
Consider the set
\[ E \colonequals \{X \in \CC^{n \times n}: \deg(\mu_X) \leq n - 1 \}, \]
sometimes called the set of \emph{derogatory} matrices.
Since $\mu_X$ can have at most $\deg(\mu_X)$ zeroes, we have
\[ E \cap \RR^{n \times n}_{\sym} = \{X \in \RR^{n \times n}_{\sym}: X \text{ has a repeated eigenvalue}\} = E_{\sym}. \]

Before continuing, we also mention the following necessary condition for a matrix being derogatory that will be useful below.
\begin{prop}
    \label{prop:derogatory}
    If $X \in \CC^{n \times n}$ is derogatory, i.e.\ $X \in E$, then there exists $\lambda \in \CC$ (an eigenvalue of $X$) such that $\rank(\lambda I - X) \leq n - 2$.
    In particular, the determinant of every $(n - 1) \times (n - 1)$ submatrix of $\lambda I - X$ is zero.
\end{prop}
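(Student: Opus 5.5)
The plan is a direct linear-algebra argument via the Jordan form, followed by a translation into a statement about minors. Let $X \in E$, so $\deg(\mu_X) \leq n-1$. Over $\CC$, $X$ is similar to a Jordan matrix $J$, and we recall that $\mu_X(z) = \prod_{\lambda} (z-\lambda)^{s_\lambda}$. Here the product runs over the distinct eigenvalues $\lambda$ of $X$, and $s_\lambda$ is the size of the largest Jordan block for $\lambda$. On the other hand, the characteristic polynomial is $\prod_\lambda (z-\lambda)^{m_\lambda}$, where $m_\lambda$ is the algebraic multiplicity, i.e.\ the total size of all Jordan blocks for $\lambda$, and $\sum_\lambda m_\lambda = n$.

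Since $\deg \mu_X = \sum_\lambda s_\lambda \leq n-1 < n = \sum_\lambda m_\lambda$ and $s_\lambda \leq m_\lambda$ termwise, there must exist some eigenvalue $\lambda$ with $s_\lambda < m_\lambda$. For this $\lambda$ there are at least two Jordan blocks. The geometric multiplicity $\dim\ker(\lambda I - X)$ equals the number of Jordan blocks for $\lambda$, so it is at least $2$. By rank--nullity, $\rank(\lambda I - X) \leq n-2$. This rank statement is similarity invariant, so working with $J$ instead of $X$ is harmless.

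For the second claim, we use the standard fact that a matrix of rank $r$ has every $(r+1)\times(r+1)$ minor equal to zero. The reason is that the rows of any $(n-1)\times(n-1)$ submatrix are truncations of $n-1$ rows of $\lambda I - X$, which span a space of dimension at most $n-2$. Those $n-1$ rows are therefore linearly dependent, and the dependence persists after deleting a column. So every $(n-1)\times(n-1)$ minor of $\lambda I - X$ vanishes; equivalently, $\operatorname{adj}(\lambda I - X) = 0$.

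The only step needing care is the multiplicity bookkeeping: the condition $\deg \mu_X \le n-1$ must force \emph{some single} eigenvalue to have two Jordan blocks, rather than merely a defect spread across several eigenvalues. The termwise comparison $s_\lambda \le m_\lambda$ handles this, since a strict inequality of sums forces a strict inequality in at least one term. There is no real obstacle beyond that.
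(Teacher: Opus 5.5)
Your argument is correct and complete: the termwise comparison $s_\lambda \le m_\lambda$ together with $\sum_\lambda s_\lambda \le n-1 < \sum_\lambda m_\lambda$ forces an eigenvalue with at least two Jordan blocks, hence $\rank(\lambda I - X) \le n-2$, and the row-dependence argument correctly gives the vanishing of all $(n-1)\times(n-1)$ minors. The paper gives no proof of this proposition. It treats it as the standard fact, stated just afterward, that derogatory matrices are exactly those with an eigenvalue of geometric multiplicity greater than $1$, so your Jordan-form argument supplies precisely the justification it implicitly relies on.
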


Note that, over $\CC^{n \times n}$, $E$ is not the same as the set of matrices with repeated (complex) eigenvalues.
Indeed, $E$ is the set of matrices having an eigenvalue of \emph{geometric} multiplicity greater than 1, while matrices with repeated eigenvalues have an eigenvalue of \emph{algebraic} multiplicity greater than 1.
It turns out that $E$ is easier to describe: given $X \in \CC^{n \times n}$, let $A(X) \in \CC^{n^2 \times n}$ have $\vec(I), \vec(A), \dots, \vec(A^{n - 1})$ as its columns.
By definition then, $\deg(\mu_X) \leq n - 1$ if and only if $A(X)$ is rank-deficient, i.e., has rank at most $n - 1$.
Writing $\Sigma \colonequals \binom{[n]^2}{n}$ and, for $\sigma \in \Sigma$ a set of $n$ positions of matrix entries, $A_{\sigma}(X)$ for the corresponding subset of rows of $A$, we have
\begin{align*}
    E 
    &= \{X \in \CC^{n \times n}: \mathrm{rank}(A(X)) \leq n - 1\} \\
    &= \{X \in \CC^{n \times n}: \det(A_{\sigma}(X)) = 0 \text{ for all } \sigma \in \Sigma\}.
\end{align*}
We note that similar definitions are discussed in \cite{parlett2002matrix}, but instead with the aim of understanding characterizations of the set of matrices having repeated eigenvalues (in the sense of algebraic multiplicity), which may be defined by a more complicated relation among the determinants $\det(A_{\sigma}(X))$.
From the above, we note in particular that $E \subset \CC^{n \times n}$ is a complex algebraic variety.

Recall that Proposition~\ref{prop: F_B} concerns the set
\begin{align*}
F_B 
&\colonequals \{(X, \Delta) \in \RR^{n \times n}_{\sym} \times \RR^{B}_{\sym}: \text{there exists } s \in \RR \text{ such that } X + s\Delta \in E_{\sym}\}
\intertext{Using the above observations, we may rewrite}
&= \{(X, \Delta) \in \RR^{n \times n}_{\sym} \times \RR^{B}_{\sym}: \text{there exists } s \in \RR \text{ such that } X + s\Delta \in E\} \\
&= \{(X, \Delta) \in \RR^{n \times n}_{\sym} \times \RR^{B}_{\sym}: \text{there exists } s \in \RR \text{ such that } \det(A_{\sigma}(X + s\Delta)) = 0 \text{ for all } \sigma \in \Sigma\} \\
&\subseteq \{(X, \Delta) \in \RR^{n \times n}_{\sym} \times \RR^{B}_{\sym}: \text{there exists } s \in \CC \text{ such that } \det(A_{\sigma}(X + s\Delta)) = 0 \text{ for all } \sigma \in \Sigma\} \\
&\equalscolon \widetilde{F}_B.
\end{align*}

Note that $\widetilde{F}_B$ is the set of $(X, \Delta)$ such that a certain family of univariate polynomials $p_\sigma(s)$ have a simultaneous root $s \in \CC$, where the coefficients of the $p_{\sigma}$ depend on $X$ and $\Delta$.
This property can be expressed as a system of polynomial equations in $(X, \Delta)$ themselves using the resultant of multiple polynomials; see Section 2 of Chapter 3 of \cite{cox1998using}.
In particular, $\widetilde{F}_B$ is a real algebraic variety.

\begin{proof}[Proof of Proposition~\ref{prop: F_B}] 
By the above reasoning, the result will follow if we can show that $\widetilde{F}_B \neq \RR^{n \times n}_{\sym} \times \RR^B_{\sym}$.
Equivalently, for any $B$, we want to find $X \in \RR^{n \times n}_{\sym}$ and $\Delta \in \RR^B_{\sym}$ such that, for all $s \in \CC$, $X + s\Delta$ is not derogatory.
We will use Proposition~\ref{prop:derogatory} for this purpose.

Since we can take $\Delta$ arbitrary supported on the entries in $B$, without loss of generality we may suppose that $B$ consists of just one entry, up to symmetry.
We decompose our task into several cases, as below.

\emph{Case 1: $B = \{(i, i)\}$.}
Further without loss of generality we may assume $i = 1$.
Take $\Delta = e_1e_1^{\top}$ and
\[ X = \left[\begin{array}{ccccc} 0 & 1 & 0 & \cdots & 0 \\
1 & 0 & 1 & \cdots & 0 \\
0 & 1 & 0 & \cdots & \vdots \\
\vdots & \vdots & \vdots & \ddots & 1 \\
0 & 0 & \cdots & 1 & 0 \end{array}\right]. \]
Then, we have
\[ X + s\Delta = \left[\begin{array}{ccccc} s & 1 & 0 & \cdots & 0 \\
1 & 0 & 1 & \cdots & 0 \\
0 & 1 & 0 & \cdots & \vdots \\
\vdots & \vdots & \vdots & \ddots & 1 \\
0 & 0 & \cdots & 1 & 0 \end{array}\right]. \]
Let $\lambda \in \CC$.
Consider the determinant of the submatrix of $X + s\Delta - \lambda I$ formed by deleting the first row and the last column, which is
\[ \det\left(\left[\begin{array}{ccccc} 1 & -\lambda & 1 & \cdots & 0 \\
0 & 1 & -\lambda & \cdots & 0 \\
0 & 0 & 1 & \cdots & \vdots \\
\vdots & \vdots & \vdots & \ddots & -\lambda \\
0 & 0 & \cdots & 0 & 1 \end{array}\right]\right) = 1, \]
since this matrix is upper triangular.
Thus, by Proposition~\ref{prop:derogatory} we have that $X + s\Delta$ is never derogatory.

\emph{Case 2: $B = \{(i, j), (j, i)\}$ for $i \neq j$.}
Again without loss of generality we may assume $i = 1$ and $j = 2$.
We take $\Delta = e_1e_2^{\top} + e_2e_1^{\top}$, analogous to the idea from Case 1.
However, for a subtle reason we will point out when it comes up, we need to take $X$ slightly different.
Namely, here we take:
\begin{equation}
X = \left[\begin{array}{ccccc} 0 & 1 & 0 & \cdots & 0 \\
1 & 3 & 1 & \cdots & 0 \\
0 & 1 & 3 & \cdots & \vdots \\
\vdots & \vdots & \vdots & \ddots & 1 \\
0 & 0 & \cdots & 1 & 3 \end{array}\right]. \label{eq:derogatory-X}
\end{equation}
(The specific choice is not important provided that the diagonal entries are big enough.)
We then have
\[ X + s\Delta - \lambda I = \left[\begin{array}{ccccc} - \lambda & 1 + s & 0 & \cdots & 0 \\
1 + s & 3 - \lambda & 1 & \cdots & 0 \\
0 & 1 & 3 - \lambda & \cdots & \vdots \\
\vdots & \vdots & \vdots & \ddots & 1 \\
0 & 0 & \cdots & 1 & 3 - \lambda \end{array}\right]. \]
We want to show that, for all $s, \lambda \in \CC$, this matrix has rank at least $n - 1$, so that by Proposition~\ref{prop:derogatory} it again is not derogatory.
We consider a few further cases:

\emph{Case 2.1: $s \neq -1$.}
In this case, the submatrix formed by deleting the first row and last column is upper triangular with non-zero entries on the diagonal, so it has non-zero determinant and the result follows.

\emph{Case 2.2: $s = -1$.}
We will break this up into two more cases, but we first note that in this case we have
\[ X + s\Delta - \lambda I = \left[\begin{array}{ccccc} - \lambda & 0 & 0 & \cdots & 0 \\
0 & 3 - \lambda & 1 & \cdots & 0 \\
0 & 1 & 3 - \lambda & \cdots & \vdots \\
\vdots & \vdots & \vdots & \ddots & 1 \\
0 & 0 & \cdots & 1 & 3 - \lambda \end{array}\right] = [-\lambda] \oplus \left[\begin{array}{cccc}
3 - \lambda & 1 & \cdots & 0 \\
1 & 3 - \lambda & \cdots & \vdots \\
\vdots & \vdots & \ddots & 1 \\
0 & \cdots & 1 & 3 - \lambda \end{array}\right], \]
a direct sum decomposition that will be useful below.

\emph{Case 2.2.1: $s = -1$ and $\lambda \neq 0$.}
In this case, the first direct summand above has rank 1.
Similar to what we argued before, the second direct summand, whose dimensions are $(n - 1) \times (n - 1)$, has a non-singular submatrix formed by deleting the first row and last column, since that leaves an upper triangular matrix with 1's on the diagonal.
Thus, the second direct summand has rank at least $n - 2$, and thus in total $\rank(X + s\Delta - \lambda I) \geq n - 1$, as needed.

\emph{Case 2.2.2: $s = -1$ and $\lambda = 0$.}
In this case, the first direct summand above is zero, while the second is the $(n - 1) \times (n - 1)$ matrix
\[ \left[\begin{array}{cccc}
3 & 1 & \cdots & 0 \\
1 & 3 & \cdots & \vdots \\
\vdots & \vdots & \ddots & 1 \\
0 & \cdots & 1 & 3 \end{array}\right]. \]
This matrix is diagonally dominant, and thus non-singular, so again $\rank(X + s\Delta - \lambda I) \geq n - 1$, completing the proof.
\end{proof}

Note that if we had not introduced the 3's above in \eqref{eq:derogatory-X}, then in Case 2.2.2 we would need to show that the adjacency matrix of the path graph on $n - 1$ vertices has full rank.
But, this graph has a zero eigenvalue whenever $n - 1$ is odd, so our argument would fail.

\section{Markov semigroups, Dirichlet forms, and variance identities}
\label{sec: variance_identity}

\subsection{Energy dissipation in Markov processes: Proof of Lemma~\ref{lem: dirichlet_mon}}

\begin{proof}[Proof of Lemma~\ref{lem: dirichlet_mon}]
    Since $(P_t)_{t \geq 0}$ is reversible and the generator $\sL$ commutes with the semigroup operators $P_t$, for all $t \geq 0$ we have \begin{align}
        D(f, P_tf) = -\langle f, \sL P_t f\rangle 
        &= -\langle f, \sL P_{t/2} P_{t/2} f\rangle \\ &= -\langle f, P_{t/2} \sL P_{t/2} f\rangle \\
        &= -\langle P_{t/2} f, \sL P_{t/2} f\rangle  \\
        &= D(P_{t/2} f, P_{t/2} f)\\
        &\ge 0
    \end{align}
    by the positivity of the Dirichlet form.
    To prove the monotonicity, from above and by the chain rule, we have \begin{align}
        \frac{d}{dt} D(f, P_tf) =  \frac{d}{dt} D(P_{t/2} f, P_{t/2} f) &= \frac{1}{2} \frac{d}{ds} D(P_{s} f, P_{s} f) \bigg|_{s = t/2} \\
        &= \frac{1}{2}\left(-\langle \sL P_sf, \sL P_sf \rangle -\langle P_sf, \sL^2 P_sf \rangle\right) \\
        &= - \|\sL P_{t/2}f \|^2 \\
        &\leq 0,
    \end{align}
    as claimed.
\end{proof}

\subsection{OU process: Proof of Lemma~\ref{lem: var_identity_OU}}

We begin the proof of Lemma~\ref{lem: var_identity_OU} by first recalling the properties of the one-dimensional OU process on $\RR$.
Recall that this is the process $X(t) \in \RR$ given by, for $X = X(0) \sim \sN(0, 1)$ and $W(t)$ a standard Brownian motion, $X(t) = e^{-t} X + e^{-t}W(e^{2t} - 1)$.

\begin{lemma}[\cite{vanHandel-PHD}, Lemma 2.22]
    \label{lem: OU_process_property}
    The process $X(t)$ above is ergodic and has $\mu = \sN(0, 1)$ as its stationary measure.
    For $X \sim \sN(0, 1)$, its semigroup, generator, and Dirichlet form are given by
    \begin{align*}
        P_tf(y) &= \Ex_{X \sim \sN(0, 1)} f\left(e^{-t}y + \sqrt{1 - e^{-2t}}X\right), \\
        \sL f(y) &= -yf^{\prime}(y) + f^{\prime\prime}(y), \\
        D(f, g) &= \Ex_{X \sim \sN(0, 1)} f^{\prime}(X)g^{\prime}(X),
    \end{align*}
    where the domain of the generator $\Dom(\sL)$ is the space of functions $f \in L^2(\mu)$ whose first two weak derivatives are also in $L^2(\mu)$ (i.e., the Sobolev space $W^{2,2}(\mu)$), while the Dirichlet form can be taken over the larger space of $f \in L^2(\mu)$ whose first weak derivative is also in $L^2(\mu)$ (i.e., the Sobolev space $W^{1,2}(\mu)$).
\end{lemma}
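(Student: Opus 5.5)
We would first establish that $X(t)$ is a Markov process with the claimed semigroup. Decompose
\[ X(t+s) = e^{-s}X(t) + e^{-(t+s)}\bigl(W(e^{2(t+s)}-1) - W(e^{2t}-1)\bigr). \]
The bracketed Brownian increment is independent of $(X(r))_{r\le t}$, which is measurable with respect to $X(0)$ and $W$ up to time $e^{2t}-1$. Its contribution is a centered Gaussian with variance $e^{-2(t+s)}(e^{2(t+s)}-e^{2t}) = 1-e^{-2s}$. Hence
\[ \EE[f(X(t+s))\mid (X(r))_{r\le t}] = \Ex_{Z\sim\sN(0,1)} f\bigl(e^{-s}X(t)+\sqrt{1-e^{-2s}}Z\bigr), \]
which is the Mehler formula for $P_s f$. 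Stationarity of $\sN(0,1)$ follows because $e^{-t}X+\sqrt{1-e^{-2t}}Z\sim\sN(0,1)$ for independent standard Gaussians $X$ and $Z$.

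Reversibility follows because $(X(0),X(t))$ is a centered bivariate Gaussian with unit variances and covariance $e^{-t}$. Its law is therefore symmetric under exchange, so $\langle f,P_tg\rangle_\mu=\langle P_tf,g\rangle_\mu$. For ergodicity, take $f$ bounded and continuous. Dominated convergence in the Mehler formula gives $P_tf(y)\to\Ex f$ pointwise, and bounded convergence then gives convergence in $L^2(\mu)$. Since each $P_t$ is an $L^2(\mu)$ contraction (Jensen plus stationarity) and bounded continuous functions are dense, the convergence extends to all of $L^2(\mu)$.

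For the generator and the Dirichlet form we would work in the Hermite basis, which avoids most of the functional-analytic bookkeeping. Let $H_k$ be the normalized probabilists' Hermite polynomials, an orthonormal basis of $L^2(\mu)$. The Mehler formula (equivalently, the generating function $e^{xu-u^2/2}$) gives $P_tH_k=e^{-kt}H_k$. Hence for $f=\sum_k c_kH_k$ we get $P_tf=\sum_k e^{-kt}c_kH_k$, and the limit $(P_tf-f)/t$ exists in $L^2(\mu)$ exactly when $\sum_k k^2c_k^2<\infty$, in which case $\sL f=-\sum_k kc_kH_k$.

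On polynomials, a direct differentiation of the Mehler formula at $t=0$ confirms $\sL f=-yf'+f''$. The first-order term gives $-yf'$ and the second-order term gives $\tfrac12(1-e^{-2t})f''\approx tf''$, since $\EE Z=0$ and $\EE Z^2=1$; equivalently one can use the Hermite ODE $H_k''-yH_k'=-kH_k$. The Dirichlet form then follows from Gaussian integration by parts, $\EE[f(X)Xg'(X)]=\EE[f'(X)g'(X)]+\EE[f(X)g''(X)]$, which yields $-\langle f,\sL g\rangle_\mu=\EE[f'(X)g'(X)]$. In coefficients this reads $D(f,g)=\sum_k kc_kd_k$, using $H_k'=\sqrt{k}H_{k-1}$.

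The main obstacle is identifying the domains precisely, namely showing that $\{\sum_k k^2c_k^2<\infty\}$ equals $W^{2,2}(\mu)$ and $\{\sum_k kc_k^2<\infty\}$ equals $W^{1,2}(\mu)$. Here we would again use $H_k'=\sqrt{k}H_{k-1}$. If $f\in W^{1,2}(\mu)$, Gaussian integration by parts against $H_{k-1}$ identifies the Hermite coefficients of the weak derivative $f'$ as $\sqrt{k}\,c_k$. Then $\|f'\|_{L^2(\mu)}^2=\sum_k kc_k^2$, and iterating once gives $\|f''\|_{L^2(\mu)}^2=\sum_k k(k-1)c_k^2$. This establishes both identifications and extends the Dirichlet form formula from polynomials to $W^{1,2}(\mu)$ by density of polynomials together with continuity of both sides in the $W^{1,2}$ norm.
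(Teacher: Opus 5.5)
Your proposal is correct. The paper gives no proof of this lemma and simply imports it from van Handel's notes. The standard argument there coincides with the first part of yours: the Brownian-increment decomposition gives the Markov property and Mehler formula, differentiating the Mehler formula at $t=0$ gives the generator, and Gaussian integration by parts gives the Dirichlet form.

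What your route adds is the Hermite spectral decomposition $P_tH_k=e^{-kt}H_k$. This is what actually substantiates the domain claims the paper asserts. It identifies $\Dom(\sL)$ with $\{\sum_k k^2c_k^2<\infty\}$ and the form domain with $\{\sum_k kc_k^2<\infty\}$, and $H_k'=\sqrt{k}H_{k-1}$ turns these into $W^{2,2}(\mu)$ and $W^{1,2}(\mu)$. It also gives ergodicity for free, since $\|P_tf-\mu(f)\|_{L^2(\mu)}^2=\sum_{k\geq 1}e^{-2kt}c_k^2\to 0$. Your separate density argument via bounded continuous functions is therefore unnecessary, though it is also correct.

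When writing this up, fill in three routine points.

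First, the identity $\EE[f'(X)H_{k-1}(X)]=\sqrt{k}\,c_k$ for a merely weakly differentiable $f$ needs a cutoff. This is because $H_{k-1}$ times the Gaussian density is not compactly supported. The resulting boundary error is controlled by Cauchy--Schwarz, since $f,H_{k-1}\in L^2(\mu)$.

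Second, you only argue $W^{1,2}(\mu)\subseteq\{\sum_k kc_k^2<\infty\}$, and likewise only one inclusion for $W^{2,2}$. The reverse inclusion holds because the Hermite partial sums converge in $L^2(\mu)$ together with their derivatives. Convergence in $L^2(\mu)$ implies $L^1_{\mathrm{loc}}$ convergence, so weak derivatives pass to the limit.

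Third, you check the pointwise formula $\sL f=-yf'+f''$ only on polynomials. It extends to all of $W^{2,2}(\mu)$ by the same closure argument, which incidentally shows $yf'\in L^2(\mu)$.
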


\begin{proof}[Proof of Lemma~\ref{lem: var_identity_OU}]
    Recall that the Lemma gives a calculation of the Dirichlet form $D(f, P_tf)$ as well as a variance identity for $\Var(f(G))$.

    For the former, we first consider the semigroup.
    Let $G^{\prime} \sim \GOE(n)$.
    Then, we have
    \begin{align*}
        P_tf(G) 
        &= \EE[f(G(t)) \mid G(0) = G] \\
        &= \EE[f(e^{-\tau t}G + \sqrt{1 - e^{-2\tau t}}G^{\prime}], \\
        \nabla P_tf(G) 
        &= \EE[e^{-\tau t} \cdot \nabla f(e^{-\tau t}G + \sqrt{1 - e^{-2\tau t}}G^{\prime}] \\
        &= e^{-\tau t} \cdot \EE[\nabla f(G(t)) \mid G(0) = G] \\
        &= e^{-\tau t} \cdot P_t \nabla f(G)
    \end{align*}

    On the other hand, since (up to scaling appropriately) the matrix OU process just has independent scalar OU processes in its coordinates, from Lemma~\ref{lem: OU_process_property} we find
    \begin{align*}
    D(f, P_t f) 
    &= \Ex_{G \sim \GOE(n)} \langle \nabla f(G), \nabla P_t f(G) \rangle \\
    &= e^{-\tau t} \Ex_{G \sim \GOE(n)} \langle \nabla f(G), P_t \nabla f(G) \rangle \\
    &= e^{-\tau t} \Ex_{G(t) \sim \OU(n, \tau)} \langle \nabla f(G), \nabla f(G(t)) \rangle,
    \end{align*}
    where the last step follows by conditioning on $G(0)$ inside the expectation.
    
    The stated form of the variance identity then follows from plugging this into Lemma~\ref{lem: cov_identity}.
    That $D(f, P_t f)$ is non-negative and non-increasing is proved in general in Lemma~\ref{lem: dirichlet_mon}.
\end{proof}

\subsection{PDBOU process: Proof of Lemma~\ref{thm: var_identity_poi_OU}}

Before proof the Lemma~\ref{thm: var_identity_poi_OU}, we first prove the semigroup properties of the Poisson-driven block OU process. 

\begin{lemma}\label{lem: prop_POU_semigroup}
    For any $n \geq 1$, any covering $\sB$ of $[n] \times [n]$, and any $\eta, \tau > 0$, the Poisson-driven block OU process $\PDBOU(n, \sB, \eta, \tau)$ is a Markov process with reversible ergodic Markov semigroup having stationary measure $\mu = \GOE(n)$.
\end{lemma}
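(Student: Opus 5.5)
We will establish the Markov property, stationarity, reversibility, and ergodicity by reducing everything to independent one-dimensional OU chains sampled at discrete times. Fix the covering $\sB$. The state of the process is $\widetilde{G}(t)$, but the natural Markovian description uses the pair $(K(t), \widetilde{G}(t))$. Since the Poisson clocks are memoryless and independent of $(G, W)$, it is cleaner to construct $\widetilde{G}$ directly as a jump process. When the clock of $B$ rings, every entry $(i,j)\in B$ is replaced by
\[ e^{-\tau}x_{ij} + \sqrt{1-e^{-2\tau}}\,\xi_{ij}\sigma_{ij}, \]
where the $\xi_{ij}$ are fresh independent standard Gaussians, kept symmetric. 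We then check that this jump process has the same law as the process in Definition~\ref{def: PDBOU}. This follows from the Markov property and independent increments of the scalar OU processes $G(\cdot)_{ij}$: advancing the OU time of entry $(i,j)$ from $\bar K_{ij}$ to $\bar K_{ij}+1$ is exactly one application of the time-$\tau$ OU kernel. The increments $W(e^{2\tau(\bar K+1)}-1)-W(e^{2\tau \bar K}-1)$ are independent of the past, which gives the identification. In this form, $\widetilde{G}$ is a continuous-time pure jump Markov process with generator
\[ \sL f = \eta\sum_{B\in\sB}(Q_B f - f), \qquad Q_B f(x) = \EE\!\left[f\big(x \text{ with entries in } B \text{ moved by } P^{\OU}_\tau\big)\right]. \]
The semigroup is $P_t = e^{t\sL}$, which is bounded on $L^2(\mu)$, so there are no domain issues.

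\textbf{Stationarity and reversibility.} Under $\mu = \GOE(n)$, the entries are independent $\sN(0,\sigma_{ij}^2)$. Each $Q_B$ acts as the tensor product of the scalar OU kernels $P_\tau$ on the coordinates in $B$ (with multiplicity if $B$ contains $(i,j)$ once up to symmetry; the symmetric pair is moved together), and as the identity elsewhere. By Lemma~\ref{lem: OU_process_property}, each scalar $P_\tau$ preserves $\sN(0,\sigma_{ij}^2)$ and is self-adjoint in $L^2$ of that Gaussian. A tensor product of self-adjoint Markov operators on the product measure is therefore self-adjoint on $L^2(\mu)$ and preserves $\mu$. Consequently $\sL$ is self-adjoint with $\mu\sL = 0$, and so $P_t = e^{t\sL}$ is self-adjoint with $\mu P_t = \mu$.

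\textbf{Ergodicity.} This is the step requiring the most care, and we handle it via a spectral gap. Since $\sB$ covers $[n]\times[n]$, every coordinate is touched by some $Q_B$. We write $L^2(\mu)$ in the Hermite (Wiener chaos) basis for the product Gaussian measure. Each $Q_B$ acts diagonally: on a product Hermite polynomial $H_\kappa$, it multiplies by $e^{-\tau\sum_{(i,j)\in B}\kappa_{ij}}$. Hence
\[ \sL H_\kappa = -\eta\sum_B\Big(1-e^{-\tau|\kappa|_B}\Big)H_\kappa. \]
For $\kappa\neq 0$, some coordinate with $\kappa_{ij}\ge 1$ lies in some $B$, so the eigenvalue is at most $-\eta(1-e^{-\tau}) < 0$. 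This gives
\[ \|P_t f - \mu(f)\|_{L^2(\mu)} \le e^{-\eta(1-e^{-\tau})t}\,\|f-\mu(f)\|_{L^2(\mu)}, \]
which establishes ergodicity.

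The main obstacle is the bookkeeping in the identification of the two constructions, particularly for overlapping blocks, where $\bar K_{ij}$ sums over several clocks. This is resolved by noting that all that matters is that each ring advances the OU time of each affected entry by $\tau$, independently of the past.
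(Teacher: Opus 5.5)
Your proof is correct, but it takes a different route from the paper's.

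\textbf{The paper's route.} The paper never passes through the generator at this point. It writes the semigroup directly as the Poisson mixture $P_t f=\sum_{K}\PP(K(t)=K)\,\bigl(\prod_{i\le j}Q^{(i,j)}_{\bar K_{ij}}\bigr)f$, a mixture of products of commuting, self-adjoint entrywise OU operators. Reversibility is then read off by linearity. Ergodicity is proved only qualitatively: the covering property forces $\bar K_{ij}(t)\to\infty$ almost surely, and a telescoping argument combined with $L^2$ ergodicity of each one-dimensional OU process finishes the proof.

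\textbf{Your route.} You first identify the process with a pure jump chain, which makes the Markov property explicit; the paper takes that property for granted. You then work with the bounded generator $\sL=\eta\sum_B(Q_B-I)$ and $P_t=e^{t\sL}$, and get stationarity and reversibility from $\mu$-invariance and self-adjointness of each $Q_B$. Finally, you diagonalize $\sL$ in the product Hermite basis, which yields the explicit spectral gap $\eta(1-e^{-\tau})$.

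\textbf{What each buys.} Your approach gives a quantitative exponential rate. Because $\sL$ is bounded, it also delivers the technical hypotheses of Lemma~\ref{lem: cov_identity} for free, namely the heat equation and differentiation under the integral. The paper's approach avoids spectral and Hermite machinery. It also produces exactly the mixture formula for $P_t$ that is reused immediately in the proof of Lemma~\ref{thm: var_identity_poi_OU} to compute $D(f,P_tf)$.

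The two descriptions agree: the $Q_B$ commute, so $e^{t\sL}=\prod_B e^{\eta t(Q_B-I)}$ expands into the paper's mixture.

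One small notational point: $\kappa$ is indexed by the independent coordinates, so $|\kappa|_B$ should sum over $i\le j$ with $(i,j)\in B$. Your gap bound is unaffected either way.
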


\begin{proof}[Proof of Lemma~\ref{lem: prop_POU_semigroup}]
   The stationarity of $\mu$ follows directly from the stationarity of $\sN(0, 1)$ for the one-dimensional OU process in Lemma~\ref{lem: OU_process_property}.

    Next, we calculate the semigroup:
    \begin{align}\label{equ: com_semigroup}
        (P_t f)(G) &=\EE[ f(\widetilde{G}(t)) \mid \widetilde{G}(0)=G ] \\ 
         &= \sum_{K \in \ZZ_{\geq 0}^{\sB}} \PP(K(t)=K)\Ex_{G(t) \sim \OU(n, \tau)} \left[ f(G(K)) \mid G(0) = G \right]
         \intertext{where we recall the notation $G(K)$ for an outcome $K \in \ZZ^{\sB}_{\geq 0}$ of the underlying Poisson process from \eqref{equ: G_ij_K}. Writing this differently, for each $1 \leq i \leq j \leq n$, let $Q^{(i, j)}_t$ be the Markov semigroup of a one-dimensional OU process with rate $\tau$ acting on coordinate $(i, j)$ of a matrix (and its symmetric counterpart), and suitably rescaled for the diagonal case $i = j$. Then, we may write this as}
         &= \sum_{K \in \ZZ_{\geq 0}^{\sB}} \PP(K(t)=K) \left(\left(\prod_{1 \leq i \leq j \leq n} Q^{(i, j)}_{\bar{K}_{ij}}\right) f\right)(G)
    \end{align}
    Note that these entrywise transition kernels are mutually commutative operators and are all self-adjoint in $L^2(\mu)$, since they are reversible and $\mu$ is the product measure of their respective stationary measures.

    Reversibility of $P_t$ then follows by expanding by linearity and using this individual self-adjointness.
    Also, by the covering property every entry $(i, j)$ belongs to at least one block $C \in \sB$, so $\bar{K}_{ij}(t) \to \infty$ for all $(i, j) \in [n] \times [n]$ almost surely.
    Then, for ergodicity we may bound
    \begin{align*}
        \|P_t f - \mu(f)\|_{L^2(\mu)}
        &\leq \sum_{K \in \ZZ_{\geq 0}^{\sB}} \PP(K(t)=K) \left\|\left(\prod_{1 \leq i \leq j \leq n} Q^{(i, j)}_{\bar{K}_{ij}}\right) f - \mu(f)\right\|_{L^2(\mu)} \\
        &= \sum_{K \in \ZZ_{\geq 0}^{\sB}} \PP(K(t)=K) \left\|\left(\prod_{1 \leq i \leq j \leq n} Q^{(i, j)}_{\bar{K}_{ij}}\right) f - \left(\prod_{1 \leq i \leq j \leq n} Q^{(i, j)}_{\infty}\right) f\right\|_{L^2(\mu)},
    \end{align*}
    then bounding again by triangle inequality in a telescoping sum where each $\bar{K}_{ij}$ is replaced by $\infty$, and using ergodicity of the entrywise one-dimensional OU processes.
\end{proof}

\begin{proof}[Proof of Lemma~\ref{thm: var_identity_poi_OU}]
Calculating the semigroup in more detail from \eqref{equ: com_semigroup} above, we have
\begin{align}\label{equ: com_semigroup_2}
        (P_t f)(G) &=\EE[ f(\widetilde{G}(t)) \mid \widetilde{G}(0)=G ] \\ 
         &= \sum_{K \in \ZZ_{\geq 0}^{\sB}} \PP(K(t)=K)\Ex_{G(t) \sim \OU(n, \tau)} \left[ f(G(K)) \mid G(0) = G \right] \\
        &=\sum_{K \in \ZZ_{\geq 0}^{\sB}} \left( \prod_{C \in \mathcal{B}} e^{-\eta  t}  \frac{{(\eta t)}^{K_{C}}}{K_{C}!}   \right) \Ex_{G(t) \sim \OU(n, \tau)} \left[ f(G(K)) \mid G(0) = G \right].  \label{equ: P_Kt_K}
    \end{align}

To calculate the generator $ \sL f(G) = \partial_t P_t f(G)|_{t=0}$, we first note that, given $K$ fixed, letting $m \colonequals |\sB|$, and recalling our notation $|K| \colonequals \sum_{B \in \mathcal{B}} K_{B}$, we have
\begin{align}
     \partial_t \left( \prod_{C \in \mathcal{B}} e^{-\eta  t}  \frac{{(\eta t)}^{K_{C}}}{K_{C}!} \right)  \bigg|_{t=0} = \begin{cases}
          - \eta  m &\text{if } |K| = 0, \\
          \eta &\text{if } |K| = 1, \\
          0 &\text{if } |K| \geq 2.
     \end{cases}
\end{align}
We denote $e_B$ as the vector such that $K_{B} = 1$ and $K_{C} = 0$ for all $C \neq  B$, the standard basis vector of index $B$ in $\ZZ_{\geq 0}^{\sB}$.
Then,
\begin{align}\label{equ: Lf_POU}
    \sL f(G) 
    &= -\eta  mf(G) + \eta \sum_{B \in \mathcal{B} } \Ex_{G(t) \sim \OU(n, \tau)} \left[ f(G(e_B)) \mid G(0) = G \right] \\
    &= \eta \sum_{B \in \mathcal{B} } \left( \Ex_{G(t) \sim \OU(n, \tau)} \left[ f(G(e_B)) \mid G(0) = G \right]  - f(G) \right).
\end{align}

Towards calculating the Dirichlet form, note that by the tower property and the reversibility of the process, for each $B \in \sB$,
\begin{align}
    \Ex_{G \sim \mu}\left[f(G)  \cdot \Ex_{G(t) \sim \OU(n, \tau)} [  g(G(e_B) )\mid G(0) = G] \right] 
    &= \Ex_{G(t) \sim \OU(n, \tau)} \left[f(G(0))  g(G(e_B)) \right] \\
    &= \Ex_{G(t) \sim \OU(n, \tau)} \left[f(G(e_B) )  g(G(0)) \right],
\end{align} 
and by stationarity, and omitting the subscripts of expectations when only the one process $G(t) \sim \OU(n, \tau)$ is involved,
\begin{align}
\mathbb{E}\left[f(G(0))g(G(0))\right] = \mathbb{E}\left[f(G(e_B))g(G(e_B))\right].
\end{align}
Combining the above two observations gives
\begin{align}\label{equ: f_g_K_sym}
&\mathbb{E}\left[f(G(0))g(G(0))\right]
- \mathbb{E}\left[f(G(e_B))g(G(0))\right]
\\
&\hspace{2cm} = \frac{1}{2}\mathbb{E}\!\left[
(f(G(0))-f(G(e_B)))
(g(G(0))-g(G(e_B)))
\right].
\end{align}

With \eqref{equ: Lf_POU} and \eqref{equ: f_g_K_sym}, we can then calculate the Dirichlet form for general inputs,
\begin{align}
     D(f,g) &\coloneqq -\langle {f}, {\sL g} \rangle_\mu \\
     &= - \eta  \Ex_{G \sim \mu} \left[ f(G) \sum_{B \in \mathcal{B} } \left(  \Ex_{G(t) \sim \OU(n, \tau)}  [ g(G(e_B) ) \mid G(0) = G ] - g(G) \right) \right] \\
     &= \eta  \sum_{B \in \mathcal{B} }  \EE[ f(G(0))  g(G(0))] - \EE[ f(G(e_B) )  g(G(0))] \\
     &= \frac{\eta}{2} \sum_{B \in \mathcal{B} } \mathbb{E} \left[ (f(G(0)) - f(G(e_B) )) (g(G(0)) - g(G(e_B) )) \right]
     \intertext{which in our notation from the statement of the Lemma is}
     &= \frac{\eta}{2}\sum_{B \in \sB} \EE[\Delta_B f \Delta_B g].
\end{align}

We next calculate the specific Dirichlet form $D(f, P_tf)$ as in the statement of the Lemma.
We first note that given a fixed block $B \in \sB$ and conditional on the draw of $G(t) \sim \OU(n, \tau)$, by \eqref{equ: com_semigroup_2},
\begin{align}
     &(P_t f)(G(0))-(P_t f)(G(e_B)) \\
     &= \sum_{K \in \ZZ_{\geq 0}^{\sB}} \left( \prod_{C \in \mathcal{B}} e^{-\eta  t}  \frac{{(\eta t)}^{K_{C}}}{K_{C}!} \right) \left(\EE\left[ f(H(K)) \mid H(0) = G(e_B)\right] - \EE\left[ f(H(K)) \mid H(0) = G(0)\right]\right),
\end{align}
where $H(t) \sim \OU(n, \tau)$.
Then, by the tower and semigroup properties, we find
\begin{align}
    &D(f,P_tf) \\
    &= \frac{\eta}{2} \sum_{B \in \mathcal{B} } \mathbb{E} \left[ (f(G) - f(G(e_B) )) ((P_tf)(G) - (P_tf)(G(e_B) )) \right] \\
    &= \frac{\eta}{2} \sum_{B \in \mathcal{B} } \sum_{K \in \ZZ_{\geq 0}^{\sB}} \left( \prod_{C \in \mathcal{B}} e^{-\eta  t}  \frac{{(\eta t)}^{K_{C}}}{K_{C}!} \right) \EE \left[ (f(G) - f(G(e_B) )) \left(f(G(K)) -f(G(K+e_B)) \right) \right] \\
    &= \frac{\eta}{2} \sum_{B \in \mathcal{B} } \sum_{K \in \ZZ_{\geq 0}^{\sB}} \left( \prod_{C \in \mathcal{B}} e^{-\eta  t}  \frac{{(\eta t)}^{K_{C}}}{K_{C}!} \right) \EE \left[ \Delta_{B} f \Delta_{B} f^K \right],
\end{align}
using our notation from the statement of the Lemma.
This expression gives the first result of the Lemma, and the non-negativity and monotonicity of this expression follow from the general Lemma~\ref{lem: dirichlet_mon}.

For the variance identity, we substitute the calculation above into Lemma~\ref{lem: cov_identity}, which gives\begin{align}\label{equ: var_f_G_1}
    \Var (f(G) ) &= \int_0^{\infty} D(f, P_t f) dt \\
    &= \frac{\eta}{2} \sum_{B \in \mathcal{B} } \sum_{K \in \ZZ_{\geq 0}^{\sB}} \left( \int_0^{\infty}  \prod_{C \in \mathcal{B}} e^{-\eta  t}  \frac{{(\eta t)}^{K_{C}}}{K_{C}!}    dt  \right) \EE \left[ \Delta_{B} f \Delta_{B} f^K \right],
\end{align}
where the inner integral is a standard Gamma function integral, giving
    \begin{align}
  \int_0^{\infty} \prod_{C \in \mathcal{B}} e^{-\eta  t}  \frac{{(\eta t)}^{K_{C}}}{K_{C}!}   dt 
  &=  \int_0^{\infty}  e^{-\eta  t  m }   {(\eta  t)}^{|K|}  \frac{1}{\prod_{C \in \mathcal{B}} K_{C}!}   dt \\
  &=\eta^{|K|}  \frac{|K|!}{ {(\eta  m)}^{|K|+1}}   \frac{1}{\prod_{C \in \mathcal{B}} K_{C}!} \\
  &= \frac{1}{\eta}  \frac{|K|!}{ {m}^{|K|+1}}   \frac{1}{\prod_{C \in \mathcal{B}} K_{C}!}. \label{equ: int_prob_K_t}
\end{align}
Substituting \eqref{equ: int_prob_K_t} into \eqref{equ: var_f_G_1} then gives the last result:
\begin{align}
\Var(f(G))
&= \frac{1}{2}  \sum_{B \in \mathcal{B}}  \sum_{K\in\ZZ_{\geq 0}^{\sB}} \frac{|K|!}{m^{|K|+1}}  \frac{1}{\prod_{C \in \mathcal{B}} K_{C}!}  \EE \left[ \Delta_{B} f \Delta_{B} f^K \right] \\
&= \frac{1}{2}  \sum_{N=0}^{\infty}\frac{N!}{m^{N+1}}   \sum_{B \in \mathcal{B}}  \sum_{\substack{K\in\ZZ_{\geq 0}^{\sB}\\ |K|=N}} \frac{1}{\prod_{C \in \mathcal{B}} K_{C}!}  \EE \left[ \Delta_{B} f \Delta_{B} f^K \right]. \qedhere
\end{align}
\end{proof}

\subsection{PDBR process: Proof of Lemma~\ref{thm: var_identity}}

Similarly, we first prove the semigroup properties of the independent sampling process.
We recall some of the notation: below $\mu$ will be the law of a sub-Gaussian generalized Wigner matrix.
The PDBR is defined in terms of independent draws $X = X^{(0)}, X^{(1)}, \dots \sim \mu$.
For $K \in \ZZ^{\sB}_{\geq 0}$, we write $X(K)$ for the matrix with entries
\[ X(K)_{ij} = X^{(\bar{K}_{ij})}_{ij}, \]
so that the PDBR may be defined as $X(t) = X(K(t))$.

When we compare $X$ to $X(K)$, some entries have been resampled several times and some only once, but repeated resamplings will not affect many components of our calculations.
So, let us define $\One(K) \in \{0, 1\}^{\sB}$ to have entries
\[ \One(K)_B = \One\{K_B \geq 1\}. \]

\begin{lemma}\label{lem: prop_indep_semigroup}
    Let $\mu$ be the law of a sub-Gaussian generalized Wigner matrix and $\sB$ a covering of $[n] \times [n]$.
    Then, the process $\PDBR(\sB, \mu)$ generates a reversible ergodic Markov semigroup with stationary measure $\mu$.
\end{lemma}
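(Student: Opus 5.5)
We will prove Lemma~\ref{lem: prop_indep_semigroup} in the same way as Lemma~\ref{lem: prop_POU_semigroup}, by writing the semigroup explicitly as a mixture over clock outcomes of commuting, self-adjoint, entrywise (or blockwise) operators. Since the process is defined as $X(t) = X(K(t))$ with $K(t)$ a vector of independent rate-one Poisson processes, we first check the Markov property. Given the history up to time $t$, the future increments $K(t+s) - K(t)$ are independent Poisson vectors. Each further ring of a block's clock replaces the entries in that block by fresh independent draws from their marginal laws under $\mu$, so the conditional law of $X(t+s)$ depends only on $X(t)$. Hence the process is Markov.

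Next we compute the semigroup. For each entry $1 \leq i \leq j \leq n$, let $Q^{(i,j)}$ be the operator that integrates out coordinate $(i,j)$ (together with its symmetric counterpart) against its marginal law $\mu_{ij}$:
\[ (Q^{(i,j)}f)(X) = \int f(X \text{ with } X_{ij} = X_{ji} \text{ replaced by } y)\, d\mu_{ij}(y). \]
Resampling an entry once or many times has the same effect, since the result is a fresh independent draw either way. So only $\One(\bar{K})$ matters, and we obtain
\[ (P_t f)(X) = \sum_{K \in \ZZ_{\geq 0}^{\sB}} \PP(K(t) = K) \Big( \prod_{(i,j) : \bar{K}_{ij} \geq 1} Q^{(i,j)} \Big) f(X). \]
Because $\mu$ is a product measure over $1 \leq i \leq j \leq n$ (the entries are independent by Definition~\ref{def:gen-wigner}), each $Q^{(i,j)}$ is a conditional expectation onto the $\sigma$-algebra of the other coordinates. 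It is therefore an orthogonal projection in $L^2(\mu)$: self-adjoint, idempotent, and satisfying $\mu(Q^{(i,j)}f) = \mu(f)$. These projections mutually commute. Consequently each product $\prod Q^{(i,j)}$ is self-adjoint and preserves $\mu$, and by linearity $P_t$ is self-adjoint, i.e., reversible, with $\mu$ stationary. The semigroup property $P_{t+s} = P_t P_s$ follows from the independent-increments property of Poisson processes together with idempotence and commutativity: the union of the resampled sets over $[0,t]$ and $(t,t+s]$ matches the product of the projections.

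For ergodicity, note that the covering property puts every entry in at least one block. The probability that some entry has $\bar{K}_{ij}(t) = 0$ is therefore at most $m e^{-t} \to 0$, by a union bound over the blocks not yet rung. On the complementary event the full product $\prod_{i \leq j} Q^{(i,j)} f = \mu(f)$ is the constant function. Since each operator has norm at most one, we get
\[ \|P_t f - \mu(f)\|_{L^2(\mu)} \leq 2\, \PP(\exists\, (i,j): \bar{K}_{ij}(t) = 0)\, \|f\|_{L^2(\mu)} \to 0. \]
The argument here is direct and no step is a genuine obstacle. The only point needing care is justifying the Markov property and the semigroup identity rigorously from the definition via independent copies $X^{(k)}$. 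For that we would condition on the clock history and use the fact that the copies $X^{(k)}_{ij}$ not yet revealed are independent of everything observed so far.
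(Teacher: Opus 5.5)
Your proof is correct and follows the same skeleton as the paper's. You reduce the kernel to a mixture over which entries have been resampled, verify reversibility and stationarity term by term, and get ergodicity from the fact that every block eventually rings. The differences are only in how the individual steps are justified. You obtain termwise self-adjointness from commuting conditional-expectation projections $Q^{(i,j)}$, which is the template the paper uses for the PDBOU in Lemma~\ref{lem: prop_POU_semigroup}; the paper instead invokes exchangeability of the pair $(X(0),X(K))$. Your union bound gives an explicit rate, $\|P_tf-\mu(f)\|_{L^2(\mu)}\le 2me^{-t}\|f\|_{L^2(\mu)}$, whereas the paper only cites almost-sure convergence of $\One(K(t))$ to the all-ones vector. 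You also check the Markov and semigroup properties explicitly, which the paper takes for granted.
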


\begin{proof}[Proof of Lemma~\ref{lem: prop_indep_semigroup}]
    That $\mu$ is a stationary measure is immediate from the definition and that $\sB$ is a covering.
    
    We note that, whenever $K, K^{\prime} \in \mathbb{Z}_{\geq 0}^{\sB}$ have the same support, i.e.\ $\One(K) = \One(K^{\prime})$ in the above notation, then the pairs $(X, X(K))$ and $(X, X(K^{\prime}))$ are identically distributed.
    In particular, this is true of $K^{\prime} = \One(K)$.
    We can therefore simplify the transition kernel:
    \begin{align}
        (P_tf)(X)
        &= \EE[f(X(t)) \mid X(0) = X] \\
        &= \sum_{K \in \ZZ_{\geq 0}^{\sB}} \PP(K(t) = K) \EE[f(X(K)) \mid X(0) = X] \\
        &= \sum_{K \in \ZZ_{\geq 0}^{\sB}} \PP(K(t) = K) \EE[f(X(\One(K))) \mid X(0) = X] \\
        &= \sum_{K \in \{0, 1\}^{\sB}} \PP(\One(K(t)) = K) \EE[f(X(K)) \mid X(0) = X]. \label{equ: PDBR-kernel}
    \end{align}

    Then, to establish reversibility, we calculate
    \begin{align}
        \langle f,P_t g\rangle_\mu
&= \EE\left[f(X)(P_t g)(X)\right] \\
&= \sum_{ K \in \{0,1\}^{\sB} }  \mathbb{P}\left( \mathbbm{1}(K(t)) = K \right)\EE \left[f(X(0))g(X(K)) \right],  \label{equ: Ptg}
    \end{align}
    and reversibility follows since $(X(0), X(K))$ are an exchangeable pair.

    For ergodicity, note that $\One(K(t))$ converges almost surely to the all-ones vector, call it $\bm 1 \in \ZZ_{\geq 0}^{\sB}$.
    For this vector, since $X(\bm 1)$ is independent of $X(0)$, we have $\EE[f(X(\bm 1)) \mid X(0) = X] = \EE[f(X(\bm 1))] = \mu(f)$, and ergodicity follows from the above expression for $P_t f$.
\end{proof}

\begin{proof}[Proof of Lemma~\ref{thm: var_identity}]
    First, we expand the formula for the transition kernel from \eqref{equ: PDBR-kernel} fully by evaluating the probabilities, using that the underlying Poisson clocks determining $K$ are independent.
    Recall that we write $m \colonequals |\sB|$.
    We have:
 \begin{align}\label{equ: p_t_f_dij_c}
     (P_t f)(X) &= \sum_{K \in \{0, 1\}^{\sB}} \PP(\One(K(t)) = K) \EE[f(X(K)) \mid X(0) = X]  \\
 &= \sum_{K \in \{0, 1\}^{\sB}}  \left( (1 - e^{-t})^{|K|}  e^{-t(m -  |K|)} \right)    \EE \left[f(X(K)) \mid X(0) = X \right] \\
 &= \sum_{k=0}^{m} \left( (1 - e^{-t})^{k}  e^{-t(m - k)} \right)  \sum_{A \in A_{k}} n_k(A) \EE f(X^A),
\end{align}
where \begin{align}
 n_k(A) &\coloneqq \#\left\{ K \in \{0,1\}^{\sB} : |K| = k,  \bigcup_{B: K_B = 1} B = A \right\}
\end{align}
and where we switch to the notation that $X^A$ is a copy of the matrix $X$ where entries in indices belonging to $A$ have been resampled according to their marginal distributions under $\mu$.
We further denote $w_t(k) \coloneqq  (1 - e^{-t})^{k}  e^{-t(m - k)}$, then, for the generator $ \sL f(X)$, we have
\begin{align}
    \sL f(X) = \partial_t  P_t f(X)|_{t=0} =  \sum_{k=0}^{m} \left( \partial_t  w_t(k)|_{t=0} \right) \sum_{A \in A_{k}} n_k(A)  \EE f(X^A). \end{align}
    Since
    \begin{align}
\partial_t  w_t(k)  |_{t=0} =  \begin{cases}
        -m &\text{if } k = 0, \\
        1 &\text{if } k = 1, \\
        0 &\text{if } k \geq 2,
    \end{cases}
\end{align}
we can simplify $ \sL f(X)$ as
\begin{align}
     \sL f(X) = -m  f(X) + \sum_{A \in \mathcal{A}_1} \EE f(X^A) = \sum_{B \in \mathcal{B}} \left( \EE f(X^B) - f(X)\right).
\end{align} 
As before, since $(X, X^{B})$ is an exchangeable pair, so
\[ \EE \left[f(X)(g(X)-g(X^{B}))\right]
= \EE \left[f (X^{B}) (g(X^{B})-g(X))\right] \]
and therefore
\begin{align}\label{equ: resample_sym}
    \EE \left[f(X)(g(X)-g(X^{B}))\right]
= \frac{1}{2}  \EE \left[(f(X)-f(X^{B}))(g(X)-g(X^{B}))\right]. 
\end{align}

In general, by the tower property and \eqref{equ: resample_sym}, the Dirichlet form is given by
\begin{align}
    D(f,g)\coloneqq -\langle {f}, {\sL g} \rangle_\mu &= \sum_{B \in \mathcal{B}}  \EE \left[f(X)(g(X)-g(X^{B}))\right] \\
    &= \frac{1}{2}    \sum_{B \in \mathcal{B}} 
 \EE\left[(f(X)-f(X^{B}))(g(X)-g(X^{B}))\right]. \label{equ: dirichlet_form}
\end{align}
Therefore, to calculate $D(f,P_t f)$, from \eqref{equ: dirichlet_form}, \begin{align}\label{equ: dirichlet_form_2}
    D(f,P_t f) = \frac{1}{2}    \sum_{B \in \mathcal{B}} 
 \EE\left[(f(X)-f(X^{B}))(P_t f(X)-P_t f(X^{B}))\right].
\end{align}
To calculate this, we recall from above that we may write
\begin{align}\label{equ: Ptf_ptfij}
 (P_t f)(X)-(P_t f)(Y) = \sum_{k=0}^{m-1}  w_t(k) \sum_{A \in \mathcal{A}_{k}} n_k(A)
    \left( \EE f(X^A)
         - \EE f(Y^A) \right).
\end{align}
When we evaluate this with $Y = X^B$, then in terms where $B \subseteq A$, we will have that $X^A$ and $Y^A$ are identically distributed (since those entries on which $X$ and $Y$ might disagree, which are only those in $B$, are resampled again when all of $A$ is resampled) and thus such terms will be zero. 
In other words, only those terms with $A \in \sA_{k, B}$ will contribute.

Using this observation, we can rewrite \eqref{equ: dirichlet_form_2} as:
\begin{align}
     D(f,P_t f) &= \frac{1}{2} \sum_{k=0}^{m-1} w_t(k) \sum_{B \in  \mathcal{B}}  \sum_{A \in \mathcal{A}_{k, B}} n_k(A) 
     \EE\left[(f(X)-f(X^{B}))(f(X^A)-f(X^{A\cup B}))\right] \\
     &= \frac{1}{2} \sum_{k=0}^{m-1} w_t(k) \sum_{B \in  \mathcal{B}}  \sum_{A \in \mathcal{A}_{k, B}} n_k(A)  \EE\left[ \Delta_{B} f \Delta_{B} f^A \right].
\end{align}
With Lemma~\ref{lem: prop_indep_semigroup}, we can apply the general covariance identity from Lemma~\ref{lem: cov_identity} to this process, and we find by evaluating the integral involved that
\begin{align}
      \Var (f(X) ) &= \int_0^{\infty} D(f, P_t f)\,dt \\
       &= \frac{1}{2} \sum_{k=0}^{m-1} \left(  \int_0^\infty  w_t(k)  dt \right) \sum_{B \in  \mathcal{B}}  \sum_{A \in \mathcal{A}_{k, B}} n_k(A)  \EE\left[ \Delta_{B} f \Delta_{B} f^A \right] \\
     &= \frac{1}{2m} \sum_{k=0}^{m-1} \frac{1}{\binom{m-1}{k}}  \sum_{B \in  \mathcal{B}}  \sum_{A \in \mathcal{A}_{k, B}} n_k(A)  \EE\left[ \Delta_{B} f \Delta_{B} f^A \right],
\end{align}
completing the proof.
\end{proof}

\section{Bounds on moments of maxima: Proof of Corollary~\ref{cor: moment_bounds}}\label{sec: concentration_ine}

\begin{proof}[Proof of Corollary~\ref{cor: moment_bounds}]
Recall that the statement concerns the moments of $\|X\|_{\ell^{\infty}}$ for $X$ a sub-Gaussian generalized Wigner matrix with parameters $(c_1, c_2, K)$.
We first prove the moment bounds in this general case, and then the results specialized to Gaussian matrices.
For the second moment,
\begin{align}
    \EE \| X\|_{\ell^{\infty}}^2 &= \int_0^\infty \PP( \| X\|_{\ell^{\infty}}^2 \geq u ) \,du \\
    &= \int_0^\infty \PP( \| X \|_{\ell^{\infty}} \geq t ) 2t \,dt \intertext{By Lemma~\ref{lem: infty_norm_inequality}, there exists a $c = c(c_1, c_2, K) >0$ such that we have the bound $ \PP( \|X\|_{\ell^{\infty}} \geq t ) \leq  \min\{1, 2 n^2 \exp( -ct^2 / K^2) \}$, and using this we find}
    &\leq \int_0^\infty \min\{1, 2 n^2 \exp( -c t^2 / K^2)\} 2t\, dt 
    \intertext{Let $t_0 \colonequals (K \sqrt{
    \log(2n^2)})/\sqrt{c}$, so that $2 n^2 \exp( -c t^2 / K^2) \geq 1$ when $t \leq t_0$, and split the integral,}
    &\leq \int_0^{t_0} 2t\, dt + \int_{t_0}^\infty 2 n^2 \exp( -c t^2 / K^2) 2t\, dt \\
    &= \frac{K^2}{c} \log(2 n^2) + \frac{2K^2}{c},
\end{align}
as claimed.

Similarly for the fourth moment,
\begin{align}
     \EE \| X\|_{\ell^{\infty}}^4 &=  \int_0^\infty \PP( \| X\|_{\ell^{\infty}} \geq t ) 4t^3\, dt \\
    &\leq \int_0^{t_0} 4t^3 dt + \int_{t_0}^\infty 2 n^2 \exp( -c t^2 / K^2)  4t^3\, dt  \\
    &=  t_0^4 + 8 n^2 \int_{t_0}^\infty t^3 \exp( -c t^2 / K^2)\,  dt. 
\end{align}
For the remaining tail integral, we substitute $u = c t^2 / K^2 $ with $u_0 = c t^2_0 / K^2  = \log(2n^2)$, then 
\begin{align}
    \int_{t_0}^\infty t^3 \exp( -c t^2 / K^2)\,  dt &= \frac{K^4}{2c^2} \int_{u_0}^\infty 2 u \exp(-u)\, du \\
    &= \frac{K^4}{2c^2} \frac{\log(2n) +1}{n^2}.
\end{align}
Together, \begin{align}
     \EE \| X\|_{\ell^{\infty}}^4 \leq \frac{K^4}{c^2} \log^2(2n^2) + \frac{2K^4}{c^2} \log(2n^2) + \frac{2K^4}{c^2},
\end{align}
as claimed.

Lastly, in the Gaussian case where $X_{ij} \sim N(0, \sigma_{ij}^2)$ with $\sigma_{ij}^2 \leq \sigma^2$, we may take $K^2 = 2\sigma^2$ and $c = 1$. The same calculation then gives
\begin{align}
       \EE \| X\|_{\ell^{\infty}}^2 &\leq 2 \sigma^2 \log(2n^2) + 4 \sigma^2, \\
       \EE \| X\|_{\ell^{\infty}}^4 &\leq 4 \sigma^4 \log^2 (2n^2) +  8 \sigma^4 \log (2n^2) +  8 \sigma^4.
   \end{align}
We obtain the simplified bounds stated in the Corollary by then noting that, for all $n\ge 1$, \begin{align}
    \log(2n^2)+2 &\leq (2+\log (2)(\log(n) +1 ) \leq 3 (\log(n) +1 ), \\
    \log^2(2n^2)+2\log(2n^2) + 2 &\leq (2+\log (2)^2(\log(n) +1 )^2 \leq 8 (\log(n) +1 )^2. \qedhere
\end{align}
\end{proof}

\section{Consequences of rigidity estimates}
\label{sec: eigidity_eigenvalues}

\subsection{Spacing estimate: Proof of Corollary~\ref{cor: lambda_i-lambda_j}}

\begin{proof}[Proof of Corollary~\ref{cor: lambda_i-lambda_j}]
Recall that this Corollary deduces from a rigidity estimate a lower bound on $|\lambda_{\alpha} - \lambda_{\beta}|$ for $|\alpha - \beta|$ sufficiently large.
To discuss the semicircle law in its standard normalization, we rescale our matrix by setting $\overline{X} \colonequals X / \sqrt{n}$ and denote its eigenvalues by $\Bar{\lambda}_1 \geq \dots \geq \Bar{\lambda}_n$, so that $\Bar{\lambda}_{\alpha} = \lambda_{\alpha} / \sqrt{n}$.
In this notation, we want to show that, with high probability,
\[ |\Bar{\lambda}_{\alpha} - \Bar{\lambda}_{\beta}| \geq c^{\prime}|\alpha - \beta|n^{-1} \text{ whenever } |\alpha - \beta| \geq C^{\prime}(\log n)^{L}. \]

 \cite[Theorem 2.2]{erdHos2012rigidity} shows that, for an event defined in terms of a parameter $L \geq 0$ as
 \begin{align}\label{equ: lambda_gamma_e}
 E \colonequals
  \{
       |\bar\lambda_\beta-\gamma_\beta|
       \le(\log n)^L\hat{\beta}^{-1/3}n^{-2/3}\text{ for all } \beta \in [n]
  \},
\end{align} there exist positive constants \( A_0 > 1 \), \( C \), \( c \), and \( \phi < 1 \) such that for $n$ sufficiently large depending only on these constants, for any \( L \) with
\begin{align}\label{equ: choice_A0}
A_0 \log \log n \leq L \leq \frac{\log(10n)}{10 \log \log n},
\end{align}
    we have $\mathbb P( E^c)\le
  C\exp  [-c(\log n)^{\phi L}]$. 
  For any $\beta \neq \alpha$, the triangle inequality gives \begin{align}\label{equ: lambda_bar_diff}
  |\bar\lambda_\alpha-\bar\lambda_\beta|
&\ge |\gamma_\alpha-\gamma_\beta|
 - |\bar\lambda_\alpha-\gamma_\alpha|
 - |\bar\lambda_\beta-\gamma_\beta|. 
    \end{align}
   If the classical spacing $|\gamma_{\alpha} - \gamma_{\beta}|$ dominates the rigidity errors  $  | \Bar{\lambda}_{\alpha} - \gamma_{\alpha} |$ and $| \Bar{\lambda}_{\beta} - \gamma_{\beta} |$, then the eigenvalue spacing is controlled by the spacing of the classical locations. 
 Hence the remaining proof reduces to identifying the range of $|\alpha-\beta|$ for which \begin{align}\label{equ: triangle_inequality_1}
  |\gamma_\alpha-\gamma_\beta| \stackrel{\text{(?)}}{\geq} 2 \left( |\bar\lambda_\alpha-\gamma_\alpha|+|\bar\lambda_\beta-\gamma_\beta| \right),
 \end{align}
 in which case \eqref{equ: lambda_bar_diff} implies \begin{align}\label{equ: triangle_inequality_2}
    |  \Bar{\lambda}_{\alpha} -  \Bar{\lambda}_{\beta}|
\geq \frac{1}{2} |\gamma_{\alpha} - \gamma_{\beta}|.
 \end{align}
Further, we have that, by definition, when $\alpha > \beta$ so that $\gamma_{\alpha} < \gamma_{\beta}$, we have
\begin{equation}
\frac{\alpha - \beta}{n} = \int_{\gamma_{\alpha}}^{\gamma_{\beta}} \varrho_{\scc}(x)\,dx \leq \frac{1}{\pi}(\gamma_{\beta} - \gamma_{\alpha}). \label{eq:gamma-int-1}
\end{equation}
Thus, the result will follow provided that we show that, on the event $E$, \eqref{equ: triangle_inequality_1} holds whenever $|\alpha - \beta| \geq C^{\prime}(\log n)^{L}$.

Towards establishing this, note that on the event $E$ we have, for the right-hand side of \eqref{equ: triangle_inequality_1},
\[ 2 \left( |\bar\lambda_\alpha-\gamma_\alpha|+|\bar\lambda_\beta-\gamma_\beta| \right) \leq (\hat{\alpha}^{-1/3} + \hat{\beta}^{-1/3}) \cdot 2(\log n)^L n^{-2/3}. \]
We first handle a few special cases.
First, suppose that $\alpha, \beta \in [\eta n, (1 - \eta)n]$ for some given $\eta \in (0, \frac{1}{4})$.
In this case, we have $\hat{\alpha}, \hat{\beta} \leq \eta n$, and so
\[ 2 \left( |\bar\lambda_\alpha-\gamma_\alpha|+|\bar\lambda_\beta-\gamma_\beta| \right) \leq 4\eta^{1/3} (\log n)^L n^{-1}, \]
and the claim follows from \eqref{eq:gamma-int-1}.
Second, suppose that $|\alpha - \beta| \geq 4(\log n)^L n^{1/3}$.
Then, \eqref{eq:gamma-int-1} gives $|\gamma_{\beta} - \gamma_{\alpha}| \geq 8\pi n^{-2/3}$, while we always have
\[ 2 \left( |\bar\lambda_\alpha-\gamma_\alpha|+|\bar\lambda_\beta-\gamma_\beta| \right) \leq 4 (\log n)^L n^{-2/3}, \]
and the result again holds.

Thus, we may assume that at least one of $\alpha, \beta$ is in either $[1, \eta n]$ or $[(1 - \eta)n, n]$, and that $|\alpha - \beta| \leq 4(\log n)^L n^{1/3} \leq \eta n$ (the last holding for $n$ sufficiently large).
Since $\eta < 1/4$, in particular $\alpha$ and $\beta$ are either both in $[1, n/2]$ or both in $[n/2, n]$.
By symmetry, without loss of generality we may suppose that $\alpha, \beta \in [1, n/2]$, and so $\hat{\alpha} = \alpha$ and $\hat{\beta} = \beta$.

We have $\varrho_{\text{sc}}(2-u) = \frac{1}{2 \pi} \sqrt{u}  \sqrt{4-u}$, and for $u \in [0,2]$, \begin{align}
   \frac{\sqrt{2}}{2\pi}\sqrt{u} \leq  \varrho_{\mathrm{sc}}(2-u) \le \frac{1}{\pi}\sqrt{u}.
\end{align}
Thus, from the definition of $\gamma_{\beta}$, we have
\begin{align}\label{equ: s_beta_bound}
    \frac{\beta}{n} &= \int_0^{2 - \gamma_{\beta}}  \varrho_{\text{sc}}(2-u)\, du,
\end{align}
which together with the above gives
\[ \frac{\sqrt{2}}{3\pi}  (2 - \gamma_{\beta})^{3/2} \leq \frac{\beta}{n} \leq \frac{2}{3\pi} (2 - \gamma_{\beta})^{3/2}. \]
Hence, by solving \eqref{equ: s_beta_bound}, there exist absolute constants $0 < c_1 \leq C_1 < \infty$ such that for all $1 \leq \beta \leq n/2$, \begin{align}\label{equ: rho_gammaj}
 c_1  \left( \frac{\beta}{n}\right)^{1/3}  \leq \varrho_{\text{sc}}(\gamma_\beta)  \leq C_1  \left( \frac{\beta}{n}\right)^{1/3}.
\end{align} 
Now, writing a more precise version of \eqref{eq:gamma-int-1}, since $\varrho_{\scc}(x)$ is increasing on $[0, 2]$, we have
\begin{equation}
\frac{|\alpha - \beta|}{n} = \int_{\gamma_{\alpha}}^{\gamma_{\beta}} \varrho_{\scc}(x)\,dx \leq C_1 \left(\frac{\max\{\alpha, \beta\}}{n}\right)^{1/3} |\gamma_{\alpha} - \gamma_{\beta}|,
\end{equation}
and rearranging gives
\[ |\gamma_{\alpha} - \gamma_{\beta}| \geq \frac{1}{C_1} |\alpha - \beta| \max\{\alpha, \beta\}^{-1/3} n^{-2/3}. \]
On the other hand, on the event $E$ and with the above simplifications, we have
\[ 2 \left( |\bar\lambda_\alpha-\gamma_\alpha|+|\bar\lambda_\beta-\gamma_\beta| \right) \leq 4(\log n)^L \max\{\alpha, \beta\}^{-1/3} n^{-2/3}. \]
Thus, provided that $|\alpha - \beta| \geq 4(\log n)^L$ we have
\[ |\gamma_{\alpha} - \gamma_{\beta}| \geq  2 \left( |\bar\lambda_\alpha-\gamma_\alpha|+|\bar\lambda_\beta-\gamma_\beta| \right) \]
as desired, completing the proof.
\end{proof}

\subsection{Eigenvalue variance estimate: Proof of Corollary~\ref{cor: eigenvalue_bound}}
\begin{proof}[Proof of Corollary~\ref{cor: eigenvalue_bound}]
Recall that this Corollary uses rigidity estimates to bound the variance of an eigenvalue.
We bound the variance by:
\begin{align}
    \Var({\lambda}_\alpha) &\leq \EE[  | {\lambda}_\alpha -\sqrt{n}\gamma_\alpha|^2  ] \\
    &= \int_0^\infty \PP\left( |{\lambda}_\alpha -\sqrt{n}\gamma_\alpha|^2 \geq t  \right)  dt \intertext{Based on the spacing for ${\lambda}_\alpha$ and $\gamma_\alpha$ given by Theorem~\ref{thm: rigidity_eigenvalue}, we split the integral into three parts. Denote $T_0 = \left((\log n)^L\hat{\alpha}^{-1/3}n^{-1/6}\right)^2$, then}
    &= \int_0^{T_0}  \PP\left( |{\lambda}_\alpha -\sqrt{n}\gamma_\alpha|^2 \geq t  \right)  dt +  \int_{T_0}^M  \PP\left( |{\lambda}_\alpha -\sqrt{n}\gamma_\alpha|^2 \geq t  \right)  dt  \\&\quad + \int_{M}^\infty  \PP\left( |{\lambda}_\alpha -\sqrt{n}\gamma_\alpha|^2 \geq t  \right)  dt,
\end{align} 
where $M$ is to be chosen later. For the first term, we use trivial bound that $\PP\left( |{\lambda}_\alpha -\sqrt{n}\gamma_\alpha|^2 \geq t  \right) \leq 1 $, then $\int_0^{T_0}  \PP\left( |{\lambda}_\alpha -\sqrt{n}\gamma_\alpha|^2 \geq t  \right)  dt \leq T_0$. For the second term, we apply the estimate given in \eqref{equ: rigidity_prob}, which gives
\begin{align}
    \int_{T_0}^M  \PP\left( |{\lambda}_\alpha -\sqrt{n}\gamma_\alpha|^2 \geq t  \right)  dt  \leq M \cdot C \exp [-c(\log n)^{\phi L}].
\end{align} 
For the third term, suppose we choose $M \geq 16n$.
Then, for all $t$ in the integral we have $t \geq 16n$, and so \begin{align}
    \{ |\lambda_\alpha - \sqrt{n} \gamma_\alpha| \geq \sqrt{t} \} &\subseteq \{ \| X\| \geq \sqrt{t}-2\sqrt{n}  \} \\
    &\subseteq \left\{ \| X\| \geq \frac{1}{2}\sqrt{t} \right\} \intertext{We apply the concentration inequality given in Lemma~\ref{lem: norm_inequality}, by choosing $M = \max\{  16n,  4C^2n\}$.
    Set $s = \frac{1}{2C} \sqrt{\frac{t}{n}}$, then for all $t \geq M$, we have $s \geq 1$ and}
    &= \{ \| X\| \geq   C s \sqrt{n} \}.
\end{align}
Thus, we can bound the probability as
\begin{align}
    \int_{M}^\infty  \PP\left( |{\lambda}_\alpha -\sqrt{n}\gamma_\alpha|^2 \geq t  \right)  dt  
    &\leq \int_{M}^\infty \PP \left( \|X\| > C   s \sqrt{n} \right)dt \\
    &= \int_{M}^\infty 2\exp\left(- \frac{2}{4C^2}  \frac{t}{n}  \cdot n\right)   dt = 4C^2 \exp\left( - \frac{M}{2C^2} \right).
\end{align} 
Together, since the exponential term decays faster than any polynomial, we obtain \begin{align}
    \Var({\lambda}_\alpha) &\leq (\log n)^L\hat{\alpha}^{-2/3}n^{-1/3} + M \cdot  C\exp  [-c(\log n)^{\phi L}] + 2C^2 \exp\left( - \frac{M}{2C^2} \right)  \\
    &\leq \tilde{C}  (\log n)^L\hat{\alpha}^{-2/3}n^{-1/3},
\end{align}
for some constant $\tilde{C}  > 0 $, giving the result.
\end{proof}

\end{document}